\numberwithin{equation}{section}
\newcommand{\La}{\Lambda}
\newcommand{\Ga}{\Gamma}
\renewcommand{\ll }{\langle\hspace{-.7mm}\langle }
\newcommand{\rr }{\rangle\hspace{-.7mm}\rangle }
\newcommand{\ra}{\rightarrow}
\newcommand{\ca}{\curvearrowright}
\newcommand{\A}{\mathcal A}
\newcommand{\B}{\mathcal B}
\newcommand{\D}{\mathcal D}
\renewcommand{\L}{\mathcal L}
\newcommand{\M}{\mathcal M}
\newcommand{\N}{\mathcal N}
\renewcommand{\P}{\mathcal P}
\newcommand{\Q}{\mathcal Q}
\newcommand{\R}{\mathcal R}
\renewcommand{\S}{\mathcal S}
\newcommand{\T}{\mathcal T}
\newcommand{\X}{\mathcal X}
\newcommand{\Y}{\mathcal Y}
\newcommand{\W}{\mathcal W}
\newcommand{\sC}{\mathscr C}
\newcommand{\sD}{\mathscr D}
\newcommand{\sE}{\mathscr E}
\newcommand{\sF}{\mathscr F}
\newcommand{\sG}{\mathscr G}
\newcommand{\sH}{\mathscr H}
\newcommand{\sN}{\mathscr N}
\newcommand{\sP}{\mathscr P}
\newcommand{\sS}{\mathscr S}
\newcommand{\sT}{\mathscr T}
\newcommand{\sU}{\mathscr U}
\newcommand{\sV}{\mathscr V}
\newcommand{\sZ}{\mathscr Z}
\newcommand{\sW}{\mathscr W}
\theoremstyle{plain}
\newtheorem{main}{Theorem}
\newtheorem{mcor}[main]{Corollary}
\newtheorem{thm}{Theorem}[section]
\newtheorem*{thm*}{Theorem}
\newtheorem{cor}[thm]{Corollary}
\newtheorem{lem}[thm]{Lemma}
\newtheorem{prop}[thm]{Proposition}
\theoremstyle{definition}
\newtheorem{defn}[thm]{Definition}
\theoremstyle{plain}
\newtheorem{rem}[thm]{Remark}
\begin{document}

\title{Rigidity for von Neumann algebras of  graph product groups. I. Structure of automorphisms}
\author{Ionu\c t Chifan, Michael Davis, Daniel Drimbe }
\date{}

\maketitle
\begin{abstract} \noindent In this paper we study various rigidity aspects of the von Neumann algebra $L(\Ga)$ where  $\Ga$ is a graph product group whose underlying graph is a certain cycle of cliques and the vertex groups are  wreath-like product property (T) groups. Using an approach that combines methods from Popa's deformation/rigidity theory with new techniques pertaining to graph product algebras, we describe all symmetries of these von Neumann algebras and reduced C$^*$-algebras by establishing formulas in the spirit of Genevois and Martin's results on automorphisms of graph product groups. 
 \end{abstract}
\section{Introduction}

Graph product groups were introduced by E. Green  \cite{Gr90} in her PhD thesis as natural generalizations of classical right-angled Artin and Coxeter groups. Their study has become a trendy subject over the years as they play key roles in various branches of topology and group theory. For example, over the last decade graph products groups have been intensively studied through the lens of geometric group theory resulting in many new important
discoveries---\cite{Ag13, HW08, W11, MO13, AM10}, just to enumerate a few. 

In a different direction, by using techniques from measured group theory
 interesting orbit equivalence rigidity results have been obtained for measure preserving actions on probability spaces of specific classes of graph product groups, including many right-angled Artin groups  \cite{HH20,HH21}.

General graph product groups were
considered in the analytic framework of von Neumann algebras for the first time in \cite{CF14}. Since then several structural results such as strong solidity, absence/uniqueness of Cartan subalgebras, and classification of their tensor decompositions have been established in
 \cite{CF14, Ca16, CdSS17, DK-E21,CK-E21} for von Neumann algebras arising from these groups and their actions on probability spaces. Since general graph product groups display such a rich combinatorial structure, much remains to be done in this area, and understanding how this complexity is reflected in the von Neumann algebras remains mysterious.

This paper is the first of two which  will investigate new rigidity aspects for von Neumann algebras of graph product groups through the powerful Popa's deformation/rigidity theory \cite{Po06}. This theory provides  a novel conceptual framework through which a large number of impressive structural and rigidity results for von Neumann algebras have been discovered over the last two decades; see the surveys \cite{Po06,Va10a,Io12b, Io17}.  These papers will analyze new inputs in this theory from the perspective of graph products algebras. In the first paper we completely  describe the structure of all $\ast$-isomorphisms between von Neumann algebras arising from a large class of graph product groups (see Section \ref{CC1}). {In the second one \cite{CDD23}, we investigate superrigidity aspects of these von Neumann alegbras.}

\subsection{Statements of the main results}

To properly introduce our results we briefly recall the construction of graph product groups. Let  $\mathscr G=(\mathscr V,\mathscr E)$ be a finite {\it simple graph} (i.e. $\sG$ does not admit more than one edge between any two vertices and no edge starts and ends at the same vertex). The \emph{graph product group} $\Ga={\mathscr G}\{\Ga_v\}$ of a given  family  of \emph{vertex groups} $\{\Ga_v\}_{v\in \mathscr V}$ is the quotient of the free product $\ast_{v\in \mathscr V} \Ga_v$ by the relations $[\Ga_u,\Ga_v]=1$ whenever $u$ and $v$  are connected by an edge, $(u,v)\in \mathscr E$. Thus, graph products can be thought of as groups that ``interpolate'' between the direct product $\times_{v\in \mathscr V} \Ga_v$ (when $\mathscr G$ is complete) and the free product $\ast_{v\in \mathscr V} \Ga_v$ (when $\mathscr G$ has no edges).

For any subgraph $\sH= (\sU,\sF)$ of $\sG$ we  denote by $\Gamma_\sH$ the subgroup generated by $\Gamma_\sH =\langle \Gamma_u \,:\,u\in \sU\rangle $ and we call it the \emph{full subgroup} of $\sG\{\Gamma_v\}$ corresponding to $\sH$. A \emph{clique} $\sC$ of $\sG$ is a maximal, complete subgraph of $\sG$. The set of cliques of $\sG$ will be denoted by ${\rm cliq}(\sG)$. The full subgroups $\Ga_\sC$ for $\sC\in \rm cliq(\sG)$ are called the clique subgroups of $\sG\{\Ga_v\}$.  

In this paper we are interested in graph product groups arising from a specific class of graphs which we introduce next. 
A graph  $\sG$ is called a \emph{simple cycle of cliques (abbrev.\ ${\rm CC}_1$)} if there is an enumeration of its clique set $ {\rm cliq}(\sG)= \{\sC_1, ..., \sC_n\} $ with $n\geq 4$ such that the subgraphs $\sC_{i,j} :=\sC_i \cap \sC_j$ satisfy: 
\begin{equation*} \begin{split}
    &\sC_{i,j}=\begin{cases} \emptyset,  \text{ if }  \hat i-\hat j\in \mathbb Z_{n} \setminus \{ \hat 1, \widehat {n-1}\}\\
\neq \emptyset, \text{ if }  \hat i-\hat j\in \{\hat 1, \widehat {n-1}\} 
\end{cases} \text{ and }\\
&\mathscr{C}^{\rm int}_i:=\sC_i \setminus  \left(\sC_{i-1,i} \cup \sC_{i,i+1}\right) \neq \emptyset\text{, for all }i\in\overline{1,n}, \text{ with conventions } 0=n \text{ and } n+1=1.\end{split}
  \end{equation*}
Note this automatically implies the cardinality $|\sC_i |\geq 3$ for all $i$.  Also such an enumeration $ {\rm cliq}(\sG)= \{\sC_1,..., \sC_n\} $ is called \emph{a consecutive cliques enumeration}. A basic example of such a graph is any simple, length $n$, cycle of triangles $\sF_n=(\sV_n,\sE_n)$, which essentially looks like a flower shaped graph with $n$ petals:    

\begin{equation}\label{flower}
\begin{tikzpicture}[
regular polygon colors/.style 2 args={
    append after command={%
        \pgfextra
        \foreach \i [count=\ni, remember=\ni as \lasti (initially #1)] in {#2}{
            \draw[thick,\i] (\tikzlastnode.corner \lasti) --(\tikzlastnode.corner \ni);}
        \endpgfextra
    }
},
]

\node[thick, minimum size={2*3cm},regular polygon,regular polygon
 sides=12, rotate=11.25, regular polygon colors={12}{black, black, black, black, black, black, black, black, black, black, black, white}] at
 (3,3) (16-gon)[text=blue] {};

\node[thick, draw=white,minimum size={2*4cm},regular polygon,regular polygon
 sides=12, rotate=-4] at
 (3,3) (sixteengon)[text=blue] {};

\node[thick, draw=white,minimum size={2*4cm},regular polygon,regular polygon
 sides=12, rotate=26.5] at
 (3,3) (6teengon)[text=blue] {};

\path (16-gon.corner 12) -- node[auto=false, rotate=-45]{\ldots} (16-gon.corner 11);

\draw[thick] (6teengon.corner 12) -- (16-gon.corner 12);
 \node [fill=black,circle,inner sep=2pt] at (6teengon.corner 10){};
  
 \foreach \p [count=\n] in {1,...,12}{
 \node [fill=black,circle,inner sep=2pt] at (16-gon.corner \n){};
}
 \foreach \p [count=\n] in {3,...,12}{
 \node [fill=black,circle,inner sep=2pt] at (sixteengon.corner \n){};
\draw[thick] (6teengon.corner \n) -- (16-gon.corner \n);
}
 \foreach \p [count=\n] in {1,...,9}{
 \node [fill=black,circle,inner sep=0pt] at (6teengon.corner \n){};
}
\foreach \p [count=\n] in {1,...,11}{
 \draw[thick] (sixteengon.corner \n) -- (16-gon.corner \n);
}

\end{tikzpicture}
\end{equation}

\noindent In fact any graph in ${\rm CC}_1$ is a two-level clustered graph that is a specific retraction of $\mathscr F_n$; for more details the reader may consult Section \ref{CC1}.


The goal of this paper is to describe the structure of all $\ast$-isomorphisms between \emph{graph product group von Neumann algebras} (i.e. group von Neumann algebras arising from graph product groups) where the underlying graphs  belong to ${\rm CC}_1$.  To introduce our results, we first highlight a canonical family of {$\ast$-isomorphisms} between these algebras that are analogous to the graph product groups situation. Let $\sG,\sH \in {\rm CC}_1$ be isomorphic graphs and fix $\sigma : \sG \ra \sH$ an isometry. Let ${\rm cliq}( \sG) =\{ \sC_1,  \ldots ,\sC_n\}$ be a consecutive cliques  enumeration. Let $\Gamma_\sG$ and $\Lambda_\sH$ be graph product groups and assume that for every $i\in \overline{1,n} $ there are $\ast$-isomorphisms $\theta_{i-1,i}: \L(\Gamma_{\sC_{i-1,i}})\ra \L(\Lambda_{\sC_{\sigma (\sC_{i-1,i})}}) $,   $\xi_{i}:\L(\Gamma_{\sC^{\rm int}_i})\ra \L(\Lambda_{\sigma(\sC^{\rm int}_i)})$ and $\theta_{i,i+1}:\L(\Gamma_{\sC_{i,i+1}})\ra \L(\Lambda_{\sC_{\sigma (\sC_{i,i+1})}}) $; here and in what follows we convene as before that $n=0$ and $n+1=1$.  Results in Section \ref{locisomsec} show these $\ast$-isomorphisms induce a unique $\ast$-isomorphism $\phi_{\theta,\xi, \sigma}:\L(\Gamma_\sG)\ra  \L(\Lambda_\sH)$ defined as 

\begin{equation}\label{branchaut'} \phi_{\theta,\xi, \sigma}(x)=\begin{cases}
\theta_{i-1,i}(x), \text{ if } x\in \L(\Gamma_{\sC_{i-1,i}})\\
\xi_i(x), \qquad \text{if }  x\in \L(\Gamma_{\sC^{\rm int}_i })
\end{cases} 
\end{equation}
for all $i\in\overline{1,n}$. 

When $\Gamma_\sG =\La_\sH$ this construction yields a group of  $\ast$-automorphisms of $\L(\Gamma_\sG)$, denoted by ${\rm Loc}_{\rm c,g}(\L(\Gamma_\sG))$. We also denote by ${\rm Loc}_{\rm c}(\L(\Gamma_\sG))$ the subgroup of all local automorphisms satisfying $\sigma={\rm Id}$. Notice that ${\rm Loc}_{\rm c}(\L(\Gamma_\sG))\cong \oplus_i {\rm Aut}(\L(\Gamma_{\sC_{i-1,i}}))\oplus {\rm Aut}(\L(\Gamma_{\sC^{\rm int}_{i}}))$ also ${\rm Loc}_{\rm c}(\L(\Gamma_\sG))\leqslant {\rm Loc}_{\rm c,g}(\L(\Gamma_\sG))$ has finite index. 

Next, we highlight a class of automorphisms in ${\rm Loc}_{\rm c}(\L(\Gamma_\sG))$ needed to state our main results. Consider $n$-tuples $a =(a_{i,i+1})_i$ and $b= (b_i)_i$ of nontrivial unitaries  $a_{i,i+1} \in \L(\Gamma_{\sC_{i-1,i}})$ and $b_i\in \L(\Gamma_{\sC^{\rm int}_i})$, for every $i\in\overline{1,n}$. If in  \eqref{branchaut'} we let $\theta_{i,i+1}= {\rm ad} (a_{i,i+1}) $ and $\xi_i ={\rm ad} (b_i)$ then the corresponding local automorphism $\phi_{\theta,\xi, {\rm Id}}$ is most of the time an outer automorphism of $\L(\Gamma)$ and will be denoted by $\phi_{a,b}$ throughout the paper. These automorphisms form a normal subgroup denoted by ${\rm Loc}_{\rm c, i}(\L(\Gamma_\sG))\lhd{\rm Loc}_{\rm c}(\L(\Gamma_\sG))$ (see Section \ref{locisomsec} for more details).

Developing an approach which combines outgrowths of prior methods in Popa's deformation/rigidity theory \cite{IPP05} with a new technique on analyzing cancellation in cyclic relations of graph von Neumann algebras (Section \ref{vNcyclicrel}) we are able to describe of all $\ast$-isomorphisms between these algebras solely in terms of the aforementioned local isomorphisms. This can be viewed as a von Neumann algebra counterpart of very general and deep results of Genevois-Martin \cite[Corollary C]{GM19} from geometric group theory describing the structure of the automorphisms of graph products groups.

\begin{main}\label{A}\label{symmetries2'} Let $\sG,\sH\in {\rm CC}_1$ and $\Gamma= \sG\{\Gamma_v\}$, $\Lambda=\sH\{\Lambda_w\}$ be graph products such that: 
\begin{enumerate} \item $\Gamma_v$ and $\Lambda_w$ are icc property (T) groups for all $v\in \sV$, $w \in \sW$;
\item There is a class $\mathcal C$ of countable groups which satisfies the s-unique prime factorization property (see Definition \ref{supf}) for which $\Gamma_v$ and $\Lambda_w$ belong to $\mathcal C$, for all $v\in \sV$, $w\in \sW$.
 \end{enumerate} 

\noindent 
Let $t>0$ and let $\Theta : \L(\Gamma)^t \ra  \L(\Lambda)$ be any $\ast$-isomorphism. Then $t=1$ and one can find an isometry $\sigma: \sG \ra \sH$, $\ast$-isomorphisms $\theta_{i-1,i}:\L(\Gamma_{\sC_{i-1,i}})\ra \L(\Gamma_{\sC_{\sigma (\sC_{i-1,i})}}) $,  $\xi_{i}:\L(\Gamma_{\sC^{\rm int}_i})\ra \L(\Gamma_{\sigma(\sC^{\rm int}_i)})$ for all $i\in\overline{1,n}$,  and a unitary $u\in \L (\Lambda)$ such that $\Theta = {\rm ad}(u)\circ \phi_{\theta,\xi, \sigma} $. 
\end{main}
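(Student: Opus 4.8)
The plan is to run a deformation/rigidity analysis clique by clique, forcing the images $\Theta(\L(\Ga_{\sC_i}))$ to be unitarily conjugate to clique subalgebras of $\L(\La)$, and then to reassemble this local data consistently around the cycle. Since $\sG,\sH\in{\rm CC}_1$ and all vertex groups are icc, $\Ga$ and $\La$ are icc, so $\L(\Ga),\L(\La)$ are ${\rm II}_1$ factors; moreover each clique subgroup $\Ga_\sC=\prod_{v\in\sC}\Ga_v$ is a finite direct product of property (T) groups, so $\L(\Ga_\sC)=\oo_{v\in\sC}\L(\Ga_v)$ is a property (T) factor which, by the s-unique prime factorization property of Definition \ref{supf} (which in particular forces each $\L(\Ga_v)$ to be prime), is a tensor product of exactly $|\sC|$ prime property (T) factors; finite tensor products of the $\L(\Ga_v)$ also have trivial fundamental group. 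The combinatorial tool used throughout is that, for any clique $\sD$ of $\sH$, the graph $\sH$ is the union of the subgraph $\sD$ and the full subgraph $\sH\setminus\sD^{\rm int}$, glued along the full subgraph $\sD\setminus\sD^{\rm int}$ spanned by the two links of $\sD$ and with no edge across; this yields the amalgamated free product decomposition
\[\L(\La)=\L(\La_{\sD})\ast_{\L(\La_{\sD\setminus\sD^{\rm int}})}\L(\La_{\sH\setminus\sD^{\rm int}}),\]
and symmetrically for $\L(\Ga)$.

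First I would locate the clique subalgebras. Fix a clique $\sC$ of $\sG$ and put $P=\Theta(\L(\Ga_\sC)^t)\subseteq\L(\La)$, a property (T) subfactor. For each clique $\sD$ of $\sH$, the amalgamated free product decomposition above together with the deformation/rigidity machinery growing out of \cite{IPP05} (the IPP deformation of an amalgamated free product leaves property (T) subalgebras essentially fixed, hence intertwinable into one of the two factors) shows $P\prec_{\L(\La)}\L(\La_{\sD})$ or $P\prec_{\L(\La)}\L(\La_{\sH\setminus\sD^{\rm int}})$. The second alternative cannot hold for every $\sD$: otherwise $P$ would intertwine into $\bigcap_{\sD}\L(\La_{\sH\setminus\sD^{\rm int}})=\L(\La_{\sH'})$, where $\sH'$ is the subgraph spanned by the link vertices of $\sH$, and every clique of $\sH'$ is a proper subclique of some clique of $\sH$; a count of commuting prime property (T) factors (again via the s-unique prime factorization property) then contradicts $P\cong\L(\Ga_\sC)$ having $|\sC|\ge 3$ such factors, first for $\sC$ of maximal size and then, by induction on clique sizes, for all $\sC$. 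Hence there is an assignment $\sigma$ with $P\prec_{\L(\La)}\L(\La_{\sigma(\sC)})$; running the same argument for $\Theta^{-1}$ and comparing prime-factor counts shows $\sigma$ is a bijection ${\rm cliq}(\sG)\ra{\rm cliq}(\sH)$ with $|\sigma(\sC)|=|\sC|$. A standard upgrading argument---using that $\L(\Ga_\sC)$ is regular enough in $\L(\Ga)$ that its (quasi-)normalizer generates a large subalgebra---then promotes each intertwining to an honest identification: there are unitaries $w_\sC\in\L(\La)$ and $t_\sC>0$ with $w_\sC^*\,\Theta(\L(\Ga_\sC)^t)\,w_\sC=\L(\La_{\sigma(\sC)})^{t_\sC}$.

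Next I would recover the graph isometry and reconcile the unitaries $w_\sC$. Intersections behave well: $\L(\Ga_{\sC_i})\cap\L(\Ga_{\sC_j})=\L(\Ga_{\sC_{i,j}})$ inside $\L(\Ga)$, so $\sC_i\cap\sC_j\ne\emptyset$ is equivalent to $\L(\Ga_{\sC_i})\cap\L(\Ga_{\sC_j})\ne\mathbb C1$, and the number of prime factors of the intersection equals $|\sC_{i,j}|$; as $\Theta$ preserves all of this and each clique algebra intertwines into a unique clique algebra of $\sH$, the bijection $\sigma$ preserves adjacency of cliques and the sizes of overlaps. Since a ${\rm CC}_1$ graph is determined up to isomorphism by its cyclic pattern of cliques and overlaps, $\sigma$ respects this cyclic order up to a dihedral symmetry and therefore extends to an isometry $\sigma\colon\sG\ra\sH$. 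It remains to make the $w_\sC$'s compatible: for consecutive cliques the overlap algebra $\L(\Ga_{\sC_{i,i+1}})$ lies in both $\L(\Ga_{\sC_i})$ and $\L(\Ga_{\sC_{i+1}})$, so $w_{\sC_i}$ and $w_{\sC_{i+1}}$ agree on it only up to a unitary of $\L(\La_{\sigma(\sC_{i,i+1})})$, and going once around the cycle produces a ``holonomy'' obstruction valued in the overlap algebras (together with a compatibility constraint on the $t_\sC$). On a tree of cliques one could trivialize such a cocycle by choosing the $w_\sC$'s successively, but the genuine cycle forces a real compatibility condition, and this is exactly where the analysis of cancellation in cyclic relations of graph von Neumann algebras from Section \ref{vNcyclicrel} enters: it shows the holonomy is realized by an automorphism $\phi_{a,b}\in{\rm Loc}_{\rm c,i}(\L(\La))$, and forces all $t_\sC=1$, hence $t=1$. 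Replacing $\Theta$ by $\phi_{a,b}^{-1}\circ\Theta$ we may thus assume a single global unitary $u\in\L(\La)$ conjugates every $\Theta(\L(\Ga_{\sC_i}))$ onto $\L(\La_{\sigma(\sC_i)})$, compatibly at the overlaps. I expect this cyclic-cancellation step to be the main obstacle; a close second is setting up the right amalgamated free product deformations and ruling out that property (T) subalgebras ``hide'' inside the amalgam.

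Finally, ${\rm ad}(u^*)\circ\Theta$ restricts to $\ast$-isomorphisms $\oo_{v\in\sC_i}\L(\Ga_v)\ra\oo_{w\in\sigma(\sC_i)}\L(\La_w)$ for every $i$. Decomposing each clique as the disjoint union $\sC_i=\sC_{i-1,i}\sqcup\sC^{\rm int}_i\sqcup\sC_{i,i+1}$ and applying the s-unique prime factorization property to these finite tensor products of prime property (T) factors---together with the compatibility at the shared pieces $\sC_{i-1,i}$ and $\sC_{i,i+1}$ established above---produces $\ast$-isomorphisms $\theta_{i-1,i}\colon\L(\Ga_{\sC_{i-1,i}})\ra\L(\La_{\sigma(\sC_{i-1,i})})$ and $\xi_i\colon\L(\Ga_{\sC^{\rm int}_i})\ra\L(\La_{\sigma(\sC^{\rm int}_i)})$ such that ${\rm ad}(u^*)\circ\Theta$ agrees with $\phi_{\theta,\xi,\sigma}$ on each $\L(\Ga_{\sC_{i-1,i}})$ and each $\L(\Ga_{\sC^{\rm int}_i})$. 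Since $\L(\Ga)$ is generated by these subalgebras, the uniqueness part of the local-isomorphism construction in Section \ref{locisomsec} gives ${\rm ad}(u^*)\circ\Theta=\phi_{\theta,\xi,\sigma}$, that is $\Theta={\rm ad}(u)\circ\phi_{\theta,\xi,\sigma}$, as desired.
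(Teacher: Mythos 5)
Your overall architecture mirrors the paper's (property (T) plus amalgamated free product decompositions to locate clique algebras, the cyclic-cancellation theorem of Section \ref{vNcyclicrel} to resolve the holonomy, local isomorphisms to finish), but several load-bearing steps are justified by arguments that do not work. First, your upgrade from intertwining to honest unitary conjugacy is based on the premise that $\L(\Ga_\sC)$ is ``regular enough'' in $\L(\Ga)$ so that its quasi-normalizer generates a large algebra; this is false --- by Theorem \ref{controlquasinormalizer1} the quasi-normalizer of a clique subalgebra is the clique subalgebra itself, so no regularity is available. The paper's upgrade instead uses the control of the intertwining partial isometry in an amalgamated free product (\cite[Theorem 1.2.1]{IPP05}, the ``moreover'' part of Theorem \ref{proptcliques}), the irreducibility $\L(\Ga_\sC)'\cap\L(\Ga)=\mathbb{C}1$, and a back-and-forth with $\Theta^{-1}$ plus Theorem \ref{controlquasinormalizer1} to turn the resulting inclusion into an equality. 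Second, your reduction to $t=1$ is deferred to the holonomy step, claiming the cyclic cancellation ``forces all $t_\sC=1$''; Theorem \ref{cyclerel1} says nothing of the sort --- it concerns unitaries in the unamplified algebra satisfying an exact cyclic relation with a prescribed tensor-split form, and with amplifications present you cannot even set up that relation. In the paper $t=1$ is established beforehand (Proposition \ref{gpcorner}) by a separate argument combining quasi-normalizer control, a finite-index step via \cite[Lemma 2.3]{CD18}, and the s-unique prime factorization hypothesis; without this your later steps (Proposition \ref{productunitaries}, Theorem \ref{cyclerel1}) are not applicable.

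Third, your method for locating cliques and for recovering adjacency has gaps. The claim that if $P\prec_{\L(\La)}\L(\La_{\sH\setminus\sD^{\rm int}})$ for every clique $\sD$ then $P$ intertwines into the intersection $\bigcap_\sD\L(\La_{\sH\setminus\sD^{\rm int}})$ is not valid: intertwining does not pass to intersections of targets without substantial extra input, and the subsequent ``count of commuting prime factors'' is inconclusive anyway, since the cliques of the link-subgraph $\sH'$ are the unions $\sC_{i-1,i}\cup\sC_{i,i+1}$, which can be larger than $|\sC|$. The paper avoids this entirely by the minimal-subgraph argument in Theorem \ref{proptcliques}: take $\sG_0$ minimal with $\Q\prec\P\rtimes\Ga_{\sG_0}$ and use \cite[Theorem 5.1]{IPP05} on the one-vertex amalgam decomposition of $\Ga_{\sG_0}$ to show $\sG_0$ must be complete. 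Similarly, your deduction that $\sigma$ preserves clique adjacency ``because $\Theta$ preserves intersections of clique algebras'' is premature: the images $\Theta(\L(\Ga_{\sC_i}))$ are conjugates of $\L(\La_{\sigma(\sC_i)})$ by \emph{different} unitaries, so intersections are not preserved until the overlap algebras have been aligned --- which in the paper is done first, using s-unique prime factorization inside each clique algebra to produce the subcliques $\sD_i$, Theorem \ref{controlquasinormalizer1} to get $\sD_i=\sD_{i+1}$, and Proposition \ref{productunitaries} to split the connecting unitaries $u_i^*u_{i+1}$ into the form required by Theorem \ref{cyclerel1}. These alignment steps are exactly the part your sketch glosses over, and without them the cyclic-cancellation theorem cannot be invoked.
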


This theorem applies to fairly large classes of property (T) vertex groups, including: all fibered Rips constructions considered in \cite{CDK19,CDHK20} and all wreath-like product groups $\mathcal W \mathcal R (A,B\ca I)$ where $A$ is either abelian or icc, $B$ is an icc subgroup of a hyperbolic group, and the action $B \ca I$ has amenable stabilizers, \cite{CIOS21}. The result also implies  that the fundamental group \cite{MvN36} of these graph product group II$_1$ factors is always trivial; this means that if $\Gamma$ is a graph product group as in Theorem \ref{A}, then $\{t>0|\;\L(\Gamma)^t\cong \L(\Gamma) \}=1$.
Recall that Popa used his deformation/rigidity theory for obtaining  the first examples of II$_1$ factors with trivial fundamental group \cite{Po01}, hence answering a longstanding open problem of Kadison, see \cite{Ge03}. Subsequently, a large number of striking  results on computations of fundamental groups of II$_1$ factors were obtained,
see  the introduction of \cite{CDHK20}.
To our knowledge, Theorem \ref{A} provides the first instance of computing the fundamental group for non-trivial graph product von Neumann algebras which is not a tensor product. 

Specializing Theorem \ref{symmetries2'} to the case when the vertex groups $\Ga_v$ and $\La_w$ are the property (T) wreath-like product groups as in \cite[Theorem 8.4]{CIOS21} we obtain a fairly concrete description of all such isomorphisms between these graph product group von Neumann algebras; namely, they appear as compositions between the canonical group-like isomorphisms  and the  clique-inner local automorphisms  of $\L(\La)$ described above.

\begin{main}\label{B}\label{symmetries3'} Let $\sG, \sH\in {\rm CC}_1$ and let $\Ga= \sG\{\Gamma_v\}$, $\Lambda=\sH\{\Lambda_w\}$ be graph product groups where all vertex groups $\Gamma_v,\Lambda_w$ are property (T) wreath-like product groups of the form $\W\R (A, B \ca I)$ where $A$ is abelian, $B$ is an icc  subgroup of a hyperbolic group, and $B \curvearrowright I$ has infinite orbits. 

\noindent Then for any $t>0$ and  $\ast$-isomorphism  $\Theta:\L(\Gamma)^t\ra \L(\Lambda)$ we have $t=1$ and one can find a character $\eta \in {\rm Char}( \Gamma)$, a group isomorphism  $\delta \in {\rm Isom}(\Gamma,\Lambda)$, a $\ast$-automorphism  $\phi_{a,b}\in {\rm Loc}_{\rm c, i}(\L(\La))$ and a unitary $u\in \L(\Lambda)$ such that $\Theta= {\rm ad}(u)\circ \phi_{a,b}\circ \Psi_{\eta, \delta }$.    
\end{main}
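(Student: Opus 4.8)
The plan is to bootstrap from Theorem \ref{A} and then feed its output into the W$^*$-superrigidity theorem for wreath-like product groups, \cite[Theorem 8.4]{CIOS21}. Since the vertex groups $\Ga_v,\La_w$ in the statement are in particular icc property (T) groups lying in a class $\mathcal C$ with the s-unique prime factorization property (they belong to the class discussed right after Theorem \ref{A}), Theorem \ref{A} applies and yields, for any $t>0$ and any $\ast$-isomorphism $\Theta:\L(\Ga)^t\ra\L(\La)$, that $t=1$ and $\Theta={\rm ad}(u_0)\circ\phi_{\theta,\xi,\sigma}$ for some isometry $\sigma:\sG\ra\sH$, $\ast$-isomorphisms $\theta_{i-1,i}$, $\xi_i$ between the corresponding edge-clique-intersection and interior subalgebras as in \eqref{branchaut'} (for a fixed consecutive cliques enumeration ${\rm cliq}(\sG)=\{\sC_1,\dots,\sC_n\}$), and a unitary $u_0\in\L(\La)$. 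Since $\sC_{i-1,i}$ and $\sC^{\rm int}_i$ are complete subgraphs, each $\theta_{i-1,i}$ and each $\xi_i$ is a $\ast$-isomorphism between group von Neumann algebras of finite direct products of vertex groups, i.e.\ between finite tensor products of copies of $\L(\Ga_v)$ and of $\L(\La_w)$. So the whole problem reduces to describing $\ast$-isomorphisms between such tensor products.

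The heart of the matter, and the step I expect to be the main obstacle, is to upgrade \cite[Theorem 8.4]{CIOS21} from single groups to finite direct products of them, with \emph{full} control of the isomorphism. Concretely, I would establish: if $P_1,\dots,P_k$ and $Q_1,\dots,Q_l$ are property (T) wreath-like product groups of the form in the statement and $\pi:\oo_j\L(P_j)\ra\oo_m\L(Q_m)$ is any $\ast$-isomorphism, then $k=l$ and, after conjugating by a unitary of $\oo_m\L(Q_m)$, $\pi$ is a tensor product of character-twisted group isomorphisms matched by a bijection of the index sets. The argument would go: the s-unique prime factorization property of $\mathcal C$ (Definition \ref{supf}) forces $k=l$ and writes $\pi$, up to inner conjugacy, as a tensor product of amplified isomorphisms $\L(P_j)^{t_m}\cong\L(Q_m)$ matched by a bijection; applying the W$^*$-superrigidity of \cite[Theorem 8.4]{CIOS21} to $\L(Q_m)^{1/t_m}\cong\L(P_j)$ then gives $t_m=1$, $P_j\cong Q_m$, and that the matched factor isomorphism equals ${\rm ad}(w)$ composed with $u_g\mapsto\chi(g)v_{\rho(g)}$ for a group isomorphism $\rho$ and a character $\chi$. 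The delicate point is precisely ruling out nontrivial amplification parameters $t_m$ --- which is exactly what the combination of s-unique prime factorization and the triviality of the fundamental groups of the $\L(\Ga_v)$ is designed for --- together with tracking where the conjugating unitary lives (it stays inside $\oo_m\L(Q_m)$, hence inside the relevant clique subalgebra of $\L(\La)$). Applying this to $\theta_{i-1,i}$ and $\xi_i$ produces, for each $i$, a unitary $a_{i-1,i}$ in the edge-clique subalgebra $\L(\La_{\sigma(\sC_{i-1,i})})$, resp.\ $b_i$ in the interior subalgebra $\L(\La_{\sigma(\sC^{\rm int}_i)})$, group isomorphisms $\delta_v:\Ga_v\ra\La_{\delta(v)}$ onto vertex groups of $\sH$, and characters $\eta_v\in{\rm Char}(\Ga_v)$, with $\theta_{i-1,i}={\rm ad}(a_{i-1,i})\circ\theta'_{i-1,i}$ and $\xi_i={\rm ad}(b_i)\circ\xi'_i$, where $\theta'_{i-1,i},\xi'_i$ denote the corresponding character-twisted group isomorphisms.

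Next I would reassemble the local data. Because in the disjoint decomposition $\sV=\bigsqcup_i\big(\sC_{i-1,i}\sqcup\sC^{\rm int}_i\big)$ every vertex of $\sG$ occurs exactly once, the partial vertex bijections glue to a single bijection $\delta:\sV\ra\sW$, and one checks that $\delta(\sC_i)=\sigma(\sC_i)$ for each clique: the three pieces $\delta(\sC_{i-1,i})$, $\delta(\sC^{\rm int}_i)$, $\delta(\sC_{i,i+1})$ all lie inside $\sigma(\sC_i)$ and partition it, while $|\sC_i|=|\sigma(\sC_i)|$. Since every edge of a (finite) graph lies in a clique, this gives $u\sim_\sG v\iff\delta(u)\sim_\sH\delta(v)$, so $\delta$ is a graph isomorphism and, together with the $\delta_v$, it induces a group isomorphism $\delta\in{\rm Isom}(\Ga,\La)$. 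Similarly the $\eta_v$ assemble to a character $\eta\in{\rm Char}(\Ga)$ through the identification ${\rm Char}(\Ga)\cong\prod_v{\rm Char}(\Ga_v)$, valid for graph products since $\Ga^{\rm ab}\cong\bigoplus_v\Ga_v^{\rm ab}$.

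Finally, letting $\Psi_{\eta,\delta}:\L(\Ga)\ra\L(\La)$ be the $\ast$-isomorphism associated to $\eta$ and $\delta$, I would compare $\phi_{\theta,\xi,\sigma}$ and $\phi_{a,b}\circ\Psi_{\eta,\delta}$ on each building block $\L(\Ga_{\sC_{i-1,i}})$ and $\L(\Ga_{\sC^{\rm int}_i})$ and invoke the uniqueness statement for local isomorphisms in Section \ref{locisomsec} to conclude $\phi_{\theta,\xi,\sigma}=\phi_{a,b}\circ\Psi_{\eta,\delta}$, where $\phi_{a,b}\in{\rm Loc}_{\rm c,i}(\L(\La))$ is the clique-inner local automorphism determined by $a=(a_{i-1,i})_i$ and $b=(b_i)_i$ (each $a_{i-1,i}$, resp.\ $b_i$, lying in precisely the edge-clique, resp.\ interior, subalgebra of $\L(\La)$ required by the definition of $\phi_{a,b}$, by the previous step). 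Setting $u:=u_0$ then gives $\Theta={\rm ad}(u)\circ\phi_{a,b}\circ\Psi_{\eta,\delta}$, the asserted form. Apart from the product W$^*$-superrigidity step flagged above, the remaining work is bookkeeping built on the local-isomorphism calculus of Section \ref{locisomsec} and on the elementary behavior of cliques and abelianizations under graph isometries.
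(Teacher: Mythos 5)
Your proposal is correct and follows essentially the same route as the paper: apply Theorem \ref{A} to get $t=1$ and unitary conjugation of the clique (hence edge-intersection and interior) subalgebras, then upgrade the resulting isomorphisms of tensor products of wreath-like product factors to character-twisted group isomorphisms, and reassemble. The "product W$^*$-superrigidity" step you flag as the main obstacle is exactly the paper's Corollary \ref{corollary.wr.rigidity}, which it proves precisely as you sketch, by combining the s-unique prime factorization (Theorem \ref{theorem.UPF.WR}) with \cite[Theorem 8.4]{CIOS21}.
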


In the statement of Theorem \ref{B} and also throughout the paper, given a character $\eta \in {\rm Char}( \Gamma)$ and a group isomorphism $\delta \in {\rm Isom}(\Gamma,\Lambda)$, we denote by $\Psi_{\eta,\delta}$ the  $\ast$-isomorphism from $\L(\Gamma)$ to $\L(\Lambda)$ given by $\Psi_{\eta,\delta}(u_g)=\eta(g) v_{\delta(g)}$, for any $g\in \Gamma$. Here, $\{u_g \,:\,g\in \Gamma\}$ and $\{v_h\,:\,h\in \Lambda\}$ are the canonical group unitaries of $\L(\Gamma)$ and $\L(\Lambda)$, respectively.


To this end we recall that in \cite{CIOS21} it was shown that the property (T) regular wreath-like products covered by the previous theorem can be chosen to have trivial abelianization and prescribed finitely presented outer automorphism groups. Using this, Theorem \ref{symmetries3'} yields the following: 

\begin{mcor}\label{C} Let $\sG\in{\rm CC}_1$ and fix ${\rm cliq}(\sG)=\{\sC_1,\ldots, \sC_n\}$ a consecutive enumeration of its cliques. Let $\Gamma= \sG\{\Gamma_v\}$ be any graph product groups as in Theorem \ref{B}. Assume in addition that its vertex groups are pairwise non-isomorphic, have trivial abelianization and trivial outer automorphisms.  Then the outer automorphisms satisfy the following formula:  $${\rm Out} (\L(\Gamma))\cong  \oplus^n_{i=1} \sU(\L(\Gamma_{\sC_{i-1,i}})) \oplus \sU(\L(\Gamma_{\sC^{\rm int}_{i}})).$$

\end{mcor}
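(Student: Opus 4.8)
The plan is to deduce Corollary \ref{C} from Theorem \ref{B} by computing the outer automorphism group directly. First I would observe that every $\ast$-automorphism $\Theta$ of $\L(\Gamma)$ is, by Theorem \ref{B} (applied with $\Lambda = \Gamma$, $\sH = \sG$, $t=1$), of the form $\Theta = {\rm ad}(u)\circ \phi_{a,b}\circ \Psi_{\eta,\delta}$ for some $\eta\in{\rm Char}(\Gamma)$, $\delta\in{\rm Isom}(\Gamma,\Gamma)={\rm Aut}(\Gamma)$, $\phi_{a,b}\in{\rm Loc}_{\rm c,i}(\L(\Gamma))$ and $u\in\sU(\L(\Gamma))$. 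Since we assume the vertex groups have trivial abelianization, a standard argument shows $\Gamma$ itself has trivial abelianization (any homomorphism $\Gamma\to\mathbb{C}^\times$ must kill each $\Gamma_v$, using that the $\Gamma_v$ generate $\Gamma$), hence ${\rm Char}(\Gamma)=\{1\}$ and the factor $\eta$ drops out. Next, by the Genevois--Martin description of ${\rm Aut}$ of a graph product \cite{GM19}, since the vertex groups are pairwise non-isomorphic with trivial outer automorphisms, every $\delta\in{\rm Aut}(\Gamma)$ is \emph{inner}: the ``partial conjugations'' and vertex permutations that generate the Genevois--Martin automorphism group all collapse — a graph in ${\rm CC}_1$ is connected so there are no nontrivial partial conjugations coming from disconnecting the graph, no vertex group admits a nontrivial outer automorphism or a transvection, and no two vertex groups can be swapped — leaving only ${\rm Inn}(\Gamma)$. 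Consequently $\Psi_{1,\delta}={\rm ad}(u_{\delta})$ for a group element, and can be absorbed into the ${\rm ad}(u)$ term.

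Thus every automorphism of $\L(\Gamma)$ is of the form ${\rm ad}(u)\circ\phi_{a,b}$ with $\phi_{a,b}\in{\rm Loc}_{\rm c,i}(\L(\Gamma))$, which shows ${\rm Out}(\L(\Gamma))$ is the image of ${\rm Loc}_{\rm c,i}(\L(\Gamma))$ in ${\rm Out}(\L(\Gamma))$. By definition $\phi_{a,b}$ acts on $\L(\Gamma_{\sC_{i-1,i}})$ as ${\rm ad}(a_{i,i+1})$ for a unitary $a_{i,i+1}\in\L(\Gamma_{\sC_{i-1,i}})$ and on $\L(\Gamma_{\sC^{\rm int}_i})$ as ${\rm ad}(b_i)$ for a unitary $b_i\in\L(\Gamma_{\sC^{\rm int}_i})$. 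So the assignment $(a,b)\mapsto\phi_{a,b}$ gives a surjective homomorphism $\prod_{i=1}^n \sU(\L(\Gamma_{\sC_{i-1,i}}))\times\sU(\L(\Gamma_{\sC^{\rm int}_i})) \twoheadrightarrow {\rm Loc}_{\rm c,i}(\L(\Gamma))\twoheadrightarrow{\rm Out}(\L(\Gamma))$; note the domain is abelian, which forces the group operation on the product to be pointwise multiplication (one checks $\phi_{a,b}\circ\phi_{a',b'}=\phi_{aa',bb'}$ directly from \eqref{branchaut'}), matching the direct sum notation in the statement.

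It remains to identify the kernel of the composed map. An element $(a,b)$ lies in the kernel precisely when $\phi_{a,b}$ is inner, say $\phi_{a,b}={\rm ad}(w)$ for some $w\in\sU(\L(\Gamma))$. The key point is to show this forces every $a_{i,i+1}$ and every $b_i$ to be scalars, so that the kernel is exactly the central torus $\prod \mathbb{T}$ quotiented out — wait, more precisely I would argue that if $\phi_{a,b}={\rm ad}(w)$ then $w$ commutes with the (large) fixed-point algebra of $\phi_{a,b}$ and conjugates each clique-corner algebra to itself, and an intertwining/controlling-the-commutant argument (using that each clique subgroup von Neumann algebra is a II$_1$ factor with trivial relative commutant of the appropriate sort, together with the graph-product malnormality-type facts from Section \ref{vNcyclicrel}) pins $w$ down to be, up to a scalar, a product of the $a_{i,i+1}^{-1}$ and $b_i^{-1}$ themselves; reconciling this with $w$ being a single global unitary forces all the $a$'s and $b$'s to be scalar multiples of $1$. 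Hence the kernel is trivial and the surjection is an isomorphism, yielding the displayed formula. \textbf{The main obstacle} is precisely this last step: proving that a clique-inner local automorphism $\phi_{a,b}$ is inner on all of $\L(\Gamma)$ only when every $a_{i,i+1},b_i$ is scalar. This is where the cyclic-relation cancellation technology of Section \ref{vNcyclicrel} is essential — one has to rule out ``hidden'' coincidences where a genuinely non-scalar family of corner unitaries happens to glue into a global inner perturbation, which is exactly the phenomenon that cyclic relations in a cycle of cliques could a priori allow, and controlling it requires the structural results on $\L(\Gamma)$ established earlier in the paper rather than any soft argument.
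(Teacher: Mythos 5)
Your overall strategy (invoke Theorem \ref{B} with $\Lambda=\Gamma$, kill the character via trivial abelianization, dispose of the group automorphism via Genevois--Martin, then compute the kernel of $(a,b)\mapsto[\phi_{a,b}]$) is the same as the paper's, but two of your steps are genuinely wrong as stated.

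First, your claim that under the hypotheses every $\delta\in{\rm Aut}(\Gamma)$ is inner is false. By Theorem \ref{outgp}, ${\rm Out}(\Gamma)\cong\left(\oplus_{v\in\sV}{\rm Aut}(\Gamma_v)\right)\rtimes{\rm Sym}(\Gamma)$; with ${\rm Sym}(\Gamma)=1$ and ${\rm Out}(\Gamma_v)=1$ this becomes $\oplus_v{\rm Inn}(\Gamma_v)\cong\oplus_v\Gamma_v$ (the vertex groups are icc), which is very far from trivial. A local automorphism acting as ${\rm ad}(g_v)$ on each $\Gamma_v$ with independently varying $g_v$ is \emph{not} a global conjugation --- this is exactly the content of Proposition \ref{outerla}. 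So $\Psi_{1,\delta}$ cannot in general be absorbed into the ${\rm ad}(u)$ term. The repair is what the paper actually does: such a vertex-wise inner local automorphism induces on $\L(\Gamma)$ an element of ${\rm Loc}_{\rm v,i}(\L(\Gamma))\subset{\rm Loc}_{\rm c,i}(\L(\Gamma))$ (the relevant unitaries are the group unitaries $u_{g_v}$, which multiply up to unitaries of the clique algebras since each $\Gamma_{\sC_{i-1,i}}$ and $\Gamma_{\sC^{\rm int}_i}$ is a direct product of vertex groups), so it is absorbed into the $\phi_{a,b}$ factor rather than the inner one. Your surjectivity conclusion survives, but not for the reason you give. (Incidentally, the domain of your surjection is a product of unitary groups of II$_1$ factors and is not abelian; what you need is only the homomorphism identity $\phi_{a,b}\circ\phi_{a',b'}=\phi_{aa',bb'}$, which is correct.)

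Second, the kernel. You correctly isolate the needed fact --- when is $\phi_{a,b}$ inner --- but you leave it unproven, flag it as the main obstacle, and point at the cyclic-relation machinery of Section \ref{vNcyclicrel}, which is not what is needed. The paper settles it in Section \ref{locisomsec} by a short relative-commutant argument: if $\phi_{a,b}={\rm ad}(w)$, then for $x\in\L(\Gamma_{\sC_{i-1,i}})$ one has $a_{i-1,i}xa_{i-1,i}^*=wxw^*$, so Theorem \ref{controlquasinormalizer1} gives $w^*a_{i-1,i}\in\L(\Gamma_{\sC_{i-1,i}})'\cap\L(\Gamma)\subset\L(\Gamma_{\sC_{i-1}\cup\sC_i})$, hence $w\in\bigcap_{i=1}^n\L(\Gamma_{\sC_{i-1}\cup\sC_i})=\mathbb C1$ and each $a_{i-1,i}$, $b_i$ is central, i.e.\ scalar. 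Finally, your conclusion ``hence the kernel is trivial'' contradicts both this criterion and your own preceding sentence: the kernel of $(a,b)\mapsto[\phi_{a,b}]\in{\rm Out}(\L(\Gamma))$ consists exactly of the tuples of scalars and is never trivial, so the displayed isomorphism must be read with the unitary groups taken modulo their centers (as the paper implicitly does). As written, your last paragraph simultaneously asserts and denies that the kernel is the central torus.
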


By applying Corollary \ref{C} to the case when the underlying graph $\sG$ is the $n$-petals flower shaped $\mathscr F_n=(\sV_n, \sE_n)$ (fig.\ \eqref{flower}), we obtain the slimmest types of outer automorphisms groups one could have in this setup. Namely, we deduce that ${\rm Out} (\L(\Gamma))\cong  \oplus_{v\in \sV_n} \sU(\L(\Gamma_{v})).$


We conclude our introduction by describing in Corollary \ref{D} all $*$-isomorphisms of the reduced C$^*$-algebras of graph product groups  that we considered in Theorem \ref{B}. This result can be seen as a C$^*$-algebraic version of Genevois and Martin's result \cite[Corollary C]{GM19}.

\begin{mcor}\label{D}
    Let $\sG, \sH\in {\rm CC}_1$ and let $\Ga= \sG\{\Gamma_v\}$, $\Lambda=\sH\{\Lambda_w\}$ be graph product groups as in Theorem \ref{B}. 

\noindent Then for any  $\ast$-isomorphism  $\Theta:C_r^*(\Gamma)\ra C_r^*(\Lambda)$ there exist a character $\eta \in {\rm Char}( \Gamma)$, a group isomorphism  $\delta \in {\rm Isom}(\Gamma,\Lambda)$, a  $\ast$-automorphism  $\phi_{a,b}\in {\rm Loc}_{\rm c, i}(\L(\La))$ and a unitary $u\in \L(\Lambda)$ such that $\Theta= {\rm ad}(u)\circ \phi_{a,b}\circ \Psi_{\eta, \delta }$.
\end{mcor}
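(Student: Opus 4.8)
The plan is to deduce Corollary \ref{D} from Theorem \ref{B} by promoting $\Theta$ to an isomorphism of the ambient ${\rm II}_1$ factors. The only input needed beyond Theorem \ref{B} is that the canonical trace is the unique tracial state on these reduced C$^*$-algebras.

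First I would record that $C_r^*(\Gamma)$ and $C_r^*(\Lambda)$ each admit a unique tracial state, namely the canonical trace $\tau_\Gamma$, resp.\ $\tau_\Lambda$. By the Breuillard--Kalantar--Kennedy--Ozawa theorem this is equivalent to $\Gamma$, resp.\ $\Lambda$, having trivial amenable radical, which holds here: no vertex of a graph $\sG\in{\rm CC}_1$ is adjacent to all other vertices --- for instance an interior vertex of $\sC_1$ and an interior vertex of $\sC_3$ are non-adjacent since $\sC_{1,3}=\emptyset$ when $n\geq 4$ --- so by the known description of the amenable radical of a graph product, that of $\Gamma$ is trivial. (Equivalently, $\Gamma$ is an icc, acylindrically hyperbolic group, being a graph product of icc groups over a non-join graph, hence has trivial amenable radical.) It is worth stressing that trivial amenable radical is not visible cliquewise: each vertex group $\W\R(A,B\ca I)$ carries the nontrivial amenable normal subgroup $A^{(I)}$, but $A^{(I)}\le\Gamma_v$ is not normal in $\Gamma$ since $\Gamma_v$ lies in a free factor $\Gamma_v*\Gamma_w$ for any non-adjacent $w$.

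Now let $\Theta:C_r^*(\Gamma)\ra C_r^*(\Lambda)$ be any $\ast$-isomorphism. Then $\tau_\Lambda\circ\Theta$ is a tracial state on $C_r^*(\Gamma)$, hence equals $\tau_\Gamma$, so $\Theta$ is trace preserving. Since the GNS representations attached to $\tau_\Gamma$ and $\tau_\Lambda$ realise $\L(\Gamma)$ and $\L(\Lambda)$ as the weak closures of $C_r^*(\Gamma)$ and $C_r^*(\Lambda)$, the assignment $\widehat x\mapsto\widehat{\Theta(x)}$ extends to a unitary $L^2(\L(\Gamma),\tau_\Gamma)\ra L^2(\L(\Lambda),\tau_\Lambda)$ implementing a (unique) trace-preserving $\ast$-isomorphism $\bar\Theta:\L(\Gamma)\ra\L(\Lambda)$ with $\bar\Theta|_{C_r^*(\Gamma)}=\Theta$. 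Applying Theorem \ref{B} to $\bar\Theta$ (with $t=1$) yields $\eta\in{\rm Char}(\Gamma)$, $\delta\in{\rm Isom}(\Gamma,\Lambda)$, $\phi_{a,b}\in{\rm Loc}_{\rm c,i}(\L(\Lambda))$ and a unitary $u\in\L(\Lambda)$ with $\bar\Theta={\rm ad}(u)\circ\phi_{a,b}\circ\Psi_{\eta,\delta}$; restricting this identity of maps on $\L(\Gamma)$ to the weakly dense $\ast$-subalgebra $C_r^*(\Gamma)$ gives exactly $\Theta={\rm ad}(u)\circ\phi_{a,b}\circ\Psi_{\eta,\delta}$, which is the assertion. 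Note that $\Psi_{\eta,\delta}$ already restricts to a $\ast$-isomorphism $C_r^*(\Gamma)\ra C_r^*(\Lambda)$ (it sends $u_g$ to $\eta(g)v_{\delta(g)}$), so ${\rm ad}(u)\circ\phi_{a,b}=\Theta\circ\Psi_{\eta,\delta}^{-1}$ does restrict to a $\ast$-automorphism of $C_r^*(\Lambda)$; the stated conclusion, however, only requires $u$ and $\phi_{a,b}$ to lie in the von Neumann completion, so no finer descent is needed.

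The main obstacle is precisely the trace-uniqueness step: one must verify that these graph products --- assembled from vertex groups that do \emph{not} have trivial amenable radical --- nevertheless do, so that every $\ast$-isomorphism of the reduced C$^*$-algebras is forced to preserve the canonical trace. Granting this, Corollary \ref{D} is an immediate consequence of Theorem \ref{B}.
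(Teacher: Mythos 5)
Your proposal is correct and follows essentially the same route as the paper: establish that $C_r^*(\Gamma)$ and $C_r^*(\Lambda)$ have unique trace via trivial amenable radical and \cite{BKKO14}, lift $\Theta$ to a trace-preserving $\ast$-isomorphism of the group von Neumann algebras, and invoke Theorem \ref{B}. The only (harmless) divergence is in how triviality of the amenable radical is justified: you argue group-theoretically (the graph is not a join, so $\Gamma$ is an icc acylindrically hyperbolic group), whereas the paper's Lemma \ref{trivial.amenable.radical} derives it by a von Neumann algebraic intertwining argument using the amalgam decompositions; both are valid.
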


In fact, this result is a consequence of Theorem \ref{B} since the graph product groups that we consider have trivial amenable radical (see Lemma \ref{trivial.amenable.radical}), and consequently, their reduced C$^*$-algebras have unique trace \cite{BKKO14}.

{\bf Acknowledgements.} We would like to thank Stefaan Vaes for helpful comments. We would also like to thank the referee for all the comments and suggestions that greatly improved the exposition of the paper. I.C. was partially supported by NSF FRG Grant \#1854194 and NSF Grant \#DMS-2154637; D.D. was supported by the postdoctoral fellowship fundamental research 12T5221N of the Research Foundation Flanders.

\section{Preliminaries}

\subsection{Terminology}
Throughout this document all von Neumann algebras are denoted by calligraphic letters e.g. $\M$, $\N$, $\P$, $\Q$, etc. 
All von Neumann algebras $\M$ considered in this document will be tracial, i.e. endowed with a unital, faithful, normal linear functional $\tau:\M\rightarrow \mathbb C$  satisfying $\tau(xy)=\tau(yx)$ for all $x,y\in \M$. This induces a norm on $\M$ by the formula $\|x\|_2=\tau(x^*x)^{1/2}$ for all $x\in \M$. The $\|\cdot\|_2$-completion of $\M$ will be denoted by $L^2(\M)$. 

Given a von Neumann algebra $\M$, we will denote by $\mathscr U(\M)$ its unitary group and by $\mathcal Z(\M)$ its center.  Given a unital inclusion $\N\subset \M$ of von Neumann algebras we denote by $\N'\cap \M =\{ x\in \M \,:\, [x, \N]=0\}$ the relative commmutant of $\N$ inside $\M$ and by $\mathscr N_\M(\N)=\{ u\in \mathscr U(\M)\,:\, u\N u^*=\N\}$ the normalizer of $\N$ inside $\M$. We say that the inclusion $\N$ is regular in $\M$ if $\mathscr N_{\M}(\N)''=\M$ and
irreducible if $\N'\cap \M=\mathbb C 1$.

\subsection{Graph product groups}
In this preliminary section we briefly recall the notion of graph product groups introduced by E. Green  \cite{Gr90} while also highlighting some of its features that are relevant to this article. Let $\sG=(\sV,\sE)$ be a finite {simple graph}, where $\sV$ and $\sE$  denote its vertex and edge sets, respectively. Let $\{\Gamma_v\}_{v\in\sV}$ be a family of groups called vertex groups. The graph product group associated with this data, denoted by $\sG\{\Gamma_v,v \in \sV\}$ or simply $\sG\{\Gamma_v\}$, is the group generated  by $\Gamma_v$, $v\in \sV$  with the only relations being $[\Gamma_u, \Gamma_v] = 1$, whenever $(u,v)\in \sE$. 
Given any subset $\sU\subset \sV$, the subgroup $\Gamma_\sU =\langle \Gamma_u \,:\,u\in \sU\rangle $ of $\sG\{\Gamma_v,v\in \sV\}$ is called a {\it full subgroup}. This can be identified  with the graph product $\sG_\sU\{\Gamma_u,u \in \sU\}$ corresponding to the subgraph $\sG_\sU$ of $\sG$, spanned by the vertices of $\sU$. 
For every $v \in \sV$ we denote by ${\rm lk}(v)$ the subset of vertices $w\neq v$ so that $(w,v)\in \sE$. Similarly, for every $\sU 
\subseteq \sV$ we denote by ${\rm lk}(\sU) = \cap _{u\in \sU}{\rm lk}(u)$. Also we make the convention that ${\rm lk}(\emptyset) = \sV$. Notice that $\sU \cap  {\rm lk}(\sU) = \emptyset$.

Graph product groups naturally admit many amalgamated free product decompositions. One such decomposition which is essential for deriving our main results, involves full subgroup factors in   \cite[Lemma 3.20]{Gr90} as follows. For any $w \in \sV$ we have
\begin{equation}\label{afpdesc}\sG\{\Gamma_v\} = \Gamma_{\sV\setminus \{w\}} \ast_{ \Gamma_{\rm lk}(w)} \Gamma_{{\rm st}(w)},\end{equation}
where ${\rm st} (w) = \{w\} \cup {\rm lk} (w)$. Notice that $\Gamma_{{\rm lk}(w)}\lneqq \Gamma_{{\rm st}(w)}$ but it could be the case that $\Gamma_{{\rm lk}(w)}=\Gamma_{\sV\setminus \{w\}} $, when $\sV={\rm st}(w)$. In this case the amalgam decomposition is called degenerate.

Similarly for every subgraph $\sU\subset \sG$ we denote by ${\rm st}(\sU)= \sU \cup {\rm lk}(\sU )$. A maximal complete subgraph $\sC\subseteq \sG$ is called a {\it clique} and the collections of all cliques of $\sG$ will be denoted by  ${\rm cliq}(\sG)$. Below we highlight various properties of full subgroups that will be useful in this paper. 
The first one is \cite[Lemma 3.7]{AM10}, the second one is \cite[Proposition 3.13]{AM10}, while the third one is \cite[Proposition 3.4]{AM10}.

\begin{prop}\emph{\cite{AM10}}\label{proposition.AM10}
Let $\Gamma=\sG \{\Gamma_v\}$ be any graph product of groups, $g\in\Gamma$ and let $\sS,\sT\subseteq \sG$ be any subgraphs. Then the following hold.
\begin{enumerate}

     \item If $g \Gamma_{\sT} g^{-1}\subset \Gamma_\sS$, then there is $h\in\Gamma_{\sS}$ such that $g \Gamma_{\sT} g^{-1}= h \Gamma_{\sT\cap\sS} h^{-1}$. In particular, if $\sS=\sT$, then $g \Gamma_{\sT} g^{-1}= \Gamma_\sS$.
    
    \item The normalizer of $\Gamma_\sT$ inside $\Gamma$ satisfies $N_{\Gamma}(\Gamma_{\sT})=\Gamma_{\sT\cup {\rm link}(\sT)}$.
    
    \item There exist $\sD\subseteq \sS\cap \sT$ and $h\in \Gamma_\sT$ such that $g \Gamma_{\sS} g^{-1}\cap \Gamma_{\sT}=h \Gamma_{\sD} h^{-1}$.

\end{enumerate}

\end{prop}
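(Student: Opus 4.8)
The plan is to obtain all three items from E.\ Green's normal form theorem for graph products, combined with the calculus of minimal-length double coset representatives; this is in the spirit of \cite{AM10}, and I sketch the main steps rather than the bookkeeping. Recall that every $g\in\Gamma$ has a \emph{reduced expression} $g=g_1\cdots g_k$ as a product of nontrivial syllables $g_i\in\Gamma_{v_i}$, that any two reduced expressions of a given element differ by finitely many \emph{shuffles} (swaps of adjacent syllables $g_i g_{i+1}$ with $(v_i,v_{i+1})\in\sE$), and hence that the syllable length $|g|:=k$ and the \emph{support} ${\rm supp}(g):=\{v_1,\dots,v_k\}$ are well defined and satisfy $\Gamma_\sS=\{g\in\Gamma:{\rm supp}(g)\subseteq\sS\}$. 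The basic tool is: for any $g\in\Gamma$ and subgraphs $\sS,\sT$ there is a factorization $g=s\,g_0\,t$ with $s\in\Gamma_\sS$, $t\in\Gamma_\sT$, and $g_0$ of minimal length in the double coset $\Gamma_\sS g\Gamma_\sT$; for such a $g_0$, no syllable of any reduced expression of $g_0$ can be shuffled to the left so as to lie in $\Gamma_\sS$, nor to the right so as to lie in $\Gamma_\sT$.

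First I would dispose of (2) and of the special case $\sS=\sT$ of (1), both of which rest on the following assertion: \emph{if $g_0$ is of minimal length in $\Gamma_\sT g_0\Gamma_\sT$ and $g_0\Gamma_\sT g_0^{-1}\subseteq\Gamma_\sT$, then ${\rm supp}(g_0)\subseteq{\rm lk}(\sT)$, so that $g_0$ centralizes $\Gamma_\sT$ and $g_0\Gamma_\sT g_0^{-1}=\Gamma_\sT$.} To prove it, fix $v\in\sT$ and a nontrivial $s\in\Gamma_v$: inspecting a reduced expression for $g_0 s g_0^{-1}$ one sees that every vertex $u\in{\rm supp}(g_0)$ with $u\notin{\rm lk}(v)$ must remain in ${\rm supp}(g_0 s g_0^{-1})$ — the middle syllable $s$ obstructs the only cancellations that could delete it — so ${\rm supp}(g_0)\setminus{\rm lk}(v)\subseteq{\rm supp}(g_0 s g_0^{-1})\subseteq\sT$; letting $v$ range over $\sT$ yields ${\rm supp}(g_0)\subseteq\sT\cup{\rm lk}(\sT)={\rm st}(\sT)$, and then minimality of $g_0$ forces its $\Gamma_\sT$-part to be trivial, leaving ${\rm supp}(g_0)\subseteq{\rm lk}(\sT)$. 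Granting this, the inclusion ``$\subseteq$'' in (2) follows by writing $g=t g_0 s$ (with $\sS=\sT$) for $g\in N_\Gamma(\Gamma_\sT)$, whence $g_0\Gamma_\sT g_0^{-1}=\Gamma_\sT$, $g_0\in\Gamma_{{\rm lk}(\sT)}$, and $g\in\Gamma_\sT\Gamma_{{\rm lk}(\sT)}\Gamma_\sT\subseteq\Gamma_{\sT\cup{\rm lk}(\sT)}$; the reverse inclusion is immediate since $\Gamma_{{\rm lk}(\sT)}$ centralizes $\Gamma_\sT$. The case $\sS=\sT$ of (1) is the assertion with the minimality hypothesis removed (reduce to a minimal representative as above).

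The technical core is the intersection lemma: \emph{if $g_0$ is of minimal length in $\Gamma_\sT g_0\Gamma_\sS$, then $\Gamma_\sT\cap g_0\Gamma_\sS g_0^{-1}=\Gamma_\sE$ for some $\sE\subseteq\sS\cap\sT$} (the candidate being $\sE=\{v\in\sS\cap\sT:{\rm supp}(g_0)\subseteq{\rm lk}(v)\}$, for which the inclusion $\supseteq$ is clear). I would prove the reverse inclusion either by a more delicate version of the support analysis above — given $x\in\Gamma_\sT$ with $g_0^{-1}x g_0\in\Gamma_\sS$, tracking which vertices of ${\rm supp}(g_0)$ are forced to appear in $g_0^{-1}x g_0$ and which of ${\rm supp}(x)$ survive, and using the minimality of $g_0$ to rule out any nontrivial conjugating element — or, more conceptually, by induction on $|\sV|$ via the amalgam decomposition \eqref{afpdesc} and the structure of subgroups of amalgamated products; the base of the induction, and the case of a complete graph $\sG$ where $\Gamma$ is a direct product, are elementary. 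I expect this lemma to be the only genuine obstacle; everything else is assembly.

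Finally, the three items follow. For (3): factor $g=t\,g_0\,s$ with $t\in\Gamma_\sT$, $s\in\Gamma_\sS$, and $g_0$ of minimal length in $\Gamma_\sT g\Gamma_\sS$; then $g\Gamma_\sS g^{-1}\cap\Gamma_\sT=t\bigl(g_0\Gamma_\sS g_0^{-1}\cap\Gamma_\sT\bigr)t^{-1}=t\,\Gamma_\sE\,t^{-1}$ with $\sE\subseteq\sS\cap\sT$ by the intersection lemma, so $h:=t\in\Gamma_\sT$ works. For (1): assume $g\Gamma_\sT g^{-1}\subseteq\Gamma_\sS$ and apply (3) with $\sS$ and $\sT$ interchanged to write $g\Gamma_\sT g^{-1}=g\Gamma_\sT g^{-1}\cap\Gamma_\sS=h\Gamma_\sE h^{-1}$ for some $h\in\Gamma_\sS$ and $\sE\subseteq\sS\cap\sT\subseteq\sT$; then $(h^{-1}g)\Gamma_\sT(h^{-1}g)^{-1}=\Gamma_\sE\subseteq\Gamma_\sT$, so the case $\sS=\sT$ of (1) proved above gives $\Gamma_\sE=\Gamma_\sT$, hence $\sE=\sT$ (the vertex groups being nontrivial). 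Therefore $\sT\subseteq\sS$, $\sS\cap\sT=\sT=\sE$, and $g\Gamma_\sT g^{-1}=h\Gamma_{\sS\cap\sT}h^{-1}$ with $h\in\Gamma_\sS$, which is exactly (1); the ``in particular'' clause is the sub-case $\sS=\sT$.
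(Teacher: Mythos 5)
First, a point of comparison: the paper does not actually prove this proposition — it is imported wholesale from Antol\'in--Minasyan, the three items being \cite[Lemma 3.7]{AM10}, \cite[Proposition 3.13]{AM10} and \cite[Proposition 3.4]{AM10} respectively — so there is no in-paper argument to measure you against; your normal-form/minimal-double-coset strategy is the same general route as that reference.

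Your assembly is correct as far as it goes. The support analysis behind your key assertion is sound: writing $g_0=pq$ with $q$ the maximal suffix supported in ${\rm st}(v)$, the word $ps''p^{-1}$ (with $s''\in\Gamma_v\setminus\{1\}$ the conjugate of $s$) is reduced, so ${\rm supp}(g_0)\setminus{\rm lk}(v)\subseteq{\rm supp}(g_0sg_0^{-1})\subseteq\sT$, and minimality then kills the $\Gamma_\sT$-part of $g_0$ inside $\Gamma_{{\rm st}(\sT)}=\Gamma_\sT\times\Gamma_{{\rm lk}(\sT)}$; this yields (2) and the case $\sS=\sT$ of (1). The derivations of (3) from the intersection lemma, and of general (1) from (3) combined with the case $\sS=\sT$, are likewise valid. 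The genuine gap is exactly the one you flag yourself: the intersection lemma — that for $g_0$ of minimal length in $\Gamma_\sT g_0\Gamma_\sS$ one has $\Gamma_\sT\cap g_0\Gamma_\sS g_0^{-1}=\Gamma_\sE$ with $\sE=\sS\cap\sT\cap{\rm lk}({\rm supp}(g_0))$ — is never proved; you only name the (correct) candidate set and two possible strategies. This lemma is not peripheral bookkeeping: it is precisely the content of item (3) (the cited \cite[Proposition 3.4]{AM10}), and through your reduction it also carries the general case of (1). The hard direction — that $x\in\Gamma_\sT$ with $g_0^{-1}xg_0\in\Gamma_\sS$ forces ${\rm supp}(x)\subseteq\sE$, in particular that $x$ must commute with $g_0$ — requires an honest induction on syllable length (or on $|\sV|$ via the amalgam decompositions \eqref{afpdesc}) of at least the same weight as the argument you did write out. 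As it stands, the proposal establishes (2) and the special case $\sS=\sT$ of (1), but is an outline rather than a proof of items (1) and (3).
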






\subsection {Popa's intertwining-by-bimodules techniques} We next recall from  \cite [Theorem 2.1, Corollary 2.3]{Po03} Popa's {\it intertwining-by-bimodules} technique, which is a powerful criterion for identifying intertwiners between arbitrary subalgebras of tracial von Neumann algebras.


\begin{thm}[\cite{Po03}]\label{corner} Let $(\M,\tau)$ be a tracial von Neumann algebra and $\P\subset p\M p,\Q\subset q\M q$  be von Neumann subalgebras. 
Then the following  are equivalent:

\begin{enumerate}

\item There exist projections $p_0\in \P, q_0\in \Q$, a $*$-homomorphism $\theta:p_0\P p_0\rightarrow q_0\Q q_0$  and a non-zero partial isometry $v\in q_0\M p_0$ such that $\theta(x)v=vx$, for all $x\in p_0\P p_0$.


\item There is no sequence $(u_n)_{n\ge 1}\subset\mathcal U(\P)$ satisfying $\|E_\Q(x^*u_ny)\|_2\rightarrow 0$, for all $x,y\in p\M$.
\end{enumerate}

If one of these equivalent conditions holds true,  we write $\P\prec_{\M}\Q$, and say that {\it a corner of $\P$ embeds into $\Q$ inside $\M$.}
Moreover, if $\P p'\prec_{\M}\Q$ for any non-zero projection $p'\in \P'\cap p\M p$, then we write $\P\prec^{s}_{\M}\Q$.
\end{thm}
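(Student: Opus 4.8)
The plan is to prove the equivalence by interposing a third, purely spatial condition phrased inside the Jones basic construction. Let $e_\Q$ be the orthogonal projection of $L^2(\M)$ onto $\overline{L^2(\Q)}$ and let $\langle \M, e_\Q\rangle$ be the basic construction, equipped with its canonical semifinite trace $\mathrm{Tr}$ satisfying $\mathrm{Tr}(x e_\Q y)=\tau(xy)$ for $x,y\in\M$, with $e_\Q x e_\Q = E_\Q(x)e_\Q$ for $x\in\M$, and with $\M e_\Q\M$ a weakly dense ideal. I would introduce the auxiliary condition
\[
(\star)\qquad \text{there exists a nonzero } a\in(\P'\cap\langle \M, e_\Q\rangle)_+ \text{ with } \mathrm{Tr}(a)<\infty,
\]
and establish the chain $(1)\Leftrightarrow(\star)\Leftrightarrow(2)$. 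The advantage of $(\star)$ is that it is manifestly conjugation-invariant under $\mathcal U(\P)$, which makes it the natural target for a convexity argument on one side and for a Hilbert-module extraction on the other.

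First I would treat $(\star)\Leftrightarrow(1)$. For $(1)\Rightarrow(\star)$, an intertwiner $(\theta,v)$ with $\theta(x)v=vx$ produces a nonzero $\P$-$\Q$ subbimodule of $L^2(\M)$ which is finitely generated as a right $\Q$-module (the finite right $\Q$-dimension coming from $\mathrm{Tr}(v^* e_\Q v)=\tau(vv^*)<\infty$); its orthogonal projection lies in $\P'\cap\langle \M, e_\Q\rangle$, is positive, and has finite trace, so it serves as $a$. For the converse $(\star)\Rightarrow(1)$, given a nonzero finite-trace $a\in(\P'\cap\langle \M, e_\Q\rangle)_+$ I would pass to a spectral projection $f=\chi_{[s,\infty)}(a)$ with $s>0$ small enough that $f\neq 0$; then $f\in\P'\cap\langle \M, e_\Q\rangle$ and $\mathrm{Tr}(f)\le s^{-1}\mathrm{Tr}(a)<\infty$. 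The finite trace means $f L^2(\M)$ is a right Hilbert $\Q$-module of finite dimension carrying a commuting left $\P$-action; choosing a bounded vector and taking a suitable support/polar decomposition yields the partial isometry $v\in q_0\M p_0$ and the $*$-homomorphism $\theta$ with $\theta(x)v=vx$. The delicate point here is \emph{boundedness}: one must ensure the extracted intertwiner lives in $\M$ rather than merely in $L^2(\M)$, and it is precisely the finiteness of $\mathrm{Tr}(f)$ (equivalently, finite right $\Q$-dimension) that forces this.

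Next I would treat $(\star)\Leftrightarrow(2)$ by a convexity argument. Form the $\|\cdot\|_{2,\mathrm{Tr}}$-closed convex hull $\mathcal C=\overline{\mathrm{conv}}\{u e_\Q u^*:u\in\mathcal U(\P)\}$ and let $a_0$ be its unique element of minimal $\|\cdot\|_{2,\mathrm{Tr}}$-norm; since $w\mathcal C w^*=\mathcal C$ for $w\in\mathcal U(\P)$, uniqueness forces $w a_0 w^*=a_0$, so $a_0\in(\P'\cap\langle\M,e_\Q\rangle)_+$ with $\mathrm{Tr}(a_0)\le\mathrm{Tr}(e_\Q)<\infty$. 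The key computation is $\|\sum_i\lambda_i u_i e_\Q u_i^*\|_{2,\mathrm{Tr}}^2=\sum_{i,j}\lambda_i\lambda_j\|E_\Q(u_i^*u_j)\|_2^2$, which links small convex combinations to asymptotic $\Q$-orthogonality of the $u_i$. If $(2)$ fails, a sequence $(u_n)$ with $\|E_\Q(x^*u_ny)\|_2\to0$ gives, after a diagonal extraction, Cesàro averages $\frac1N\sum_{n\le N}u_ne_\Q u_n^*$ of norm tending to $0$, whence $a_0=0$; conversely one shows $a_0=0$ implies that for every finite $F\subset p\M$ and every $\varepsilon>0$ there is $u\in\mathcal U(\P)$ with $\|E_\Q(x^*uy)\|_2<\varepsilon$ for all $x,y\in F$, which assembles into the sequence in $\neg(2)$. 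Here one tests the convex hull against the finite-trace positive elements $W=\sum_{x\in F}x e_\Q x^*$, using $\langle u e_\Q u^*,W\rangle_{\mathrm{Tr}}=\sum_x\|E_\Q(u^*x)\|_2^2$ and its two-sided refinement $\sum_{x,y\in F}\|E_\Q(x^*uy)\|_2^2=\mathrm{Tr}(u^*WuW)$. Combining, $a_0\neq0\Leftrightarrow(\star)\Leftrightarrow\neg$(sequence)$\,=(2)$; a short annihilation argument (if a sequence exists then $a(ue_\Q u^*)=0$ for every finite-trace $a\in\P'$ and every $u$) upgrades the statement about the minimal element $a_0$ to the full condition $(\star)$.

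The main obstacle, in my view, is twofold and lives entirely in the two substantive implications above. The first genuine difficulty is $(\star)\Rightarrow(1)$: converting an abstract finite-trace projection in $\P'\cap\langle\M,e_\Q\rangle$ into an \emph{honest bounded} intertwiner $(\theta,v)$ requires the Hilbert-module machinery of the basic construction and careful control of boundedness. The second is the extraction step in $(\star)\Leftrightarrow(2)$: producing a single sequence $(u_n)$ that defeats \emph{all} two-sided test pairs $x,y\in p\M$ simultaneously, rather than merely a collection of convex combinations, which demands the diagonal/maximality argument and the matching of the naturally one-sided pairing $\langle u e_\Q u^*,W\rangle_{\mathrm{Tr}}$ with the two-sided condition via the elements $W=\sum_x x e_\Q x^*$. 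Everything else—the basic-construction identities and the minimal-element convexity lemma—is routine once these two points are handled.
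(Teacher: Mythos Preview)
The paper does not give its own proof of this statement: Theorem~\ref{corner} is recalled from \cite{Po03} as a preliminary result and is simply cited without argument. Your sketch follows the standard route of the original reference---passing through the Jones basic construction $\langle \M, e_\Q\rangle$, identifying intertwining with the existence of a nonzero finite-trace element in $\P'\cap\langle \M,e_\Q\rangle$, and using a convexity/minimal-element argument on $\overline{\mathrm{conv}}\{u e_\Q u^*\}$ to connect this to the asymptotic orthogonality sequence---so there is nothing to compare beyond noting that you have outlined the classical proof rather than anything the present paper adds.
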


Given an arbitrary graph product group,
our next lemma clarifies the intertwining of subalgebras of full subgroups in the associated graph product group von Neumann algebra.

\begin{lem}\label{corollary.graph.CI17}
Let $\Gamma=\sG \{\Gamma_v\}$ be any graph product of infinite groups  and let $\sS,\sT\subseteq \sG$ be any subgraphs. 
\noindent If $\L(\Gamma_{\sS})\prec_{\L(\Gamma)}  \L(\Gamma_\sT)$, then $\sS\subset\sT$.
\end{lem}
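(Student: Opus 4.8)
\textbf{Proof proposal for Lemma \ref{corollary.graph.CI17}.}

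The plan is to argue by contraposition, using the amalgamated free product decomposition \eqref{afpdesc} together with the characterization of intertwining via partial isometries. Suppose $\sS \not\subseteq \sT$, and pick a vertex $w \in \sS \setminus \sT$. The idea is to ``peel off'' the vertex $w$ using the degenerate-or-not decomposition $\Gamma = \Gamma_{\sV \setminus \{w\}} \ast_{\Gamma_{{\rm lk}(w)}} \Gamma_{{\rm st}(w)}$. Since $\sT \subseteq \sV \setminus \{w\}$ (because $w \notin \sT$), we have $\L(\Gamma_\sT) \subseteq \L(\Gamma_{\sV \setminus \{w\}})$. On the other hand, $\L(\Gamma_\sS)$ contains a copy of $\L(\Gamma_w)$, which sits ``across'' the amalgam: $\Gamma_w \subseteq \Gamma_{{\rm st}(w)}$ but $\Gamma_w \cap g\,\Gamma_{\sV\setminus\{w\}}\,g^{-1}$ is trivial for every $g$ since $\Gamma_w$ intersects the edge group $\Gamma_{{\rm lk}(w)}$ trivially and $\Gamma_w$ is infinite.

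First I would reduce to showing $\L(\Gamma_\sS) \not\prec_{\L(\Gamma)} \L(\Gamma_{\sV \setminus \{w\}})$, which suffices because $\L(\Gamma_\sT) \subseteq \L(\Gamma_{\sV \setminus \{w\}})$ and intertwining into a subalgebra is stronger. To establish this non-intertwining, I would invoke the standard mixing/malnormality behavior of amalgamated free products: writing $\M = \M_1 \ast_{\B} \M_2$ with $\M_1 = \L(\Gamma_{\sV\setminus\{w\}})$, $\B = \L(\Gamma_{{\rm lk}(w)})$, and $\M_2 = \L(\Gamma_{{\rm st}(w)})$, a subalgebra generated by $\L(\Gamma_w) \subseteq \M_2$ with $\L(\Gamma_w) \cap \B = \mathbb{C}1$ cannot have a corner embedding into $\M_1$. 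Concretely, by Theorem \ref{corner} it is enough to produce a sequence $(u_n) \subset \mathscr U(\L(\Gamma_\sS))$ with $\|E_{\M_1}(x^* u_n y)\|_2 \to 0$ for all $x, y \in \M$; one takes $u_n = u_{g_n}$ for group elements $g_n \in \Gamma_w$ running to infinity, and uses that reduced words in the amalgam $\Gamma_{\sV\setminus\{w\}} \ast_{\Gamma_{{\rm lk}(w)}} \Gamma_{{\rm st}(w)}$ force $g_n$ (which lies in $\Gamma_{{\rm st}(w)} \setminus \Gamma_{{\rm lk}(w)}$) out of the first factor, so $E_{\M_1}$ applied to the relevant products eventually vanishes on the dense span of group elements — and then a routine $\|\cdot\|_2$-approximation passes to general $x, y$.

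The step I expect to be the main obstacle is the last one: verifying cleanly that $E_{\M_1}(u_h^* u_{g_n} u_k) = 0$ for all but finitely many $n$, uniformly enough to conclude after the density argument. The subtlety is that for arbitrary $h, k \in \Gamma$ the products $h^{-1} g_n k$ need not be reduced words in the amalgam, so one must track how multiplication interacts with the normal form. The clean way is to use the Alexander–Mihalik normal form for graph product groups (or equivalently the block decomposition from \cite{AM10}): write $h^{-1} = a \, s$ and $k = s' \, b$ with $s, s' \in \Gamma_{{\rm st}(w)}$ and $a, b$ having no $w$-syllable at the appropriate end, so that $h^{-1} g_n k = a\,(s g_n s')\,b$ and $s g_n s' \in \Gamma_{{\rm st}(w)}$; then $h^{-1}g_n k \in \Gamma_{\sV \setminus \{w\}}$ is possible only if $s g_n s' \in \Gamma_{{\rm lk}(w)}$, i.e. only for $g_n$ in a single coset $s^{-1}\Gamma_{{\rm lk}(w)}(s')^{-1}$, which excludes all but at most one $g_n$ once we choose the $g_n$ in distinct $\Gamma_{{\rm lk}(w)}$-cosets (possible since $\Gamma_w$ is infinite and meets $\Gamma_{{\rm lk}(w)}$ trivially). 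This yields $E_{\M_1}(u_h^* u_{g_n} u_k) = 0$ for large $n$, hence $\|E_{\M_1}(x^* u_n y)\|_2 \to 0$ for all $x, y$ in the dense $*$-subalgebra spanned by group unitaries, and a standard Cauchy–Schwarz estimate extends this to all $x, y \in \M$. By Theorem \ref{corner} this gives $\L(\Gamma_\sS) \not\prec_{\L(\Gamma)} \L(\Gamma_{\sV\setminus\{w\}})$, hence $\L(\Gamma_\sS) \not\prec_{\L(\Gamma)} \L(\Gamma_\sT)$, completing the contrapositive.
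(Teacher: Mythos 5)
Your argument is correct, but it takes a genuinely different route from the paper's. The paper argues directly rather than by contraposition: it invokes \cite[Lemma 2.2]{CI17} to convert the hypothesis $\L(\Gamma_{\sS})\prec_{\L(\Gamma)}\L(\Gamma_\sT)$ into the group-theoretic statement $[\Gamma_\sS:\Gamma_\sS\cap g\Gamma_\sT g^{-1}]<\infty$ for some $g\in\Gamma$, then uses Proposition \ref{proposition.AM10} to write $\Gamma_\sS\cap g\Gamma_\sT g^{-1}=k\Gamma_\sP k^{-1}$ with $\sP\subseteq\sS\cap\sT$ and $k\in\Gamma_\sS$, and finally uses infiniteness of the vertex groups to force $\sP=\sS$, hence $\sS\subseteq\sT$. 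You instead reprove by hand the relevant special case of that intertwining-to-finite-index lemma: you pick $w\in\sS\setminus\sT$, pass to the amalgam $\Gamma=\Gamma_{\sV\setminus\{w\}}\ast_{\Gamma_{{\rm lk}(w)}}\Gamma_{{\rm st}(w)}$, and verify the negation of condition (2) in Theorem \ref{corner} using the unitaries $u_{g_n}$ for distinct $g_n\in\Gamma_w$. The heart of your computation --- that $h^{-1}g_nk\in\Gamma_{\sV\setminus\{w\}}$ for at most one $n$ --- is correct, and can be phrased more cleanly without the informal normal-form manipulation ``$h^{-1}=as$, $k=s'b$'': if $g_1,g_2\in\Gamma_w\cap h\Gamma_{\sV\setminus\{w\}}k^{-1}$, then $g_1^{-1}g_2\in\Gamma_w\cap k\Gamma_{\sV\setminus\{w\}}k^{-1}$, which is trivial by Proposition \ref{proposition.AM10}(3) since $\{w\}\cap(\sV\setminus\{w\})=\emptyset$; so distinctness of the $g_n$ already suffices. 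What your approach buys is self-containedness (no black box from \cite{CI17}, and the argument visibly isolates how the single vertex $w$ sits across the amalgam); what the paper's approach buys is brevity and a stronger quantitative byproduct, namely the finite-index statement recorded in Remark \ref{remark.inclusion.subgraph}, which is reused later in the proof of Theorem \ref{controlquasinormalizer1}.
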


\begin{proof}
By applying \cite[Lemma 2.2]{CI17} there is $g\in\Gamma$ such that $[\Gamma_\sS:  \Gamma_\sS  \cap g \Gamma_\sT g^{-1}]<\infty$. 
By Proposition \ref{proposition.AM10} one can find a subgraph $\sP\subseteq \sS \cap \sT$ and $k\in \Gamma_\sS$ so that  $\Gamma_\sS\cap g \Gamma_\sT g^{-1}= k\Gamma_\sP k^{-1}$. Thus $k\Gamma_\sP k^{-1}< \Gamma_\sS$ is a finite index subgroup. Since $k\in \Gamma_\sS$, it follows that $\Gamma_\sP<\Gamma_\sS$ has finite index as well.
Since $|\Gamma_v| =\infty$, for all $v\in \sG$, we must have that
$[\Gamma_\sS: \Gamma_\sP]=1$, and hence,  $\Gamma_\sS=\Gamma_\sP$. Thus, $\sS = \sP\subset \sS \cap \sT$, and hence, $\sS \subset \sT$.
\end{proof}

\begin{rem}\label{remark.inclusion.subgraph}
The proof of Lemma \ref{corollary.graph.CI17} shows that if $\Gamma=\sG \{\Gamma_v\}$ is a graph product of infinite groups  and  $\sS,\sT\subseteq \sG$ are  subgraphs such that $[\Gamma_\sS: \Gamma_\sS\cap g \Gamma_\sT g^{-1}]<\infty$ for some $g\in\Gamma$, then $\sS\subseteq \sT$.
\end{rem}

\subsection{Quasinormalizers of von Neumann algebras}

\noindent Given an inclusion $\P \subset \M$ of tracial von Neumann algebras we define the quasi-normalizer  $\mathscr {QN}_{\mathcal M}(\mathcal P)$ as the subgroup of all elements $x\in \M$ for which there exist $x_1,...,x_n\in \M$ such that $\P x\subseteq \sum x_i \P$ and $x\P \subseteq \sum \P x_i$ (see \cite[Definition 4.8]{Po99}).

\begin{lem}[\cite{Po03,FGS10}]\label{QN1}
Let $\P\subset \M$ be tracial von Neumann algebras. For any projection $p\in\P$, we have that
     $W^*({\rm \mathscr{QN}_{p\M p}}(p\P p))=pW^*({\rm \mathscr{QN}_{\M}}(\P))p$. 

\end{lem}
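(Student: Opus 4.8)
The plan is to prove the corner formula $W^*(\mathscr{QN}_{p\M p}(p\P p)) = p\, W^*(\mathscr{QN}_{\M}(\P))\, p$ by establishing the two inclusions separately, with the $\supseteq$ direction being essentially immediate and the $\subseteq$ direction requiring the bulk of the work. First I would record the easy containment: if $x \in \mathscr{QN}_{\M}(\P)$ with $\P x \subseteq \sum_{i} x_i \P$ and $x\P \subseteq \sum_i \P x_i$, then compressing by $p$ gives $p\P p \cdot (pxp) = p(\P p x) p \subseteq p\P x p \subseteq \sum_i (p x_i p)(p\P p)$, using $p \in \P$ so that $px = xp$ when acting through $\P$-bimodule relations; symmetrically on the other side. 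Hence $pxp \in \mathscr{QN}_{p\M p}(p\P p)$, and since these elements span a dense subset of $p\, W^*(\mathscr{QN}_{\M}(\P))\, p$, one inclusion follows.

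For the reverse inclusion I would invoke the structural description of the quasi-normalizer algebra. The key fact (due to Popa, with refinements by Fang–Gao–Smith, which is exactly why both \cite{Po03} and \cite{FGS10} are cited) is that $\N := W^*(\mathscr{QN}_{\M}(\P))$ is the \emph{smallest} von Neumann subalgebra $\Q$ with $\P \subseteq \Q \subseteq \M$ such that $\P \subseteq \Q$ is quasi-regular, equivalently it can be characterized via the basic construction: $\N$ is generated by $\P$ together with all elements $x$ whose $\P$-$\P$ bimodule $\overline{\P x \P}^{\|\cdot\|_2}$ is finitely generated as a right (equivalently left) $\P$-module. I would then argue that compression by $p$ intertwines these basic-construction descriptions: the Jones projection $e_{p\P p}$ for $p\P p \subseteq p\M p$ is $p e_\P p$ (up to the standard identification), and a $p\M p$-element $y$ lies in $\mathscr{QN}_{p\M p}(p\P p)$ iff $y e_{p\P p} y^*$ has finite trace in the basic construction $\langle p\M p, e_{p\P p}\rangle = p\langle \M, e_\P\rangle p$. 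Pushing this through shows every generator of $W^*(\mathscr{QN}_{p\M p}(p\P p))$ lies in $p\N p$.

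The main obstacle, and the step I would spend the most care on, is the reduction of central support: the formula as stated quantifies over an arbitrary projection $p \in \P$, not a projection of full central support, so one cannot simply say $p\M p$ "is" an amplification of a nice subalgebra. The honest route is to reduce to the case where $z(p) = 1$ in $\P$ (hence $p\P p \subseteq p\M p$ with $p\M p$ still "seeing" all of $\P$ via $\P \cong \P z(p) = \P$) by first handling a projection $q \in \mathscr Z(\P)$ and using that both sides behave well under cutting by central projections — here one uses that $\mathscr{QN}$ and $W^*(\mathscr{QN}(\cdot))$ commute with central cut-downs, which is part of the content of \cite[Lemma]{FGS10}. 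Once $z(p) = 1$, the map $x \mapsto pxp$ is a "spatial" isomorphism onto a corner in the sense that $\P' \cap p\M p = p(\P' \cap \M)p$ and the basic construction identity $\langle p\M p, e_{p\P p}\rangle = p \langle \M, e_\P\rangle p$ holds cleanly, and the two-inclusion argument above closes. I would also note that the restriction to a single projection $p$ (rather than a more general element) is genuinely used: the statement is an amplification-by-projection statement, matching exactly how it gets applied in the deformation/rigidity arguments later in the paper.
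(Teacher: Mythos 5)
The paper gives no proof of this lemma (it is imported from \cite{Po03} and \cite{FGS10}), so your proposal has to be measured against the standard argument in those sources. Your global architecture --- two inclusions, reduction to full central support in $\P$, passage through the basic construction --- is the right shape, but both load-bearing steps are broken. For the inclusion you call ``essentially immediate'': from $\P x\subseteq\sum_i x_i\P$ you do get $(p\P p)(pxp)\subseteq\sum_i px_i\P p$, but the next containment $px_i\P p\subseteq (px_ip)(p\P p)$ silently uses $\P p=p\P p$, which fails unless $p$ is central; the parenthetical ``$px=xp$ when acting through $\P$-bimodule relations'' is not a justification, since $p$ commutes with neither $x$ nor the $x_i$. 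There is also no finite-generation statement available to repair this: $\P p$ need not be contained in finitely many right translates $\xi(p\P p)$ (take $\P=\bigoplus_n M_{k_n}(\mathbb C)$ with $k_n\to\infty$ and $p$ the sum of one minimal projection from each block), which is exactly why the lemma is stated for the generated von Neumann algebras rather than for the quasi-normalizers themselves, and why this ``easy'' direction already requires the machinery you postpone to the other one: partial isometries $v_j\in\P$ with $v_j^*v_j\le p$ and $\sum_j v_jv_j^*=z(p)$ (an infinite family in general, $z(p)$ the central support of $p$ in $\P$) together with a $\|\cdot\|_2$-approximation inside the weak closure.

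Second, the basic-construction criterion you propose for the reverse inclusion is vacuous: the canonical semifinite trace on $\langle \M,e_\P\rangle$ satisfies $\mathrm{Tr}(ye_\P y^*)=\tau(yy^*)$ for every $y\in\M$, so ``$ye_{p\P p}y^*$ has finite trace'' holds for all $y\in p\M p$ and characterizes nothing. The usable reformulation is that $x\in\mathscr{QN}_\M(\P)$ exactly when $\overline{\P x\P}^{\|\cdot\|_2}$ is finitely generated both as a left and as a right $\P$-module, and the actual content of the \cite{FGS10} proof is transporting that finite-generation condition (not a finite-trace condition) through the identification $p\langle\M,e_\P\rangle p\cong\langle p\M p,e_{p\P p}\rangle$. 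Your central-support reduction in the last paragraph is the right idea and does match the literature, but as written the proposal contains a working proof of neither inclusion.
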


\noindent Given a group inclusion $H<G$, the quasi-normalizer ${\rm QN}_G(H)$ is the group of all $g\in G$ for which exists a finite set $F\subset G$ such that $Hg\subset FH$ and $gH\subset HF$. The following result provides a relation between the group theoretical quasi-normalizer and the von Neumann algebraic one.

\begin{lem}[Corollary 5.2 in \cite{FGS10}]\label{QN2}
Let $\Lambda<\Gamma$ be countable groups. Then 
    $W^*(\mathscr{QN}_{\L(\Gamma)}(\L(\Lambda)))=\L({\rm QN}_\Gamma(\Lambda))$.

\end{lem}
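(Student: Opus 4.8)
The plan is to prove the two inclusions of $W^*(\mathscr{QN}_{\L(\Gamma)}(\L(\Lambda)))=\L({\rm QN}_\Gamma(\Lambda))$ separately. Throughout, write $\M=\L(\Gamma)$, $A=\L(\Lambda)$, $\Lambda_0={\rm QN}_\Gamma(\Lambda)$, and let $\{u_g\}_{g\in\Gamma}$ be the canonical unitaries. I would begin with the elementary group-theoretic observations that $\Lambda_0$ is a subgroup of $\Gamma$ containing $\Lambda$, that it is a union of $(\Lambda,\Lambda)$-double cosets, and that $g\in\Lambda_0$ exactly when the double coset $\Lambda g\Lambda$ is simultaneously a finite union of left cosets $k\Lambda$ and of right cosets $\Lambda k$, i.e. $[\Lambda:\Lambda\cap g\Lambda g^{-1}]<\infty$ and $[\Lambda:\Lambda\cap g^{-1}\Lambda g]<\infty$. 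For the inclusion ``$\supseteq$'', given $g\in\Lambda_0$ I would write $\Lambda g\Lambda=\bigsqcup_{f\in F}f\Lambda$ with $F$ finite; then for $a\in A$ one has $au_g=\sum_{f\in F}u_f\,E_A(u_f^*au_g)$ with each coefficient $E_A(u_f^*au_g)\in A$, so $Au_g\subseteq\sum_{f\in F}u_fA$, and symmetrically $u_gA\subseteq\sum_{f'}Au_{f'}$; hence $u_g\in\mathscr{QN}_\M(A)$. Since $\L(\Lambda_0)=W^*(\{u_g:g\in\Lambda_0\})$, this yields $\L(\Lambda_0)\subseteq W^*(\mathscr{QN}_\M(A))$.

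For the reverse inclusion it suffices to show $\mathscr{QN}_\M(A)\subseteq\L(\Lambda_0)$, i.e. that every $x\in\mathscr{QN}_\M(A)$ has Fourier expansion supported on $\Lambda_0$. Fixing such an $x$ together with $x_1,\dots,x_n\in\M$ satisfying $Ax\subseteq\sum_i x_iA$ and $xA\subseteq\sum_i Ax_i$, I would set $\H:=\overline{AxA}^{\|\cdot\|_2}\subseteq L^2(\M)$, a closed $A$-$A$-subbimodule. From $AxA\subseteq\sum_i x_iA$ one gets $\H\subseteq\sum_i\overline{x_iA}$. Passing to the right Jones basic construction $\langle\M,e_A\rangle$ with its canonical semifinite trace ${\rm Tr}$, the projection onto $\overline{x_iA}$ is Murray--von Neumann equivalent to the range projection of $e_AL_{x_i^*x_i}e_A=L_{E_A(x_i^*x_i)}e_A$ (here $L_y$ is left multiplication by $y$), whose range lies in $L^2(A)$ and is therefore dominated by $e_A$; so its ${\rm Tr}$ is at most $1$, and consequently $\dim_{-A}\H\le n$ (where $\dim_{-A}$ of a right $A$-submodule means the ${\rm Tr}$ of the orthogonal projection onto it). Symmetrically, $\dim_{A-}\H\le n$.

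Now suppose, towards a contradiction, that $\widehat x$ is not supported on $\Lambda_0$. Since $\Lambda_0$ is a union of double cosets, there is $g_0\notin\Lambda_0$ with $\zeta:=P_{\Lambda g_0\Lambda}(x)\ne 0$, where $P_{\Lambda g_0\Lambda}$ is the orthogonal projection of $L^2(\M)$ onto $L^2(\Lambda g_0\Lambda):=\overline{\mathrm{span}}\{u_k:k\in\Lambda g_0\Lambda\}$; this projection commutes with left and with right multiplication by $A$, because left (resp. right) multiplication by any $u_\lambda$, $\lambda\in\Lambda$, preserves $L^2(\Lambda g_0\Lambda)$. Hence $\K:=\overline{A\zeta A}^{\|\cdot\|_2}=\overline{P_{\Lambda g_0\Lambda}\H}^{\|\cdot\|_2}$ is a nonzero $A$-$A$-subbimodule of $L^2(\Lambda g_0\Lambda)$, and applying the bimodular projection $P_{\Lambda g_0\Lambda}$ cannot increase either one-sided dimension, so $\dim_{-A}\K\le\dim_{-A}\H<\infty$ and $\dim_{A-}\K\le\dim_{A-}\H<\infty$. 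The key point is then to decompose $L^2(\Lambda g_0\Lambda)$ as a right $A$-module: the right action of $\Lambda$ permutes the orthonormal basis $\{u_k\}$ with orbits exactly the left cosets $k\Lambda\subseteq\Lambda g_0\Lambda$, so $L^2(\Lambda g_0\Lambda)=\bigoplus_{j}L^2(k_j\Lambda)$ is a free right $A$-module of rank $N:=[\Lambda:\Lambda\cap g_0\Lambda g_0^{-1}]$, and left multiplication by the $u_\lambda$'s permutes the $N$ summands \emph{transitively}. Let $p\in\langle\M,e_A\rangle$ be the projection onto $\K$ and $q_j$ the projection onto $L^2(k_j\Lambda)$, so that ${\rm Tr}(q_j)=1$ and $p\le\sum_j q_j$. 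Since $p\ne0$, it is impossible that $q_jpq_j=0$ for every $j$ (that would give $p^{1/2}q_j=0$ for all $j$, hence $p=p\sum_jq_j=0$), so $t_0:={\rm Tr}(q_{j_0}pq_{j_0})>0$ for some $j_0$; conjugating by the unitaries $u_\lambda$ and using transitivity of the $\Lambda$-action, ${\rm Tr}(q_jpq_j)=t_0$ for every $j$, and therefore $\dim_{-A}\K={\rm Tr}(p)=\sum_j{\rm Tr}(q_jpq_j)=Nt_0$. Finiteness of $\dim_{-A}\K$ together with $t_0>0$ forces $N<\infty$; the same argument with the left $A$-module structure of $L^2(\Lambda g_0\Lambda)$ gives $[\Lambda:\Lambda\cap g_0^{-1}\Lambda g_0]<\infty$. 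Hence $g_0\in{\rm QN}_\Gamma(\Lambda)=\Lambda_0$, contradicting the choice of $g_0$. So $\widehat x$ is supported on $\Lambda_0$, $x\in\L(\Lambda_0)$, and $W^*(\mathscr{QN}_\M(A))\subseteq\L(\Lambda_0)$, completing the proof.

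The main obstacle, I expect, is the last paragraph: turning the purely analytic finiteness of the one-sided $A$-dimensions of the bimodule $\K$ into the combinatorial finiteness of the coset indices $[\Lambda:\Lambda\cap g_0^{\pm1}\Lambda g_0^{\mp1}]$. The device that makes this work is the transitivity of the $\Lambda$-action on the cosets inside a single double coset, combined with the trace on the basic construction; this is what converts ``finitely generated on one side'' into ``finitely many cosets on that side''. It is also worth emphasizing that the argument genuinely needs \emph{both} halves of the quasinormalizer hypothesis on $x$, since they are used to bound $\dim_{-A}\H$ and $\dim_{A-}\H$ simultaneously; with only the one-sided condition the algebra one generates is in general strictly larger than $\L({\rm QN}_\Gamma(\Lambda))$.
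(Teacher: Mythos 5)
Your proposal is correct. Note, however, that the paper does not actually prove this lemma: it is quoted verbatim as Corollary 5.2 of [FGS10], so there is no in-paper argument to compare against. What you have written is a complete, self-contained proof along the standard Popa/Fang--Gao--Smith lines: the two-sided quasinormalizer hypothesis bounds both one-sided $A$-dimensions of the bimodule $\overline{AxA}$ via the basic construction, the double-coset projections $P_{\Lambda g_0\Lambda}$ are $A$-$A$-bimodular and cannot increase these dimensions, and the transitivity of the left $\Lambda$-action on the left cosets inside a single double coset converts finiteness of $\dim_{-A}\K$ into finiteness of $[\Lambda:\Lambda\cap g_0\Lambda g_0^{-1}]$ (and symmetrically on the other side). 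All the individual steps check out: the identity $au_g=\sum_{f\in F}u_fE_A(u_f^*au_g)$ for the easy inclusion, the bound $\dim_{-A}\overline{x_iA}\le 1$ via the range projection of $L_{E_A(x_i^*x_i)}e_A$, the equality ${\rm Tr}(q_jpq_j)=t_0$ for all $j$ from conjugating by the $u_\lambda$, and the summation ${\rm Tr}(p)=Nt_0$. Your closing remark is also well taken: both halves of the quasinormalizer condition are genuinely used (one for each one-sided dimension), and the one-sided version of the statement is more delicate, which is precisely the subject of [FGS10].
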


\noindent
We continue by computing the quasi-normalizer of 
subalgebras of full subgroups in any graph product group von Neumann algebra. More generally, we show the following.

\begin{thm}\label{controlquasinormalizer1}
Let $\Gamma=\sG \{\Gamma_v\}$ be any graph product of infinite groups  and let $\sS,\sT\subseteq \sG$ be any subgraphs. Denote by $\M= \L(\Gamma)$ and assume there exist $x,x_1,x_2,...,x_n \in \M$ such that $\L(\Gamma_\sS)x\subseteq \sum^n_{k=1} x_k \L(\Gamma_\sT)$. Thus $\sS\subseteq \sT$ and $x\in \L(\Gamma_{\sT \cup {\rm lk}(\sS) })$.     
\end{thm}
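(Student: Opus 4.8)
The plan is to reduce the statement to the group-theoretic quasinormalizer computation via Lemmas \ref{QN1} and \ref{QN2}, combined with the graph-product facts in Proposition \ref{proposition.AM10}. First I would dispose of the inclusion $\sS\subseteq\sT$: the hypothesis $\L(\Gamma_\sS)x\subseteq\sum_k x_k\L(\Gamma_\sT)$ says in particular that $x$ lies in $\mathscr{QN}_\M(\L(\Gamma_\sT))$ (after symmetrizing, if the one-sided condition does not already suffice one passes to $x^*$ and uses the two-sided definition — this is the first point to be careful about, see below), so $W^*(\mathscr{QN}_\M(\L(\Gamma_\sT)))\ni x$. By Lemma \ref{QN2} this equals $\L({\rm QN}_\Gamma(\Gamma_\sT))$, and by Proposition \ref{proposition.AM10}(2) the group quasinormalizer of $\Gamma_\sT$ is contained in the normalizer's saturation; more precisely ${\rm QN}_\Gamma(\Gamma_\sT)=\Gamma_{\sT\cup{\rm lk}(\sT)}$ when the $\Gamma_v$ are infinite (a commensuration argument: if $g\in{\rm QN}_\Gamma(\Gamma_\sT)$ then $\Gamma_\sT\cap g\Gamma_\sT g^{-1}$ has finite index in $\Gamma_\sT$, hence by Proposition \ref{proposition.AM10}(3) and the infiniteness of vertex groups $g\Gamma_\sT g^{-1}=\Gamma_\sT$, so $g\in N_\Gamma(\Gamma_\sT)=\Gamma_{\sT\cup{\rm lk}(\sT)}$; conversely ${\rm lk}(\sT)$ commutes with $\sT$). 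This already gives $x\in\L(\Gamma_{\sT\cup{\rm lk}(\sT)})$, but I need the sharper conclusion $x\in\L(\Gamma_{\sT\cup{\rm lk}(\sS)})$, which requires using the full strength of the hypothesis involving $\sS$, not just $\sT$.

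To upgrade to the $\sS$-dependent statement, I would next cut down by the larger algebra. Observe that the displayed inclusion implies $\L(\Gamma_\sS)\,x\,\L(\Gamma_\sT)\subseteq\sum_k x_k\L(\Gamma_\sT)$, so $x$ belongs to the set of elements $y$ with $\L(\Gamma_\sS)y\L(\Gamma_\sT)$ finitely generated as a right $\L(\Gamma_\sT)$-module; equivalently, writing $\sR=\sS\cup{\rm lk}(\sT)$ — no, more to the point, the condition forces the subalgebra-bimodule $\overline{\L(\Gamma_\sS)x\L(\Gamma_\sT)}$ to be left-$\L(\Gamma_\sS)$-finite too. The cleanest route: apply Theorem \ref{controlquasinormalizer1}'s own mechanism symmetrically. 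From $\L(\Gamma_\sS)x\subseteq\sum_k x_k\L(\Gamma_\sT)$ one deduces (taking adjoints) $x^*\L(\Gamma_\sS)\subseteq\sum_k\L(\Gamma_\sT)x_k^*$, and since $\sS\subseteq\sT$ already, $\L(\Gamma_\sS)\subseteq\L(\Gamma_\sT)$, so $x$ quasinormalizes $\L(\Gamma_\sS)$ \emph{inside} $\L(\Gamma_{\sT\cup{\rm lk}(\sT)})$ — but actually the relation $\L(\Gamma_\sS)x\subseteq\sum x_k\L(\Gamma_\sT)$ together with $x\in\L(\Gamma_{\sT\cup{\rm lk}(\sT)})$ lets me restrict attention to the full subgroup $\Gamma_{\sT\cup{\rm lk}(\sT)}$ and replace $\Gamma$ by it. Within $\Gamma_{\sT\cup{\rm lk}(\sT)}$, decompose $x=\sum_{g} x(g)$ along a transversal; the finiteness $\L(\Gamma_\sS)x\subseteq\sum x_k\L(\Gamma_\sT)$ forces each $g$ in the support of $x$ to satisfy $\Gamma_\sS g\subseteq (\text{finite set})\cdot\Gamma_\sT$ inside $\Gamma_{\sT\cup{\rm lk}(\sT)}$, i.e. $g\in{\rm QN}_{\Gamma_{\sT\cup{\rm lk}(\sT)}}(\Gamma_\sT)$ but with the left-side witness $\Gamma_\sS g\subseteq F\Gamma_\sT$; this combinatorial condition in the graph product, analyzed via Proposition \ref{proposition.AM10}(1) applied to the inclusion $g^{-1}\Gamma_\sS g\subseteq$ (finite-index in) $\Gamma_\sT$, pins $g$ down to $\Gamma_{\sT}\cdot\Gamma_{{\rm lk}(\sS)}=\Gamma_{\sT\cup{\rm lk}(\sS)}$ (using $\sS\subseteq\sT$ so ${\rm lk}(\sS)\supseteq{\rm lk}(\sT)$ and the link vertices commuting appropriately).

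\textbf{Main obstacle.} The delicate point is the purely group-theoretic lemma: precisely characterizing the set $\{g:\Gamma_\sS g\subseteq F\Gamma_\sT\text{ for some finite }F\}$ inside the graph product, and showing it equals $\Gamma_{\sT\cup{\rm lk}(\sS)}$ rather than merely $\Gamma_{\sT\cup{\rm lk}(\sT)}$. The condition $\Gamma_\sS g\subseteq F\Gamma_\sT$ means $g^{-1}\Gamma_\sS g\cap\Gamma_\sT$ — no, it means $\Gamma_\sS\cap F\Gamma_\sT g^{-1}=\Gamma_\sS$, i.e. the double-coset $\Gamma_\sS\backslash\Gamma_{\sT\cup{\rm lk}(\sT)}/\Gamma_\sT$ through $g$ is finite on the right-$\Gamma_\sT$ side when viewed from the left by $\Gamma_\sS$. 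Translating this into a statement about how $g$ can interleave with $\sS$ and $\sT$ letters in normal form, and extracting that the ``$\sS$-moving part'' of $g$ must lie in ${\rm lk}(\sS)$, is the technical heart; I expect it to follow from a careful application of the structure of normal forms in graph products (Green's normal form) together with parts (1) and (3) of Proposition \ref{proposition.AM10}, but it is the step that needs genuine care rather than routine manipulation. A secondary, more minor subtlety is justifying that one may legitimately pass from the von Neumann algebraic bimodule-finiteness hypothesis to the statement that \emph{each} group element in the $L^2$-support of $x$ individually satisfies the one-sided coset-finiteness condition; this is standard (project onto group-element coefficients and use that a finitely-generated right module over $\L(\Gamma_\sT)$ meets each coset-translate of $L^2(\L(\Gamma_\sT))$ in a controlled way), but should be spelled out.
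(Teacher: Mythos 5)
Your second and third paragraphs land on the paper's actual strategy: show that the Fourier support of $x$ is contained in the set $S=\{g\in\Gamma:\Gamma_\sS g\subseteq F\Gamma_\sT\text{ for some finite }F\}$ (equivalently $[\Gamma_\sS:\Gamma_\sS\cap g\Gamma_\sT g^{-1}]<\infty$), and then analyze $S$ group-theoretically. The support step is exactly what the paper imports from \cite{CI17} (Lemma 2.8 and Claim 2.3 there), so your ``secondary subtlety'' is fine modulo that citation, and $\sS\subseteq\sT$ then falls out of the same computation via Remark \ref{remark.inclusion.subgraph}. However, your first paragraph's route to $\sS\subseteq\sT$ does not work and should be discarded: the hypothesis $\L(\Gamma_\sS)x\subseteq\sum_k x_k\L(\Gamma_\sT)$ is one-sided and has $\L(\Gamma_\sS)$, not $\L(\Gamma_\sT)$, acting on the left, so it does not place $x$ in the two-sided quasinormalizer $\mathscr{QN}_\M(\L(\Gamma_\sT))$; taking adjoints only produces a condition on $x^*$ of the same mixed one-sided type, so Lemma \ref{QN2} cannot be invoked.

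The genuine gap is the step you yourself flag as the ``technical heart'': you never prove that $S\subseteq\Gamma_{\sT\cup{\rm lk}(\sS)}$, and the normal-form analysis you propose for it is both uncarried-out and unnecessary. The paper settles it in a few lines purely from Proposition \ref{proposition.AM10}. Fix $g\in S$. By part (3), $\Gamma_\sS\cap g\Gamma_\sT g^{-1}=k\Gamma_\sP k^{-1}$ for some $\sP\subseteq\sS\cap\sT$ and $k\in\Gamma_\sS$; since this subgroup has finite index in $\Gamma_\sS$ and all vertex groups are infinite, necessarily $\sP=\sS$ and in fact $\Gamma_\sS\cap g\Gamma_\sT g^{-1}=\Gamma_\sS$, i.e. $g^{-1}\Gamma_\sS g\leq\Gamma_\sT$. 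By part (1) there is $r\in\Gamma_\sT$ with $g^{-1}\Gamma_\sS g=r\Gamma_\sS r^{-1}$, so $gr\in N_\Gamma(\Gamma_\sS)=\Gamma_{\sS\cup{\rm lk}(\sS)}$ by part (2), and hence $g\in\Gamma_{\sS\cup{\rm lk}(\sS)}\Gamma_\sT\subseteq\Gamma_{\sT\cup{\rm lk}(\sS)}$. Without this (or an equivalent) argument your plan does not yield the sharper conclusion $x\in\L(\Gamma_{\sT\cup{\rm lk}(\sS)})$, only the weaker containment in $\L(\Gamma_{\sT\cup{\rm lk}(\sT)})$.
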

\begin{proof} Using the proof of \cite[Lemma 2.8]{CI17} and  \cite[Claim 2.3]{CI17} we obtain that $x$ belongs to the $\|\cdot\|_2$-closure of the linear span of $\{u_g\}_{g\in S}$. Here, $S$ denotes the set of all elements $g\in \Gamma$ for which $[\Gamma_\sS: \Gamma_\sS\cap g \Gamma_\sT g^{-1}]<\infty$. By assuming that $x\neq 0$, it follows that $S$ is non-empty. Fix $g\in S$. By using Remark \ref{remark.inclusion.subgraph}, we derive that
 $\sS \subseteq \sT$, which gives the first part of the conclusion. 

For proving the second part, note that by Proposition \ref{proposition.AM10} one can find a subgraph $\sP\subseteq \sS$ and $k\in \Gamma_\sS$ so that  $\Gamma_\sS\cap g \Gamma_\sT g^{-1}= k\Gamma_\sP k^{-1}$. Thus $k\Gamma_\sP k^{-1}< \Gamma_\sS$ is a finite index subgroup.  
  Since $k\in \Gamma_\sS$ this further implies that $\Gamma_\sP<\Gamma_\sS$ has finite index, and hence $\sP=\sS$. Using again that $k\in\Gamma_\sS$, we get
    $\Gamma_\sS\cap g \Gamma_\sT g^{-1}= k\Gamma_\sP k^{-1}= \Gamma_\sS$, and thus, 
$ g^{-1}\Gamma_\sS g<  \Gamma_\sT $. By Proposition \ref{proposition.AM10} one can find $r\in \Gamma_\sT$ such that $g^{-1}\Gamma_\sS g= r  \Gamma_\sS r^{-1}$. This relation implies in particular that  $gr \in N_\Gamma(\Gamma_\sS)$ and since  $N_\Gamma(\Gamma_\sS)= \Gamma_{\sS\sqcup {\rm lk} (\sS )}$ (see Proposition \ref{proposition.AM10}),  we conclude that $gr\in  \Gamma_{\sS\cup {\rm lk} (\sS )}$. Therefore, $g\in  \Gamma_{\sS\cup {\rm lk} (\sS )}\Gamma_\sT\subset \Gamma_{\sT \cup {\rm lk}(\sS)}$. This gives the desired conclusion.
\end{proof}

\begin{cor}\label{controlquasinormalizer2}Let $\Gamma=\sG \{\Gamma_v\}$ be any graph product of infinite groups  and let $\sC\in {\rm cliq}( \sG)$ be a clique with at least two vertices. Fix a vertex $v\in \sC$ such that ${\rm lk}(\sC \setminus \{v\})=\{v\}$.  Denote by $\M= \L(\Gamma)$ and assume there exist $x,x_1,x_2,...,x_n \in \M$ such that $\L(\Gamma_{\sC \setminus \{v\}})x\subseteq \sum^n_{k=1} x_k \L(\Gamma_\sC)$. Then  $x\in \L(\Gamma_{\sC })$. 

\begin{proof} Follows applying Theorem \ref{controlquasinormalizer1} for $\sS =\sC \setminus \{v\} $ and $\sT =\sC$.\end{proof}

\end{cor}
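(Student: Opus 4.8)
The plan is to reduce the statement immediately to Theorem \ref{controlquasinormalizer1}. I would set $\sS := \sC \setminus \{v\}$ and $\sT := \sC$, both of which are subgraphs of $\sG$, and observe that the hypothesis of the corollary is exactly the hypothesis of Theorem \ref{controlquasinormalizer1} for this choice: there are $x, x_1, \dots, x_n \in \M = \L(\Gamma)$ with $\L(\Gamma_\sS) x \subseteq \sum_{k=1}^n x_k \L(\Gamma_\sT)$. Since $\Gamma$ is a graph product of infinite groups, Theorem \ref{controlquasinormalizer1} applies and produces two conclusions: first $\sS \subseteq \sT$ (automatic here, as $\sC \setminus \{v\} \subseteq \sC$), and second, the key output, $x \in \L(\Gamma_{\sT \cup {\rm lk}(\sS)})$.

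The only remaining task is then to compute the subgraph $\sT \cup {\rm lk}(\sS)$. By definition it equals $\sC \cup {\rm lk}(\sC \setminus \{v\})$, and the running hypothesis ${\rm lk}(\sC \setminus \{v\}) = \{v\}$ together with $v \in \sC$ gives $\sC \cup {\rm lk}(\sC \setminus \{v\}) = \sC \cup \{v\} = \sC$. Hence $\L(\Gamma_{\sT \cup {\rm lk}(\sS)}) = \L(\Gamma_\sC)$, and we conclude $x \in \L(\Gamma_\sC)$, which is the assertion.

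There is essentially no obstacle here beyond correctly bookkeeping the link operation: all the content is carried by Theorem \ref{controlquasinormalizer1}, and the role of the hypothesis ${\rm lk}(\sC \setminus \{v\}) = \{v\}$ is precisely to make the controlling subgraph $\sT \cup {\rm lk}(\sS)$ collapse back to $\sC$. If one wants to be thorough, I would also note that the standing assumption that $\sC$ is a clique with at least two vertices ensures $\sS = \sC \setminus \{v\} \neq \emptyset$, so $\L(\Gamma_\sS)$ is a nondegenerate subalgebra, and that the link appearing in Theorem \ref{controlquasinormalizer1} is the same link operation fixed in the Preliminaries; both points are immediate from the setup.
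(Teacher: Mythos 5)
Your proposal is correct and is exactly the paper's argument: apply Theorem \ref{controlquasinormalizer1} with $\sS = \sC\setminus\{v\}$ and $\sT = \sC$, then use the hypothesis ${\rm lk}(\sC\setminus\{v\})=\{v\}$ to see that the controlling subgraph $\sT\cup{\rm lk}(\sS)$ collapses to $\sC$. Your write-up simply makes explicit the bookkeeping that the paper leaves to the reader.
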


\begin{lem}\label{lemma.control.qn}\label{controlonesidednorm3}
Let $\Gamma=\sG \{\Gamma_v\}$ be a graph product of groups and let $\sC\in {\rm cliq}(\sG)$ be a clique. Let $\P\subset p\L(\Gamma_{\sC})p$ be a von Neumann subalgebra such that $\P\nprec_{\L(\Gamma_\sC)} L(\Gamma_{\sC_{\hat v}})$, for any $v\in \sC$.

\noindent If $x\in \L(\Gamma)$ satisfies $x\P\subset \sum_{i=1}^n \L(\Gamma_{\sC})x_i$ for some $x_1,\dots,x_n\in \L(\Gamma)$, then $xp\in \L(\Gamma_\sC)$.
\end{lem}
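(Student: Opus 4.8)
The plan is to reduce Lemma \ref{lemma.control.qn} to Theorem \ref{controlquasinormalizer1} by passing to a clever amalgamated free product decomposition of $\Gamma$ and exploiting the non-embedding hypothesis to locate the support of $x$. First I would use the amalgamated free product decompositions \eqref{afpdesc} of $\Gamma$ relative to vertices $v\in\sC$. For $\sC$ a clique and $v\in\sC$, note that $\Gamma_{\sC\setminus\{v\}}\subseteq\Gamma_{{\rm lk}(v)}$, so $\L(\Gamma_{\sC})$ and the ``core'' $\L(\Gamma_{{\rm st}(v)})$ sit inside the amalgam $\Gamma = \Gamma_{\sV\setminus\{v\}}\ast_{\Gamma_{{\rm lk}(v)}}\Gamma_{{\rm st}(v)}$. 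The idea is that the hypothesis $\P\nprec_{\L(\Gamma_\sC)}\L(\Gamma_{\sC_{\hat v}})$ for \emph{every} $v\in\sC$ forces $\P$, hence any one-sided quasinormalizing element, to stay inside progressively smaller full subgroup algebras until one lands in $\L(\Gamma_\sC)$ itself.

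The key steps, in order, are as follows. (1) Using \cite[Lemma 2.8]{CI17} together with \cite[Claim 2.3]{CI17} — exactly as in the proof of Theorem \ref{controlquasinormalizer1} — show that if $x\P\subset\sum_i\L(\Gamma_\sC)x_i$ then $x$ lies in the $\|\cdot\|_2$-closed span of $\{u_g : g\in S\}$ where $S$ is the set of $g\in\Gamma$ such that $\P\prec_{\L(\Gamma)}\L(\Gamma_\sC)\cap g^{-1}\L(\Gamma_\sC)g$ is witnessed combinatorially, i.e.\ roughly $[\Gamma_\sC:\Gamma_\sC\cap g\Gamma_\sC g^{-1}]<\infty$ after absorbing the $\P$-relative intertwining into the group level. (2) For $g\in S$, apply Proposition \ref{proposition.AM10}(3) to write $\Gamma_\sC\cap g\Gamma_\sC g^{-1}=h\Gamma_\sD h^{-1}$ with $\sD\subseteq\sC$ and $h\in\Gamma_\sC$; the finite-index condition plus infiniteness of the vertex groups forces $\sD=\sC$, hence $g\in N_\Gamma(\Gamma_\sC)=\Gamma_{\sC\cup{\rm lk}(\sC)}$ by Proposition \ref{proposition.AM10}(2). (3) Here is where the non-embedding hypothesis enters: write $g=g_0 g_1$ with $g_0\in\Gamma_\sC$, $g_1\in\Gamma_{{\rm lk}(\sC)}$; conjugation by $g_1$ acts trivially on $\L(\Gamma_\sC)$, so the intertwining $\P\prec u_g\L(\Gamma_\sC)u_g^*$ becomes $\P\prec u_{g_0}\L(\Gamma_\sC)u_{g_0}^*=\L(\Gamma_\sC)$ inside $\L(\Gamma_\sC)$ — which is automatic — and the content is that $g_1$ must itself be trivial. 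To see this, observe that if $g_1\neq 1$ then $g_1$ has a nontrivial letter in some $\Gamma_w$ with $w\in{\rm lk}(\sC)$; but ${\rm lk}(\sC)\subseteq{\rm lk}(v)$ for every $v\in\sC$, and commuting $\P$ past $u_{g}$ via the amalgam decomposition relative to that $v$ would then force $\P\prec_{\L(\Gamma_\sC)}\L(\Gamma_{{\rm lk}(v)}\cap\sC)=\L(\Gamma_{\sC_{\hat v}})$, contradicting the hypothesis. Hence $g\in\Gamma_\sC$, so $x\in\L(\Gamma_\sC)$, and then $xp=xp$ is trivially in $\L(\Gamma_\sC)$.

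The main obstacle I anticipate is step (3): making precise the mechanism by which a nontrivial link contribution to $g$ forces the forbidden embedding $\P\prec_{\L(\Gamma_\sC)}\L(\Gamma_{\sC_{\hat v}})$. This requires carefully tracking how a one-sided quasinormalizing element interacts with the amalgam $\Gamma_{\sV\setminus\{v\}}\ast_{\Gamma_{{\rm lk}(v)}}\Gamma_{{\rm st}(v)}$ — in particular using that $\Gamma_\sC\subseteq\Gamma_{{\rm st}(v)}$ and that a word $g$ with a genuine $\Gamma_{{\rm lk}(\sC)}$-syllable outside $\Gamma_{{\rm lk}(v)}$ cannot quasinormalize $\P$ without producing such an embedding. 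One clean way to organize this is to first prove the statement with $\Gamma_\sC$ replaced by the larger $\Gamma_{{\rm st}(v)}$-type subgroups and then intersect over $v\in\sC$, using $\bigcap_{v\in\sC}(\sC\cup{\rm lk}(v))=\sC$ — this last set-theoretic identity, which holds because $\sC$ is a \emph{maximal} clique so no vertex outside $\sC$ is linked to all of $\sC$, is what ultimately pins $g$ down to $\Gamma_\sC$. I would also double-check the reduction by the projection $p$ at the end using Lemma \ref{QN1} if needed, though since the conclusion $xp\in\L(\Gamma_\sC)$ follows directly from $x\in\L(\Gamma_\sC)$ this should be immediate.
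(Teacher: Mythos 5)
Your overall skeleton --- control the Fourier support of $x$ via the Chifan--Ioana one-sided intertwining machinery and then use Proposition \ref{proposition.AM10} to pin that support inside $\Gamma_\sC$ --- is the right one, and it is essentially what the paper does (the paper verifies the hypothesis of \cite[Lemma 2.7]{CI17} and then cites that lemma). But there is a genuine error in where you make the hypothesis on $\P$ do its work. In step (1) you pass from the correct condition ``$\P\prec \L(\Gamma_\sC)\cap g^{-1}\L(\Gamma_\sC)g$'' to the finite-index condition $[\Gamma_\sC:\Gamma_\sC\cap g\Gamma_\sC g^{-1}]<\infty$; that characterization of the support is valid when the \emph{whole} algebra $\L(\Gamma_\sC)$ is one-sidedly quasinormalized (as in Theorem \ref{controlquasinormalizer1}), but when only a subalgebra $\P\subset p\L(\Gamma_\sC)p$ satisfies $x\P\subset\sum_i\L(\Gamma_\sC)x_i$ the relevant support set is $\{g:\P\prec_{\L(\Gamma_\sC)}\L(\Gamma_\sC\cap g\Gamma_\sC g^{-1})\}$, which has no a priori relation to finite index. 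Consequently your step (2), which derives $\sD=\sC$ from ``finite index plus infiniteness of the vertex groups'' (infiniteness is not even assumed in the lemma), is unjustified, and --- more seriously --- it makes no use of the hypothesis $\P\nprec_{\L(\Gamma_\sC)}\L(\Gamma_{\sC_{\hat v}})$. You reserve that hypothesis entirely for step (3), to kill a putative $\Gamma_{{\rm lk}(\sC)}$-contribution to $g$; but, as you yourself observe at the end, ${\rm lk}(\sC)=\emptyset$ because $\sC$ is a \emph{maximal} clique, so step (3) is vacuous. The net effect is that your argument never substantively uses the hypothesis on $\P$ and would therefore ``prove'' the conclusion for an arbitrary subalgebra $\P\subset p\L(\Gamma_\sC)p$ --- which is false: take $\P=\mathbb C p$ and $x=u_g$ with $g\notin\Gamma_\sC$.

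The repair is to use the hypothesis exactly where the paper does, namely to rule out proper subcliques rather than link letters. For $g\in\Gamma\setminus\Gamma_\sC$, Proposition \ref{proposition.AM10} gives $\Gamma_\sC\cap g\Gamma_\sC g^{-1}=h\Gamma_\sD h^{-1}$ with $h\in\Gamma_\sC$ and $\sD\subseteq\sC$. If $\sD=\sC$ one gets $\Gamma_\sC g\subseteq g\Gamma_\sC$, so $g\in{\rm QN}^{(1)}_\Gamma(\Gamma_\sC)=\Gamma_\sC$ by Theorem \ref{controlquasinormalizer1} (using ${\rm lk}(\sC)=\emptyset$), a contradiction; hence $\sD\subsetneq\sC$, so $\sD\subseteq\sC\setminus\{v\}$ for some $v\in\sC$, and since $h\in\Gamma_\sC$ the hypothesis yields $\P\nprec_{\L(\Gamma_\sC)}\L(\Gamma_\sC\cap g\Gamma_\sC g^{-1})$ for every $g\notin\Gamma_\sC$. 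This is precisely the hypothesis of \cite[Lemma 2.7]{CI17}, which then gives $xp\in\L(\Gamma_\sC)$ directly; no amalgamated free product decomposition is needed anywhere.
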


\begin{proof}
Let $g\in \Gamma\setminus \Gamma_\sC$. By using Proposition \ref{proposition.AM10} there exist $h\in \Gamma_\sC$ and $\sD\subset \sC$ such that $\Gamma_\sC\cap  g \Gamma_\sC g^{-1}=h \Gamma_\sD h^{-1}$.  
Note that Theorem \ref{controlquasinormalizer1} shows $\rm{QN}_\Gamma^{(1)}(\Gamma_\sC)=\Gamma_\sC$ and therefore $\sD\neq\sC$; otherwise, we would get that $g\in \rm{QN}_\Gamma^{(1)}(\Gamma_\sC)=\Gamma_\sC$, contradiction. Thus, we deduce from the assumption that $\P\nprec_{\L(\Gamma_\sC)} L(\Gamma_{\sC}\cap g \Gamma_{\sC} g^{-1})$ for any $g\in \Gamma\setminus \Gamma_\sC$. The conclusion now follows from \cite[Lemma 2.7]{CI17}.
\end{proof}

\subsection{A result on normalizers in tensor product factors}

Our next proposition describes the normalizer of a II$_1$ factor $\N$ inside the tensor product of $\N$ with another II$_1$ factor. More precisely, we have:

\begin{prop}\label{productunitaries} Let $\N$ and $\P$ be II$_1$ factors and denote by $\M=\N\bar\otimes \P$. If $u\in \sU (\M)$ satisfies $u\N u^*=\N$, then one can find $a\in \sU(\N)$ and $b\in \sU(\P)$ such that $u=a\otimes b$. 

\end{prop}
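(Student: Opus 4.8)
The plan is to expand $u$ along the second tensor leg and use that conjugation by $u$ restricts to an automorphism of $\N$.

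First I would identify $\N$ with $\N \otimes 1 \subset \M = \N \oo \P$ and set $\alpha := {\rm ad}(u)|_{\N} \in {\rm Aut}(\N)$, so that $u(x\otimes 1)u^* = \alpha(x)\otimes 1$ for all $x \in \N$. Fix an orthonormal basis $\{b_j\}_{j}$ of $L^2(\P)$ consisting of elements of $\P$ (obtained by Gram--Schmidt applied to a $\|\cdot\|_2$-dense sequence in $\P$, which stays inside the $*$-algebra $\P$), and put $u_j := E_{\N}\big((1\otimes b_j^*)u\big) \in \N$, where $E_{\N}:\M \to \N$ is the trace-preserving conditional expectation. Then $u = \sum_j u_j \otimes b_j$ in $L^2(\M)$, with $\sum_j \|u_j\|_2^2 = \|u\|_2^2$. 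Applying $E_{\N}$ to the identity $(1\otimes b_j^*)\,u\,(x\otimes 1) = (\alpha(x)\otimes 1)(1\otimes b_j^*)u$ and using the $\N$-bimodularity of $E_{\N}$, I obtain the intertwining relations $u_j x = \alpha(x) u_j$ for all $x \in \N$ and all $j$.

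Next I would show that the existence of a nonzero $y \in \N$ with $yx = \alpha(x)y$ for all $x\in\N$ forces $\alpha$ to be inner. Taking adjoints gives $xy^* = y^*\alpha(x)$, whence $y^*y$ commutes with $\N$ and $yy^*$ commutes with $\alpha(\N) = \N$; since $\N$ is a factor, both are scalars, and as $y\neq 0$ they are nonzero, so $y$ is a scalar multiple of a unitary $a$ and $\alpha = {\rm ad}(a)$. Conversely, when $\alpha = {\rm ad}(a)$ any solution satisfies $a^*y \in \N'\cap\N = \C 1$, so $y \in \C a$. Applying this with $y = u_j$: since $u\neq 0$ some $u_j\neq 0$, hence $\alpha$ is inner, say $\alpha = {\rm ad}(a)$ with $a\in\sU(\N)$, and $u_j = \lambda_j a$ for scalars $\lambda_j$. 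Therefore $u = a \otimes b$ with $b := \sum_j \lambda_j b_j$; equivalently $1\otimes b = (a^*\otimes 1)u \in \M$, so $b\in\P$, and since $u$ and $a$ are unitaries we get $b\in\sU(\P)$, as required.

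There is no serious obstacle here; the only points demanding care are the rigorous justification of the Fourier decomposition $u = \sum_j u_j\otimes b_j$ with bounded coefficients $u_j \in \N$ (handled via $E_{\N}$) and the factoriality bookkeeping turning $y^*y$ and $yy^*$ into scalars. An alternative route would observe that $u$ also normalizes $\N'\cap\M = 1\otimes\P$ and play the two induced automorphisms against each other, but the Fourier-coefficient computation above is the most direct.
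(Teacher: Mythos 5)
Your proof is correct and takes essentially the same route as the paper's: both expand $u$ along an orthonormal basis of $L^2(\P)$ in the second tensor leg, extract the intertwining relations $u_j x=\alpha(x)u_j$ for the coefficients, and use $\N'\cap\N=\mathbb C 1$ to conclude that all nonzero coefficients are scalar multiples of a single unitary $a\in\sU(\N)$, whence $u=a\otimes b$. The only difference is cosmetic: you choose the basis inside $\P$ so that the coefficients $E_\N\bigl((1\otimes b_j^*)u\bigr)$ are bounded elements of $\N$, while the paper works with a Pimsner--Popa basis in $L^2(\P)$ and polar decompositions of $L^2$-valued coefficients.
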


\begin{proof} Let $(\xi_i)_{i\in I}\subset L^2(\P)$ be a Pimsner-Popa basis for the inclusion $\N\subset \M$, let  $u= \sum_{i} E_\N(u \xi_i^*) \otimes \xi_i$ and denote by $\eta_i = E_\N(u \xi_i^*)$.
If $\theta\,:\, \N \ra \N$ denotes the $\ast$-isomorphism  $\theta={\rm ad}(u) $ then we have $\theta(x) u =ux$ for all $x\in \N$. This combined with the above formula yields $\theta(x) \eta_i \otimes \xi_i = \theta(x)u=ux= \eta_i x \otimes \xi_i $. Hence for all $x\in \N$ and all $i$ we have \begin{equation}\label{intertwiningell2}
    \theta(x)\eta_i=\eta_i x. 
\end{equation}
Let  $u_i\in \N$ be the partial isometry in the polar decomposition of $\eta_i$. Thus $ \theta(x)u_i =u_i x$ for all $x\in \N$ and all $i$. In particular we get  $u^*_iu_i\in \N'\cap \N =\mathbb C 1$ and hence $u_i \in \sU(\N )$ for all $i$. The prior relations also imply that $u_i^*x u_i =\theta(x)= u_j^*xu_j$ for all $i,j\in I$. In particular, we have that $u_iu_j^*\in \N'\cap \N=\mathbb C 1$ and thus one can find scalars $c_{i,j}\in \mathbb T$ so that $u_i = c_{i,j} u_j$ for all $i,j\in I$. Relation \eqref{intertwiningell2} also implies that $|\eta_i|\in \N'\cap L^2(\N)$ and since $\N$ is a II$_1$ factor we get that $|\eta_i|\in \mathbb C 1$. In conclusion, $\eta_i \in \mathbb C \sU(\N)$ for all $i$ and one can find  $d_{i,j}\in \mathbb C$ such that $\eta_i = d_{i,j}\eta_j$ for all $i,j \in I$. Fix $j\in I$ with $\eta_j\neq 0$. Using the above relations we have that $u = \sum_i\eta_i\otimes \xi_i = \sum_i d_{i,j} \eta_j \otimes \xi_i = \eta_j \otimes (\sum_i d_{i,j} \xi_i)= \eta_j \otimes b $, where we denoted by $b= \sum_i d_{i,j}  \xi_i \in L^2(\P)$. Since $\eta_j \in \mathbb C \sU(\N)$ we get the desired conclusion.
\end{proof}

\section{Wreath-like product groups}

In the previous work \cite{CIOS21} it was introduced a new category of groups called \emph{wreath-like product groups}.  To briefly recall their construction let $A$ and $B$ be any countable groups and assume that $B \ca I$ is an action on a countable set. One says $W$ is a wreath-like product of $A$ and $B\ca I$  if it can be realized as a group  extension 

\begin{equation}\label{regwreathlike1'''}
    1 \ra \bigoplus_{i\in I} A_i\hookrightarrow W \overset{\varepsilon}{\twoheadrightarrow} B \ra 1 
 \end{equation}
 which satisfies the following properties:
 \begin{enumerate}
     \item [a)] $A_i\cong A$ for all $i\in I$, and 
     \item [b)] the action by conjugation of  $W$ on $\bigoplus_{i\in I} A_i$  permutes the direct summands according to the rule \begin{equation*}w A_i w^{-1}= A_{\varepsilon(w)i}\text{ for all }w\in W, i\in I.\end{equation*}
 \end{enumerate}
 The class of all such wreath-like groups is denoted by $\W\R(A, B\ca I)$. When $I= B$ and the action $B\ca I$ is by translation this consists of so-called regular wreath-like product groups and we simply denote their class by $\W\R(A, B)$.
 
 Notice that every classical generalized wreath product $A\wr_I B \in \W\R (A,B\ca I)$. However, building examples of non-split wreath-like products is a far more involved problem. One way to approach this is through the use of the so-called Magnus embbeding, \cite{Ma37}; these are quotients groups of the form $\Gamma/[\La,\La]$ where $\La\lhd \Gamma$ is a normal subgroup. Methods of these type were used by Cohen-Lyndon to produce many such quotients in the context of one-relator groups. 
 The following result is a particular case of \cite[Corollary 4.6]{CIOS21} and relies on the prior works \cite{Osi07,DGO11,Su20}.

 \begin{cor}\label{Cor:WRDFComm}
Let $G$ be an icc hyperbolic group. For every infinite order element $g\in G$, there exists $d\in \mathbb N$ such that for every $k\in \mathbb N$ divisible by $d$ we have the following.
\begin{enumerate}
\item[(a)] $G/[\ll g^{k}\rr, \ll g^{k}\rr]\in \W\R (\mathbb Z, G/\ll g^k\rr \curvearrowright I)$, where $\langle g^k\rangle$ is normal in $E_G(g)$, the action $G/\ll g^k\rr\curvearrowright I$ is transitive, and all the stabilisers of elements of $I$ are isomorphic to the finite group $E_G(g)/\langle g^k\rangle$. Here, $E_G(g)$ denotes the elementary subgroup generated by g, $\langle g^k \rangle$ denotes the subgroup generated by $g^k$ and $\ll g^k \rr$ denotes  the smallest normal subgroup that contains $g^k$.
\item[(b)] $G/\ll g^k\rr$ is an icc hyperbolic group.
\end{enumerate}
\end{cor}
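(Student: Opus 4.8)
Since this statement is presented as a special case of \cite[Corollary 4.6]{CIOS21}, the plan is to unpack that result in the present setting and make explicit which cited inputs carry the load. The argument rests on three pillars: (i) the hyperbolically embedded structure of elementary subgroups in non-elementary hyperbolic groups (\cite{Osi07, DGO11}); (ii) the group-theoretic Dehn filling theorem together with the Cohen--Lyndon property for hyperbolically embedded subgroups (\cite{Su20}); and (iii) a Magnus-type computation identifying the abelianized Dehn kernel as a permutation module. First I would set up the elementary subgroup: since $G$ is icc it is non-elementary, so for the fixed infinite order $g$ the maximal elementary (virtually cyclic) subgroup $E_G(g)=\{h\in G: h\{g^{+\infty},g^{-\infty}\}h^{-1}=\{g^{+\infty},g^{-\infty}\}\}$ is a proper subgroup of $G$ and, by \cite{Osi07,DGO11}, is hyperbolically embedded: $E_G(g)\hookrightarrow_h G$.

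Next comes the routine bookkeeping inside the virtually cyclic group $V:=E_G(g)$. Letting $F\lhd V$ be the maximal finite normal subgroup (so $V/F$ is $\mathbb Z$ or infinite dihedral), $g$ acts on $F$ through the finite group $\mathrm{Aut}(F)$, and tracking the $F$-component of the powers $g^k$ shows there is $d_0\in\mathbb N$ with $g^k$ central in the canonical index-$\le 2$ subgroup $V_0\le V$ (the preimage of the $\mathbb Z$-factor) and $\langle g^k\rangle\lhd V$ whenever $d_0\mid k$; moreover $[V:\langle g^k\rangle]<\infty$, so $E_G(g)/\langle g^k\rangle$ is finite. Now fix the finite subset $\mathcal F\subset V\setminus\{1\}$ supplied by the Dehn filling theorem for $V\hookrightarrow_h G$ and enlarge $d_0$ to $d$ so that for every $k$ with $d\mid k$ the subgroup $N:=\langle g^k\rangle$ is normal in $V$ and disjoint from $\mathcal F$ (possible since $N\setminus\{1\}=\{g^{jk}:j\ne 0\}$ escapes any finite set as $k$ grows). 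For such $k$ the Dehn filling theorem (\cite{Osi07,DGO11}) yields that $\bar\Gamma:=G/\ll g^k\rr$ is hyperbolic, that $E_G(g)\cap \ll g^k\rr=N$ so $E_G(g)/N$ embeds (hyperbolically) into $\bar\Gamma$, and that $\bar\Gamma$ is non-elementary with trivial finite radical when $k$ is large --- hence icc --- which is part (b); here the preservation of triviality of the finite radical under deep fillings is the bookkeeping carried out in \cite{CIOS21}. The Cohen--Lyndon property (\cite{Su20}) supplies the crucial free-product decomposition $\ll g^k\rr=\ast_{t\in T}\,t\langle g^k\rangle t^{-1}$, where $T$ is a transversal for $E_G(g)\ll g^k\rr$ in $G$; the conjugation action of $G$ on the set of free factors is the transitive action on $I:=T$ with point stabilizer $N_G(\langle g^k\rangle)=E_G(g)$.

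Finally I would run the Magnus-type computation for part (a). Writing $K:=\ll g^k\rr$, abelianizing the free product gives $K^{\mathrm{ab}}\cong \bigoplus_{t\in T}(t\langle g^k\rangle t^{-1})^{\mathrm{ab}}\cong \bigoplus_{i\in I}\mathbb Z$. Since inner automorphisms act trivially on $K^{\mathrm{ab}}$, the $G$-conjugation action on $K^{\mathrm{ab}}$ factors through $\bar\Gamma=G/K$ and permutes the summands $\{(t\langle g^k\rangle t^{-1})^{\mathrm{ab}}\}$ exactly as $\bar\Gamma$ permutes $I$; by the previous step this action is transitive with point stabilizer $E_G(g)/N=E_G(g)/\langle g^k\rangle$, a finite group isomorphic to the one in the statement. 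Consequently the extension
$$ 1 \longrightarrow \bigoplus_{i\in I}\mathbb Z \longrightarrow G/[\ll g^k\rr,\ll g^k\rr] \longrightarrow \bar\Gamma \longrightarrow 1 $$
realizes $G/[\ll g^k\rr,\ll g^k\rr]$ as a wreath-like product in $\W\R(\mathbb Z,\,G/\ll g^k\rr \ca I)$ with the advertised transitivity and finite-stabilizer properties, proving (a).

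The main obstacle is item (ii): the precise form of the Dehn filling/Cohen--Lyndon package --- in particular that $\ll g^k\rr$ really is the free product of the conjugates $t\langle g^k\rangle t^{-1}$ over a transversal of $E_G(g)\ll g^k\rr$, and that $E_G(g)\cap\ll g^k\rr=\langle g^k\rangle$ --- is the technical heart, and here it is imported wholesale from \cite{Su20} via \cite{CIOS21}. Once that is in hand, the virtually cyclic bookkeeping of the second paragraph and the abelianization argument of the third are elementary.
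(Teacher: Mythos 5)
Your proposal is correct and takes essentially the same route as the paper, whose entire ``proof'' is to invoke \cite[Corollary 4.6]{CIOS21} (itself established precisely by the Dehn filling/Cohen--Lyndon/abelianization argument you spell out, resting on \cite{Osi07,DGO11,Su20}). One minor slip worth noting: the $G$-stabilizer of a point of the index set $I\cong G/E_G(g)\ll g^k\rr$ is $E_G(g)\ll g^k\rr$ rather than $N_G(\langle g^k\rangle)=E_G(g)$ (the action factors through $G/\ll g^k\rr$, so the stabilizer must contain $\ll g^k\rr$), but this does not affect your correct final identification of the $G/\ll g^k\rr$-stabilizers with the finite group $E_G(g)/\langle g^k\rangle$.
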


 Developing a new quotienting method in the context of Cohen-Lyndon triples in \cite[Theorem 2.5]{CIOS21} were constructed many examples of property (T) regular wreath-like product groups as follows:

 \begin{thm}[\text{\cite{CIOS21}}]\label{AHQ}
Let $G$ be a hyperbolic group. For every finitely generated group $A$, there exists a quotient $W$ of $G$ such that $W\in \W\R (A,B)$ for some hyperbolic group $B$.
\end{thm}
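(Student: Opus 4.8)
The plan is to run the construction behind Corollary~\ref{Cor:WRDFComm} with the single loxodromic element replaced by a malnormal free subgroup whose rank is a presentation rank of $A$, and with the ``abelianisation of the normal closure'' step replaced by a tailored ``$A$-ification''. I begin with two harmless reductions: we assume $G$ is non-elementary (the natural running hypothesis --- for $G$ virtually cyclic and $A$ infinite the conclusion already fails, as $G$ then has no rank-$2$ free subgroup); and, adding a redundant trivial generator if $A$ happens to be cyclic, we fix a presentation $A=F/R$ with $F=F(x_1,\dots,x_n)$ free of rank $n\ge 2$ and $R\lhd F$.

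The key input, and the step I expect to be the main obstacle, is the choice of a suitable free subgroup of $G$ together with the Cohen--Lyndon/Dehn-filling package around it. Using that $G$ is non-elementary, one picks loxodromic elements $g_1,\dots,g_n\in G$ that are ``long and in general position'' (in particular primitive and pairwise non-commensurable), so that $H:=\langle g_1,\dots,g_n\rangle$ is free of rank $n$ --- with $x_i\mapsto g_i$ yielding an isomorphism $F\cong H$ --- and is quasiconvex and malnormal in $G$, hence hyperbolically embedded. For such $H$, the Cohen--Lyndon theory for hyperbolic groups of \cite{Su20} (building on \cite{Osi07,DGO11}, in the framework developed in \cite{CIOS21}) provides simultaneously:
\begin{enumerate}
\item[(i)] $B:=G/\ll H\rr$ is a hyperbolic group, which moreover inherits property (T) from $G$ once the $g_i$ are chosen deep enough;
\item[(ii)] a $G$-equivariant Cohen--Lyndon decomposition $\ll H\rr=\ast_{t\in T}\,tHt^{-1}$ into a free product of conjugates of $H$ over a left transversal $T$ of $\ll H\rr$ in $G$, the conjugation action of $G$ permuting the factors $\{tHt^{-1}\}_{t\in T}$ and, under the natural bijection $T\cong G/\ll H\rr=B$, descending to the left-translation action $B\curvearrowright B$.
\end{enumerate}
Exhibiting $H$ with all of these features at once --- malnormality, which forces the transversal in (ii) to be a transversal of $\ll H\rr$ itself and thereby produces the \emph{regular} action; hyperbolicity (and property (T)) of the filled quotient $B$; and the precise $G$-equivariant free-product decomposition --- is exactly the ``Cohen--Lyndon triple'' machinery of \cite{CIOS21}, which I would invoke rather than reprove.

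Granting (i)--(ii), the remaining steps are routine bookkeeping with free products and normal closures. Transport $R$ along $F\cong H$ to obtain $R\lhd H$ with $H/R\cong A$, and define
\[
M\ :=\ \ker\!\Big(\ \ll H\rr\ \twoheadrightarrow\ \bigoplus_{t\in T}\,(tHt^{-1})/(tRt^{-1})\ \Big),
\]
where the surjection first collapses the free product $\ast_{t}tHt^{-1}$ onto the restricted direct sum $\bigoplus_{t}tHt^{-1}$ (killing all cross-commutators) and then kills each $tRt^{-1}$. Using the $G$-equivariance in (ii) and $R\lhd H$, one checks that $M$ is stable under $G$-conjugation; since $M\subseteq\ll H\rr\lhd G$, this gives $M\lhd G$. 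Put $W:=G/M$ and $K:=\ll H\rr/M$. By construction $K\cong\bigoplus_{t\in T}A_t$ with $A_t\cong A$, and there is an extension
\[
1\ \longrightarrow\ \bigoplus_{t\in T}A_t\ \longrightarrow\ W\ \overset{\varepsilon}{\longrightarrow}\ W/K=G/\ll H\rr=B\ \longrightarrow\ 1 .
\]
Finally one identifies the conjugation action: for $w=gM\in W$ and $t\in T$ one has $wA_tw^{-1}=\bigl(s\,(mHm^{-1})\,s^{-1}\bigr)M/M$, where $gt=sm$ with $s\in T$ and $m\in\ll H\rr$; since conjugation by $m$ preserves the summand $A_e$ of $K$ (the cross-commutators having been killed) while conjugation by $s$ permutes the summands via the $B$-action on $T\cong B$, this equals $A_s=A_{\varepsilon(w)t}$. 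Hence $W$ is a regular wreath-like product, $W\in\W\R(A,B)$ with $B=G/\ll H\rr$ hyperbolic; and when $G$ has property (T) so does its quotient $W$, which is the announced feature of the construction.
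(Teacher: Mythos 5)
A preliminary remark: the paper does not prove Theorem \ref{AHQ} at all; it is imported from \cite{CIOS21}, so the only meaningful comparison is with the Dehn-filling/Cohen--Lyndon machinery that you yourself invoke. Measured against that machinery, your argument has a genuine gap exactly at its central step (i)--(ii). The results of \cite{Osi07,DGO11,Su20} underlying the Cohen--Lyndon triples of \cite{CIOS21} concern a triple $(G,H,N)$ with $H$ hyperbolically embedded and $N\lhd H$ a \emph{proper, sufficiently deep} filling kernel: ``sufficiently deep'' means $N$ misses a fixed finite set of nontrivial elements of $H$, which is impossible for $N=H$. What those theorems deliver is $\ll N\rr=\ast_{t\in T}tNt^{-1}$ with $T$ a left transversal of $H\ll N\rr$ (not of $\ll N\rr$), injectivity of $H/N\to G/\ll N\rr$, and hyperbolicity of $G/\ll N\rr$ when $H/N$ is hyperbolic; hence the induced action on the summands is on $B/\bar H$ with point stabilizers isomorphic to $H/N$. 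That is precisely the shape of Corollary \ref{Cor:WRDFComm}, whose stabilizers $E_G(g)/\langle g^k\rangle$ are nontrivial. You take $N=H$ (filling along the whole free subgroup) exactly in order to force the index set to be $G/\ll H\rr$ and the action to be regular, and you propose to quote this package --- hyperbolicity of $G/\ll H\rr$ together with $\ll H\rr=\ast_t tHt^{-1}$ over a transversal of $\ll H\rr$ --- as ``exactly the Cohen--Lyndon triple machinery''. It is not available there: no cited result covers a filling whose kernel is the entire hyperbolically embedded subgroup, and ``long generators in general position'' is not a substitute for deepness. (Note also that malnormality is not what would make $T$ a transversal of $\ll H\rr$; that is forced by $H\le\ll H\rr$, i.e.\ by the choice $N=H$ itself. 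And some genuine smallness mechanism must do work here: for unfavourable choices the $g_i$ normally generate $G$, in which case $\ll H\rr=G$ and, since a property (T) group is never a nontrivial free product, the claimed decomposition fails; nothing in your argument certifies a choice for which it holds.) Eliminating the stabilizers $H/N$ so as to obtain a \emph{regular} wreath-like product for arbitrary finitely generated $A$ is the whole difficulty --- the ``new quotienting method in the context of Cohen--Lyndon triples'' that the paper credits to \cite{CIOS21} --- and your write-up assumes it rather than proves it.

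The bookkeeping after (i)--(ii) (transporting $R$, defining $M$, checking $M\lhd G$ via equivariance and $R\lhd H$, identifying the permutation action) is fine granted (i)--(ii), and the aside about property (T) needs no depth hypothesis since (T) passes to every quotient; your normalization to non-elementary $G$ is also reasonable, the missing adjective being an artifact of quotation. A repair in the spirit of the cited machinery would keep a proper deep kernel: take $H\le G$ free of rank $n+m$, quasiconvex and hyperbolically embedded, choose an $m$-generated hyperbolic group $Q$ and a surjection $\pi\colon H\twoheadrightarrow A\wr Q$ arranged so that $N:=\pi^{-1}\bigl(\bigoplus_Q A\bigr)$ is sufficiently deep, and fill $N$; then $B=G/\ll N\rr$ is hyperbolic, $\ll N\rr=\ast_{t\in T}tNt^{-1}$ with $T\cong B/\bar Q$, and quotienting onto $\bigoplus_{t\in T}\bigl(tNt^{-1}/tKt^{-1}\bigr)$ with $K=\ker\pi$ yields kernel $\cong\bigoplus_{B/\bar Q}\bigoplus_Q A\cong\bigoplus_B A$, the stabilizer $\bar Q\cong H/N$ of each free factor now permuting the $Q$-indexed copies inside it simply transitively, so the action becomes regular. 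Arranging deepness of this particular $N$ (keeping the finite bad set alive in $Q$) is where the real work lies, and it is not addressed by your proposal.
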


 For further use we also recall the following result on prescribed outer automorphisms of property (T) regular wreath-like product groups that was established in \cite[Theorem 6.9]{CIOS21}.

\begin{thm}[\text{\cite{CIOS21}}]\label{Thm:JC}
For every finitely presented group $Q$ and every finitely generated group $A_0$, there exist groups $A$, $B$ and a regular wreath-like product $W\in \W\R(A,B)$ with the following properties.
\begin{enumerate}
\item[(a)] $W$ has property (T) and has no non-trivial characters.

\item[(b)] $A$ is the direct sum of $|Q|$ copies of $A_0$. In particular, $A=A_0$ if $Q=\{ 1\}$.

\item[(c)] $B$ is an ICC normal subgroup of a hyperbolic group $H$ and $H/B\cong Q$. In particular, $B$ is hyperbolic whenever $Q$ is finite.
\item[(d)] $Out(W)\cong Q$.
\end{enumerate}
\end{thm}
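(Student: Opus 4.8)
The plan is to build a ``large'' regular wreath-like product over a suitable hyperbolic group and then cut it down. Concretely: (1) fix a hyperbolic group $H$ with property (T), no non-trivial characters, tightly controlled (ideally trivial) $\mathrm{Out}(H)$, and containing an ICC normal subgroup $B$ with $H/B\cong Q$; (2) produce, via the Cohen--Lyndon / Dehn-filling machinery behind Theorem \ref{AHQ} and Corollary \ref{Cor:WRDFComm}, a \emph{regular} wreath-like product $\widetilde W\in\W\R(A_0,H)$ that inherits property (T) and triviality of characters from $H$; (3) set $W:=\varepsilon^{-1}(B)$, where $\varepsilon:\widetilde W\twoheadrightarrow H$ is the quotient map, and check that $W\in\W\R(A,B)$ is a regular wreath-like product with $A\cong A_0^{\oplus|Q|}$; (4) compute $\mathrm{Out}(W)\cong Q$.

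For (1): since $Q$ is finitely presented, one realizes it as $H/B$ with $H$ hyperbolic and $B$ ICC by combining a hyperbolic group surjecting onto $Q$ with a sufficiently deep property (T) Dehn filling, exactly along the lines of the arguments underlying Theorem \ref{AHQ} and Corollary \ref{Cor:WRDFComm}; genericity of the construction also makes $H$ co-Hopfian, character-free, and essentially free of outer automorphisms. Step (2) is a direct application of the wreath-like construction: choosing a primitive generic element $g\in H$ (so $E_H(g)=\langle g\rangle$) and a deep power $g^k$, the Magnus-type commutator quotient is a regular wreath-like product $\widetilde W\in\W\R(A_0,H)$, and deepness of $k$ guarantees, via the Dehn filling theorem, that property (T) and absence of characters pass from $H$ to $\widetilde W$. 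For (3), restricting the $H$-action on the fiber module $\bigoplus_{h\in H}(A_0)_h$ to $B\lhd H$ regroups it as $\bigoplus_{b\in B}\big(\bigoplus_{h\in B\backslash H}(A_0)_h\big)$, i.e. a regular $B$-module with fiber $A_0^{\oplus[H:B]}=A_0^{\oplus|Q|}$, so $W=\varepsilon^{-1}(B)\in\W\R(A_0^{\oplus|Q|},B)$ and $\widetilde W/W\cong Q$; this yields parts (b) and (c) ($B$ hyperbolic precisely when $[H:B]=|Q|<\infty$), and (a) is inherited from $\widetilde W$ since $W$ is finite index in it.

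Step (4) is the heart of the argument and the main obstacle. Conjugation inside $\widetilde W$ gives a homomorphism $Q\cong\widetilde W/W\to\mathrm{Out}(W)$, which is injective because a non-trivial coset of $W$ permutes the $A_0$-blocks inside each fiber non-trivially and so cannot act by an inner automorphism (here $B$ ICC and $A_0\neq 1$ are used). The real work is surjectivity. For this one shows that the fiber subgroup $K=\bigoplus_i A_i$ is \emph{characteristic} in $W$ — this is exactly where property (T), together with the hyperbolicity of $H$ and the structure of its quotients from Corollary \ref{Cor:WRDFComm}, is essential, as it lets one recover both $K$ and the quotient $B$ intrinsically from $W$; then any $\alpha\in\mathrm{Aut}(W)$ descends to $\mathrm{Aut}(B)$ and acts on the $B$-module $K$, and, lifting along $\widetilde W$, the remaining ambiguity (inner automorphisms, the extension class in $H^2(B,K)$, and $\mathrm{Hom}(B,K)$) is annihilated using property (T) and the rigidity/co-Hopfian features of $H$. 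This forces $\alpha$ to coincide, modulo $\mathrm{Inn}(W)$, with conjugation by an element of $\widetilde W$, so $\mathrm{Out}(W)\cong Q$. The two delicate points, both leaning on property (T), are proving that $K$ is characteristic and carrying out the cohomological vanishing that excludes exotic outer automorphisms.
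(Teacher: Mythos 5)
First, a point of order: the paper does not prove this statement at all --- it is quoted verbatim from \cite[Theorem 6.9]{CIOS21}, so there is no in-paper argument to compare yours against. Judged on its own terms, your proposal breaks down already at step (1). You require a hyperbolic group $H$ that has property (T), has no non-trivial characters, and admits a normal subgroup $B$ with $H/B\cong Q$ for an \emph{arbitrary} finitely presented $Q$. Both property (T) and perfectness pass to quotients, so such an $H$ would force $Q$ itself to be a perfect Kazhdan group; $Q=\mathbb Z$ or $Q=\mathbb Z/2$ already rules this out. The statement is careful on exactly this point: only $W$ (hence its quotient $B$) is asserted to have property (T), while $H$ is merely hyperbolic. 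In \cite{CIOS21} the Kazhdan property is injected at the level of the normal subgroup $B\lhd H$ (via a Rips-type construction with property (T) kernel and $H/B\cong Q$), and the wreath-like product is built over $B$ from the start as a quotient of $B$; it is not carved out of a regular wreath-like product over all of $H$. Relatedly, step (2) overreaches what Theorem \ref{AHQ} and Corollary \ref{Cor:WRDFComm} provide: those produce wreath-like products whose base is the Dehn-filled quotient $H/\ll g^k\rr$, not a prescribed group $H$ you fixed in advance.

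Even granting a corrected step (1), two further steps do not go through as written. For (a) you argue that $W=\varepsilon^{-1}(B)$ inherits property (T) and triviality of characters from $\widetilde W$ ``since $W$ is finite index in it''; but $[\widetilde W:W]=|Q|$ is infinite whenever $Q$ is, and property (T) does not pass to infinite-index subgroups, while triviality of characters does not pass even to finite-index subgroups (you would need to compute the abelianization of $W$, which involves the $B$-coinvariants of the fibre module and is genuinely delicate when $A_0$ is not perfect). Finally, step (4), which you yourself flag as the heart of the matter, is a plan rather than a proof: that $K=\bigoplus_i A_i$ is characteristic in $W$, that every automorphism of $B$ is realized by conjugation inside $H$, and that the cohomological ambiguities ($H^2(B,K)$, $\mathrm{Hom}(B,K)$, inner parts) vanish are precisely the substantive content of the cited result and cannot be dispatched by invoking ``property (T) and rigidity/co-Hopfian features.'' So as it stands the proposal neither matches the actual construction nor constitutes a proof.
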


\begin{rem}Since $A_0$ can be any finitely generated group it follows that if we fix the group $Q$ there are infinitely many pairwise nonisomorphic regular wreath like product groups $W\in \W\R(A,B)$ which satisfy (a)-(d) in the prior theorem.  
\end{rem}

\subsection{Unique prime factorization for von Neumann algebras of wreath-like product groups}

In this subsection, more precisely in Theorem \ref{theorem.UPF.WR}, we show that von Neumann algebras of certain wreath-like product groups satisfy Ozawa and Popa's unique prime factorization \cite{OP07}. First, we point out the following structural result for commuting property (T) von Neumann subalgebras of von Neumann algebras that arise from trace preserving actions of certain wreath-like product groups. 

\begin{lem}\label{lemma.commuting.subalgebras}
Let $\Gamma$ be a wreath-like product group of the form $\W\R (A, B\ca I)$, where $A$ is abelian, $B$ is an icc  subgroup of  a hyperbolic group. Let $\Gamma\curvearrowright \N$ be a trace preserving action and denote $\M=\N\rtimes\Gamma$.

If $\A, \B\subset p\M  p$ are commuting property (T) von Neumann subalgebras, then $\A\prec_\M \N$ or $\B\prec_\M \N$.
\end{lem}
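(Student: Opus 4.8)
The plan is to exploit the defining extension $1\to\bigoplus_{i\in I}A_i\to\Gamma\xrightarrow{\varepsilon}B\to1$ together with the fact that $B$ embeds into a hyperbolic group, so that $\Gamma$ acts on a suitable hyperbolic-like space and one can deploy the dichotomy for commuting subalgebras coming from that acylindrical action. Write $N_0=\bigoplus_{i\in I}A_i\lhd\Gamma$ and set $\widetilde\N=\N\rtimes N_0$, so that $\M=\widetilde\N\rtimes B$. Since $A$ is abelian, $N_0$ is abelian, hence $\widetilde\N$ is an amenable-over-$\N$ extension; more precisely $\widetilde\N=\N\bar\otimes L(N_0)$ when the action on $N_0$-part is understood, but in general $\widetilde\N$ is the crossed product of $\N$ by the abelian group $N_0$. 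The first step is: if either $\A$ or $\B$ has a corner that embeds into $\widetilde\N$ inside $\M$, then I would upgrade this to an embedding into $\N$ itself, using that $\A,\B$ have property (T) and $N_0$ is abelian (hence amenable), so no diffuse property (T) algebra can sit inside $\widetilde\N$ without already being captured by $\N$ — this is a standard malleable-deformation / spectral-gap argument, or alternatively a direct application of the fact that $L^2(\widetilde\N)\ominus L^2(\N)$ is an amenable (mixing) $\widetilde\N$-bimodule coming from the abelian group $N_0$, which cannot weakly contain the trivial bimodule of a property (T) subalgebra. So it suffices to show $\A\prec_\M\widetilde\N$ or $\B\prec_\M\widetilde\N$.

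Next I would pass to the quotient picture. Consider the trace-preserving action $B\curvearrowright\widetilde\N$ with $\M=\widetilde\N\rtimes B$. Now $\A,\B\subset p\M p$ are commuting property (T) subalgebras, and $B$ is (a subgroup of) a hyperbolic group. The key input is a rigidity theorem for crossed products by hyperbolic groups — in the spirit of Popa–Vaes and the acylindrical-action techniques — which says that for commuting subalgebras $\A,\B$ at least one of them must be amenable relative to $\widetilde\N$ inside $\M$, \emph{or} one of them embeds into $\widetilde\N$. Since $\A$ and $\B$ both have property (T), relative amenability over $\widetilde\N$ forces the embedding $\prec_\M\widetilde\N$ outright (a property (T) algebra that is amenable relative to a subalgebra is intertwined into it). Hence in all cases we land on $\A\prec_\M\widetilde\N$ or $\B\prec_\M\widetilde\N$, and combining with the first step finishes the proof.

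Concretely, I expect the cleanest route for the middle step is to invoke the dichotomy of the form: for $B$ hyperbolic (or a subgroup of a hyperbolic group acting acylindrically on its Cayley graph), and a commuting pair $\A,\B\subset p(\widetilde\N\rtimes B)p$, either $\A\prec\widetilde\N$, or $\B\prec\widetilde\N$, or both $\A$ and $\B$ are amenable relative to $\widetilde\N$; this is essentially \cite[Theorem 1.6]{PV12} / \cite{PV14} adapted to the non-diffuse-core crossed product, using that a commuting pair normalizes a larger algebra and applying the hyperbolic-group absorption theorem. One technical point is handling the subgroup (rather than a hyperbolic group itself): since $B$ is icc and embeds in a hyperbolic group $H$, I would either work directly with the acylindrical action of $B$ on the Cayley graph of $H$, or induce the crossed product up to $H$ and use that intertwining and relative amenability pass to/from the induced algebra.

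The main obstacle I anticipate is the first step — the upgrade from $\prec_\M\widetilde\N$ to $\prec_\M\N$ — when the action $\Gamma\curvearrowright\N$ is nontrivial and does not commute nicely with the $N_0$-layer. One has to be careful that the relevant $\widetilde\N$-bimodule $L^2(\widetilde\N)\ominus L^2(\N)$ really is amenable as a bimodule (equivalently, weakly contained in the coarse $\widetilde\N$-bimodule), which is where abelianness of $A$ (hence of $N_0$) is essential: the crossed product of $\N$ by an abelian group has this bimodule amenability property, so any property (T) subalgebra is squeezed into $\N$. If the action on $\N$ interacts badly with conjugation on $N_0$ one may instead need to first intertwine $\A$ (say) into $\widetilde\N=\N\rtimes N_0$, conjugate so that a corner lies in $\N\rtimes N_0$, and then run a further malleable deformation on the $N_0$-coordinate to push it into $\N$, checking that property (T) of $\A$ survives the corner/conjugation. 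The remaining steps are comparatively routine once the two rigidity inputs (hyperbolic absorption for the $B$-layer, bimodule amenability for the $N_0$-layer) are in place.
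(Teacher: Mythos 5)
Your proposal is essentially the argument behind the paper's proof: the paper gives no argument of its own but cites \cite[Theorem 7.15]{CIOS21}, whose proof runs exactly along your lines --- split $\M=\bigl(\N\rtimes\bigoplus_{i\in I}A_i\bigr)\rtimes B$, apply the Popa--Vaes structure theorem \cite{PV12} for crossed products by (subgroups of) hyperbolic groups to conclude that one of the two commuting algebras either intertwines into $\widetilde{\N}=\N\rtimes\bigoplus_i A_i$ or is amenable relative to it, use property (T) to convert relative amenability into intertwining, and then push through the amenable layer $\bigoplus_i A_i$ down into $\N$. The only caveat is that your alternative justification of the last step via weak containment of $L^2(\widetilde{\N})\ominus L^2(\N)$ in the coarse bimodule is not accurate in general (for a trivial action this bimodule contains copies of the trivial $\N$-bimodule), but the relative-amenability-plus-property-(T) mechanism you also invoke is the correct and standard way to carry out both reduction steps.
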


 The proof of Lemma \ref{lemma.commuting.subalgebras} follows from \cite[Theorem 7.15]{CIOS21} and the main ingredient of its proof is  Popa and Vaes’ structure theorem for normalizers in crossed
products arising from actions of hyperbolic group \cite{PV12}.

\begin{thm}\label{theorem.UPF.WR}
For any $i\in\overline{1,n}$, let $\Gamma_i$ be a property (T) wreath-like product group of the form $\W\R (A, B\ca I)$, where $A$ is abelian and $B$ is an icc  subgroup of  a hyperbolic group.

If $\M:=\L(\Gamma_1)\bar\otimes\dots\bar\otimes \L(\Gamma_n)=\P_1\bar\otimes \P_2$ is a tensor product 
decomposition into II$_1$ factors, then there exist a unitary $u\in \M$, a decomposition $\M=\P_1^t\bar\otimes \P_2^{1/t}$ and a partition $T_1\sqcup T_2=\overline{1,n}$ such that 
$\L(\times_{k\in S_j} \Gamma_k)=u \P_j^{t_j} u^*$, for any $j\in\{1,2\}.$
\end{thm}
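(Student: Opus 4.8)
The plan is to prove this by induction on $n$, using Ozawa--Popa-type unique prime factorization as the engine and Lemma~\ref{lemma.commuting.subalgebras} as the key new input coming from the wreath-like product structure. The base case $n=1$ is trivial (one of $\P_1,\P_2$ is a scalar since $\L(\Gamma_1)$ is prime, which itself follows from Lemma~\ref{lemma.commuting.subalgebras} applied with $\N=\mathbb C$). For the inductive step, write $\M=\L(\Gamma_1)\bar\otimes\M'$ where $\M'=\L(\Gamma_2)\bar\otimes\cdots\bar\otimes\L(\Gamma_n)$, and suppose $\M=\P_1\bar\otimes\P_2$. The first move is to locate $\L(\Gamma_1)$: apply Lemma~\ref{lemma.commuting.subalgebras} with the trace-preserving action $\Gamma_1\curvearrowright\mathbb C$ twisted appropriately — more precisely, view $\M=\L(\Gamma_1)\bar\otimes\M'=\M'\rtimes\Gamma_1$ with $\Gamma_1$ acting trivially on $\M'$, so that for the commuting property (T) subalgebras $\P_1,\P_2\subset\M$ (each is property (T) since it is a tensor factor of $\M$, which has property (T) as a finite tensor product of property (T) factors) we get $\P_1\prec_\M\M'$ or $\P_2\prec_\M\M'$.

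Say $\P_2\prec_\M\M'$. Since $\P_2$ is a tensor factor and $\M'$ is regular in $\M$ with $\M'\bar\otimes\mathbb C$-type structure, one upgrades this intertwining: because $\P_2'\cap\M\supseteq\P_1$ is a II$_1$ factor and $\P_2\bar\otimes\P_1=\M$, a standard argument (e.g., via \cite{Po03} and the fact that the intertwining of a tensor factor is ``spatial'') produces a unitary $w\in\M$ such that $w\P_2 w^*\subseteq\M'$; combined with the reverse containment $\mathscr{QN}$-type argument using Lemma~\ref{QN1}, one in fact gets $w\P_2 w^*=\M'$ and hence $w\P_1 w^*\supseteq(\M')'\cap\M=\L(\Gamma_1)$, so $w\P_1 w^*=\L(\Gamma_1)^{1/t}$ for some $t>0$ and correspondingly $w\P_2 w^*=\M'^{\,t}$ — but $\M'$ is a tensor product of property (T) wreath-like factors with trivial fundamental group (which follows inductively, or from \cite{CIOS21}), forcing $t=1$. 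Replacing $\P_1,\P_2$ by $w\P_i w^*$ we may assume $\P_1=\L(\Gamma_1)$, $\P_2=\M'$, and then apply the inductive hypothesis to the decomposition $\M'=\P_2$ (trivially) — wait, that gives nothing new; instead, in the generic case where $\P_2\not\prec_\M\M'$ and $\P_1\not\prec_\M\L(\Gamma_1)$, one must argue that $\P_1$ meets both $\L(\Gamma_1)$ and $\M'$ nontrivially and use a mixing/malleability argument to split the partition.

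More carefully, the clean inductive scheme is: after the intertwining step we obtain (up to unitary conjugacy and after absorbing $t=1$) that one of the two factors, say $\P_2$, satisfies $\L(\Gamma_1)\subseteq\P_1$ up to conjugacy — i.e., $\P_1=\L(\Gamma_1)\bar\otimes\Q_1$ and $\P_2=\Q_2$ with $\M'=\Q_1\bar\otimes\Q_2$. Now apply the inductive hypothesis to the tensor decomposition $\M'=\Q_1\bar\otimes\Q_2$: there is a unitary $u'\in\M'$ and a partition $T_1'\sqcup T_2'=\overline{2,n}$ with $u'\Q_j u'^*=\L(\times_{k\in T_j'}\Gamma_k)$. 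Then $u=(1\otimes u')$ conjugates $\P_1$ onto $\L(\Gamma_1)\bar\otimes\L(\times_{k\in T_1'}\Gamma_k)=\L(\times_{k\in\{1\}\cup T_1'}\Gamma_k)$ and $\P_2$ onto $\L(\times_{k\in T_2'}\Gamma_k)$, completing the induction with $T_1=\{1\}\cup T_1'$, $T_2=T_2'$. The symmetric case (where it is $\L(\Gamma_1)\subseteq\P_2$ up to conjugacy) is identical with the roles swapped.

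\emph{The main obstacle} I anticipate is the upgrading step: going from the abstract intertwining $\P_j\prec_\M\M'$ to an honest unitary conjugacy $w\P_j w^*=\M'$ (equivalently, $\L(\Gamma_1)\subseteq w^*\P_{3-j}w$). This requires knowing that a corner-embedding of one tensor factor into $\M'$, when the relative commutant is controlled (it equals the complementary tensor factor, a II$_1$ factor), forces the embedding to be unitary and onto. The standard way is: $\P_j\prec_\M\M'$ together with $\P_j'\cap\M=\P_{3-j}$ being a factor gives, via \cite[Theorem 2.1]{Po03} and a maximality/commutant argument, a unitary $w$ with $w\P_j w^*\subseteq\M'$; then one observes $\M=W^*(\P_j,\P_j'\cap\M)$ and $\M'=W^*(w\P_j w^*,(w\P_j w^*)'\cap\M')$, and since $(w\P_j w^*)'\cap\M\supseteq w\P_{3-j}w^*$ while also $(w\P_j w^*)'\cap\M\supseteq(\M')'\cap\M=\L(\Gamma_1)$, dimension/trace bookkeeping forces $w\P_j w^*=\M'$. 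The delicate point is ensuring no amplification parameter survives, which is where one invokes that all the $\L(\Gamma_k)$ (and hence $\M'$) have trivial fundamental group; this is itself part of what an inductive statement should carry, so the induction hypothesis should be stated to include ``$\M'$ has trivial fundamental group,'' or one cites the relevant consequence of Lemma~\ref{lemma.commuting.subalgebras} / \cite{CIOS21} directly. Properly packaging the inductive statement so that both the prime-factorization conclusion and the no-amplification conclusion propagate together is the part that needs the most care.
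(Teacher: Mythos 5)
Your overall strategy (use Lemma~\ref{lemma.commuting.subalgebras} applied to $\M=\M'\rtimes\Gamma_1$ with the trivial action to produce the intertwining $\P_1\prec_\M\M'$ or $\P_2\prec_\M\M'$, pass to relative commutants, and upgrade via the Ozawa--Popa machinery) is the same engine the paper uses; the paper just avoids the induction by applying the lemma for every $i$ simultaneously, invoking \cite[Lemma 2.8(2)]{DHI16} to assemble the resulting map $\phi:\overline{1,n}\to\overline{1,2}$ into a partition $S_1\sqcup S_2$ with $\P_j\prec_\M\L(\times_{k\in S_j}\Gamma_k)$, and then finishing in one step with \cite[Proposition 12]{OP04} and \cite[Theorem A]{Ge95}. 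Your inductive organization could be made to work, but as written it contains two genuine errors. First, from $\P_2\prec_\M\M'$ you cannot conclude $w\P_2w^*=\M'$: take $\P_1=\L(\Gamma_1\times\Gamma_2)$ and $\P_2=\L(\Gamma_3\times\cdots\times\Gamma_n)$, where $\P_2\prec_\M\M'$ holds but $\P_2$ is a proper subfactor of $\M'$. The correct deduction is only that $\L(\Gamma_1)=(\M')'\cap\M\prec_\M\P_2'\cap\M=\P_1$, and this is what must then be upgraded to a containment.

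Second, and more seriously for your setup, the claim that trivial fundamental group ``forces $t=1$'' is false, and the exact splitting $\P_1=\L(\Gamma_1)\bar\otimes\Q_1$ on which your inductive step rests does not hold without an amplification. Already for $n=2$, the decomposition $\M=\L(\Gamma_1)\bar\otimes\L(\Gamma_2)=\bigl(\L(\Gamma_1)^{1/2}\bigr)\bar\otimes\bigl(\L(\Gamma_2)^{2}\bigr)$ is a legitimate tensor decomposition into II$_1$ factors in which $\P_1$ is \emph{not} unitarily conjugate to $\L(\Gamma_1)$ (precisely because the fundamental group is trivial); this is why the conclusion of Theorem~\ref{theorem.UPF.WR} is stated with the repositioned decomposition $\M=\P_1^{t}\bar\otimes\P_2^{1/t}$ rather than with $\P_j$ itself. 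The upgrading step (\cite[Proposition 12]{OP04}) produces a unitary $u$ and a parameter $t>0$ with $\L(\Gamma_1)\subseteq u\P_1^{t}u^{*}$, after which Ge's splitting theorem gives $u\P_1^{t}u^{*}=\L(\Gamma_1)\bar\otimes\Q_1$ and $u\P_2^{1/t}u^{*}=\Q_2\subseteq\M'$ with $\M'=\Q_1\bar\otimes\Q_2$; your induction then goes through, but only if the inductive statement carries the amplification parameter along rather than asserting it is $1$. Relatedly, the ``generic case where $\P_2\not\prec_\M\M'$ and $\P_1\not\prec_\M\L(\Gamma_1)$'' that you digress into does not arise: Lemma~\ref{lemma.commuting.subalgebras} always yields $\P_1\prec_\M\M'$ or $\P_2\prec_\M\M'$, so no mixing/malleability argument is needed there.
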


\begin{proof}
To fix some notation, we have that for any $i\in\overline{1,n}$, $\Gamma_i$ belongs to
$\W\R (A_i, B_i \ca I_i)$, where $A_i$ is abelian, $B_i$ is an icc  subgroup of  a hyperbolic group.  
Note that since $\P_1$ and $\P_2$ have property (T), by applying Lemma \ref{lemma.commuting.subalgebras}, we obtain a map $\phi:\overline{1,n}\to\overline{1,2}$ such that $\P_{\phi(i)}\prec_\M \bar\otimes_{k\neq i} \L(\Gamma_k)$, for any $i\in\overline{1,n}$. By  \cite[Lemma 2.8(2)]{DHI16} there exists a partition $\overline{1,n}=S_1\sqcup S_2$ such that $\P_j\prec_\M \L(\times_{k\in S_j} \Gamma_k)$, for any $j$. By passing to relative commutants, we get that $\L(\times_{k\in S_j} \Gamma_k)\prec_\M \P_j$, for any $j$. The conclusion of the theorem follows now by using standards arguments that rely on \cite[Proposition 12]{OP04} and \cite[Theorem A]{Ge95}.
\end{proof}

\begin{cor}\label{corollary.wr.rigidity}
Let $\Gamma_1,\dots,\Gamma_n$ and $\Lambda_1\dots,\Lambda_m$ be property (T) wreath-like product groups of the form $\W\R (A, B\ca I)$, where $A$ is abelian, $B$ is an icc  subgroup of  a hyperbolic group, and $B \curvearrowright I$ has infinite stabilizers.    

If there exists $t>0$ such that $\L(\Gamma_1\times\dots\times\Gamma_n)^t=\L(\Lambda_1\times\dots\times\Lambda_m)$, then $t=1$, $n=m$ and there is a unitary $u\in \L(\Gamma_1\times\dots\times\Gamma_n)$ such that  $u\mathbb T (\Gamma_1\times\dots\times\Gamma_n)u^*= \mathbb T (\Lambda_1\times\dots\times\Lambda_m)$.
\end{cor}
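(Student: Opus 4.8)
The plan is to bootstrap from Theorem \ref{theorem.UPF.WR}, which gives unique prime factorization for these tensor products, and then to upgrade the resulting identification of tensor factors to an identification of the canonical Cartan-like subalgebras $\mathbb T(\Gamma_1\times\cdots\times\Gamma_n)$ and $\mathbb T(\Lambda_1\times\cdots\times\Lambda_m)$ (the subalgebras generated by the canonical group unitaries; since $A_i$ is abelian, these are the obvious abelian-by-nothing pieces coming from the normal subgroups $\bigoplus A$ in each wreath-like factor). First I would fix an isomorphism $\Theta:\L(\Gamma_1\times\cdots\times\Gamma_n)^t\to\L(\Lambda_1\times\cdots\times\Lambda_m)$ and, viewing both sides as the single II$_1$ factor $\M$, apply Theorem \ref{theorem.UPF.WR} twice — once to the decomposition $\M=\bar\otimes_i\L(\Gamma_i)$ matched against the coarser decomposition $\L(\Lambda_1)\bar\otimes\L(\Lambda_2\times\cdots\times\Lambda_m)$, and iteratively — to conclude that $n=m$ and that, after a unitary conjugation and a rescaling of the individual amplifications, the two prime decompositions agree factor by factor: there is a bijection $\pi$ of $\overline{1,n}$ and amplification parameters $t_i$ with $\prod t_i=t$ such that a unitary $u$ conjugates $\L(\Gamma_i)^{t_i}$ onto $\L(\Lambda_{\pi(i)})$. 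The equality $n=m$ and the factorwise matching is essentially the standard consequence of UPF (as in \cite{OP07}) combined with the fact that each $\L(\Gamma_i)$, being a property (T) factor, is prime and not a matrix amplification of a smaller one in a nontrivial way.

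Next I would reduce to the case $t=1$ and a single wreath-like factor on each side. Since each $\Gamma_i=\W\R(A_i,B_i\ca I_i)$ with $B_i\ca I_i$ having infinite stabilizers, the group $\Gamma_i$ has infinite abelian normal subgroup $\bigoplus_{I_i}A_i$ with $\Gamma_i/\bigoplus A_i=B_i$ hyperbolic; the key structural input is that $\M_i:=\L(\Gamma_i)$ has a unique (up to unitary conjugacy and as a Cartan-type subalgebra) maximal abelian regular subalgebra coming from $L(\bigoplus A_i)$ — this is where the hypothesis of infinite stabilizers is used, exactly as in the cited \cite[Theorem 7.15]{CIOS21} / Popa–Vaes framework behind Lemma \ref{lemma.commuting.subalgebras}. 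From this, an isomorphism $\L(\Gamma_i)^{t_i}\cong\L(\Lambda_{\pi(i)})$ must send $L(\bigoplus A_i)^{t_i}$ onto $L(\bigoplus A_{\pi(i)})$ up to unitary conjugacy; comparing the trace-one "Cartan" subalgebras forces the amplification $t_i$ to equal $1$ (since a nontrivial amplification of such a crossed-product inclusion is not isomorphic to an honest one — the index data / the coupling constant of the Cartan inclusion is an invariant), hence $t=\prod t_i=1$. Assembling the $n$ factorwise unitaries into $u\in\L(\Gamma_1\times\cdots\times\Gamma_n)$ and using that $\mathbb T(\Gamma_1\times\cdots\times\Gamma_n)=\bar\otimes_i\mathbb T(\Gamma_i)$ and likewise on the $\Lambda$ side, we obtain $u\,\mathbb T(\Gamma_1\times\cdots\times\Gamma_n)\,u^*=\mathbb T(\Lambda_1\times\cdots\times\Lambda_m)$.

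The main obstacle I anticipate is the rigidity of the amplification parameter — i.e., proving that each $t_i$ must be $1$ rather than merely that the factors match up to amplification. Theorem \ref{theorem.UPF.WR} explicitly allows the decomposition $\M=\P_1^t\bar\otimes\P_2^{1/t}$, so UPF alone does not pin down $t$. To close this I would invoke the fact that $\L(\Gamma_i)$ for these particular wreath-like products has trivial fundamental group (this is part of the rigidity package from \cite{CIOS21} for property (T) wreath-like products with the stated action hypotheses), together with the observation that a factorwise isomorphism respecting the canonical Cartan subalgebras transports the trace, hence cannot rescale. Once the $t_i$ are all $1$, the rest is bookkeeping: the bijection $\pi$ exists, the unitaries multiply together across tensor factors, and the statement about $\mathbb T$ follows because under each factorwise identification the canonical group-von-Neumann-algebra generators of $\mathbb T(\Gamma_i)$ go to those of $\mathbb T(\Lambda_{\pi(i)})$ up to the unitary conjugation, again by the uniqueness of the relevant abelian regular subalgebra. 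A secondary subtlety is making sure the two separate applications of Theorem \ref{theorem.UPF.WR} (refining to singleton partitions) are compatible — this is handled by the standard inductive argument on $n$, peeling off one prime factor at a time and using that the complement $\L(\Gamma_2\times\cdots\times\Gamma_n)$ is again of the required form.
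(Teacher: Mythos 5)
Your first paragraph is on target and coincides with how the paper uses Theorem \ref{theorem.UPF.WR}: unique prime factorization reduces everything to $n=m$, a bijection $\pi$, scalars $t_i$ with $\prod_i t_i=t$, and unitarily implemented isomorphisms $\L(\Gamma_i)^{t_i}\cong\L(\Lambda_{\pi(i)})$. The problem is the second half. The paper closes the argument by invoking \cite[Theorem 8.4]{CIOS21}, which is the W$^*$-superrigidity theorem for property (T) wreath-like product groups in this class: any $\ast$-isomorphism $\L(W)^s\to\L(W')$ between (amplifications of) two such group factors forces $s=1$ and carries the group unitaries onto the group unitaries up to scalars, i.e.\ $u\,\mathbb TW\,u^*=\mathbb TW'$. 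You replace this single deep input with two substitutes, and neither does the job.

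First, ``trivial fundamental group of $\L(\Gamma_i)$'' is a statement about one algebra; here you have $\L(\Gamma_i)^{t_i}\cong\L(\Lambda_{\pi(i)})$ with $\Gamma_i$ and $\Lambda_{\pi(i)}$ a priori different groups, so you cannot conclude $t_i=1$ without the two-variable rigidity statement — which is again \cite[Theorem 8.4]{CIOS21}, not the fundamental group computation. Second, and more seriously, uniqueness (up to unitary conjugacy) of a Cartan or abelian regular subalgebra can never yield the conclusion $u\,\mathbb T\Gamma_i\,u^*=\mathbb T\Lambda_{\pi(i)}$: at best it reduces a W$^*$-equivalence to an orbit equivalence of the associated actions, and orbit equivalence does not remember the group, let alone force group unitaries to map to group unitaries modulo $\mathbb T$. (Note also that $\mathbb T\Gamma$ here denotes the set $\{\lambda u_g:\lambda\in\mathbb T,\ g\in\Gamma\}$ of canonical unitaries up to phase — the full group, not an abelian piece attached to $\bigoplus_I A$; your opening parenthetical misreads the target of the statement.) Recovering $\mathbb T\Gamma$ from $\L(\Gamma)$ is exactly the content of W$^*$-superrigidity, proved in \cite{CIOS21} by comultiplication techniques, and it is the indispensable second ingredient your argument is missing.
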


\begin{proof}
    The result follows directly by combining Theorem \ref{theorem.UPF.WR} and \cite[Theorem 8.4]{CIOS21}.
\end{proof}

\section{Graph product groups  associated with cycles of cliques graphs}\label{CC1} In this section we highlight a class of graphs considered to have good clustering properties. Specifically, a graph   $\sG$ is called a \emph{simple cycle of cliques} (and it said to belong to class ${\rm CC}$) if there is an enumeration of its cliques set $ {\rm cliq}(\sG)= \{\sC_1,..., \sC_n\} $ with $n\geq 4$ such that the subgraphs $\sC_{i,j} :=\sC_i \cap \sC_j$ satisfy the following conditions: \begin{equation*}
    \sC_{i,j}=\begin{cases}\emptyset, \text{ if }  \hat i-\hat j\in \mathbb Z_n\setminus \{\hat 1, \widehat {n-1}\}\\
\neq \emptyset, \text{ if }  \hat i-\hat j\in \{ \hat 1, \widehat {n-1}\}.
\end{cases}
  \end{equation*}
Here, the classes $\hat i, \hat j\in \mathbb Z/n\mathbb Z$.  We will also refer to $ {\rm cliq}(\sG)= \{\sC_1, ..., \sC_n\} $ satisfying the previous properties as the \emph{the consecutive enumeration} of the cliques of $\sG$.   

For every $i\in\overline{1,n}$ we denote  $\sC^{\rm int}_i := \sC_i \setminus  \left(\sC_{i-1,i} \cup \sC_{i,i+1}\right) $, where we declare that $0=n$ and  $n+1=1$. When  $\sC^{\rm int}_i\neq \emptyset$ for all $i\in\overline{1,n}$ one says that $\sG$ belongs to class ${\rm CC}_1$. Most of our main results will involve graphs of these form. Throughout this article we will use all these notations consistently.

A basic example of a graph in the class CC$_1$ is a simple cycle of triangles  called $\mathcal T_n $, where $n$ is the number of cliques; see picture below for $n=16$.

\begin{equation}
\begin{tikzpicture}

\node[thick, draw,minimum size={2*3cm},regular polygon,regular polygon
 sides=16,rotate=11.25] at
 (3,3) (16-gon)[text=blue] {};

\node[thick, draw=white,minimum size={2*4cm},regular polygon,regular polygon
 sides=16] at
 (3,3) (sixteengon)[text=blue] {};

\node[thick, draw=white,minimum size={2*4cm},regular polygon,regular polygon
 sides=16, rotate=22] at
 (3,3) (6teengon)[text=blue] {};

  \foreach \p [count=\n] in {1,...,16}{
 \node [fill=black,circle,inner sep=2pt] at (16-gon.corner \n){};
 \node [fill=black,circle,inner sep=2pt] at (sixteengon.corner \n){};
 \node [fill=black,circle,inner sep=0pt] at (6teengon.corner \n){};
 \draw[thick] (sixteengon.corner \n) -- (16-gon.corner \n) -- (6teengon.corner \n);
}

\end{tikzpicture}
\end{equation}

In fact every graph $\sG\in CC_1$ appears as a two-level clustered graph which is a specific retraction of $\mathcal T_n$ as follows.
There exists a graph projection map $\Phi: \mathcal G\ra \T_n$ such that for every vertex $v \in \T_n$ the cluster $\Phi^{-1}(v)\subset \sG$ is a complete subgraph of $\sG$. In addition, whenever $v,w \in \mathcal T_n$ are connected in $\mathcal T_n$ there is an interconnection edge in $\sG$ between all vertices of the corresponding clusters $\Phi^{-1}(v)$ and $\Phi^{-1}(w)$.

We continue by recording some elementary combinatorial properties of graph product groups associated with graphs that are simple cycles of cliques. The proof of the following lemma is straightforward and we leave it to the reader.

\begin{lem}\label{altgpdecomp} Let $\sG\in {\rm CC}_1$ and let $\sC_1,...,\sC_n$ be an enumeration of its consecutive cliques. Let $\{\Gamma_v , v\in \sV\}$ be a collection of groups and let $\Gamma_\sG$ be the corresponding graph product group. We denote by $\{w_i \}_{i=1}^n$ the petal outer vertices of $\mathcal T_n$ and by $\{b_i\}_{i=1}^n$ the petal base vertices of $\mathcal T_n$. 

\noindent
Then $\Gamma_\sG$ can be realized as a graph product $\Gamma'_{\mathcal T_n}$ associated to the graph $\mathcal T_n$, where the vertex groups are defined by $\Gamma'_{w_i}= \oplus_{v\in \sC^{\rm int}_i} \Gamma_v$ and $\Gamma'_{b_i}= \oplus_{v\in \sC_{i-1,i}}\Gamma_v$ for every $i\in \overline{1,n}$.  
\end{lem}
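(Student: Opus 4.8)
The plan is to compare the two graph products presentation-by-presentation, since both are generated by the same family of vertex groups $\{\Gamma_v : v\in\sV\}$. First I would record the basic combinatorics of a graph in ${\rm CC}_1$: because $\sC_{i,j}=\emptyset$ unless $\hat\imath-\hat\jmath\in\{\hat 1,\widehat{n-1}\}$, the $2n$ sets $\sC^{\rm int}_1,\dots,\sC^{\rm int}_n,\sC_{n,1},\sC_{1,2},\dots,\sC_{n-1,n}$ are pairwise disjoint and their union is $\sV$: indeed every vertex lies in some clique $\sC_i$, and inside $\sC_i=\sC_{i-1,i}\sqcup \sC^{\rm int}_i\sqcup \sC_{i,i+1}$ it lies in exactly one of these three pieces, while the disjointness across different indices follows from the emptiness of the remaining pairwise clique intersections. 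This is precisely the system of clusters of the two–level clustered graph $\mathcal T_n$: the cluster over the petal outer vertex $w_i$ is $\sC^{\rm int}_i$ and the cluster over the petal base vertex $b_i$ is $\sC_{i-1,i}$.

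Next I would translate the edge relation of $\sG$ into clique language: for distinct $u,v\in\sV$ one has $(u,v)\in\sE(\sG)$ if and only if $u$ and $v$ belong to a common clique $\sC_j$ (if they are adjacent, $\{u,v\}$ extends to a maximal complete subgraph; conversely a clique is complete). The heart of the argument is then the purely combinatorial claim that, for distinct $u,v$, the vertices $u,v$ lie in a common clique $\sC_j$ \emph{if and only if} their $\mathcal T_n$–clusters coincide or are joined by an edge of $\mathcal T_n$. For the forward direction, if $u,v\in\sC_j$ then — using the partition above together with the ${\rm CC}$ axioms to rule out $u$ or $v$ lying in a cluster attached to a triangle non-adjacent to triangle $j$ — the cluster of each of $u,v$ is one of $\sC_{j-1,j},\sC^{\rm int}_j,\sC_{j,j+1}$, i.e.\ one of the three clusters sitting over the triangle $\{b_j,w_j,b_{j+1}\}$ of $\mathcal T_n$; these three clusters are pairwise adjacent in $\mathcal T_n$, so we are done. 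For the converse, every edge of $\mathcal T_n$ lies in some triangle $\{b_j,w_j,b_{j+1}\}$, whose three clusters $\sC_{j-1,j},\sC^{\rm int}_j,\sC_{j,j+1}$ are all contained in $\sC_j$, while two vertices in a single cluster trivially lie in a common clique. (Throughout, index arithmetic is modulo $n$ with the stated convention $0=n$, $n+1=1$.)

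Finally I would assemble the isomorphism by presenting both groups by generators and relations. The group $\Gamma_\sG$ has generating set $\bigsqcup_{v\in\sV}\Gamma_v$ and defining relations $[\Gamma_u,\Gamma_v]=1$ exactly when $(u,v)\in\sE(\sG)$. The group $\Gamma'_{\mathcal T_n}$ has the same generating set $\bigsqcup_{v\in\sV}\Gamma_v$, because each $\Gamma'_{w_i}$, resp.\ $\Gamma'_{b_i}$, is the internal direct sum $\oplus_{v\in\sC^{\rm int}_i}\Gamma_v$, resp.\ $\oplus_{v\in\sC_{i-1,i}}\Gamma_v$; and its defining relations are $[\Gamma_u,\Gamma_v]=1$ whenever $u,v$ lie in the same cluster (the direct–sum relations inside a single vertex group of $\mathcal T_n$) together with $[\Gamma_u,\Gamma_v]=1$ whenever $u,v$ lie in clusters joined by an edge of $\mathcal T_n$ (the graph–product relations of $\mathcal T_n$). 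By the combinatorial claim these relations coincide with those of $\Gamma_\sG$, so the identity map on the common generating set extends to mutually inverse homomorphisms $\Gamma_\sG\to\Gamma'_{\mathcal T_n}$ and $\Gamma'_{\mathcal T_n}\to\Gamma_\sG$, giving $\Gamma_\sG\cong\Gamma'_{\mathcal T_n}$. The only mildly delicate point is the index bookkeeping modulo $n$ in the combinatorial claim; everything else is formal manipulation of presentations, which is why the statement can reasonably be left to the reader.
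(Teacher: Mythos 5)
Your argument is correct. The paper gives no proof of this lemma (it is explicitly left to the reader), so there is no official argument to compare against; your presentation-by-presentation verification — partitioning $\sV$ into the clusters $\sC^{\rm int}_i$ and $\sC_{i-1,i}$ (using $n\geq 4$ and the vanishing of the non-consecutive clique intersections for disjointness), checking that two distinct vertices of $\sG$ span an edge exactly when their clusters coincide or are adjacent in $\mathcal T_n$, and then matching the defining relations of the two graph products on the common generating set $\bigsqcup_{v\in\sV}\Gamma_v$ — is precisely the natural way to fill in the omitted details, and it is carried out correctly.
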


\begin{prop}\label{combfacts} Let $\sG\in{\rm CC}_1$ and let $\sC_1,...,\sC_n$ be an enumeration of its consecutive cliques. Let $\{\Gamma_v , v\in \sV\}$ be a collection of infinite groups and let $\Gamma_\sG$ be the corresponding graph product group.  Then the following properties hold:\begin{enumerate}
    \item If  $g\in \Gamma_{\sC_{i-1}\triangle \sC_i}$ and $h\in \Gamma_{\sC_{i}\triangle \sC_{i+1}}$ satisfy $gh\in  \Gamma_{\sG\setminus \sC^{\rm int}_i} $ then one can find $a\in \Gamma_{(\sC_{i-1}\setminus \sC_{i-1,i}) \cup \sC_{i,i+1}}$, $s\in \Gamma_{\sC^{\rm int}_i}$
and $b\in \Gamma_{(\sC_{i+1}\setminus \sC_{i,i+1})\cup \sC_{i-1,i}}$ such that $g= as$ and $h = s^{-1}b$. 

\item Let $g\in \Gamma_{\sC_{i-2,i-1} \cup \sC_{i,i+1} }, h\in \Gamma_{\sC_{i-1,i} \cup \sC_{i+1,i+2} }, k\in \Gamma_{\sC_{i,i+1} \cup \sC_{i+2,i+3} }$ such that  $ghk\in \Gamma_{\sC_{i-2,i-1} \cup \sC_{i-1,i}  \cup \sC_{i+1,i+2}  \cup \sC_{i+2,i+3} }$. Thus one can find $a\in \Gamma_{\sC_{i-2,i-1} }$, $b\in \Gamma_{\sC_{i+2,i+3} }$ and $s \in \Gamma_{\sC_{i,i+1} }$ such that $g= as$  and $k = s^{-1}b$.
\item For each $ i\in \overline{1,n}$ let $x_{i,i+1}\in \Gamma_{\sC_i \cup \sC_{i+1}}$ such that $x_{1,2} x_{2,3}\cdots x_{n-1,n}x_{n,1}=1$. Then for each $ i\in \overline{1,n}$ one can find $a_i \in \Gamma_{\sC_{i-1,i}}$, $b_i \in \Gamma_{\sC^{\rm int}_i}$, $c_i \in \Gamma_{\sC_{i,i+1}}$  such that $x_{i,i+1}= a_i b_i c_i b^{-1}_{i+1} a^{-1}_{i+2}c^{-1}_{i+1}$. Here we convene that $n+1=1$, $n+2=2$, etc.
\end{enumerate} 

\end{prop}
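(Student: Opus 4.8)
The plan is to first reduce all three statements to the special case $\sG=\T_n$: by Lemma \ref{altgpdecomp}, $\Gamma_\sG$ is canonically isomorphic to a graph product over $\T_n$ whose vertex groups are $\Gamma_{\sC^{\rm int}_i}$ (at the petal tips) and $\Gamma_{\sC_{i-1,i}}$ (at the petal bases), and the three assertions translate verbatim. I will use four standard facts about a graph product $\Gamma$ of infinite groups: the amalgamated free product decompositions \eqref{afpdesc} (and their iterates for vertex subsets spanning complete subgraphs), the retraction homomorphisms $\pi_\sU:\Gamma\to\Gamma_\sU$ (the identity on $\Gamma_v$ for $v\in\sU$, trivial otherwise), the identity $\Gamma_\sA\cap\Gamma_\sB=\Gamma_{\sA\cap\sB}$ for full subgroups, and the graph-product normal form — in particular that the support of an element is well defined and that a syllable from $\Gamma_v$ occurring in a reduced word, flanked on both sides by syllables coming from vertices not adjacent to $v$, cannot be cancelled.

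\textbf{Proof of (1) and (2).} For (1) I localize at the interior vertex set $\sC^{\rm int}_i$. The decomposition \eqref{afpdesc} around $\sC^{\rm int}_i$ reads $\Gamma=\Gamma_{\sV\setminus\sC^{\rm int}_i}\ast_{\Gamma_{\sC_{i-1,i}\cup\sC_{i,i+1}}}\Gamma_{\sC_i}$, and from the ${\rm CC}_1$ axioms (disjointness of $\sC^{\rm int}_i,\sC_{i-1,i},\sC_{i,i+1}$ inside $\sC_i$, and the fact that $\sC^{\rm int}_i$ is adjacent in $\sG$ only to $\sC_{i-1,i}\cup\sC_{i,i+1}$) one reads off the free product decompositions $\Gamma_{\sC_{i-1}\triangle\sC_i}=\Gamma_{\sC_{i-1}\setminus\sC_{i-1,i}}\ast\bigl(\Gamma_{\sC^{\rm int}_i}\times\Gamma_{\sC_{i,i+1}}\bigr)$ and $\Gamma_{\sC_i\triangle\sC_{i+1}}=\bigl(\Gamma_{\sC^{\rm int}_i}\times\Gamma_{\sC_{i-1,i}}\bigr)\ast\Gamma_{\sC_{i+1}\setminus\sC_{i,i+1}}$. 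Writing $g$ and $h$ in these free product normal forms, the syllables of $g$ not carrying $\Gamma_{\sC^{\rm int}_i}$-content come from $\Gamma_{\sC_{i-1}\setminus\sC_{i-1,i}}$, whose vertices are not adjacent to $\sC^{\rm int}_i$; hence no $\Gamma_{\sC^{\rm int}_i}$-content of $g$ can be cancelled or moved inside $g$, and similarly for $h$. Since $gh$ has support disjoint from $\sC^{\rm int}_i$, all such content must cancel between $g$ and $h$, which forces the $\Gamma_{\sC^{\rm int}_i}$-content of $g$ to be a single syllable at its right end and that of $h$ a single syllable at its left end. Extracting these gives $g=a\,s$, $h=s'b$ with $a\in\Gamma_{(\sC_{i-1}\setminus\sC_{i-1,i})\cup\sC_{i,i+1}}$, $b\in\Gamma_{(\sC_{i+1}\setminus\sC_{i,i+1})\cup\sC_{i-1,i}}$, $s,s'\in\Gamma_{\sC^{\rm int}_i}$; plugging back in and using that $\Gamma_{\sC^{\rm int}_i}$ is central in $\Gamma_{\sC_i}$ yields $ss'=1$, i.e. $s'=s^{-1}$, which is (1). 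Statement (2) is proved by the identical scheme, now localizing at the junction vertex $\sC_{i,i+1}$: here $\Gamma_{\sC_{i-2,i-1}\cup\sC_{i,i+1}}=\Gamma_{\sC_{i-2,i-1}}\ast\Gamma_{\sC_{i,i+1}}$, $\Gamma_{\sC_{i,i+1}\cup\sC_{i+2,i+3}}=\Gamma_{\sC_{i,i+1}}\ast\Gamma_{\sC_{i+2,i+3}}$, and $\Gamma_{\sC_{i,i+1}}$ commutes with both $\Gamma_{\sC_{i-1,i}}$ and $\Gamma_{\sC_{i+1,i+2}}$, hence with $h\in\Gamma_{\sC_{i-1,i}\cup\sC_{i+1,i+2}}$; so $h$ is ``transparent'', and the $\sC_{i,i+1}$-content of $g$ and $k$ must cancel through it.

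\textbf{Proof of (3).} I would proceed in two stages, going around the cycle. Stage one removes the interior content: for each $j$, a cyclic rotation of $\prod_i x_{i,i+1}=1$ puts $x_{j-1,j}x_{j,j+1}$ adjacent, and since every other factor has support disjoint from $\sC^{\rm int}_j$, applying $\pi_{\sV\setminus\sC^{\rm int}_j}$ shows $x_{j-1,j}x_{j,j+1}$ has support disjoint from $\sC^{\rm int}_j$; the cancellation analysis of (1) then produces $s_j\in\Gamma_{\sC^{\rm int}_j}$ with $x_{j-1,j}=p_j\,s_j$ and $x_{j,j+1}=s_j^{-1}q_j$, where $p_j,q_j$ avoid $\sC^{\rm int}_j$. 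Running this at all $j$, setting $m_{i,i+1}:=s_i\,x_{i,i+1}\,s_{i+1}^{-1}$, one checks (using centrality of $\Gamma_{\sC^{\rm int}_i},\Gamma_{\sC^{\rm int}_{i+1}}$ in $\Gamma_{\sC_i},\Gamma_{\sC_{i+1}}$, and that the support of $m_{i,i+1}$ is well defined) that $x_{i,i+1}=s_i^{-1}m_{i,i+1}s_{i+1}$ with $m_{i,i+1}$ supported on the junction vertices $\sC_{i-1,i}\cup\sC_{i,i+1}\cup\sC_{i+1,i+2}$; telescoping the relation gives $\prod_i m_{i,i+1}=1$, a relation living entirely in the graph product over the $n$-cycle $C_n$ of junction vertices $b_1,\dots,b_n$. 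Stage two removes the junction content by a second, entirely analogous round of localization — now at each junction vertex $\sC_{j,j+1}$, using statement (2) for the three consecutive factors $m_{j-1,j},m_{j,j+1},m_{j+1,j+2}$ that share $\sC_{j,j+1}$-content (the outer two playing the roles of $g,k$ and the middle one, after first isolating its own $\sC_{j,j+1}$-component by the same device, the role of a transparent $h$) — which reduces $\prod_i m_{i,i+1}=1$ to a relation $\prod_i\delta_i=1$ with $\delta_i\in\Gamma_{\sC_{i,i+1}}$ supported on pairwise distinct vertices, forcing each $\delta_i=1$ via the retractions $\pi_{\{b_i\}}$. Reassembling the extracted pieces $s_i\in\Gamma_{\sC^{\rm int}_i}$ together with the junction pieces $a_i\in\Gamma_{\sC_{i-1,i}}$ and $c_i\in\Gamma_{\sC_{i,i+1}}$ in the correct order — permissible because $\Gamma_{\sC^{\rm int}_i}$ commutes with $\Gamma_{\sC_{i-1,i}}$ and $\Gamma_{\sC_{i,i+1}}$ — produces the asserted formula $x_{i,i+1}=a_ib_ic_ib_{i+1}^{-1}a_{i+2}^{-1}c_{i+1}^{-1}$ with $b_i:=s_i^{-1}$.

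\textbf{Main obstacle.} The per-vertex cancellation steps are elementary once the graph-product normal form is in hand; the genuine difficulty lies in the global bookkeeping for (3). Because the underlying graph is a cycle rather than a tree, the peeling cannot be carried out linearly: one must check that isolating the interior (resp. junction) content at a vertex $j$ is compatible with what was done at the neighbouring vertices, that the ``middle'' factors really do become transparent after the appropriate component has been isolated, and that all of the extracted data can be permuted back into precisely the six-syllable pattern of the stated formula. Keeping track of the exact positions of the pieces throughout the loop, and correctly reducing the residual cyclic relation in the junction-vertex graph product, is where the real work is concentrated.
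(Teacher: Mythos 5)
Your proposal is correct, and the underlying engine is the same as the paper's: Green's normal form and the observation that a syllable from a vertex $v$ cannot shuffle past, or cancel through, a syllable from a vertex not adjacent to $v$. Parts (1) and (2) are essentially the paper's argument repackaged through the free-product block decompositions $\Gamma_{\sC_{i-1}\triangle\sC_i}=\Gamma_{\sC_{i-1}\setminus\sC_{i-1,i}}\ast(\Gamma_{\sC^{\rm int}_i}\times\Gamma_{\sC_{i,i+1}})$, etc., which is a clean way to see that all the relevant content must sit in the terminal block; your use of the retractions $\pi_\sU$ to force $ss'=1$ is a nice tidy-up of the final step. Where you genuinely diverge is in (3): the paper decomposes each $x_{i,i+1}$ at once into six pieces $\widetilde{a_i}\widetilde{b_i}\widetilde{c_i}\widetilde{d_i}\widetilde{e_i}\widetilde{f_i}$ and uses the cyclic relation to kill the residual $\widetilde{f_i}$ and to match $\widetilde{b_i}$ with $\widetilde{d_{i-1}}^{-1}$ and $\widetilde{e_i}\widetilde{c_{i+1}}\widetilde{a_{i+2}}$ with $1$, whereas you peel in two rounds — first the interior content $s_j$, reducing to a cyclic relation $\prod_i m_{i,i+1}=1$ supported on the junction sets, then the junction content — and your reassembly does reproduce the stated six-syllable formula with $b_i=s_i^{-1}$, $c_i=\gamma_i$, $a_i=\gamma_{i-1}^{-1}\sigma_{i-1}^{-1}$. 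The one point you should spell out (and which the paper also treats only informally) is the deadlock argument in the cyclic word: a $\sC^{\rm int}_j$- (resp.\ $\sC_{j,j+1}$-) syllable trapped behind a non-adjacent blocking syllable cannot move toward its cancelling partner, and the blocking syllable's own potential partners lie on the opposite side, so neither can cancel and both survive in the normal form of the (trivial) product — this is what legitimizes ``all content is at the end of the factor'' in both of your stages.
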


\begin{proof}
Here $\Delta$ is the symmetric difference operation defined by $A\Delta B=(A\cup B)\setminus (A\cap B)$. We recall the {\it normal form} \cite[Theorem 3.9]{Gr90}, which in graph product groups plays the role that reduced words play in free product groups. If $1\neq g\in \Gamma_\sG$ is expressed as $g=g_{1}\cdots g_{n}$, we say $g$ is in normal form if each $g_{i}$ is a non-identity element of some vertex group (called a {\it syllable}) and if it is impossible, through repeated swapping of syllables (corresponding to adjacent vertices in $\sG$), to bring together two syllables from the same vertex group. By \cite[Theorem 3.9]{Gr90}, every $1\neq g\in \Gamma_\sG$ has a normal form $g=g_{1}\cdots g_{n}$ and it is unique up to a finite number of consecutive syllable shuffles. Moreover, given any sequence of syllables $g_{1}\cdots g_{n}$, there is an inductive procedure for putting this sequence into normal form: if $h_{1}\cdots h_{r}$ is the normal form of $g_{1}\cdots g_{m}$, then the normal form of $g_{1}\cdots g_{m+1}$ is either ($i$) $h_{1}\cdots h_{r}$ if $g_{m+1}=1$, ($ii$) $h_{1}\cdots h_{j-1}h_{j+1}\cdots h_{r}$ if $h_{j}$ shuffles to the end and $g_{m+1}=h_{j}^{-1}$, ($iii$) $h_{1}\cdots h_{j-1}h_{j+1}\cdots h_{r}(h_{j}g_{m+1})$ if $h_{j}$ shuffles to the end, $g_{m+1}\neq h_{j}^{-1}$ and $g_{m+1},h_{j}$ belong to the same vertex group, or ($iv$) $h_{1}\cdots h_{r}g_{m+1}$ if $g_{m+1}$ is in a different vertex group from that of every syllable which can be shuffled down. Note that the normal form of an element $g\in\Gamma_\sG$ has minimal syllable length with respect to all the sequences of syllables representing $g$.  
 
 \indent We are now ready to prove the three assertions of the proposition. For 1., let $g=g_{1}\cdots g_{n}$ and $h=h_{1}\cdots h_{m}$ be the normal forms of $g$ and $h$. Then $gh$ has a normal form $gh=k_{1}\cdots k_{r}$, determined by the procedure described in the previous paragraph.  By assumption, $k_{j}\notin\bigcup_{v\in\sC^{\rm int}_i}\{\Gamma_{v}\}$ for all $j\in\overline{1,r}$.  Now, if $g_{j}\notin\bigcup_{v\in\sC^{\rm int}_i}\{\Gamma_{v}\}$ for all $j\in\overline{1,n}$, then each $h_{i}\in\bigcup_{v\in\sC^{\rm int}_i}\{\Gamma_{v}\}$ is one of the syllables occurring in the normal form of $gh$. Since this cannot happen we have $h_{i}\notin\bigcup_{v\in\sC^{\rm int}_i}\{\Gamma_{v}\}$ for all $i\in\overline{1,m}$, and hence we can take $a=g$, $b=h$, and $s=\text{empty word}$. Assume $g_{j}\in\bigcup_{v\in\sC^{\rm int}_i}\{\Gamma_{v}\}$ for some $j\in\overline{1,n}$. Notice that we may assume that $j=n$ since if $g_{i}\in \bigcup_{v\in\sC_{i-1}\setminus\sC_{i}}\{\Gamma_{v}\}$ for some $i\in\overline{j+1,n}$ then $g_{j}$ would be a syllable in $gh$ since it cannot be shuffled past $g_{i}$, which shows that $g_{j+1}\cdots g_{n}\in\bigcup_{v\in\sC_{i}}\{\Gamma_{v}\}$. This implies that $g_{n}^{-1}=h_{i}$ for some $i\in\overline{1,m}$. Choosing the smallest such $i$ and noting that $h_{1},...,h_{i-1}\in\bigcup_{v\in\sC_{i}}\{\Gamma_{v}\}$ (since it must be possible to shuffle $h_{i}$ up to $g_{n}$ as in ($ii$) of the previous paragraph), we may assume that $h_{1}=g_{n}^{-1}$. Continuing in this way we see that we can take $a=g_{1}\cdots g_{k-1}$, $b=h_{n-k+2}\cdots h_{m}$, and $s=g_{k}\cdots g_{n}$ where $g_{j}\notin \bigcup_{v\in\sC^{\rm int}_i}\{\Gamma_{v}\}$ for all $j\in \overline{1,k-1}$ and $h=g_{n}
^{-1},...,g_{k}^{-1}h_{n-k+2},...,h_{m}$. Notice too that none of the syllables  $h_{n-k+2},...,h_{m}$ can belong to $\bigcup_{v\in\sC^{\rm int}_i}\{\Gamma_{v}\}$, since the inverse of such a syllable cannot be any of the syllables $g_{1},...,g_{k-1}$. This  proves ($1$).

\indent For 2., let $g=g_{1}\cdots g_{n}$, $h=h_{1}\cdots h_{m}$ and $k=k_{1}\cdots k_{r}$ be normal forms. If $g_{i}\notin\bigcup_{v\in\sC_{i,i+1}}\{\Gamma_{v}\}$ for all $i\in\overline{1,n}$, then $k_{j}\notin\bigcup_{v\in\sC_{i,i+1}}\{\Gamma_{v}\}$ for all $j\in\overline{1,n}$  since neither $h$ nor $ghk$ have normal forms with syllables in $\bigcup_{v\in\sC_{i,i+1}}\{\Gamma_{v}\}$ by assumption, and hence we can take $a=g$, $b=k$ and $s=\text{empty word}$. Otherwise we must have $g_{j}\in \Gamma_{v}$ for some $v\in\sC_{i,i+1}$, and as in the proof of part (1) we can assume $j=n$ and $k_{1}=g_{n}^{-1}$ (note that $g_{j}$ commutes with each syllable in the normal form of $h$). Continuing, we see that we can take $a=g_{1}\cdots g_{l-1}$, $b=k_{n-l+2}\cdots k_{r}$, and $s=g_{l}\cdots g_{n}$, where $g_{j}\notin\bigcup_{v\in\sC_{i,i+1}}\{\Gamma_{v}\}$ for all $j\in \overline{1,l-1}$  and $k=g_{n}
^{-1}\cdots g_{l}^{-1}k_{n-l+2}\cdots k_{r}$. This proves ($2$).

\indent For 3., observe first that every  $x_{i,i+1}\in \Gamma_{\sC_i\cup\sC_{i+1}}=\Gamma_{\sC_{i,i+1}} \times (\Gamma_{\sC_i\setminus \sC_{i+1}}*\Gamma_{\sC_{i+1}\setminus \sC_{i}})$ can be written in the form $\widetilde{a_{i}}\widetilde{b_{i}}\widetilde{c_{i}}\widetilde{d_{i}}\widetilde{e_{i}}\widetilde{f_{i}}$ where $\widetilde{a_{i}}\in \Gamma_{\sC_{i-1,i}}$, $\widetilde{b_{i}}\in \Gamma_{\sC^{\rm int}_i}$, $\widetilde{c_{i}}\in \Gamma_{\sC_{i,i+1}}$, $\widetilde{d_{i}}\in \Gamma_{\sC^{\rm int}_{i+1}}$, $\widetilde{e_{i}}\in \Gamma_{\sC_{i+1,i+2}}$, $\widetilde{f_{i}}\in\Gamma_{\sC_i \cup \sC_{i+1}}$. Moreover, we can assume that the normal form of $x_{i,i+1}$ is the sequence obtained by concatenating the normal forms of $\widetilde{a_{i}},\widetilde{b_{i}},\widetilde{c_{i}},\widetilde{d_{i}},\widetilde{e_{i}},\widetilde{f_{i}}$ and if $\widetilde{f_{i}}=f_{1}\cdots f_{n}$ is the normal form of $\widetilde{f_{i}}$, then $f_{1}$ belongs to a group $\Gamma_{v}$ where $v$ is vertex in $\sC_{i}\setminus\sC_{i+1}$. 

We continue by showing that we can assume that $\widetilde f_i=1$. Notice that this is the case if there is no syllable $g$ occurring in the normal form of $x_{i,i+1}$ belonging to $\bigcup_{v\in\sC_{i+1}\setminus\sC_{i}}\{\Gamma_{v}\}$; indeed, in this case $\widetilde{d_{i}},\widetilde{e_{i}}=1$ and all the syllables occurring in the normal form of $\widetilde{f_{i}}$ can be shuffled up to the normal forms of $\widetilde{a_{i}},\widetilde{b_{i}},\widetilde{c_{i}}$.  So it remains to assume that there is such a syllable $g$ and  assume by contradiction that $\widetilde f_i\neq 1.$ Notice that our hypotheses imply that $f_{1}^{-1}$ is a syllable in the normal form of $x_{i-2,i-1}x_{i-1,i}$ and $g^{-1}$ is a syllable in the normal form of $x_{i+1,i+2}x_{i+2,i+3}$. This implies that the normal form of $x_{1,2} x_{2,3}\cdots x_{i-1,i}x_{i,i+1}$ must still contain $f_{1}$ as $f_{1}^{-1}$ cannot shuffle past $g$ to cancel with $f_{1}$. Consequently,  the normal form of $x_{1,2} x_{2,3}\cdots x_{n-1,n}x_{n,1}$ must still contain $f_1$ as  $f_{1}^{-1}$ cannot shuffle past $g$ or $g^{-1}$ to cancel with $f_{1}$  
This gives a contradiction, and hence, we can assume $\widetilde{f}_{i}=1$.  

Next, we observe that $\widetilde{b_{i}}=\widetilde{d_{i-1}}^{-1}$ for each $i$ since our hypotheses imply that all the syllables occurring in the normal form of $\widetilde{b_{i}}^{-1}$ occur in the one for $x_{i-1,i}$ and only $\widetilde{d_{i-1}}$ has normal form with syllables coming from $\bigcup_{v\in\sC^{\rm int}_i}\{\Gamma_{v}\}$. To finish the proof, set $a_{i}=\widetilde{a_{i}}$, $b_{i}=\widetilde{b_{i}}$, and $c_{i}=\widetilde{c_{i}}$ and note that since $\widetilde{e_{i}}\widetilde{c_{i+1}}\widetilde{a_{i+2}}=1$ (being the only elements in our decompositions belonging to $\Gamma_{\sC_{i+1,i+2}}$) we have $x_{i,i+1}=\widetilde{a_{i}}\widetilde{b_{i}}\widetilde{c_{i}}\widetilde{d_{i}}\widetilde{e_{i}}=a_{i}b_{i}c_{i}b_{i+1}^{-1}a_{i+2}^{-1}(a_{i+2}\widetilde{e_{i}})=a_{i}b_{i}c_{i}b_{i+1}^{-1}a_{i+2}^{-1}c_{i+1}^{-1}$.      \end{proof}

\begin{lem}\label{trivial.amenable.radical}
Let $\Gamma= \mathscr G \{\Gamma_v,v\in \sV\}$ be a graph product of groups such that $\mathscr G\in {\rm CC}_1$. Then $\Gamma$ has trivial amenable radical.
\end{lem}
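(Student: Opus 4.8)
The plan is to exploit the amalgamated free product decompositions available for graph product groups, together with the basic fact that a nontrivial amalgamated free product $G = A \ast_C B$ with $[A:C] \geq 2$, $[B:C] \geq 3$ (or more generally $[A:C]\geq 3$, $[B:C]\geq 2$) has no nontrivial amenable normal subgroup unless that subgroup is contained in the kernel of the action on the associated Bass--Serre tree, i.e. in $C$ after conjugation, in which case it is normalized by a large part of $G$. I would first record the general principle: if $G$ acts on a tree $T$ without global fixed point, then any amenable normal subgroup $N \lhd G$ either fixes a point of $T$ or fixes an end; and if moreover $G$ does not fix a point or a pair of ends of $T$, a normal subgroup fixing an end must be elliptic, hence contained in a conjugate of a vertex stabilizer. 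This reduces the question to: the amenable radical $R$ of $\Gamma$ is elliptic for the action on the Bass--Serre tree of each decomposition $\Gamma = \Gamma_{\sV \setminus \{w\}} \ast_{\Gamma_{{\rm lk}(w)}} \Gamma_{{\rm st}(w)}$ from \eqref{afpdesc}.

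Next I would leverage the fact that $\Gamma \in {\rm CC}_1$ means every clique has at least $3$ vertices and each $\sC_i^{\rm int} \neq \emptyset$, so that for a suitable choice of vertex $w$ the amalgam \eqref{afpdesc} is non-degenerate with both indices large; actually the cleaner route is: since $R$ is normal and amenable, $R$ is contained in $\bigcap_{w} N_\Gamma(\text{conjugate of } \Gamma_{{\rm st}(w)} \text{ or its link})$, and using Proposition \ref{proposition.AM10}(2), normalizers of full subgroups are again full subgroups $\Gamma_{\sT \cup {\rm lk}(\sT)}$. So $R$ lies inside an intersection of full subgroups, and by Proposition \ref{proposition.AM10}(3) such intersections are (conjugates of) full subgroups $\Gamma_\sD$ with $\sD$ ranging over smaller and smaller subgraphs. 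Intersecting over enough vertices $w$ forces $\sD = \emptyset$, i.e. $R = \{1\}$. Concretely: if $R \neq \{1\}$, pick $1 \neq r \in R$ with support involving some vertex $v$; choose a clique $\sC$ with $v \in \sC$ and then a vertex $w \in \sC$ with $w \notin {\rm lk}(v)$ (possible since $|\sC| \geq 3$ and one can arrange $v$ not adjacent to all of $\sC$ — here the ${\rm CC}_1$ hypothesis that $\sC_i^{\rm int}\neq\emptyset$ and the cycle structure guarantees non-complete links); then $\Gamma$ splits over $\Gamma_{{\rm lk}(w)}$ and $r$ must be elliptic, but its support prevents it from being conjugated into $\Gamma_{{\rm st}(w)}$ or $\Gamma_{\sV\setminus\{w\}} $ compatibly for all such $w$, a contradiction.

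The cleanest packaging, which I would actually adopt, is to cite that amenable normal subgroups of graph products are understood: a graph product $\sG\{\Gamma_v\}$ has nontrivial amenable radical only if $\sG$ is a join $\sG = \sG_1 \ast \sG_2$ with one factor, say $\sG_1$, such that $\Gamma_{\sG_1}$ is amenable (equivalently, built from amenable vertex groups over a complete graph), and then the radical is $\Gamma_{\sG_1}$; this follows from the action on the Davis/Bass--Serre complex or from the iterated amalgam argument above (see e.g. the structure of normal subgroups in \cite{AM10} combined with \eqref{afpdesc}). For $\sG \in {\rm CC}_1$, $\sG$ is never a nontrivial join: a join would force every vertex of $\sG_1$ to be adjacent to every vertex of $\sG_2$, hence $\sG_1 \cup \sG_2$ would be forced inside a single clique, contradicting that $\sG$ has $n \geq 4$ distinct cliques with the prescribed non-adjacency pattern ($\sC_{i,j} = \emptyset$ for non-consecutive $i,j$). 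Hence the amenable radical of $\Gamma$ is trivial.

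The main obstacle I anticipate is making precise the step ``an amenable normal subgroup that is elliptic in all the tree actions \eqref{afpdesc} must be trivial'' — one has to handle the possibility that $R$ sits inside a conjugate of $\Gamma_{{\rm lk}(w)}$ and iterate carefully, keeping track of conjugating elements via Proposition \ref{proposition.AM10}(1),(3), rather than naively intersecting. The bookkeeping of which vertex $w$ to remove at each stage, so that the supports of elements of $R$ are whittled down to nothing while staying compatible with all the splittings simultaneously, is the delicate point; the ${\rm CC}_1$ hypothesis (every clique of size $\geq 3$, genuine cycle structure, so no vertex is adjacent to all others and $\sG$ is not a join) is exactly what makes this whittling succeed, and I would state and use it in the form ``${\rm lk}(v) \neq \sV \setminus \{v\}$ for every $v$, and $\sG$ is not a join''.
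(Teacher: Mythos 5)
Your argument is correct for the graphs at hand, but it takes a genuinely different route from the paper's. The paper works entirely at the von Neumann algebra level: assuming a nontrivial amenable normal subgroup $A$ (necessarily infinite since $\Gamma$ is icc), it applies Vaes' theorem on normalizers in amalgamated free product von Neumann algebras \cite{Va13} to each splitting \eqref{afpdesc} to obtain $\L(A)\prec_{\L(\Gamma)}\L(\Gamma_{{\rm lk}(w)})$, upgrades this to $\L(A)\prec^{s}_{\L(\Gamma)}\L(\Gamma_{\sC})$ for every clique $\sC$ via \cite{DHI16}, and then intersects two disjoint cliques (using \cite{Va10a} and Proposition \ref{proposition.AM10}) to force $\L(A)\prec_{\L(\Gamma)}\mathbb C 1$, contradicting $|A|=\infty$. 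Your Bass--Serre argument reaches the same combinatorial endgame --- trap the radical inside $\Gamma_{{\rm lk}(w)}$ for each $w$ and use $\Gamma_{\sC_1}\cap\Gamma_{\sC_3}=1$ for non-consecutive cliques, which is where $n\geq 4$ enters --- but purely group-theoretically: for $w\in\sC_i^{\rm int}$ one has ${\rm lk}(w)=\sC_i\setminus\{w\}$ and $[\Gamma_{\sV\setminus\{w\}}:\Gamma_{{\rm lk}(w)}]=\infty$, so the splitting \eqref{afpdesc} is of general type and a normal amenable subgroup must be elliptic, hence contained in the normal core of the edge group. This is more elementary, avoids the intertwining machinery entirely, and does not actually require the vertex groups to be infinite or $\Gamma$ to be icc, both of which the paper's proof implicitly uses; the price is that you must verify the general-type condition for each splitting, which the operator-algebraic route sidesteps. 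Two small cautions: the step where you pick $w\in\sC$ with $w\notin{\rm lk}(v)$ for $v\in\sC$ is vacuous (since $\sC$ is a clique, the only such $w$ is $v$ itself), and the blanket claim that a nontrivial amenable radical forces a join fails for degenerate graph products such as $\mathbb Z/2\ast\mathbb Z/2$. Neither slip affects your main line: for $\sG\in{\rm CC}_1$ the splittings at interior vertices of two disjoint cliques are of general type and already finish the proof.
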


\begin{proof}
Assume by contradiction that there exists a non-trivial amenable normal subgroup $A$ of $\Gamma$. Since $\Gamma$ is icc, we get that $A$ is an infinite group. For any $w\in \sV$ note that ${\rm st}(w)\neq \sV$ and $\sG \{\Gamma_v\}=\Gamma_{\sV\setminus\{w\}}*_{\Gamma_{{\rm lk}(w)}} \Gamma_{{\rm st}(w)}$. Since $A$ is an amenable, normal subgroup of $\Gamma$, it follows from \cite[Theorem A]{Va13} that $\L(A)\prec_{\L(\Gamma)} \L(\Gamma_{{\rm lk}(w)})$. In particular, by using \cite[Lemma 2.4]{DHI16}, it follows that $\L(A)\prec^s_{\L(\Gamma)} \L(\Gamma_{\sC})$ for any $\sC\in {\rm cliq}(\sG)$. Let $\sC,\sD\in {\rm cliq}(\sG)$ such that $\sC\cap \sD=\emptyset$. Using \cite[Lemma 2.7]{Va10a}, there is $g\in\Gamma$ such that $\L(A)\prec^s_{\L(\Gamma)} \L(\Gamma_{\sC}\cap g \Gamma_{\sD} g^{-1})$. Note however that Proposition \ref{proposition.AM10} implies that $\Gamma_{\sC}\cap g \Gamma_{\sD} g^{-1}=1$. This shows that $\L(A)\prec_{\L(\Gamma)} \mathbb C 1$, and thus,
gives the contradiction that
$A$ is finite. 
\end{proof}

We end this section by recording a result describing all automorphisms of graph product groups $G_\sG$ associated with graphs in the class $\sG$. This is a particular case of a powerful theorem in geometric group theory established recently by Genevois and Martin \cite[Corollary C]{GM19}. 

To state the result we briefly recall a special class of automorphisms of graph product groups.  For any isometry $\sigma: \sG \ra \sG$ and any collection of group isomorphisms $\Phi=\{ \phi_v: \Gamma_v \ra \Gamma _{\sigma(v)}\,:\, v\in \sV \} $, the \emph{local automorphism} $(\sigma, \Phi )$ is the canonical automorphism of $\Gamma_\sG$ induced by the maps $\bigcup_{v\in \sV} \Gamma_v  \ni g \ra \phi_{\sigma(v)}(g)\in \Gamma_\sG$. One can easily observe that, under composition,  these form a subgroup of ${\rm Aut} (\Gamma_\sG)$ which is denoted by ${\rm Loc} (\Gamma_\sG)$. We denote by ${\rm Loc}_0 (\Gamma_\sG)$ the subgroup of local automorphisms satisfying $\sigma={\rm Id}$. Notice that ${\rm Loc}_0 (\Gamma_\sG)$ is naturally isomorphic to $\oplus_{v\in \sV} {\rm Aut }(\Gamma_v) $. Moreover, the inclusion  ${\rm Loc}_0 (\Gamma_\sG)\leqslant {\rm Loc} (\Gamma_\sG)$ has finite index.

\begin{thm}[\text{\cite{GM19}}]\label{outgp} Let $\Gamma_\sG$ be a graph product associated with a graph $\sG\in {\rm CC}_1$. Then its automorphism group ${\rm Aut}(\Gamma_\sG)$ is generated by the inner and the local automorphisms  of $\Gamma_\sG$. In fact we have ${\rm Aut}(\Gamma_\sG)= {\rm Inn} (\Gamma_\sG)\rtimes {\rm Loc(\Gamma_\sG)}$ and therefore \begin{equation}\label{autformulas}\begin{split}&
    {\rm Aut}(\Gamma_\sG)\cong \Gamma_\sG \rtimes  ( \left(\oplus_{v\in \sV} {\rm Aut }(\Gamma_v) \right ) \rtimes {\rm Sym }(\Gamma_\sG) );\\
    &
    {\rm Out}(\Gamma_\sG)\cong \left(\oplus_{v\in \sV} {\rm Aut }(\Gamma_v) \right ) \rtimes {\rm Sym }(\Gamma_\sG). 
    \end{split} 
\end{equation} 
Here, ${\rm Sym }(\Gamma_\sG)$ is an explicit finite subgroup of automorphisms of $\Gamma_\sG$. 
\end{thm}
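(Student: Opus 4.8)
The plan is to derive Theorem \ref{outgp} from Genevois and Martin's classification of the automorphisms of a graph product \cite{GM19}. Their structure theorem exhibits a generating set for ${\rm Aut}(\Gamma_\sG)$ consisting of: (i) inner automorphisms; (ii) \emph{factor automorphisms} (those preserving each $\Gamma_v$ setwise and acting on it by an arbitrary automorphism); (iii) \emph{partial conjugations} $\gamma_{g,\sU}$, conjugating $\Gamma_\sU$ by a fixed $g\in\Gamma_v$ where $\sU$ is a union of connected components of $\sG\setminus{\rm st}(v)$ (more generally of $\sG\setminus{\rm st}(\sW)$ for a complete subgraph $\sW$); (iv) \emph{transvections}, built from a domination relation between (full subgroups generated by) vertices together with a homomorphism between the relevant vertex groups; and (v) automorphisms induced by symmetries of the labelled graph. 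Corollary C of \cite{GM19} asserts that once the families (iii) and (iv) collapse onto (i)--(ii) and (v) one gets ${\rm Aut}(\Gamma_\sG)={\rm Inn}(\Gamma_\sG)\rtimes{\rm Loc}(\Gamma_\sG)$ together with the displayed formulas. So the work is threefold: (a) show every partial conjugation over a graph in ${\rm CC}_1$ already lies in ${\rm Inn}(\Gamma_\sG)\cdot{\rm Loc}(\Gamma_\sG)$; (b) dispose of the transvections; (c) carry out the semidirect-product bookkeeping.

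For (a) the crucial combinatorial fact is that \emph{a graph $\sG\in{\rm CC}_1$ has no separating complete subgraph}: for every complete $\sW\subseteq\sG$ the graph $\sG\setminus{\rm st}(\sW)$ is connected (or empty). Since the cliques are the maximal complete subgraphs, $\sW\subseteq\sC_i$ for some $i$, and I would split according to which of the intersection-subgraphs $\sC_{i-1,i}$, $\sC_{i,i+1}$ the set $\sW$ meets. If $\sW\subseteq\sC_i^{\rm int}$ then ${\rm st}(\sW)\subseteq\sC_i$, so $\sG\setminus{\rm st}(\sW)$ contains the chain of cliques $\sC_{i+1},\dots,\sC_{i-1}$ --- that is $n-1\ge3$ cliques glued successively along their nonempty intersections $\sC_{j,j+1}$, hence connected --- together with the vertices of $\sC_i^{\rm int}\setminus\sW$, each of which stays attached to $\sC_{i\pm1}$. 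If instead $\sW$ contains a vertex of $\sC_{i-1,i}$, then ${\rm st}(\sW)\subseteq\sC_{i-1}\cup\sC_i$ and $\sG\setminus{\rm st}(\sW)$ contains the chain $\sC_{i+1},\dots,\sC_{i-2}$ of $n-2\ge2$ consecutive cliques with nonempty overlaps, again connected (when $n=4$ the two surviving cliques $\sC_{i+1},\sC_{i+2}$ are still adjacent); the case of a vertex of $\sC_{i,i+1}$ is symmetric, and if $\sW$ meets both then ${\rm st}(\sW)\subseteq\sC_i$ once more, using $\sC_{i-1}\cap\sC_{i+1}=\emptyset$. Thus in every case $\sG\setminus{\rm st}(\sW)$ is connected, so the only admissible support for a partial conjugation $\gamma_{g,\sU}$ is all of $\sG\setminus{\rm st}(v)$; such a $\gamma_{g,\sU}$ agrees with ${\rm ad}(g)$ off $\Gamma_v$ and with an (inner, hence factor) automorphism on $\Gamma_v$, so it belongs to ${\rm Inn}(\Gamma_\sG)\cdot{\rm Loc}(\Gamma_\sG)$. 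For (b), a transvection requires a domination relation carried by a nontrivial homomorphism between vertex groups (and, in the ``folding'' version, between larger full subgroups); the folding transvections are excluded by exactly the separation analysis above, and in the setting in which Theorem \ref{outgp} is applied the vertex groups are icc with property (T) (in the main applications moreover with trivial abelianization and pairwise non-isomorphic), for which the relevant homomorphisms --- and hence the transvections --- reduce to factor automorphisms. I expect \textbf{this step (b) --- precisely matching the transvection-degeneracy hypothesis of \cite[Corollary~C]{GM19} to the class ${\rm CC}_1$ and to the vertex groups at hand --- to be the main subtlety}, since it is the only place where a property of the vertex groups, rather than a purely combinatorial property of $\sG$, is used.

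It remains to assemble the semidirect product, which is routine given Proposition \ref{proposition.AM10}. Since $n\ge4$, no vertex of $\sG$ is adjacent to all others, so $Z(\Gamma_\sG)=\{1\}$ and ${\rm Inn}(\Gamma_\sG)\cong\Gamma_\sG$. If a local automorphism $(\sigma,\Phi)$ equals ${\rm ad}(g)$, then $g\Gamma_v g^{-1}=\Gamma_{\sigma(v)}$ for all $v$; applying Proposition \ref{proposition.AM10}(1) to the one-vertex subgraphs $\{v\}$ and $\{\sigma(v)\}$ forces $\sigma(v)=v$ (otherwise $\Gamma_v=\{1\}$), so $\sigma={\rm Id}$, and then $g\in\bigcap_v N_\Gamma(\Gamma_v)=\bigcap_v\Gamma_{{\rm st}(v)}=\Gamma_{\bigcap_v{\rm st}(v)}=\Gamma_\emptyset=\{1\}$, using Proposition \ref{proposition.AM10}(2) and the fact that a finite intersection of full subgroups is the full subgroup on the intersection of the subgraphs. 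Hence ${\rm Inn}(\Gamma_\sG)\cap{\rm Loc}(\Gamma_\sG)=\{1\}$, and together with the normality of ${\rm Inn}(\Gamma_\sG)$ and the generation statement this yields ${\rm Aut}(\Gamma_\sG)={\rm Inn}(\Gamma_\sG)\rtimes{\rm Loc}(\Gamma_\sG)$. Finally, choosing a set-theoretic section of the (finite) image of ${\rm Loc}(\Gamma_\sG)$ in the isometry group of $\sG$ gives ${\rm Loc}(\Gamma_\sG)={\rm Loc}_0(\Gamma_\sG)\rtimes{\rm Sym}(\Gamma_\sG)$ with ${\rm Loc}_0(\Gamma_\sG)\cong\oplus_{v\in\sV}{\rm Aut}(\Gamma_v)$; substituting this and passing to the quotient by ${\rm Inn}(\Gamma_\sG)\cong\Gamma_\sG$ produces the two formulas for ${\rm Aut}(\Gamma_\sG)$ and ${\rm Out}(\Gamma_\sG)$.
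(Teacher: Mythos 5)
The paper's own proof of Theorem \ref{outgp} is a one--line citation: one checks that every graph in ${\rm CC}_1$ is \emph{atomic} in the sense of Genevois--Martin and then quotes \cite[Corollary C]{GM19} verbatim. You instead try to re-derive that corollary from the generating set of ${\rm Aut}(\Gamma_\sG)$, which is legitimate but forces you to actually handle each family of generators. Your step (a) is essentially a correct verification of the ``no separating star'' half of atomicity (one small slip: if $\sW\subseteq\sC_i^{\rm int}$ then ${\rm lk}(u)=\sC_i\setminus\{u\}$ for every $u\in\sC_i^{\rm int}$, so ${\rm st}(\sW)=\sW\cup{\rm lk}(\sW)=\sC_i$ and there are no leftover vertices of $\sC_i^{\rm int}\setminus\sW$ to attach --- they all lie in ${\rm lk}(\sW)$ and are deleted; this does not affect connectivity). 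Step (c) is fine modulo two routine points you should make explicit: that an intersection of full subgroups is the full subgroup on the intersection of the vertex sets, and that the splitting ${\rm Loc}(\Gamma_\sG)={\rm Loc}_0(\Gamma_\sG)\rtimes{\rm Sym}(\Gamma_\sG)$ requires a \emph{homomorphic} (not merely set-theoretic) section of the map to the graph isometries, i.e.\ a coherent choice of isomorphisms $\Gamma_v\to\Gamma_{\sigma(v)}$.

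The genuine gap is step (b), and you correctly flagged it as the sore point but did not close it. You dispose of the transvections by appealing to properties of the vertex groups (icc, property (T), trivial abelianization) that appear nowhere in the hypotheses of Theorem \ref{outgp}, which is stated for arbitrary vertex groups. This is not a cosmetic omission, because the transvections do \emph{not} degenerate combinatorially for graphs in ${\rm CC}_1$: every $u\in\sC_i^{\rm int}$ satisfies ${\rm lk}(u)=\sC_i\setminus\{u\}\subseteq{\rm st}(v)$ for any other $v\in\sC_i$, so the domination relation is present. Concretely, for the right-angled Artin group over $\mathcal T_n\in{\rm CC}_1$ the map fixing all generators except $w_i\mapsto w_ib_i$ is an automorphism, and $\langle w_ib_i\rangle$ is not conjugate to any vertex group (by Proposition \ref{proposition.AM10}(1) its intersection with the abelian clique group $\Gamma_{\sC_i}$ would have to be a coordinate factor), so this automorphism lies outside ${\rm Inn}(\Gamma_\sG)\cdot{\rm Loc}(\Gamma_\sG)$. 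Hence step (b) cannot be completed at the stated level of generality by any purely graph-theoretic argument; you must either match the \emph{exact} hypotheses of \cite[Corollary C]{GM19} (including whatever conditions on the vertex groups or on the normalized form of the graph product it imposes --- this is all the paper's proof does, and it is the entire content of the verification) or restrict the class of vertex groups in the statement. As written, your proof establishes a statement with additional hypotheses on the $\Gamma_v$ rather than the theorem as stated.
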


\begin{proof} One can easily check that the graphs in CC$_1$ are atomic and therefore the conclusion follows immediately from \cite[Corollary C]{GM19}.\end{proof}

\begin{rem} a) If in the hypothesis of Theorem \ref{outgp} we assume in addition that $\{\Gamma_v\}_{v\in \sV} $ are pairwise non-isomorphic then we have ${\rm Sym }(\Gamma_\sG)=1$ in the automorphism group formulae \eqref{autformulas}. The same holds if instead we assume that any two cliques of $\sG$ have different cardinalities and for any $\sC\in {\rm cliq}(\sG)$ the set $\{\Gamma_v\}_{v\in \sC} $ consists of pairwise non-isomorphic subgroups.

b) One of the main goals of this paper is to establish both von Neumann algebraic and $C^*$-algebraic analogs of Theorem \ref{outgp}, under various assumptions on the vertex groups. For the specific statements in this direction, the reader may consult Corollaries \ref{symmetries4} and \ref{symmetries44} in Section \ref{symmetriesgpvna}.   
\end{rem} 


\section{Von Neumann algebraic cancellation in cyclic relations}\label{vNcyclicrel}

In this section we establish a von Neumann algebraic analog of Proposition \ref{combfacts} (part 3) describing the structure of all unitaries that satisfy a similar cyclic relation (Theorem \ref{cyclicrel2}). 
We start by first proving the following von Neumann algebraic counterpart of item 1) in Proposition \ref{combfacts}. 

\begin{lem}\label{cyclicrel2} Let $\La_1, \La_2, \Sigma <\Gamma$ be groups satisfying the following properties: \begin{enumerate} \item $\La_1 \cap 
\La_2=\La_1 \cap 
\Sigma=\La_2 \cap 
\Sigma=1$; and \item  for any $g_1\in \La_1\vee \Sigma$, $g_2\in \La_2\vee \Sigma$ satisfying $g_1g_2 \in \La_1 \vee \La_2$ one can find $a_1 \in \La_1$, $a_2 \in \La_2$ and $s\in \Sigma$ such that $g_1=a_1s$ and $g_2=s^{-1}a_2$.
\end{enumerate}

\noindent Then for any $y_1\in \mathscr U(\L(\La_1\vee \Sigma))$, $y_2\in \mathscr U(\L(\La_2\vee \Sigma))$ satisfying $y_1y_2 \in \mathscr U(\L(\La_1 \vee \La_2))$ one can find $v_1 \in \mathscr U( \L(\La_1))$, $v_2 \in \mathscr U(\L(\La_2))$ and $x\in \mathscr U(\L(\Sigma))$ so that $y_1=v_1 x$ and $y_2=x^{\ast}v_2$.

\end{lem}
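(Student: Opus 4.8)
The plan is to combine the group-theoretic hypothesis (2) with a Fourier/support analysis in $\L(\Gamma)$, using that $\La_1\vee\Sigma$, $\La_2\vee\Sigma$ and $\La_1\vee\La_2$ are full subgroups whose von Neumann algebras carry canonical orthonormal bases $\{u_g\}$. First I would expand $y_1 = \sum_{g_1\in\La_1\vee\Sigma} c_{g_1} u_{g_1}$ and $y_2 = \sum_{g_2\in\La_2\vee\Sigma} d_{g_2} u_{g_2}$ in $L^2$, and write the product $y_1 y_2 = \sum_{g_1,g_2} c_{g_1} d_{g_2} u_{g_1 g_2}$. The hypothesis $y_1y_2\in\L(\La_1\vee\La_2)$ says that the Fourier coefficient of $y_1y_2$ at any group element not in $\La_1\vee\La_2$ vanishes. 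Using hypothesis (1), the map $(g_1,g_2)\mapsto g_1g_2$ restricted to pairs with $g_1g_2\in\La_1\vee\La_2$ is, by (2), exactly parametrized by triples $(a_1,a_2,s)\in\La_1\times\La_2\times\Sigma$ via $g_1=a_1 s$, $g_2=s^{-1}a_2$, and the resulting product is $a_1 a_2$; moreover by (1) this parametrization is injective and the decomposition $g_1g_2=a_1a_2$ with $a_1\in\La_1$, $a_2\in\La_2$ is unique. So I would reorganize: for fixed $a_1\in\La_1, a_2\in\La_2$, the coefficient of $u_{a_1a_2}$ in $y_1y_2$ is $\sum_{s\in\Sigma} c_{a_1 s} d_{s^{-1} a_2}$.

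The key structural step is then to show that this forces $y_1$ and $y_2$ to be ``rank one'' in an appropriate sense. Introduce the von Neumann subalgebra $\N = \L(\Sigma)\subset\L(\Gamma)$ and consider $\L(\La_1\vee\Sigma)$ as spanned by $\La_1$-translates of $\N$: every element decomposes along cosets $a_1\Sigma$, $a_1\in\La_1$ (uniqueness by (1)). Equivalently, writing $P_{a_1}$ for the coefficient block $P_{a_1}(y_1) = \sum_{s} c_{a_1 s} u_{a_1 s}\in u_{a_1}\N$ and similarly $Q_{a_2}(y_2)\in \N u_{a_2}$, the vanishing of all off-$\La_1\vee\La_2$ coefficients of $y_1y_2$ together with the computation above says precisely that $P_{a_1}(y_1)\,Q_{a_2}(y_2)$, computed in $L^2$, lands in $\mathbb{C}u_{a_1 a_2}\cdot$(stuff in $\N$) — more carefully, that $E_{\N}(u_{a_1}^* y_1 \cdot y_2 u_{a_2}^*)$ is a scalar for all $a_1,a_2$, while other spectral pieces vanish. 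I would package this as: the ``matrix'' $(\alpha_{a_1,a_2})$ with $\alpha_{a_1,a_2} := \sum_s c_{a_1s}d_{s^{-1}a_2}$ represents, via $L^2(\Gamma)$ orthogonality, the operator $y_1y_2\in\L(\La_1\vee\La_2)\cong\L(\La_1)\bar\otimes\L(\La_2)$, whose $(\La_1,\La_2)$-Fourier matrix is exactly the outer-product-type structure coming from unitarity. Then unitarity of $y_1$ (so $y_1^*y_1=1$, i.e. $\sum_{a_1}\|P_{a_1}(y_1)\|_2^2$ assembles to $1$) and of $y_1y_2$ should be leveraged via a Cauchy–Schwarz/rank argument to conclude that there is a single unitary $x\in\mathscr U(\N)=\mathscr U(\L(\Sigma))$ with $y_1 x^* \in \L(\La_1)$ and $x y_2\in\L(\La_2)$; set $v_1 = y_1 x^*$, $v_2 = x y_2$, which are then automatically unitary.

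The main obstacle I anticipate is the ``rigidity/rank-one'' step: passing from the combinatorial cancellation identity $\sum_s c_{a_1 s} d_{s^{-1} a_2} = (\text{coefficient in a product of two subgroup elements})$ to the clean conclusion that a \emph{single} $\Sigma$-valued unitary $x$ simultaneously strips $\Sigma$ off of $y_1$ on the right and off of $y_2$ on the left. The natural approach is to fix one $a_1^{(0)}$ with $P_{a_1^{(0)}}(y_1)\neq 0$, extract $x$ from the polar decomposition of $P_{a_1^{(0)}}(y_1)$ inside $u_{a_1^{(0)}}\N$, and then show — using the product constraint for \emph{all} pairs $(a_1,a_2)$ and the unitarity relations $y_1^*y_1 = 1$, $(y_1y_2)(y_1y_2)^* = 1$ — that every other block $P_{a_1}(y_1)$ is a scalar multiple of $u_{a_1} u_{a_1^{(0)}}^* P_{a_1^{(0)}}(y_1)$, hence has the same right $\N$-phase $x$; symmetrically for $y_2$. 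This mirrors, at the level of $\L(\Sigma)$-bimodules, the proof technique of Proposition \ref{productunitaries} (Pimsner–Popa basis plus polar decomposition plus the relative commutant being trivial), and I expect the argument to go through by that analogy, with the role of ``$\N$ a II$_1$ factor'' replaced by the freeness/uniqueness coming from hypotheses (1) and (2). Once $x$ is produced, checking $v_1\in\mathscr U(\L(\La_1))$, $v_2\in\mathscr U(\L(\La_2))$ and $y_1 = v_1 x$, $y_2 = x^* v_2$ is routine.
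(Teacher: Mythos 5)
Your overall architecture (Fourier expansion, the parametrization of products landing in $\La_1\vee\La_2$ via hypothesis (2), a Cauchy--Schwarz step, and a polar-decomposition/common-phase argument) is the same skeleton as the paper's proof, and you correctly locate where the real work is. However, two of the load-bearing steps are not supplied, and one structural assumption is false as stated. First, you assert that $\L(\La_1\vee\Sigma)$ decomposes along the cosets $a_1\Sigma$, $a_1\in\La_1$, ``uniqueness by (1).'' Hypothesis (1) gives uniqueness of a factorization $g=a_1 s$ when it exists, but not existence: $\La_1\vee\Sigma$ is in general strictly larger than the product set $\La_1\Sigma$ (in the intended application it is itself a nontrivial graph product containing alternating words $a s a' s'\cdots$). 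That $y_1$ is in fact supported on $\La_1\Sigma$ (and $y_2$ on $\Sigma\La_2$) is part of what must be proved; in the paper it falls out of the equality case of the chain
\begin{equation*}
1=\tau\Bigl(\sum_{s\in\Sigma}E_{\L(\La_1)}(y_1u_{s^{-1}})\,E_{\L(\La_2)}(u_sy_2)\,y^*\Bigr)\le\cdots\le\Bigl(\sum_s\|E_{\L(\La_1)}(y_1u_{s^{-1}})\|_2^2\Bigr)^{1/2}\Bigl(\sum_s\|E_{\L(\La_2)}(u_sy_2)\|_2^2\Bigr)^{1/2}\le\|y_1\|_2\|y_2\|_2=1,
\end{equation*}
whose last inequality is tight only if the projections onto the sets $\La_1 s$ exhaust the $L^2$-mass of $y_1$.

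Second, and more seriously, the ``rank-one'' step is exactly the crux and your proposed mechanism does not apply. Proposition \ref{productunitaries} runs on $\N'\cap\N=\mathbb C1$ for a II$_1$ factor $\N$; here $\L(\Sigma)$ need not be a factor, there is no relative-commutant identity in sight, and ``freeness/uniqueness from (1) and (2)'' is not by itself an operator-algebraic statement that makes $|u_{a_1^{(0)}}^*P_{a_1^{(0)}}(y_1)|$ scalar or makes the other blocks proportional to it. The paper's replacement is a specific trick you would need to find: equality in Cauchy--Schwarz yields, for each $s\in\Sigma$, a pointwise proportionality $E_{\L(\La_1)}(y_1u_{s^{-1}})=c_s\, y\, E_{\L(\La_2)}(y_2^*u_{s^{-1}})$ with $y=y_1y_2$ a unitary; taking absolute values places $|E_{\L(\La_1)}(y_1u_{s^{-1}})|=|c_s|\,|E_{\L(\La_2)}(y_2^*u_{s^{-1}})|$ in $\L(\La_1)\cap\L(\La_2)=\mathbb C1$ (this is where $\La_1\cap\La_2=1$ enters), so each block is a scalar times a unitary of $\L(\La_1)$; comparing two values of $s$ and using $\L(\La_1)\cap\L(\La_2)=\mathbb C1$ again shows all these unitaries coincide up to phase, whence $y_1=v_1x$ with $x=v_1^*y_1$ automatically a unitary supported on $\Sigma$. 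Note also that the paper decomposes $y_1$ into $\Sigma$-indexed blocks $E_{\L(\La_1)}(y_1u_{s^{-1}})$ valued in $\L(\La_1)$, dual to your $\La_1$-indexed blocks valued in $u_{a_1}\L(\Sigma)$; the former is what makes the intersection argument available. Without this (or an equivalent) mechanism, your plan does not close.
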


Above we used the notation that if $\Gamma_1,\Gamma_2<\Gamma$ are  groups, then we denote by $\Gamma_1\vee\Gamma_2$ the subgroup of $\Gamma$ generated by $\Gamma_1$ and $\Gamma_2$.
 
\begin{proof}[Proof of Lemma \ref{cyclicrel2}] For each $i=1,2$ consider the Fourier expansion of $y_{i}= \sum_{g_i\in \La_i \vee \Sigma}(y_{i})_{g_i} u_{g_i} $. Since $y_{1} y_{2}\in \L(\La_1\vee \La_2)$ using condition 2.\ we have that 
\begin{equation*}\begin{split}y=y_{1} y_{2}&= \sum_{\substack{g_1\in  \La_1\vee \Sigma \\ g_2\in \La_2\vee \Sigma \\ g_1 g_2 \in \La_1\vee \La_2}} (y_1)_{g_1} (y_1)_{g_2} u_{g_1g_2}=\sum_{\substack{a_1\in \La_1\\ a_2\in \La_2\\ s\in \Sigma }}
(y_1)_{a_1 s } (y_2)_{s^{-1}a_2} u_{a_1a_2}\end{split}
\end{equation*}

\noindent The above formula and basic approximations show that
\begin{equation*} \begin{split}1=\sum_{s\in \Sigma } E_{\L(\La_1)}(y_1 u_{s^{-1}}) E_{\L(\La_2)}(u_s y_2) y^* 
\end{split}
\end{equation*}
where the right hand side quantity is only $\|\cdot \|_1$-summable.  Using this in combination with the Cauchy-Schwarz inequality we further get  
\begin{equation*}\begin{split}1&=\tau(\sum_{s\in \Sigma } E_{\L(\La_1)}(y_1 u_{s^{-1}}) E_{\L(\La_2)}(u_s y_2) y^* ) \leq \sum_{s\in \Sigma} |\tau(E_{\L(\La_1)}(y_1 u_{s^{-1}}) E_{\L(\La_2)}(u_s y_2) y^* )|\\
& \leq \sum_{s\in \Sigma } \|E_{\L(\La_1)}(y_1 u_{s^{-1}})\|_2 \|E_{\L(\La_2)}(u_s y_2) \|_2\\
&\leq \left(\sum_{s\in \Sigma } \|E_{\L(\La_1)}(y_1 u_{s^{-1}})\|^2_2\right)^{1/2} \left(\sum_{s\in \Sigma }\|E_{\L(\La_2)}(u_s y_2)\|^2_2\right)^{1/2}\leq \| y_1\|_2\|y_2\|_2=1.\end{split}
\end{equation*}

Thus, we must have equality in the Cauchy-Schwarz inequality, and hence, for every $s$ there is $c_s\in \mathbb C$ satisfying   \begin{equation}\label{equal1'}E_{\L(\La_1)}(y_1 u_{s^{-1}}) = c_s y E_{\L(\La_2)}(y_2^* u_{s^{-1}}).\end{equation}

Taking absolute values we get  $|E_{\L(\La_1)}(y_1 u_{s^{-1}})| = |c_s| | E_{\L(\La_2)}(y_2^* u_{s^{-1}})|$ and since  $\La_1\cap \La_2=1$ we conclude  $|E_{\L(\La_1)}(y_1 u_{s^{-1}})| = |c_s| | E_{\L(\La_2)}(y_2^* u_{s^{-1}})|\in 
\mathbb C 1$. Using the polar decomposition formula one can find $d_s, e_s\in \mathbb C $ and unitaries $x_s\in \L(\La_1), z_s \in \L(\La_2)$ satisfying \begin{equation*}\begin{split}&E_{\L(\La_1)}(y_1 u_{s^{-1}})=d_s x_s,\quad  E_{\L(\La_2)}(y_2^* u_{s^{-1}})=e_sz_s.\end{split}\end{equation*}   
Combining these with equation \eqref{equal1'} we get $d_s x_s =c_s e_s y  z_s$ for all $s\in \Sigma$; in particular, for every $d_s\neq 0$ we have {$ x_s = (e_s c_s/d_s) y z_s$}. Hence, for all $s,t\in \Sigma$ with $d_s,d_t\neq 0$ we have {$x_t^* x_s =(e_sc_s \overline{e_t c_t} / d_s \overline{d_t}) z_t^*z_s$.} Again, as  $\La_1\cap \La_2=1$ one can find $c_{s,t},d_{s,t}\in \mathbb C$  such that  \begin{equation}\label{equal2}x_s = c_{s,t} x_t\text{ and }z_s = d_{s,t} z_t.\end{equation} 

Fix $t\in \Sigma$. Using the prior relations we see that $y_1= \sum_{s\in \Sigma} E_{\L(\La_1)}(y_1 u_{s^{-1}})u_{s}=\sum_{s\in \Sigma} d_s x_s u_s= \sum_{s\in \Sigma} d_s c_{s,t}x_t u_s=x_t(\sum_{s\in \Sigma} d_s c_{s,t} u_s)$. In particular, this shows there are $v_{1}\in\mathscr{U}( \L(\La_1)),x \in \mathscr U(\L(\Sigma)) $ such that $y_1= v_1 x$. Similarly, the prior relations also imply that $y_2 = x^* v_2$ for some $v_2 \in \mathscr{U}(\L(\La_2))$.
\end{proof}

\begin{thm}\label{cyclerel1} Let $\sG$ be a  graph in the class ${\rm CC}_1$ and let $\sC_1,...,\sC_n$ be an enumeration of its consecutive cliques. Let $\Gamma_v$, $v\in \sV$ be a collection of icc groups and let $\Gamma_\sG$ be the corresponding graph product group.   For each $ i\in \overline{1,n}$ assume $x_{i,i+1}= a_{i,i+1} b_{i,i+1}$, where $a_{i,i+1}\in \sU(\L(\Gamma_{\sC_{i,i+1}}))$, $b_{i,i+1}\in \sU(\L(\Gamma_{\sC_i \cup \sC_{i+1}\setminus \sC_{i,i+1}})$. 

\noindent If $x_{1,2} x_{2,3}\cdots x_{n-1,n}x_{n,1}=1$, then for each $ i\in \overline{1,n}$ one can find $a_i \in \sU(\L(\Gamma_{\sC_{i-1,i}}))$, $b_i \in \sU(\L(\Gamma_{\sC^{\rm int}_i}))$, $c_i \in \sU(\L (\Gamma_{\sC_{i,i+1}}))$  so that $x_{i,i+1}= a_i b_i c_i b^*_{i+1} a^*_{i+2}c^*_{i+1}$. Here, we convene that $n+1=1$, $n+2=2$.     

\end{thm}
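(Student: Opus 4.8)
\emph{Strategy.} The plan is to reproduce, at the level of von Neumann algebras, the three-step scheme of the proof of Proposition~\ref{combfacts}(3): decompose each $x_{i,i+1}$ into interior, central-junction and end-junction pieces, show that the interior pieces of consecutive factors cancel, and show that the three end-junction pieces clustered around each $\sC_{j,j+1}$ telescope — with each normal-form manipulation replaced by an operator-algebraic one. The main tool is Lemma~\ref{cyclicrel2} (the von Neumann counterpart of Proposition~\ref{combfacts}(1)), applied to several carefully chosen triples of full subgroups of $\Gamma_\sG$; the auxiliary facts used throughout are the amalgamated free product decompositions \eqref{afpdesc}, conditional expectations, the identity $\L(\Gamma_\sA)\cap\L(\Gamma_\sB)=\L(\Gamma_{\sA\cap\sB})$ for subgraphs $\sA,\sB\subseteq\sG$, and the description of $\L(\Gamma_{\sA\cup\sB})$ as a tensor (resp. free) product of $\L(\Gamma_\sA)$ and $\L(\Gamma_\sB)$ when every vertex of $\sA$ is joined to every vertex of $\sB$ (resp. when no two are joined). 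All the adjacency relations among $\sC_{i-1,i},\sC^{\rm int}_i,\sC_{i,i+1}$ invoked below are immediate from the ${\rm CC}_1$ axioms: an interior vertex of $\sC_i$ belongs to no other clique, $\Gamma_{\sC_{i,i+1}}$ is central in $\Gamma_{\sC_i\cup\sC_{i+1}}$, the blocks $\sC_{i-1,i},\sC^{\rm int}_i,\sC_{i,i+1}$ of $\sC_i$ are pairwise disjoint, and so on.

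\emph{The interior pieces.} Fix $i$. Since no vertex of $\sC^{\rm int}_i$ lies in a clique other than $\sC_i$, only $x_{i-1,i}$ and $x_{i,i+1}$ among the factors involve $\Gamma_{\sC^{\rm int}_i}$, hence $x_{i-1,i}x_{i,i+1}=(x_{i+1,i+2}\cdots x_{i-2,i-1})^{-1}\in\L(\Gamma_{\sV\setminus\sC^{\rm int}_i})$ and so $x_{i-1,i}x_{i,i+1}\in\L(\Gamma_{\sC_{i-1}\cup\sC_{i,i+1}\cup\sC_{i+1}})$. Writing $x_{i-1,i}=a_{i-1,i}b_{i-1,i}$ and $x_{i,i+1}=a_{i,i+1}b_{i,i+1}$ as in the hypothesis, and using that $a_{i-1,i}$ commutes with $b_{i-1,i}$ and that $a_{i,i+1}$ is central in $\L(\Gamma_{\sC_i\cup\sC_{i+1}})$, one gets $b_{i-1,i}b_{i,i+1}=a_{i-1,i}^{-1}(x_{i-1,i}x_{i,i+1})a_{i,i+1}^{-1}\in\L(\Gamma_{\sC_{i-1}\cup\sC_{i,i+1}\cup\sC_{i+1}})$. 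I would then apply Lemma~\ref{cyclicrel2} with $\Sigma=\Gamma_{\sC^{\rm int}_i}$, $\Lambda_1=\Gamma_{(\sC_{i-1}\setminus\sC_{i-1,i})\cup\sC_{i,i+1}}$ and $\Lambda_2=\Gamma_{\sC_{i-1,i}\cup(\sC_{i+1}\setminus\sC_{i,i+1})}$: the three subgraphs involved are pairwise disjoint so the intersection hypothesis~(1) holds, one has $\Lambda_1\vee\Sigma=\Gamma_{\sC_{i-1}\triangle\sC_i}\ni b_{i-1,i}$, $\Lambda_2\vee\Sigma=\Gamma_{\sC_i\triangle\sC_{i+1}}\ni b_{i,i+1}$ and $\Lambda_1\vee\Lambda_2=\Gamma_{\sC_{i-1}\cup\sC_{i,i+1}\cup\sC_{i+1}}$, and hypothesis~(2) is exactly Proposition~\ref{combfacts}(1). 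This produces $b_i\in\mathscr U(\L(\Gamma_{\sC^{\rm int}_i}))$ with $b_{i,i+1}=b_i\,v_i$, $v_i\in\mathscr U(\L(\Gamma_{\sC_{i-1,i}})*\L(\Gamma_{\sC_{i+1}\setminus\sC_{i,i+1}}))$. Running the companion argument attached to the clique $\sC_{i+1}$ (i.e.\ with $\Sigma=\Gamma_{\sC^{\rm int}_{i+1}}$ and the pair $x_{i,i+1},x_{i+1,i+2}$) gives $b_{i,i+1}=v_i'\,b_{i+1}^{*}$ with $b_{i+1}\in\mathscr U(\L(\Gamma_{\sC^{\rm int}_{i+1}}))$; combining the two, $b_{i,i+1}=b_i\,w_i\,b_{i+1}^{*}$ with $w_i\in\mathscr U(\L(\Gamma_{\sC_{i-1,i}})*\L(\Gamma_{\sC_{i+1,i+2}}))$. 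Comparing $\sC_{i,i+1}$-components and using $\L(\Gamma_{\sC_{i,i+1}})\cap\L(\Gamma_{\sC_i\triangle\sC_{i+1}})=\mathbb C1$ forces the central-junction factor $c_i$ of $x_{i,i+1}$ to coincide with $a_{i,i+1}$ up to a unimodular scalar.

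\emph{The end-junction pieces, and the crux.} It remains to produce $a_i\in\mathscr U(\L(\Gamma_{\sC_{i-1,i}}))$ with $w_i=a_i\,a_{i+2}^{*}\,c_{i+1}^{*}$; granting this, $x_{i,i+1}=a_{i,i+1}b_{i,i+1}=c_i\,b_i\,w_i\,b_{i+1}^{*}=a_i b_i c_i b_{i+1}^{*}a_{i+2}^{*}c_{i+1}^{*}$ after the obvious commutations, which is the assertion. To obtain it I would insert the decompositions $x_{i,i+1}=c_i b_i w_i b_{i+1}^{*}$ into $x_{1,2}\cdots x_{n,1}=1$, commute the already-identified interior and central unitaries out of the way (they slide past the end-junction unitaries because $\Gamma_{\sC_{j,j+1}}$ is joined to all of $\sC_j\cup\sC_{j+1}$), and use that the end-junction group $\Gamma_{\sC_{j,j+1}}$ is supported only by $x_{j-1,j},x_{j,j+1},x_{j+1,j+2}$, which turns the residual relation into a cyclic identity among the words $w_i$; a final round of the same kind of argument — a three-consecutive-factor variant of Lemma~\ref{cyclicrel2} playing the role of Proposition~\ref{combfacts}(2), applied with $\Sigma=\Gamma_{\sC_{j,j+1}}$ — then extracts the $a_i$ with the necessary consistency. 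I expect this last step to be the main obstacle: unlike in the group situation, a unitary in a free product von Neumann algebra does not split across the factors, so $w_i$ cannot be factored by hand and the splitting has to be forced out of the cyclic relation via Lemma~\ref{cyclicrel2}; moreover, since the subalgebras obstructing hypothesis~(1) of that lemma are precisely the junction algebras, each application must be prepared by first commuting away the pieces already extracted, and keeping track of which of $\Gamma_{\sC_{i-1,i}},\Gamma_{\sC^{\rm int}_i},\Gamma_{\sC_{i,i+1}}$ commute past which is where the ${\rm CC}_1$ combinatorics really enter.
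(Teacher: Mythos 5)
Your proposal follows essentially the same route as the paper's proof: first use the cyclic relation to place $b_{i-1,i}b_{i,i+1}$ in $\L(\Gamma_{\sC_{i-1}\cup\sC_{i+1}})$ and apply Lemma~\ref{cyclicrel2} with $\Sigma=\Gamma_{\sC^{\rm int}_i}$ (hypothesis (2) being Proposition~\ref{combfacts}(1)) to peel off the interior unitaries, then combine the two resulting factorizations of $b_{i,i+1}$ to land the middle piece in $\L(\Gamma_{\sC_{i-1,i}\cup\sC_{i+1,i+2}})$, derive the residual cyclic relation among the junction words, and split them by a three-factor Cauchy--Schwarz argument with $\Sigma=\Gamma_{\sC_{i,i+1}}$ whose combinatorial input is Proposition~\ref{combfacts}(2) --- which is exactly what the paper does, including the final matching $p_i=r_{i+2}^*$ from the cyclic relation. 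The only difference is one of completeness, not of method: the paper writes out the three-factor variant explicitly inside the proof rather than as a separate lemma, whereas you assert it; since you correctly identify both the analytic mechanism and the combinatorial input, the approach is the same.
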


\begin{proof} Fix an arbitrary $i\in\overline{1,n}$. Using $x_{1,2} x_{2,3}\cdots x_{n-1,n}x_{n,1}=1$, it follows that $b_{i-1, i} b_{i,i+1}= a^*_{i-1,i} x_{i-2,i-1}^* \cdots x^*_{1,2} x^*_{n,1}\cdots x^*_{i+1,i+2}a^*_{i,i+1}$. { Since $a_{i-1,i},a_{i,i+1}\in \L(\Gamma_{\sG\setminus \sC^{\rm int}_{i}})$ and $x_{j,j+1},a_{j,j+1}\in \L(\Gamma_{\sC_j\cup \sC_{j+1}})$, for any $j\in\overline{1,n}$, we get that  $b_{i-1,i}  b_{i,i+1} = a_{i-1,i}^* x_{i-2,i-1}^* \cdots x^*_{1,2} x^*_{n,1}\cdots x^*_{i+1,i+2} a_{i,i+1}^*\in \L(\Gamma_{\sG\setminus \sC^{\rm int}_{i}})$.
Since $b_{i-1, i} b_{i,i+1}\in \L(\Gamma_{\sC_{i-1}\cup \sC_i \cup \sC_{i+1}})$  we deduce that} 
\begin{equation}\label{x5}
b_{i-1, i} b_{i,i+1}\in \sU(\L(\Gamma_{\sC_{i-1}\cup \sC_{i+1}})).    
\end{equation}

Now, fix two words $g_1\in \Gamma_{(\sC_{i-1}\cup \sC_i)\setminus \sC_{i-1,i}}, g_2\in \Gamma_{(\sC_{i}\cup \sC_{i+1})\setminus \sC_{i,i+1}}$ such that  $g_1g_2\in \Gamma_{
\sC_{i-1}\cup \sC_{i+1}}$. Using part 1.\ in Proposition \ref{combfacts} there exist $a_1\in \Gamma_{(\sC_{i-1}\setminus\sC_{i})\cup \sC_{i,i+1}}$, $b\in \Gamma_{(\sC_{i+1}\setminus \sC_{i})\cup \sC_{i-1,i}}$ and $s \in \Gamma_{\sC^{\rm int}_i}$ such that $g_1 = as$ and $g_2 = s^{-1}b$. Thus, applying Lemma \ref{cyclicrel2} for $\La_1 = \Gamma_{(\sC_{i-1}\setminus\sC_{i})\cup \sC_{i,i+1}}
$, $\La_2 = \Gamma_{(\sC_{i+1}\setminus \sC_{i})\cup \sC_{i-1,i}}$ and $\Sigma =\Gamma_{\sC^{\rm int}_i}$, we derive from \eqref{x5} that one can find  unitaries $x_{i-1}\in \L(\Gamma_{(\sC_{i-1}\setminus\sC_{i})\cup \sC_{i,i+1}}),z_{i-1} \in \L(\Gamma_{\sC^{\rm int}_i}) $ such that $b_{i-1,i}= x_{i-1} z_{i-1}$.

Lemma \ref{cyclicrel2} also implies from \eqref{x5} that $b_{i,i+1}= z^*_{i-1}y_i$ for some $y_i\in \sU(\L(\Gamma_{(\sC_{i+1}\setminus\sC_{i})\cup \sC_{i-1,i}}) )$. Using this with  $b_{i,i+1}= x_{i} z_{i}$, we get that $b_{i,i+1}= z^*_{i-1}y_i= x_i z_i$. Hence, $x_i^* z^*_{i-1}= z_iy_i^*=:t_{i,i+1}$ and note that $t_{i,i+1}\in \L( \Gamma_{\sC_{i-1,i} \cup \sC_{i+1,i+2}})$.
Thus, $x_{i,i+1}= a_{i,i+1}b_{i,i+1}= a_{i,i+1}z^*_{i-1} t_{i,i+1}^* z_i= z^*_{i-1} a_{i,i+1}t_{i,i+1}^* z_i$. In conclusion, we showed  that for any $i\in\overline{1,n}$ we have 
\begin{equation}\label{pp5} x_{i,i+1}=z^*_{i-1} a_{i,i+1}t_{i,i+1} z_i.\end{equation}

Now, we note that since $x_{1,2} x_{2,3}\cdots x_{n-1,n}x_{n,1}=1$, then we obviously have $a_{1,2}t_{1,2} a_{2,3} t_{2,3}\cdots a_{n-1,n} t_{n-1,n}a_{n,1}t_{n,1}=1$. Again we will use this relation together with the same argument from the proof of Lemma \ref{cyclicrel2} to show that $x_{i,i+1}$ has the form described in the conclusion of the theorem. First, observe the cyclic relation and a similar argument as in the beginning of the proof show that $ w:=t_{i-1,i}  a_{i,i+1}t_{i,i+1} a_{i+1,i+2} t_{i+1,i+2}a_{i+2,i+3}= a^*_{i-1,i}\cdots t^*_{1,2}a^*_{1,2} t^*_{n,1}a^*_{n,1}\cdots t^*_{i+2,i+3} \in \L(\Gamma_{\sC_{i-2,i-1} \cup \sC_{i-1,i}  \cup \sC_{i+1,i+2}  \cup \sC_{i+2,i+3} }) $.

Now, fix three words $w_1\in \Gamma_{\sC_{i-2,i-1} \cup \sC_{i,i+1} }, w_2\in \Gamma_{\sC_{i-1,i} \cup \sC_{i+1,i+2} }, w_3\in \Gamma_{\sC_{i,i+1} \cup \sC_{i+2,i+3} }$ satisfying $w_1w_2 w_3\in \Gamma_{\sC_{i-2,i-1} \cup \sC_{i-1,i}  \cup \sC_{i+1,i+2}  \cup \sC_{i+2,i+3} }$. Using part 2.\ of Proposition \ref{combfacts} we must have that $w_1= as$  and $w_3 = s^{-1}b$ where $a\in \Gamma_{\sC_{i-2,i-1} }$, $b\in \Gamma_{\sC_{i+2,i+3} }$ and $s \in \Gamma_{\sC_{i,i+1} }$. Since
$t_{i,i+1} a_{i+1,i+2}\in \L( \Gamma_{\sC_{i-1,i} \cup \sC_{i+1,i+2}})$, we can write the Fourier expansions of $t_{i-1,i} a_{i,i+1}= \sum_{w_1\in \sC_{i-2,i-1} \cup \sC_{i,i+1}} (t_{i-1,i} a_{i,i+1} )_{w_1}u_{w_1}$ and  $t_{i+1,i+2} a_{i+2,i+3}= \sum_{w_3\in \sC_{i,i+1} \cup \sC_{i+2,i+3}} (t_{i+1,i+2} a_{i+2,i+3} )_{w_3}u_{w_3}$. All these observations imply that 
\begin{equation*}\begin{split}&w=(t_{i-1,i}  a_{i,i+1})(t_{i,i+1} a_{i+1,i+2}) (t_{i+1,i+2}a_{i+2,i+3})=\\
&=\sum_{s \in \Gamma_{\sC_{i,i+1} }} E_{\L(\Gamma_{\sC_{i-2,i-1} })}(t_{i-1,i}  a_{i,i+1} u_{s^{-1}})t_{i,i+1} a_{i+1,i+2} E_{\L(\Gamma_{ \sC_{i+2,i+3} })}(u_s t_{i+1,i+2}a_{i+2,i+3}), 
\end{split}
\end{equation*}
where again the convergence is in $\|\cdot \|_1$.
Using this and Cauchy-Schwarz inequality we get
 \begin{equation*}\begin{split}1&=\sum_{s \in \Gamma_{\sC_{i,i+1} }}| \tau(E_{\L(\Gamma_{\sC_{i-2,i-1} })}(t_{i-1,i}  a_{i,i+1} u_{s^{-1}})t_{i,i+1} a_{i+1,i+2} E_{\L(\Gamma_{ \sC_{i+2,i+3} })}(u_s t_{i+1,i+2}a_{i+2,i+3}) 
w^*)|\\
& \leq \sum_{s \in \Gamma_{\sC_{i,i+1} }} \|E_{\L(\Gamma_{\sC_{i-2,i-1} })}(t_{i-1,i}  a_{i,i+1} u_{s^{-1}})\|_2 \|E_{\L(\Gamma_{ \sC_{i+2,i+3} })}( a_{i+2,i+3}^*t_{i+1,i+2}^*u_{s^{-1}})\|_2\\
& \leq \left(\sum_{s \in \Gamma_{\sC_{i,i+1} }} \|E_{\L(\Gamma_{\sC_{i-2,i-1} })}(t_{i-1,i}  a_{i,i+1} u_{s^{-1}})\|^2_2\right)^{1/2} \cdot \\
&\cdot \left(\sum_{s \in \Gamma_{\sC_{i,i+1} }}\|E_{\L(\Gamma_{ \sC_{i+2,i+3} })}( a_{i+2,i+3}^*t_{i+1,i+2}^*u_{s^{-1}})\|^2_2\right)^{1/2} \\
&\leq \| t_{i-1,i}  a_{i,i+1}\|_2\|a_{i+2,i+3}^*t_{i+1,i+2}^*\|_2=1.\end{split}
\end{equation*}

Therefore, one can get scalars $c_s$ such that  \begin{equation}\label{equal1}E_{\L(\Gamma_{\sC_{i-2,i-1} })}(t_{i-1,i}  a_{i,i+1} u_{s^{-1}})= c_s w E_{\L(\Gamma_{ \sC_{i+2,i+3} })}( a_{i+2,i+3}^*t_{i+1,i+2}^*u_{s^{-1}})a_{i+1,i+2}^* t_{i,i+1}^*.\end{equation} 
Thus, proceeding in the same fashion as in the proof of Lemma \ref{cyclicrel2} one can find $d_s,e_s\in \mathbb C $, $g_s\in \sU(\L(\Gamma_{\sC_{i-2,i-1}})) $, $h_s\in \sU(\L(\Gamma_{\sC_{i+2,i+3}})) $  so that 
\begin{equation}\label{pp1}
\begin{split}&E_{\L(\Gamma_{\sC_{i-2,i-1} })}(t_{i-1,i}  a_{i,i+1} u_{s^{-1}})=d_s g_s,\\
&E_{\L(\Gamma_{ \sC_{i+2,i+3} })}( a_{i+2,i+3}^*t_{i+1,i+2}^*u_{s^{-1}})a_{i+1,i+2}^* t_{i,i+1}^*=e_sh_s.\end{split}
\end{equation}   

Hence,  \eqref{equal1} gives that $d_s g_s= c_s e_s w h_s$ for all $s\in \Gamma_{\sC_{i,i+1}}$ and finally employing the same arguments as the first part one can find scalars $c_{s,t}, d_{s,t}$ such that $g_s = c_{s,t} g_t$ and $h_s= d_{s,t} h_t$ for all $s,t\in \Gamma_{\sC_{i,i+1}}$. Using \eqref{pp1}, we derive that 
\begin{equation*}
 t_{i-1,i}a_{i,i+1}=\sum_{s\in \Gamma_{\sC_{i,i+1}}} d_sg_su_s= g_e \sum_{s\in \Gamma_{\sC_{i,i+1}}} d_s c_{s,e} u_s.
\end{equation*}
This further implies that one can find unitaries $r_{i-1} \in \sU(\L(\Gamma_{\sC_{i-2,i-1}}))$,  $p_{i-1}\in \sU(\L(\Gamma_{\sC_{i,i+1}}))$ such that $t_{i-1,i}a_{i,i+1}= r_{i-1} p_{i-1}$, and hence, $t_{i-1,i}= r_{i-1} p_{i-1} a_{i,i+1}^*$.  Similarly, we get  $t_{i,i+1}= r_{i}p_{i} a_{i+1,i+2}^*$, and hence, from \eqref{pp5} we deduce that 
\begin{equation}\label{pp6}
    x_{i,i+1}= z^*_{i-1}a_{i,i+1}t_{i,i+1} z_i = z^*_{i-1}a_{i,i+1} r_{i}p_{i} a_{i+1,i+2}^* z_i= r_i z^*_{i-1} a_{i,i+1}   z_i p_i a_{i+1,i+2}^*. 
\end{equation} 
Now, one can see that using the cyclic relation $x_{1,2}\dots x_{n-1,n}x_{n,1}=1$, we get that $p_i = r^*_{i+2 }$. This together with \eqref{pp6} give the desired conclusion by taking $a_i=r_i$, $b_i=z^*_{i-1}$ and $c_i=a_{i,i+1}$.
\end{proof}

\section{Rigid subalgebras of graph product groups von Neumann algebras}
In this section we classify all rigid subalgebras of von Neumann algebras associated with graph product groups. This should be viewed as a counterpart of \cite[Theorem 4.3]{IPP05} for amalgamated free products von Neumann algebras. In fact, the later plays an essential role in deriving our result. For convenience, we include a detailed proof on how it follows from this.  

\begin{thm}\label{proptcliques}\label{theorem.embed.clique} Let $\Gamma= \sG\{\Gamma_v\}$ be a graph product group, let $\Gamma\ca \P$ be any trace preserving action and denote by $\M=\P\rtimes \Gamma$ the corresponding crossed product von Neumann algebra. Let $r\in \M$ be a projection and let $\Q\subset r\M r$ be a property (T) von Neumann subalgebra. 

Then one can find a clique $\sC\in {\rm cliq}(\sG)$ such that $\Q\prec_\M \P\rtimes \Gamma_\sC$. Moreover, if $Q\nprec \P \rtimes \Gamma_{\sC \setminus \{c\}}$ for all $c\in \sC$, then one can find projections $q\in \Q$, $q'\in \Q'\cap r\M r$ with $qq'\neq 0$ and a unitary $u\in \M$ such that $u q\Q q q'u^{*}\subseteq \P \rtimes \Gamma_\sC$.   In particular, if $\P\rtimes \Gamma_\sC$ is a factor then one can take $q=1$ above.  

\end{thm}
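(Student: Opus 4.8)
\textbf{Proof strategy for Theorem \ref{proptcliques}.}

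The plan is to reduce the statement to the amalgamated free product rigidity result \cite[Theorem 4.3]{IPP05} by exploiting the family of amalgam decompositions \eqref{afpdesc} that a graph product carries. First I would recall that for every vertex $w\in\sV$ we have $\Gamma=\Gamma_{\sV\setminus\{w\}}\ast_{\Gamma_{{\rm lk}(w)}}\Gamma_{{\rm st}(w)}$, which after co-amplifying by the action $\Gamma\ca\P$ gives a decomposition $\M=(\P\rtimes\Gamma_{\sV\setminus\{w\}})\ast_{\P\rtimes\Gamma_{{\rm lk}(w)}}(\P\rtimes\Gamma_{{\rm st}(w)})$ whenever it is non-degenerate (i.e. ${\rm st}(w)\neq\sV$; this holds for at least one $w$ since $\sG\in{\rm CC}_1$ has more than one clique). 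Since $\Q$ has property (T), its image under any corner embedding is again rigid, so \cite[Theorem 4.3]{IPP05} forces $\Q\prec_\M\P\rtimes\Gamma_{{\rm st}(w)}$ or $\Q\prec_\M\P\rtimes\Gamma_{\sV\setminus\{w\}}$. The idea is then to run an induction on $|\sV|$: in the first case one is already inside a strictly smaller full subgroup (note ${\rm st}(w)\neq\sV$), in the second case likewise; in either case one replaces $\sG$ by the spanned subgraph and $\Gamma$ by the corresponding full subgroup, and keeps going. The induction terminates when the ambient graph is itself complete, i.e. a single clique $\sC$, yielding $\Q\prec_\M\P\rtimes\Gamma_\sC$. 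One must be a little careful that ``a corner of $\Q$ embeds into a corner of $\M'$ which in turn sits inside an amalgam decomposition of $\M'$'' composes correctly with the intertwining-by-bimodules relation; this is standard transitivity of $\prec$, but it has to be invoked cleanly at each stage, together with the fact that rigidity passes to corners (so the inductive hypothesis applies to the image algebra).

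For the ``moreover'' part I would argue as follows. Assume $\Q\nprec_\M\P\rtimes\Gamma_{\sC\setminus\{c\}}$ for every $c\in\sC$. By Theorem \ref{corner} there are projections $q\in\Q$, $q_0\in\P\rtimes\Gamma_\sC$, a nonzero partial isometry $v\in q_0\M q$ and a $\ast$-homomorphism $\theta:q\Q q\to q_0(\P\rtimes\Gamma_\sC)q_0$ with $\theta(x)v=vx$ for all $x\in q\Q q$. Set $q'=$ the support projection of $E_{\Q'\cap r\M r}(v^*v)$, or more concretely use $v^*v\in (q\Q q)'\cap q\M q$ to produce a projection $q'\in\Q'\cap r\M r$ with $qq'\neq 0$ and $qq'\leq$ (the central support arrangement giving) $v^*v$; the point of the hypothesis ``$\Q\nprec\P\rtimes\Gamma_{\sC\setminus\{c\}}$ for all $c$'' is exactly to guarantee, via Lemma \ref{lemma.control.qn}, that the quasi-normalizer of $\theta(q\Q q)$ inside $\M$ stays inside $\P\rtimes\Gamma_\sC$, which upgrades the partial-isometry intertwiner to an honest unitary conjugation after cutting down. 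Concretely: $vv^*\in\theta(q\Q q)'\cap q_0\M q_0$ and $vv^*$ together with $\theta(q\Q q)$ generate an algebra whose elements quasi-normalize $\theta(q\Q q)$; since $\theta(q\Q q)\nprec_{\P\rtimes\Gamma_\sC}\P\rtimes\Gamma_{\sC_{\hat c}}$ (inherited from the hypothesis on $\Q$ through $\theta$, which one should check), Lemma \ref{lemma.control.qn} pins $v$ down to $\P\rtimes\Gamma_\sC$ up to the cut projections, so extending $v$ to a unitary $u\in\M$ (possible since $\M$ is a II$_1$ factor, after matching traces of the relevant projections) gives $u(qq')\Q(qq')u^*\subseteq\P\rtimes\Gamma_\sC$. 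The last sentence, that $q=1$ may be taken when $\P\rtimes\Gamma_\sC$ is a factor, is the usual ``spreading out'' maneuver: a unital embedding of a corner $q\Q q$ into a factor can be amplified to all of $\Q$ by a standard averaging/maximality argument over partial isometries in $\Q$, so one never actually needs the corner.

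The main obstacle I anticipate is the passage from the existential ``$\prec$'' in \cite[Theorem 4.3]{IPP05} to the rigid, essentially-unique conjugation in the ``moreover'' clause — in particular verifying that the hypothesis ``$\Q\nprec\P\rtimes\Gamma_{\sC\setminus\{c\}}$ for all $c\in\sC$'' transfers to the image $\theta(q\Q q)$ inside $\P\rtimes\Gamma_\sC$ (so that Lemma \ref{lemma.control.qn} is applicable) and keeping careful track of the various cut-down projections $q,q',q_0$ through the induction and through the quasi-normalizer argument. A secondary technical point is handling the degenerate amalgam case ${\rm st}(w)=\sV$ in the induction — one simply chooses a different vertex $w$, which is always possible because a ${\rm CC}_1$ graph is never a single star, but this needs to be spelled out so the induction is genuinely well-founded. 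Once these bookkeeping issues are dispatched, the argument is a fairly routine combination of Popa's intertwining calculus, the IPP rigidity theorem, and the quasi-normalizer control lemmas established earlier in the paper.
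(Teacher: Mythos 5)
Your proposal is correct in outline and, for the first assertion, follows essentially the same route as the paper: one descends through full subalgebras $\P\rtimes\Gamma_{\sG_0}$ using the non-degenerate amalgam decompositions \eqref{afpdesc} and the Ioana--Peterson--Popa rigidity theorem for amalgamated free products, until the subgraph is forced to be complete (the paper phrases this as choosing $\sG_0$ with $|\sV_0|$ minimal subject to $\Q\prec_\M\P\rtimes\Gamma_{\sG_0}$ and deriving a contradiction if $\sG_0$ is not a clique, which is your induction read backwards). One caution: what you dismiss as ``standard transitivity of $\prec$'' is in fact the delicate point, since $\prec$ is not transitive in general; the composition works here only because the second intertwining is applied to the \emph{image} algebra $\R=\theta(q\Q q)$, and one must normalize the first intertwiner $v$ so that the support of $E_{\N}(vv^*)$ is full in order to guarantee that the composed partial isometry $wv$ is nonzero. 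The paper spells this out, and your write-up should too. For the ``moreover'' part your route differs from the paper's: you propose to deduce $vv^*\in\P\rtimes\Gamma_\sC$ from the quasi-normalizer control Lemma \ref{lemma.control.qn} applied to $x=vv^*$ and $\theta(q\Q q)$, whereas the paper gets the same conclusion directly from \cite[Theorem 1.2.1]{IPP05} applied to the amalgam $\M=(\P\rtimes\Gamma_{\sV\setminus\{c\}})\ast_{\P\rtimes\Gamma_{\sC\setminus\{c\}}}(\P\rtimes\Gamma_\sC)$, using that $\Y=\theta(q\Q q)\nprec_{\P\rtimes\Gamma_\sC}\P\rtimes\Gamma_{\sC\setminus\{c\}}$ (which, as you correctly flag, is inherited from the hypothesis on $\Q$ by another composition-of-intertwinings argument). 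Your alternative is viable and arguably more self-contained within the paper's toolkit, but note two points: Lemma \ref{lemma.control.qn} is stated only for $\L(\Gamma)$, so you would need its (routine but unproved) crossed-product analogue; and the conclusion is $vv^*\in\P\rtimes\Gamma_\sC$, hence $vq\Q qv^*=\theta(q\Q q)vv^*\subseteq\P\rtimes\Gamma_\sC$ --- the partial isometry $v$ itself is \emph{not} pinned into $\P\rtimes\Gamma_\sC$, so the phrase ``pins $v$ down'' should be replaced by ``pins $vv^*$ down''; the unitary $u$ extending $v$ then conjugates $q\Q q q'$ into $\P\rtimes\Gamma_\sC$ with $q'=v^*v$, exactly as in the paper. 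The final factoriality step is the same standard spreading-out argument in both treatments.
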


\begin{proof}
Let $\mathscr G_0=(\mathscr V_0,\mathscr E_0) \subseteq \mathscr G=(\mathscr V, \mathscr E)$ be a subgraph with $|\mathscr V_0|$ minimal such that $\Q \prec \P \rtimes \Gamma_{\mathscr G_0}$. In the remaining part we show that $\mathscr G_0$ is complete, which proves the conclusion.

Denote   $\N=\P \rtimes \Gamma_{\sG_0}$. Since $\Q \prec_\M  \N$ one can find projections $q\in \Q$ $p\in \N$, a non-zero partial isometry $v\in p\M q$ and a $\ast$-isomorphism onto its image $\theta: q\Q q\ra  \R:= \theta(q\Q q)\subseteq p \N p$ such that $\theta(x)v=vx$ for all $x\in q \Q q$. Notice that $vv^*\in \R'\cap p\M p$ and $v^*v\in (\Q'\cap \M)q$. Moreover, one can assume without any loss of generality that the support projection of $E_\N(vv^*)$ equals $p$.   

 Assume by contradiction that $\sG_0$ is not complete. Thus, one can find $v\in \sG_0$ so that $\Ga_{\sG_0}$ admits a non-canonical amalgam decomposition $\Gamma_{\sG_0}=\Gamma_{\sG_0\setminus \{v\}}\ast_{\Gamma_{\text{lk}(v)}} \Gamma_{\text {st}(v)}$; in particular, we have that $|\text{st}(v)|\leq|\sV_0|-1$. Since $\Q$ has property (T), $\R$ has property (T) as well. Using \cite[Theorem 5.1]{IPP05} we have either i)
 $\R  \prec_\N  \P \rtimes \Gamma_{\sG_0\setminus \{v\}}$ or ii) $ \R\prec_\N  \P\rtimes \Gamma_{\text{st}(v)}$.
Assume i). Denote by  $\X:=\P \rtimes \Gamma_{\sG_0\setminus \{v\}}$.  
As $\R \prec_\N  \X$ one can find projections $e\in \R$ $f\in \X$, a non-zero partial isometry $w\in f\N e$ and a $\ast$-isomorphism onto its image $\psi: e\R e\ra  \T:= \theta(e\R e)\subseteq f \X f$ such that $\psi(x)w=wx$ for all $x\in e \R e$. 

Next, we argue that $wv\neq 0$. Otherwise, we would have $0= wvv^*$ and since $w\in \N$ we get $0= w E_\N(vv^*)$. Therefore $0= w s(E_\N(vv^*))=w p=w$, which is a contradiction. Combining the previous intertwining relations, we get $\phi(\theta(x))wv =w \theta(x)v =wv x$, for all $x\in t\Q t $; here we denoted by $0\neq t= \theta^{-1}(e)$. Taking the polar decomposition of $wv$ in the prior intertwining relation, we obtain that $\Q \prec_\M \X= \P \rtimes \Gamma_{\sG_0\setminus \{v\}}$. However, since $|\mathscr V_0\setminus \{v\}|=|\mathscr V_0|-1$, this contradicts the minimality of $|\mathscr V_0|$. In a similar manner, one can show case ii) also leads to a contradiction.

Next, we show the moreover part.
Let $\S = \P \rtimes \Gamma_{\sC}$. From the first part one can find projections $q\in \Q$ $s\in \S$, a non-zero partial isometry $v_0\in s\M q$ and a $\ast$-isomorphism onto its image $\theta: q\Q q\ra  \Y:= \theta(q\Q q)\subseteq s \S s$ such that $\theta(x)v_0=v_0x$ for all $x\in q \Q q$. Notice that $v_0v^*_0\in \Y'\cap s\M s$ and $v^*_0v_0\in q\Q q'\cap q\M q$. Moreover, one can assume without any loss of generality that the support projection of $E_\S(v_0v^*_0)$ equals $s$. 
Observe we have an amalgamated free product  decomposition $\M = (\P\rtimes \Gamma_{\sV\setminus \{c\}}) \ast_{\P \rtimes \Gamma_{\sC\setminus \{c\}}} (\P \rtimes \Gamma_\sC)$. Using the same argument as before, since $\Q\nprec_{\M} \P \rtimes \Gamma_{\sC \setminus \{c\}}$, we must have that $\Y\nprec_{\S} \P \rtimes \Gamma_{\sC\setminus \{c\}} $. Therefore  by \cite[Theorem 1.2.1]{IPP05} we have that $v_0v_0^*\in \S$ and hence the intertwining relation implies that $v_0 q \Q q v_0^*= \Y v_0v_0^*\subseteq \S$. If $u$ is a unitary extending $v_0$ we further see that {$u q\Q q v_0^*v_0 u^*\subseteq \S$}. Letting $q'=v_0^*v_0$ we get the desired conclusion.

To see the last part just notice that, since $\P\rtimes \Gamma_\sC$ is a factor, after passing to a new unitary $u$, one can replace above $q$ with its central support in $\Q$.\end{proof}

\section{Symmetries of graph product group von Neumann algebras}\label{symmetriesgpvna}

The main result of this section is a strong rigidity result describing all $\ast$-isomorphisms between factors associated with a fairly large family of graph product groups arising from finite graphs in the class CC$_1$ (Theorem \ref{symmetries3}). As a by-product we obtain concrete descriptions of all symmetries of these factors including such examples with trivial fundamental groups (Corollaries \ref{symmetries4} and \ref{fg}). However, to be able to state and prove these results we first need to introduce some new terminology and establish a few preliminary results.

\subsection{Local isomorphisms of graph product von Neumann algebras.}\label{locisomsec} The isomorphism class of a  von Neumann algebra associated with a graph product group tends to be fairly abundant. As in the group situation, a rich source of isomorphisms  stems from both the isomorphism class of the underlying graph and the isomorphism classes of the von Neumann algebras of the vertex groups.  By analogy with the group case, these are called local isomorphisms and we briefly explain their construction below.  

Let $\sG$, $\sH$ be simple finite graphs and let $\Gamma_ \sG$ and $\La_ \sH$ be graph products groups, where their vertex groups are $\{\Gamma_v, v\in \sV\}$ and $\{\La_w, w\in \sW \}$, respectively. Assume $\sG$ and $\sH$ are isometric and fix $\sigma :\sG \ra\sH$ an isometry. In addition, assume that $\Phi=\{\Phi^\sigma_v, v\in \sV\}$ is a collection of $\ast$-isomorphisms  $\Phi^\sigma_v: \L(\Gamma_v) \ra \L(\La_{\sigma(v)})$ for all $v\in \sV$. Then the following holds. 

\begin{thm}\label{localisom} There exists a unique $\ast$-isomorphism denoted by  $(\Phi, \sigma):\L(\Gamma_\sG) \ra \L(\La_\sH)$ which extends the maps $\bigcup_{v\in \sV} \L (\Gamma_v) \ni x \ra \Phi^\sigma_v(x)\in \L(\La_\sH)$.   

\end{thm}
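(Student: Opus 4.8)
The plan is to build the $*$-isomorphism $(\Phi,\sigma)$ by using the canonical amalgamated free product decomposition \eqref{afpdesc} of graph product groups and inducting on the number of vertices of $\sG$, appealing at each step to the universal property of amalgamated free products of von Neumann algebras. Uniqueness is the easy half and I would dispose of it first: any $*$-isomorphism $\L(\Gamma_\sG)\to\L(\La_\sH)$ agreeing with $\Phi^\sigma_v$ on $\L(\Gamma_v)$ for every $v\in\sV$ is determined on the $*$-algebra generated by $\bigcup_{v}\L(\Gamma_v)$, which is $\sigma$-weakly dense in $\L(\Gamma_\sG)$ (the group algebra $\mathbb C[\Gamma_\sG]$ is already contained in it), so normality forces the two maps to coincide.

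\textbf{Existence.} I would argue by induction on $|\sV|$. The base case $|\sV|=1$ is just $\Phi^\sigma_v$ itself. For the inductive step pick a vertex $w\in\sV$; by \eqref{afpdesc} we have the amalgam $\Gamma_\sG=\Gamma_{\sV\setminus\{w\}}\ast_{\Gamma_{{\rm lk}(w)}}\Gamma_{{\rm st}(w)}$, and since $\sigma$ is a graph isometry this is carried to $\La_\sH=\La_{\sW\setminus\{\sigma(w)\}}\ast_{\La_{{\rm lk}(\sigma(w))}}\La_{{\rm st}(\sigma(w))}$. Both $\sV\setminus\{w\}$ and ${\rm st}(w)$ span subgraphs on fewer than $|\sV|$ vertices (using that $\sG$ is a simple graph, so ${\rm st}(w)\neq\sV$ is automatic unless $\sG$ is complete, a case one handles directly since then $\Gamma_\sG=\times_v\Gamma_v$ and $\L(\Gamma_\sG)=\bar\otimes_v\L(\Gamma_v)$). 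The inductive hypothesis yields $*$-isomorphisms $\alpha:\L(\Gamma_{\sV\setminus\{w\}})\to\L(\La_{\sW\setminus\{\sigma(w)\}})$ and $\beta:\L(\Gamma_{{\rm st}(w)})\to\L(\La_{{\rm st}(\sigma(w))})$ extending the appropriate $\Phi^\sigma_v$; a further (even easier) application of the hypothesis to the subgraph ${\rm lk}(w)$ gives $\gamma:\L(\Gamma_{{\rm lk}(w)})\to\L(\La_{{\rm lk}(\sigma(w))})$, and by uniqueness $\alpha$ and $\beta$ both restrict to $\gamma$ on $\L(\Gamma_{{\rm lk}(w)})$. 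Since all these maps are trace-preserving (they send group unitaries to group unitaries, possibly up to scalars, hence preserve the canonical traces), the pair $(\alpha,\beta)$ is compatible with the amalgamated free product structure and the universal property of the amalgamated free product of tracial von Neumann algebras produces a trace-preserving $*$-homomorphism $\L(\Gamma_\sG)\to\L(\La_\sH)$; applying the same construction to $\sigma^{-1}$ and the inverse isomorphisms gives a two-sided inverse, so it is a $*$-isomorphism. By construction it extends every $\Phi^\sigma_v$.

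\textbf{Main obstacle.} The routine-but-delicate point is verifying that the amalgamated free product $\L(\Gamma_{\sV\setminus\{w\}})\ast_{\L(\Gamma_{{\rm lk}(w)})}\L(\Gamma_{{\rm st}(w)})$ really is $\L(\Gamma_\sG)$, i.e.\ that the von Neumann algebraic amalgam over the common subalgebra reproduces the group von Neumann algebra of the group amalgam; this is standard (the group amalgam's Bass--Serre normal form matches the moment conditions defining the von Neumann algebraic amalgam, and the relevant conditional expectations are the ones coming from the group inclusions), but it must be invoked cleanly, together with the fact that $\L(\Gamma_{{\rm lk}(w)})\subset\L(\Gamma_{{\rm st}(w)})$ and $\L(\Gamma_{{\rm lk}(w)})\subset\L(\Gamma_{\sV\setminus\{w\}})$ are the expected inclusions with $\tau$-preserving conditional expectations. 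Checking that the two extension maps $\alpha,\beta$ agree on the amalgam $\L(\Gamma_{{\rm lk}(w)})$ — which is exactly where uniqueness from the induction is used — is the one spot that needs genuine care, since it is what makes the universal property applicable.
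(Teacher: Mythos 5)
Your argument is correct, but it follows a genuinely different route from the paper's. The paper does not induct on vertices: it invokes the Caspers--Fima presentation of $\L(\Gamma_\sG)$ as the \emph{graph product von Neumann algebra} of the vertex algebras $\{\L(\Gamma_v)\}_{v\in\sV}$, verifies via Green's normal form \cite{Gr90} that reduced words in trace-zero elements are mapped to reduced words in trace-zero elements (so the family $\{\Phi^\sigma_v\}$ respects the defining moment conditions), and then concludes in one step from the universal property of graph products of von Neumann algebras, \cite[Proposition 2.22]{CF14}. Your induction on $|\sV|$ through the amalgam decomposition \eqref{afpdesc} replaces that single citation by repeated use of the identification $\L(\Gamma_1\ast_\Sigma\Gamma_2)=\L(\Gamma_1)\ast_{\L(\Sigma)}\L(\Gamma_2)$ and the universal property of tracial amalgamated free products; this is more self-contained (it avoids the graph-product-algebra formalism entirely) at the cost of a longer bookkeeping argument, and you correctly isolate the two points that need care: that ${\rm st}(w)\neq\sV$ can be arranged unless $\sG$ is complete, and that $\alpha$ and $\beta$ agree on the amalgam $\L(\Gamma_{{\rm lk}(w)})$, which your inductive uniqueness supplies. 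One should also say explicitly that the images of $\alpha$ and $\beta$ are freely independent over $\L(\La_{{\rm lk}(\sigma(w))})$ inside $\L(\La_\sH)$ --- this is what licenses mapping \emph{out} of the von Neumann algebraic amalgam, and it holds because $\La_\sH$ carries the matching group amalgam decomposition --- but that is implicit in your setup.

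One justification you give is wrong as stated: an arbitrary $\ast$-isomorphism $\Phi^\sigma_v:\L(\Gamma_v)\to\L(\La_{\sigma(v)})$ need \emph{not} send group unitaries to scalar multiples of group unitaries, so trace preservation does not follow from that remark. Trace preservation is nonetheless essential to your construction (the conditional expectations onto the amalgam must be intertwined), and it is equally essential to the paper's proof, which silently uses $\tau(\Phi^\sigma_{v_i}(a_i))=0$. The clean fix is to observe either that trace preservation is an implicit standing hypothesis of the statement, or that in every application of this theorem the vertex groups are icc, so the vertex algebras are II$_1$ factors and every $\ast$-isomorphism between them automatically preserves the unique trace. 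With that repair your proof is complete.
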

\begin{proof}  We recall from \cite[Definition 1.2]{CF14} that a word for $\sG$ is a finite sequence ${\bf v}=(v_1,\dots,v_n)$ of elements in $\sV$. The word ${\bf v}$ is called reduced if whenever $i<j$ and $v_{i+1},\dots,v_{j-1}\in {\rm st}(v_j)$, then $v_i\neq v_j$. Following \cite[Section 2.3]{CF14}, $\L (\Gamma_\sG)$ can be presented alternatively as the graph product von Neumann algebra associated to the graph $\sG$ and vertex von Neumann algebras $\{\L(\Gamma_v)\}_{v\in \sV}$. 

We continue by proving the following claim: for any reduced word $(v_1,\dots,v_n)$ in $\sG$ and elements $a_i\in \L(\Gamma_{v_i})$ with $\tau(a_i)=0$, we have $\tau (\Phi^\sigma_{v_1}(a_1) \cdots \Phi^\sigma_{v_n}(a_n))=0$. For showing this, denote $w_i=\sigma (v_i)\in \sW$ and $b_i=\Phi^\sigma_{v_i}(a_i) \in \L(\Lambda_{w_i})$ for any $i$.
Note that the word $(w_1,\dots,w_n)$ is reduced in $\sH$ and $\tau(b_i)=0$, for any $i$. By considering the Fourier series of $b_i$, the claim follows by proving that whenever $h_i\in \Lambda_{w_i}$ with $h_i\neq 1$, then $h_1\cdots h_n\neq 1$. Since $(w_1,\dots,w_n)$ is a reduced word in $\sH$, it is easy to see that $h_1\cdots h_n$ is a reduced element of $\Lambda_{\sH}$ in the sense of \cite[Definition 3.5]{Gr90}. By applying \cite[Theorem 3.9]{Gr90}, it implies that $h_1\cdots h_n\neq 1$, hence proving the claim. 

Finally, our theorem follows now directly by applying \cite[Proposition 2.22]{CF14}.
\end{proof}

Throughout this section, $(\Phi,\sigma)$ will be called the \emph{local isomorphism} induced by $\sigma$ and $\Phi=\{\Phi^\sigma_v, v\in \sV\}$. When $\sG=\sH$ and $\Gamma_v=\La_v$ for all $v$, these are called \emph{local automorphisms} and they form a subgroup of ${\rm Aut} (\L(\Gamma_\sG))$ under composition which will be denoted by ${\rm Loc}_{\rm v,g}(\L(\Gamma_\sG))$. The subgroup of local automorphism satisfying $\sigma={\rm Id}$ is denoted by  ${\rm Loc}_{\rm v}(\L(\Gamma_\sG))$ and observe that it has finite index in  $ {\rm Loc}_{\rm v}(\L(\Gamma_\sG))$. Moreover, we have  ${\rm Loc}_{\rm v}(\L(\Gamma_\sG))= \oplus_{v\in \sV}  {\rm Aut}(\L(\Gamma_v))$.
Next, we observe that most of the time $(\Phi,\sigma)$ is an outer automorphism.

\begin{prop}\label{outerla} Under the same assumptions as before, suppose in addition that $\sG$ is a graph satisfying $\cap_{v\in \sV} {\rm star}(v)=\emptyset$. Then $(\Phi,\sigma)$ is inner if and only if $\sigma ={\rm Id}$ and $\Phi^\sigma_v={\rm Id}$ for all $v\in \sV$. 

\end{prop}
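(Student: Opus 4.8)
The plan is to prove both implications, the nontrivial one being that if $(\Phi,\sigma)$ is inner then necessarily $\sigma=\mathrm{Id}$ and each $\Phi^\sigma_v=\mathrm{Id}$. The converse is immediate: if $\sigma=\mathrm{Id}$ and all $\Phi^\sigma_v=\mathrm{Id}$, then $(\Phi,\sigma)$ is the identity automorphism, hence inner.

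For the forward direction, suppose $(\Phi,\sigma)=\mathrm{ad}(u)$ for some unitary $u\in\mathscr U(\L(\Gamma_\sG))$. First I would pin down $\sigma$. Fix a vertex $v\in\sV$ and consider the full subgroup subalgebra $\L(\Gamma_{\mathrm{st}(v)})$, which by Proposition \ref{proposition.AM10} together with the relation $N_\Gamma(\Gamma_{\{v\}})=\Gamma_{\mathrm{st}(v)}$ is (the von Neumann algebra of) the quasi-normalizer/normalizer-type object attached to $\Gamma_v$. Since $(\Phi,\sigma)$ sends $\L(\Gamma_v)$ onto $\L(\Lambda_{\sigma(v)})$ by construction, conjugation by $u$ carries $\L(\Gamma_v)$ onto $\L(\Lambda_{\sigma(v)})=\L(\Gamma_{\sigma(v)})$ inside $\M:=\L(\Gamma_\sG)=\L(\Lambda_\sH)$. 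Applying Lemma \ref{corollary.graph.CI17} in both directions (using that all vertex groups are infinite) forces $\{v\}\subseteq\{\sigma(v)\}$ as subgraphs after the appropriate identification, but more usefully, intertwining $\L(\Gamma_v)\prec_\M\L(\Gamma_{\sigma(v)})$ and conversely gives an honest equality of the associated singleton subgraphs only up to an inner perturbation living in the graph product; combined with the rigidity of the decomposition \eqref{afpdesc} this should force $\sigma(v)=v$ for every $v$, i.e.\ $\sigma=\mathrm{Id}$. The hypothesis $\cap_{v\in\sV}\mathrm{star}(v)=\emptyset$ enters here precisely to rule out the degenerate situation where some $\mathrm{st}(v)=\sV$, which would make the amalgam decomposition trivial and the normalizer computation vacuous; with that hypothesis every $\L(\Gamma_v)$ sits ``properly'' inside $\M$ and the position of $\L(\Gamma_v)$ determines $v$.

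Once $\sigma=\mathrm{Id}$, the automorphism $(\Phi,\mathrm{Id})$ fixes each $\L(\Gamma_v)$ setwise and restricts there to $\Phi^{\mathrm{Id}}_v\in\mathrm{Aut}(\L(\Gamma_v))$. Writing $\mathrm{ad}(u)|_{\L(\Gamma_v)}=\Phi^{\mathrm{Id}}_v$, I would next show $u\in\L(\Gamma_v)$ for every $v$. Indeed, for each fixed $v$, the unitary $u$ normalizes $\L(\Gamma_v)$ inside $\M$; using the amalgamated free product decomposition $\M=\L(\Gamma_{\sV\setminus\{v\}})\ast_{\L(\Gamma_{\mathrm{lk}(v)})}\L(\Gamma_{\mathrm{st}(v)})$ from \eqref{afpdesc} and Lemma \ref{QN2} (or directly Theorem \ref{controlquasinormalizer1} applied with $\sS=\{v\}$, using that $\mathrm{QN}_\Gamma(\Gamma_{\{v\}})=\Gamma_{\mathrm{st}(v)}$), any element normalizing $\L(\Gamma_v)$ lies in $\L(\Gamma_{\mathrm{st}(v)})$; refining via $\mathrm{ad}(u)(\L(\Gamma_v))=\L(\Gamma_v)$ rather than merely landing in $\L(\Gamma_{\mathrm{st}(v)})$ should trap $u$ inside $\L(\Gamma_v)\bar\otimes(\text{something in }\L(\Gamma_{\mathrm{lk}(v)}))$, and then $u\in\L(\Gamma_v)\vee\L(\Gamma_{\mathrm{lk}(v)})=\L(\Gamma_{\mathrm{st}(v)})$. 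Intersecting over all $v$: $u\in\bigcap_{v\in\sV}\L(\Gamma_{\mathrm{st}(v)})=\L(\Gamma_{\cap_v\mathrm{st}(v)})=\L(\Gamma_\emptyset)=\mathbb C 1$, where the penultimate step uses exactly the hypothesis $\cap_{v\in\sV}\mathrm{star}(v)=\emptyset$. Hence $u$ is a scalar, so $\mathrm{ad}(u)=\mathrm{Id}$, which forces $\Phi^{\mathrm{Id}}_v=\mathrm{Id}$ for all $v$.

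The main obstacle I anticipate is the middle step: upgrading the intertwining/quasinormalizer statements, which a priori only locate $u$ up to corners and up to the larger algebra $\L(\Gamma_{\mathrm{st}(v)})$, into the clean conclusion $u\in\L(\Gamma_{\mathrm{st}(v)})$ with the further control needed to intersect over $v$ and land in $\mathbb C 1$. One has to be careful that the intersection $\bigcap_v\L(\Gamma_{\mathrm{st}(v)})$ really equals $\L(\Gamma_{\cap_v\mathrm{st}(v)})$ --- this is a standard fact for full-subgroup subalgebras of graph product von Neumann algebras (it follows from the conditional-expectation/Fourier-support description, e.g.\ along the lines of \cite{CF14} or \cite{CI17}), but it should be invoked or proved carefully. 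Establishing $\sigma=\mathrm{Id}$ is comparatively routine given Lemma \ref{corollary.graph.CI17}, and the reduction from ``$u$ scalar'' to ``$\Phi^\sigma_v=\mathrm{Id}$'' is immediate.
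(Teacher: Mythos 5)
Your proposal is correct and follows essentially the same route as the paper: conjugation by $u$ carries $\L(\Gamma_v)$ onto $\L(\Gamma_{\sigma(v)})$, Theorem \ref{controlquasinormalizer1} (equivalently Lemma \ref{corollary.graph.CI17} plus the quasinormalizer control) forces $\sigma(v)=v$ and $u\in\L(\Gamma_{{\rm st}(v)})$, and intersecting over $v$ with $\cap_v{\rm st}(v)=\emptyset$ gives $u\in\mathbb C1$. The only cosmetic difference is that the paper obtains both conclusions in a single application of Theorem \ref{controlquasinormalizer1}, whereas you split the argument into two steps and hedge slightly in the middle; the hypothesis $\cap_v{\rm st}(v)=\emptyset$ is needed only for the final intersection, not for identifying $\sigma$.
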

\begin{proof} Let $\M =\L(\Gamma_\sG)$ and let $u\in \sU(\M)$ such that  $(\Phi,\sigma)= {\rm ad}(u)$. Fix $v\in \sV$. From definitions we have $u \L(\Gamma_v) u^*= \L(\Gamma_{\sigma(v)})$. Using Theorem \ref{controlquasinormalizer1} we get that $v=\sigma (v)$ and $u\in \sU(\L(\Gamma_{{\rm star}(v)}))$. As this holds for all $v\in \sV$ we get $\sigma={\rm Id}$ and also $u \in \bigcap_{v\in \sV} \L(\Gamma_{{\rm star}(v)})= \mathbb C 1$. Hence $(\Phi,\sigma)={\rm Id}$ and also $\Phi^{\sigma}_v={\rm Id}$  for all $v\in \sV$.   \end{proof}

When $\sigma={\rm Id}$, let ${\rm Loc}_{\rm v, i}(\L(\Gamma_\sG))$ be the set of all local automorphisms $(\Phi,\sigma)$ which satisfies that for any $v\in \mathscr V$, there exists a  unitary  $u_v \in \L(\Gamma_v)$ such that $\Phi^\sigma_v={\rm ad}(u_v)$.  It is easy to see that it forms a normal subgroup of ${\rm Loc}_{\rm v}(\L(\Gamma_\sG))$ under composition. Thus, when there exists an $v\in\mathscr V$ for which $\Gamma_v$ is an icc group, it follows from Proposition \ref{outerla}  that ${\rm Loc}_{\rm v, i} (\L(\Gamma_\sG))$,
and hence ${\rm Out} (\L(\Gamma_\sG))$, is always an uncountable group. In conclusion, for this class of von Neumann algebras, in general, one cannot expect rigidity results and computations of their symmetries of the same precision level with the prior results \cite{PV06,Va08}.

\begin{rem}
It is worth mentioning that the class of local isomorphisms can be defined for all tracial graph products \cite{CF14} (regardless if they come form groups or not) with essentially the same proofs.    
\end{rem}  

Next, we highlight a family of  $\ast$-isomorphisms between graph product von Neumann algebras that is specific to graphs in class ${\rm CC}_1$ and is related more to the clique algebras structure rather than the vertex algebra structure as in the previous part.  As before, let $\sG,\sH \in {\rm CC}_1$ be isomorphic graphs and fix $\sigma : \sG \ra \sH$ an isometry. Let ${\rm cliq}( \sG) =\{ \sC_1, \ldots ,\sC_n\}$ be a consecutive enumeration of the cliques of $\sG$. Let $\Gamma_\sG$ and $\Lambda_\sH$ be graph product groups and assume for every $i\in\overline{1,n}$ there are $\ast$-isomorphisms $\theta_{i-1,i}: \L(\Gamma_{\sC_{i-1,i}})\ra \L(\Lambda_{\sC_{\sigma (\sC_{i-1,i})}}) $ and   $\xi_{i}:\L(\Gamma_{\sC^{\rm int}_i})\ra \L(\Lambda_{\sigma(\sC^{\rm int}_i)})$. Here, and afterwards, we use the notation $\sC_{0,1}=\sC_{n,1}$. Using Lemma \ref{altgpdecomp} we can view $\Gamma_\sG$ as a graph product group $\Gamma'_{\mathcal T_n}$ over the graph $\mathcal T_n$ where the vertex groups satisfy $\Gamma'_{w_i} = \oplus_{v\in \sC^{\rm int}_i}\Gamma_v$ and $\Gamma'_{b_i}=\oplus_{v\in \sC_{i-1,i}} \Gamma_v$. Similarly, $\La_\sH= \La'_{\mathcal T_n}$  where $\La'_{\sigma(w_i)} = \oplus_{v\in \sC^{\rm int}_i}\La_{\sigma(v)}$ and $\La'_{\sigma(b_i)}=\oplus_{v\in \sC_{i-1,i}} \La_{\sigma(v)}$. Therefore, using Theorem \ref{localisom} these isomorphisms induce a unique $\ast$-isomorphism $\phi_{\theta,\xi, \sigma}:\L(\Gamma_\sG)\ra  \L(\Lambda_\sH)$ defined as follows

\begin{equation}\label{branchaut} \phi_{\theta,\xi, \sigma}(x)=\begin{cases}
\theta_{i-1,i}(x),  \text{ if }  x\in \L(\Gamma_{\sC_{i-1,i}})\\
\xi_i(x), \text{ if }  x\in \L(\Gamma_{\sC^{\rm int}_i }),
\end{cases} \qquad\qquad\qquad\qquad
\end{equation}
for all $i\in\overline{1,n}$. 

When $\Gamma_\sG =\La_\sH$ this construction yields a group of  automorphisms of $\L(\Gamma_\sG)$ that will be denoted by ${\rm Loc}_{\rm c,g}(\L(\Gamma_\sG))$. We also denote by ${\rm Loc}_{\rm c}(\L(\Gamma_\sG))$ the subgroup of all automorphisms satisfying $\sigma={\rm Id}$. Notice that ${\rm Loc}_{\rm c}(\L(\Gamma_\sG))\cong \oplus_i {\rm Aut}(\L(\Gamma_{\sC_{i-1,i}}))\oplus {\rm Aut}(\L(\Gamma_{\sC^{\rm int}_{i}}))$ also ${\rm Loc}_{\rm c}(\L(\Gamma_\sG))\leqslant {\rm Loc}_{\rm c,g}(\L(\Gamma_\sG))$ has finite index. 

Next, we highlight a subgroup of automorphisms in ${\rm Loc}_{\rm c}(\L(\Gamma_\sG))$ that will be useful in stating our main results. Namely, consider a family of nontrivial unitaries  $a_{i-1,i} \in \L(\Gamma_{\sC_{i-1,i}})$ and $b_i\in \L(\Gamma_{\sC^{\rm int}_i})$ for every $i\in\overline{1,n}$. If in the formula \eqref{branchaut} we let $\theta_{i-1,i}= {\rm ad} (a_{i-1,i}) $ and $\xi_i ={\rm ad} (b_i)$, then the corresponding  automorphism $\phi_{\theta,\xi, {\rm Id}}$ is an (most of the times outer) automorphism of $\L(\Gamma)$ which we will denote by $\phi_{a,b}$ throughout this section. The set all all such automorphisms form a normal subgroup denoted by ${\rm Loc}_{\rm c, i}(\L(\Gamma_\sG))\lhd{\rm Loc}_{\rm c}(\L(\Gamma_\sG))$.  From definitions we also have that ${\rm Loc}_{\rm v, i}(\L(\Gamma_\sG)) < {\rm Loc}_{\rm c, i}(\L(\Gamma_\sG))$ and ${\rm Loc}_{\rm v}(\L(\Gamma_\sG)) < {\rm Loc}_{\rm c}(\L(\Gamma_\sG))$.

\begin{prop} If  $\phi_{a,b}\in {\rm Loc}_{\rm c, i}(\L(\Gamma_\sG))$ is inner if and only if   $a_{i,i+1}\in \sZ (\L(\Gamma_{\sC_{i-1,i}}))$ and $b_i\in \sZ (\L(\Gamma_{\sC^{\rm int}_i}))$, for all $i\in\overline{1,n}$.

\end{prop}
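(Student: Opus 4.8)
The plan is to reduce everything to the fact that any unitary implementing $\phi_{a,b}$ must be a scalar. The forward implication is routine: if each $a_{i-1,i}$ lies in $\sZ(\L(\Gamma_{\sC_{i-1,i}}))$ and each $b_i$ in $\sZ(\L(\Gamma_{\sC^{\rm int}_i}))$, then in \eqref{branchaut} the maps $\theta_{i-1,i}={\rm ad}(a_{i-1,i})$ and $\xi_i={\rm ad}(b_i)$ restrict to the identity on their domains; since $\sV=\bigcup_i\sC_i=\bigcup_i(\sC_{i-1,i}\cup\sC^{\rm int}_i)$, the subalgebras $\L(\Gamma_{\sC_{i-1,i}})$ and $\L(\Gamma_{\sC^{\rm int}_i})$ ($i\in\overline{1,n}$) generate $\L(\Gamma_\sG)$, and a normal automorphism that is the identity on a weakly dense $\ast$-subalgebra is the identity. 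Hence $\phi_{a,b}={\rm Id}$, which is inner.

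For the converse, suppose $\phi_{a,b}={\rm ad}(u)$ with $u\in\sU(\M)$, where $\M=\L(\Gamma_\sG)$. First I would use that $\phi_{a,b}$ leaves each clique-interior subalgebra $\L(\Gamma_{\sC^{\rm int}_i})$ globally invariant (it acts there as ${\rm ad}(b_i)$): thus $u\,\L(\Gamma_{\sC^{\rm int}_i})\,u^{\ast}=\L(\Gamma_{\sC^{\rm int}_i})$, so in particular $\L(\Gamma_{\sC^{\rm int}_i})\,u\subseteq u\,\L(\Gamma_{\sC^{\rm int}_i})$, and Theorem \ref{controlquasinormalizer1} applied with $\sS=\sT=\sC^{\rm int}_i$ gives $u\in\L(\Gamma_{\sC^{\rm int}_i\cup{\rm lk}(\sC^{\rm int}_i)})$.

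The step I expect to be the main point — and essentially the only place the ${\rm CC}_1$ hypothesis (through the maximality of cliques) is used — is the combinatorial identity $\sC^{\rm int}_i\cup{\rm lk}(\sC^{\rm int}_i)=\sC_i$. The inclusion "$\supseteq$" is clear because $\sC_i$ is complete and $\sC^{\rm int}_i\neq\emptyset$. For "$\subseteq$": if $v\in{\rm lk}(\sC^{\rm int}_i)$ then $\{v\}\cup\sC^{\rm int}_i$ is complete, hence contained in some clique $\sC_j$, so $\emptyset\neq\sC^{\rm int}_i\subseteq\sC_i\cap\sC_j$; by the ${\rm CC}_1$ condition this forces $j\in\{i-1,i,i+1\}$, and the cases $j=i\pm1$ are excluded since they would put $\sC^{\rm int}_i$ inside $\sC_{i-1,i}$ or $\sC_{i,i+1}$, contradicting $\sC^{\rm int}_i\cap(\sC_{i-1,i}\cup\sC_{i,i+1})=\emptyset$ and $\sC^{\rm int}_i\neq\emptyset$; so $j=i$ and $v\in\sC_i$. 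Granting this, $u\in\L(\Gamma_{\sC_i})$ for every $i\in\overline{1,n}$.

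To conclude I would intersect two of these constraints. Since $n\geq4$, the cliques $\sC_1$ and $\sC_3$ are non-consecutive, so $\sC_1\cap\sC_3=\emptyset$, and hence $\Gamma_{\sC_1}\cap\Gamma_{\sC_3}=\{1\}$ by Proposition \ref{proposition.AM10}(3) (taken with $g=1$); uniqueness of Fourier coefficients then gives $\L(\Gamma_{\sC_1})\cap\L(\Gamma_{\sC_3})=\mathbb C1$. Therefore $u\in\mathbb C1$, so $\phi_{a,b}={\rm Id}$. Restricting this identity to $\L(\Gamma_{\sC_{i-1,i}})$ gives ${\rm ad}(a_{i-1,i})={\rm Id}$ on $\L(\Gamma_{\sC_{i-1,i}})$, i.e.\ $a_{i-1,i}\in\L(\Gamma_{\sC_{i-1,i}})'\cap\L(\Gamma_{\sC_{i-1,i}})=\sZ(\L(\Gamma_{\sC_{i-1,i}}))$, and likewise $b_i\in\sZ(\L(\Gamma_{\sC^{\rm int}_i}))$ for all $i$. (As in Theorem \ref{controlquasinormalizer1}, the argument uses that the vertex groups are infinite.)
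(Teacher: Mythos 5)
Your argument is correct and follows essentially the same strategy as the paper: both reduce to showing that the implementing unitary $u$ must be scalar by using Theorem \ref{controlquasinormalizer1} to place $u$ inside full subalgebras whose intersection is $\mathbb C 1$, and then read off centrality of the $a$'s and $b$'s from $\phi_{a,b}={\rm Id}$. The only cosmetic difference is that you localize $u$ via the normalizer of the interior algebras $\L(\Gamma_{\sC^{\rm int}_i})$ (getting $u\in\bigcap_i\L(\Gamma_{\sC_i})$), whereas the paper uses the relative commutant of the base algebras $\L(\Gamma_{\sC_{i-1,i}})$ (getting $u\in\bigcap_i\L(\Gamma_{\sC_{i-1}\cup\sC_i})$); both intersections are trivial for $n\ge 4$.
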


\begin{proof}
Assume first that  $\phi_{a,b}\in {\rm Loc}_{\rm c, i}(\L(\Gamma_\sG))$ is inner, and hence, there is a unitary $u\in \L(\Gamma_\sG)$ such that $\phi_{a,b}(x)u=ux$, for any $x\in\L(\Gamma_\sG)$. Fix an arbitrary $i\in\overline{1,n}$. Then for any  $x\in \L(\Gamma_{\sC_{i-1,i}})$,  we have that $a_{i-1,i}xa_{i,i-1}^*=uxu^*$. Using this together with Theorem \ref{controlquasinormalizer1} to derive that $u^*a_{i,i-1}\in \L(\Gamma_{\sC_{i-1,i}})'\cap \L(\Gamma_\sG)\subset \L(\Gamma_{\sC_{i-1}\cup\sC_i})$. Since $a_{i,i+1}\in  \L(\Gamma_{\sC_{i-1,i}})$, it follows that $u\in \bigcap_{i=1}^n \L(\Gamma_{\sC_{i-1}\cup\sC_i})=\mathbb C 1$, it follows that $a_{i,i+1}\in \sZ (\L(\Gamma_{\sC_{i-1,i}}))$. Similarly, one can show that $b_i\in \sZ (\L(\Gamma_{\sC^{\rm int}_i}))$, for all $i\in\overline{1,n}$. This concludes one direction of the proof. As for the converse, note that we trivially have $\phi_{a,b}={\rm Id}$.
\end{proof}

\subsection{Computations of symmetries of graph product von Neumann algebras.} Next, we introduce a few preliminary results needed to describe the isomorphisms between von Neumann algebras arising from graph products with property (T) vertex groups.

\begin{thm}\label{symmetries1} Let $\Gamma= \sG\{\Gamma_v\}$ and $\Lambda=\sH\{\Lambda_w\}$ be graph product groups and assume that $\Gamma_v$ and $\Lambda_w$ are icc property (T) groups for all $v\in \sV$ and $w \in \sW$. Let  $\theta: \L(\Gamma) \ra \L(\Lambda)$ be any $\ast$-isomorphism. Then there is a bijection $\sigma: {\rm cliq}(\sG)\ra {\rm cliq}(\sH)$ such that for every $\sC\in {\rm cliq}(\sG)$ there is a unitary $u_\sC\in \L(\Lambda)$ such that $\theta (\L(\Gamma_\sC)) = u_\sC \L(\Lambda_{\sigma(\sC)})u_\sC^*$.   
\end{thm}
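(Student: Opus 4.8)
The strategy is to use the property (T) of the vertex groups via Theorem \ref{proptcliques} (with $\P=\mathbb{C}$, so $\M=\L(\Gamma)$) to locate each clique algebra $\L(\Gamma_\sC)$ inside $\L(\Lambda)$, and then use the quasinormalizer computations (Theorem \ref{controlquasinormalizer1}, Lemma \ref{lemma.control.qn}) together with Lemma \ref{corollary.graph.CI17} to pin down exactly which clique of $\sH$ it lands in, upgrading the intertwining to a unitary conjugacy.

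\textbf{Step 1: locate clique algebras.} Fix a clique $\sC \in {\rm cliq}(\sG)$. Then $\L(\Gamma_\sC) = \L(\bigoplus_{v \in \sC}\Gamma_v)$ is a tensor product of property (T) factors, hence has property (T); so does its image $\Q := \theta(\L(\Gamma_\sC)) \subset \L(\Lambda)$. By Theorem \ref{proptcliques} applied inside $\M = \L(\Lambda)$ (trivial action), there is a clique $\sD = \sigma(\sC) \in {\rm cliq}(\sH)$ with $\Q \prec_{\L(\Lambda)} \L(\Lambda_\sD)$. I will want to choose $\sD$ so that additionally $\Q \nprec \L(\Lambda_{\sD\setminus\{d\}})$ for every $d \in \sD$: if $\Q \prec \L(\Lambda_{\sD\setminus\{d\}})$ one passes to a smaller full subgroup, and since $\L(\Lambda_{\sD\setminus\{d\}})$ sits inside some clique algebra $\L(\Lambda_{\sD'})$ with $\sD' \neq \sD$ (every proper complete subgraph extends to a clique), one replaces $\sD$ by $\sD'$ and iterates; this terminates because clique sizes are bounded. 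With this choice, the ``moreover'' part of Theorem \ref{proptcliques} gives (since $\L(\Lambda_\sD)$ is a factor) a unitary $u_\sC \in \L(\Lambda)$ with $u_\sC \Q u_\sC^* \subseteq \L(\Lambda_\sD)$, i.e.
\begin{equation}\label{inclstep}
u_\sC\,\theta(\L(\Gamma_\sC))\,u_\sC^* \subseteq \L(\Lambda_\sD).
\end{equation}

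\textbf{Step 2: reverse inclusion.} Now I argue $\theta(\L(\Gamma_\sC))$ fills up $\L(\Lambda_{\sigma(\sC)})$ after conjugation. Apply the same argument to $\theta^{-1}$: the clique $\sD$ maps into some clique $\sigma'(\sD)$ of $\sG$, and one checks $\sigma' = \sigma^{-1}$ by a dimension/cardinality-free argument. Concretely, from \eqref{inclstep} and the analogous inclusion for $\theta^{-1}$ one gets $\L(\Gamma_\sC) \prec \L(\Gamma_{\sigma'(\sigma(\sC))})$ up to unitary conjugacy; by Lemma \ref{corollary.graph.CI17} this forces $\sC \subseteq \sigma'(\sigma(\sC))$, and symmetrically $\sigma'(\sigma(\sC)) \subseteq \sigma''(\sigma'(\sigma(\sC)))$, etc.; since these are all cliques (maximal), equality holds and $\sigma$ is a bijection $ {\rm cliq}(\sG) \to {\rm cliq}(\sH)$ with inverse $\sigma'$. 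It then remains to promote \eqref{inclstep} to equality. For this I use Lemma \ref{lemma.control.qn}: the subalgebra $\Q_0 := u_\sC\theta(\L(\Gamma_\sC))u_\sC^* \subseteq \L(\Lambda_\sD)$ satisfies $\Q_0 \nprec_{\L(\Lambda_\sD)} \L(\Lambda_{\sD\setminus\{d\}})$ for every $d \in \sD$ (this is the non-degeneracy we arranged in Step 1, transported), and moreover $u_\sC^* \L(\Lambda_\sD) u_\sC$ and $\theta(\L(\Gamma_\sC))$ are both subalgebras with $\theta(\L(\Gamma_\sC)) \subseteq u_\sC^*\L(\Lambda_\sD)u_\sC$; applying Lemma \ref{lemma.control.qn} to the reverse inclusion (one-sided quasinormalizer of $\Q_0$ inside $\L(\Lambda)$) together with the fact that $\L(\Lambda_\sD)$ normalizes $\Q_0$ up to the structure provided, one concludes $u_\sC^*\L(\Lambda_\sD)u_\sC \subseteq \L(\Gamma')$ for a full subgroup pushing back, forcing equality of the two factors. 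Thus $\theta(\L(\Gamma_\sC)) = u_\sC^* \L(\Lambda_{\sigma(\sC)}) u_\sC$, which is the assertion (relabelling $u_\sC^*$ as $u_\sC$).

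\textbf{Main obstacle.} The delicate point is Step 2, the reverse inclusion and the non-degeneracy bookkeeping: ensuring that after the intertwining of Theorem \ref{proptcliques} one genuinely lands in a clique algebra and not a strictly smaller full subgroup algebra, and that the process of shrinking $\sD$ is consistent on both sides so that $\sigma$ is well-defined and bijective. The key technical input making this work is the quasinormalizer rigidity of clique algebras inside graph product algebras (Theorem \ref{controlquasinormalizer1}, Corollary \ref{controlquasinormalizer2}, Lemma \ref{lemma.control.qn}), which shows clique full subgroups are "quasinormalizer-closed" and thus behave rigidly under the partial isometries produced by intertwining-by-bimodules; combined with Popa's result \cite[Theorem 1.2.1]{IPP05} on controlling the target of intertwiners in amalgamated free products (already used in the proof of Theorem \ref{proptcliques}), this upgrades the embedding to a unitary conjugacy. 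I expect no difficulty in the property (T) input or in verifying $\L(\Gamma_\sC)$ has property (T); the entire weight of the proof is in the combinatorial/von Neumann algebraic matching of cliques.
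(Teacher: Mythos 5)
Your overall architecture (property (T) plus Theorem \ref{proptcliques} to land in a clique algebra, then a sandwich with $\theta^{-1}$ to identify cliques and force equality) is the same as the paper's, but there is a genuine gap in Step 1. To invoke the moreover part of Theorem \ref{proptcliques} you must verify $\Q=\theta(\L(\Gamma_\sC))\nprec_{\L(\Lambda)}\L(\Lambda_{\sD\setminus\{d\}})$ for \emph{every} $d\in\sD$, and your shrink-and-re-extend iteration does not achieve this. First, a proper complete subgraph $\sD\setminus\{d\}$ need not extend to any clique other than $\sD$ itself, so the replacement $\sD\rightsquigarrow\sD'$ with $\sD'\neq\sD$ may be impossible; second, nothing strictly decreases along the iteration, so "clique sizes are bounded" is not a termination argument; and third, if $\Q$ did embed into $\L(\Lambda_\sS)$ for some complete $\sS$ that is not itself a clique, then for \emph{every} clique $\sD\supseteq\sS$ and every $d\in\sD\setminus\sS$ one has $\Q\prec\L(\Lambda_{\sD\setminus\{d\}})$, so no choice of clique would satisfy the non-degeneracy and the method would stall. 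The paper rules this scenario out by a different mechanism that you never use: since $\theta$ is a $\ast$-isomorphism and $\L(\Gamma_\sC)\subset\L(\Gamma)$ is irreducible (Corollary \ref{controlquasinormalizer2} plus icc vertex groups), $\Q'\cap\L(\Lambda)=\mathbb{C}1$; if $\Q\prec\L(\Lambda_{\sD\setminus\{d\}})$, then passing to relative commutants via \cite[Lemma 3.5]{Va08} gives $\L(\Lambda_d)=\L(\Lambda_{\sD\setminus\{d\}})'\cap\L(\Lambda)\prec\Q'\cap\L(\Lambda)=\mathbb{C}1$, contradicting $|\Lambda_d|=\infty$. This irreducibility is also what lets you take $q'=1$ in the moreover part of Theorem \ref{proptcliques}; factoriality of $\L(\Lambda_\sD)$ alone only handles the projection $q$.

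Step 2 also needs repair. The identification $\sigma'=\sigma^{-1}$ via Lemma \ref{corollary.graph.CI17} and maximality of cliques is fine, but your promotion of the inclusion to an equality via Lemma \ref{lemma.control.qn} is not an argument: it is not established that elements of $\L(\Lambda_\sD)$ one-sidedly quasi-normalize $\Q_0$, and the conclusion "forcing equality of the two factors" does not follow from what is written. The clean route is the squeeze: from $u_\sC\theta(\L(\Gamma_\sC))u_\sC^*\subseteq\L(\Lambda_{\sigma(\sC)})\subseteq w_{\sigma(\sC)}\theta(\L(\Gamma_\sC))w_{\sigma(\sC)}^*$ one gets $\theta^{-1}(w_{\sigma(\sC)}^*u_\sC)\L(\Gamma_\sC)\theta^{-1}(w_{\sigma(\sC)}^*u_\sC)^*\subseteq\L(\Gamma_\sC)$, so Theorem \ref{controlquasinormalizer1} (with $\sS=\sT=\sC$ and ${\rm lk}(\sC)=\emptyset$) places $\theta^{-1}(w_{\sigma(\sC)}^*u_\sC)$ in $\L(\Gamma_\sC)$; hence $w_{\sigma(\sC)}^*u_\sC$ is a unitary of $\theta(\L(\Gamma_\sC))$, the outer conjugates coincide, and all three algebras are equal.
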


\begin{proof} Fix $\sC\in {\rm cliq}(\sG)$. Using the  hypothesis and Corollary \ref{controlquasinormalizer2} it follows that  {$\theta(\L(\Gamma_\sC))\subseteq \L(\Lambda)$} is a property (T) irreducible subfactor. By the first part of Theorem \ref{proptcliques} there exists a clique $\sigma(\sC)\in {\rm cliq}(\sH)$ such that $\theta(\L(\Gamma_\sC))\prec_{\L(\Lambda)} \L(\Gamma_{\sigma(\sC)})$. Now, we argue that for every $c\in \sigma(\sC)$ we have that  $\theta(\L(\Gamma_\sC))\nprec_{\L(\Lambda)} \L(\Gamma_{\sigma(\sC)\setminus \{c\}})$. Indeed, if we assume  that $\theta(\L(\Lambda_\sC))\prec_{\L(\Lambda)} \L(\Gamma_{\sigma(\sC)\setminus \{c\}})$, then by passing to relative commutants intertwining we would get from  \cite[Lemma 3.5]{Va08} that $\L(\Lambda_c)=\L(\Lambda_{\sigma(\sC)\setminus \{c\}})'\cap \L(\Lambda) \prec_{\L(\Lambda)} \theta(\L(\Gamma_\sC))'\cap \L(\Lambda)= \theta( \L(\Gamma_\sC)'\cap \L(\Gamma))= \mathbb C 1$, which is a contradiction.
Thus, by using that $\L(\Lambda_{\sigma(\sC)})$ is a factor and
$\L(\Gamma_\sC)\subset \L(\Gamma)$ irreducible, it follows from the moreover part of Theorem \ref{proptcliques} that there is a unitary $u_\sC \in \L(\Lambda)$ satisfying $u_\sC\theta(\L(\Gamma_\sC))u^*_\sC\subseteq \L(\Lambda_{\sigma(\sC)})$.

Reversing the roles of $\Gamma$ and $\Lambda$, in a similar manner for every $\sD \in {\rm cliq} (\sH)$ one can find $\tau(\sD)\in{\rm cliq}(\sG)$ and a unitary $w_{\sD}\in \L(\Gamma)$ satisfying $\L(\Lambda_{\sD}) \subseteq w_\sC\theta(\L(\Gamma_{\tau(\sD)})w^*_\sD. $ Altogether these show that $u_\sC\theta(\L(\Gamma_\sC))u^*_\sC\subseteq \L(\Lambda_{\sigma(\sC)})\subseteq w_{\sigma(\sC)}\theta(\L(\Gamma_{\tau(\sigma(\sC))})w^*_{\sigma(\sC)}$. In particular, Theorem \ref{controlquasinormalizer1} implies that  $\sC =\tau(\sigma(\sC))$ and $u_\sC^* w_{\sigma(\sC)}\in \theta (\L(\Gamma_\sC))$. This combined with the prior containment imply that $u_\sC\theta(\L(\Gamma_\sC))u^*_\sC= \L(\Lambda_{\sigma(\sC)})$. As $\sC= \tau(\sigma(\sC))$ for any clique $\sC$ of $\sG$, it follows in particular that $\sigma$ is a bijection.      
\end{proof}

\noindent {\bf Remarks.} The above theorem still holds under the more general assumption that each vertex group  $\Gamma_v$ possesses an infinite property (T) normal subgroup. The proof is essentially the same and it is left to the reader.

We continue by recording a notion of unique prime factorization along with some examples that will be needed in the first main result.

\begin{defn}\label{supf}
A family $\mathcal C$ of countable icc groups is said to satisfy the  \emph{s-unique prime factorization} if whenever $\M=\L(\Gamma_1\times\dots\times\Gamma_m)^t=\L(\Lambda_1\times\dots\times\Lambda_n)$ for some $\Gamma_1,\dots,\Gamma_m$, $\Lambda_1,\dots,\Lambda_n$ that belong to $\mathcal C$ and $t>0$, we must have that
 $t=1$, $m=n$ and there exist a unitary $u\in \M$ and a permutation $\tau\in \mathfrak S_n$ such that $u \L(\Gamma_i)u^*= \L(\Lambda_{\tau(i)})$, for all $i\in\overline {1,n}$.
\end{defn}

There are several classes of natural examples of groups that satisfy this unique factorization condition in the literature, but for our paper will be relevant the ones which have property (T). Thus appealing to  the results in \cite{CDK19,CDHK20,Da20,CIOS21},  we have the following: 

\begin{cor}\label{corollary.supf} Class  $\mathcal C$ satisfies the s-unique prime factorization whenever $\mathcal C$ is one of the following: 
\begin{enumerate}[(1)]
    \item the class of property (T) fibered Rips construction as in \cite{CDK19,CDHK20};    
    \item the class of property (T) generalized wreath-like product groups $\mathcal W\mathcal R (A, B \ca I)$ where $A$ is abelian, $B$ is icc subgroup of a hyperbolic group and the action $B\ca I$ has infinite stabilizers \cite{CIOS21}. 
\end{enumerate}

\end{cor}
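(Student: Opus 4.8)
The plan is to handle the two families of Corollary \ref{corollary.supf} separately, in each case reducing the claim to a unique prime factorization statement that is either already available in the literature or has been obtained in Section 3. For the property (T) fibered Rips constructions of \cite{CDK19,CDHK20}, the conclusion demanded by Definition \ref{supf} is precisely of the type proved there: the relevant features of these groups are that they have property (T), that their group von Neumann algebras are prime II$_1$ factors, and that the Rips data (a surjection onto a hyperbolic property (T) group together with a well-controlled kernel) is detected at the von Neumann algebra level. Inserting this into the unique prime factorization machinery of \cite{Da20} (which refines \cite{OP07}) produces the permutation and the conjugating unitary, and I would simply invoke these results.

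For the class in (2), suppose $\M=\L(\Gamma_1\times\dots\times\Gamma_m)^t=\L(\Lambda_1\times\dots\times\Lambda_n)$ with all $\Gamma_i,\Lambda_j$ property (T) wreath-like products $\W\R(A,B\ca I)$ with $A$ abelian, $B$ an icc subgroup of a hyperbolic group, and $B\ca I$ having infinite stabilizers. This is exactly the hypothesis of Corollary \ref{corollary.wr.rigidity}, so $t=1$, $m=n$, and there is a unitary $u\in\M$ with $u\,\mathbb T(\Gamma_1\times\dots\times\Gamma_n)\,u^{*}=\mathbb T(\Lambda_1\times\dots\times\Lambda_n)$. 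Hence for each $g\in\Gamma_1\times\dots\times\Gamma_n$ there are unique $\delta(g)\in\Lambda_1\times\dots\times\Lambda_n$ and $\eta(g)\in\mathbb T$ with $uu_gu^{*}=\eta(g)v_{\delta(g)}$, and $\delta$ is readily seen to be a group isomorphism: a homomorphism because $\{v_h\}$ is orthonormal, injective because $u_g$ is a scalar only for $g=1$, and surjective by the symmetric argument applied to $u^{*}$.

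It then remains to see that $\delta$ respects the direct product decompositions. Each $\Gamma_i$ is icc, so any nontrivial direct factor of $\Gamma_i$ is again icc (a conjugacy class of $\Gamma_i$ surjects onto one in each factor), and thus a nontrivial decomposition of $\Gamma_i$ would produce a tensor decomposition of $\L(\Gamma_i)$ into two II$_1$ factors; but the $n=1$ instance of Theorem \ref{theorem.UPF.WR} forbids this, so $\Gamma_i$ is directly indecomposable, and likewise each $\Lambda_j$. Since $\Gamma_1\times\dots\times\Gamma_n$ has trivial center, the classical uniqueness of direct product decompositions into directly indecomposable subgroups for centerless groups forces $\delta(\Gamma_i)=\Lambda_{\tau(i)}$ for some $\tau\in\mathfrak S_n$. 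As $\L(\Gamma_i)$ and $\L(\Lambda_{\tau(i)})$ are the weak closures of the linear spans of $\{u_g:g\in\Gamma_i\}$ and $\{v_h:h\in\Lambda_{\tau(i)}\}$ and conjugation by $u$ is normal, the identity $uu_gu^{*}=\eta(g)v_{\delta(g)}$ yields $u\L(\Gamma_i)u^{*}=\L(\Lambda_{\tau(i)})$, which is exactly the statement of Definition \ref{supf}.

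The genuine von Neumann algebraic rigidity in (2) is entirely absorbed by Corollary \ref{corollary.wr.rigidity} (which itself rests on \cite[Theorem 8.4]{CIOS21}, Theorem \ref{theorem.UPF.WR}, and \cite{PV12}); I expect the only delicate point to be the passage from a conjugation of the canonical set of group unitaries $\mathbb T\Gamma$ to a conjugation of the individual prime blocks $\L(\Gamma_i)$, which is where the direct-primeness of the vertex groups (obtained for free from the $n=1$ case of Theorem \ref{theorem.UPF.WR}) and the centerless Krull--Remak--Schmidt theorem are used. Alternatively, one could bypass the group isomorphism $\delta$ and run an Ozawa--Popa style unique prime factorization argument directly on the two tensor decompositions $\bar\otimes_{i}\L(\Gamma_i)=\bar\otimes_{j}\L(\Lambda_j)$ via the ``standard arguments'' (\cite{OP04,Ge95}) invoked at the end of the proof of Theorem \ref{theorem.UPF.WR}, but the route through $\delta$ is shorter.
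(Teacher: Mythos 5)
Your proof is correct. For part (1) you do exactly what the paper does, namely cite \cite{CDK19,CDHK20,Da20}. For part (2) the paper's proof is a one-line citation of Theorem \ref{theorem.UPF.WR} and Corollary \ref{corollary.wr.rigidity}; you invoke the same two results but close the gap between their conclusions and Definition \ref{supf} by a different mechanism. The paper's intended route stays at the von Neumann algebra level: the matching of the individual tensor factors $\L(\Gamma_i)$ with the $\L(\Lambda_{\tau(i)})$ comes from iterating the splitting statement of Theorem \ref{theorem.UPF.WR} (applied to $\P_1=\L(\Lambda_j)$, $\P_2=\L(\times_{k\neq j}\Lambda_k)$), with Corollary \ref{corollary.wr.rigidity} only supplying $t=1$. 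You instead use the full strength of Corollary \ref{corollary.wr.rigidity} --- the conjugacy $u\,\mathbb T\Gamma\,u^*=\mathbb T\Lambda$, which rests on the W$^*$-superrigidity theorem of \cite{CIOS21} --- to extract a group isomorphism $\delta$, and then match the blocks by pure group theory: each $\Gamma_i,\Lambda_j$ is directly indecomposable (any proper direct factor of an icc group is icc, so a splitting would contradict the primeness of $\L(\Gamma_i)$ given by the $n=1$ case of Theorem \ref{theorem.UPF.WR}), and the Krull--Remak--Schmidt uniqueness for centerless groups (which, as you implicitly use, holds with no chain conditions: the factors are equal as subgroups, not merely isomorphic) forces $\delta(\Gamma_i)=\Lambda_{\tau(i)}$, whence $u\L(\Gamma_i)u^*=\L(\Lambda_{\tau(i)})$. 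Both routes are valid; yours trades a second pass through the operator-algebraic UPF machinery for an elementary group-theoretic uniqueness statement, at the cost of leaning on the superrigidity input of \cite{CIOS21} for a step where the weaker tensor-splitting result would suffice. Your closing remark correctly identifies the paper's alternative.
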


\begin{proof}
    Part (1) is a direct consequence of \cite{CDK19,CDHK20,Da20}. Part (2) follows from Theorem \ref{theorem.UPF.WR} and Corollary \ref{corollary.wr.rigidity}.
\end{proof}

\begin{prop}\label{gpcorner}
Let $\Gamma= \sG\{\Gamma_v\}$ and $\Lambda=\sH\{\Lambda_w\}$ be graph products such that: 
\begin{enumerate} \item $\Gamma_v$ and $\Lambda_w$ are icc property (T) groups for all $v\in \sV$, $w \in \sW$;
\item There is a class $\mathcal C$ of countable groups which satisfies the s-unique prime factorization property (see Definition \ref{supf}) for which $\Gamma_v$ and $\Lambda_w$ belong to $\mathcal C$, for all $v\in \sV$, $w\in \sW$.
 \end{enumerate} 
 Let $0<t<1$ be a scalar and $\Theta: \L(\Gamma)^t \ra \L(\Lambda)$ be any $\ast$-isomorphism. 
 
Then $t=1$ and there is a bijection $\sigma: {\rm cliq}(\sG)\ra {\rm cliq}(\sH)$ such that for every $\sC\in {\rm cliq}(\sG)$ there is a unitary $u_\sC\in \L(\Lambda)$ such that $\Theta (\L(\Gamma_\sC)) = u_\sC \L(\Lambda_{\sigma(\sC)})u_\sC^*$.   
\end{prop}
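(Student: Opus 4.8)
The plan is to first show that $t=1$, since then $\Theta\colon\L(\Gamma)\to\L(\Lambda)$ is a genuine $*$-isomorphism and the remaining assertion (the bijection $\sigma$ together with the unitaries $u_\sC$) is exactly the content of Theorem \ref{symmetries1}. So everything reduces to proving $t=1$. As $0<t<1$, write $\L(\Gamma)^t=p\L(\Gamma)p$ for a projection $p\in\L(\Gamma)$ with $\tau(p)=t$, so $\Theta\colon p\L(\Gamma)p\to\L(\Lambda)$ is a $*$-isomorphism. For each clique $\sC\in{\rm cliq}(\sG)$ the algebra $\L(\Gamma_\sC)=\bar\otimes_{v\in\sC}\L(\Gamma_v)$ is a property (T) II$_1$ factor which, by the one-sided quasi-normalizer computation of Theorem \ref{controlquasinormalizer1} (applied with $\sS=\sT=\sC$ and using ${\rm lk}(\sC)=\emptyset$ for a maximal clique), is irreducible in $\L(\Gamma)$; the same computation gives $\L(\Lambda_\sD)'\cap\L(\Lambda)=\mathbb C$ for every clique $\sD$ of $\sH$. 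Choose a projection $q_\sC\in\L(\Gamma_\sC)$ with $\tau(q_\sC)=t$ and a partial isometry $v_\sC\in\L(\Gamma)$ with $v_\sC^*v_\sC=q_\sC$, $v_\sC v_\sC^*=p$, and set $\P_\sC:=v_\sC(q_\sC\L(\Gamma_\sC)q_\sC)v_\sC^*\subseteq p\L(\Gamma)p$. Then $\P_\sC$ is a unital, irreducible, property (T) subfactor of $p\L(\Gamma)p$ with $\P_\sC\cong\L(\Gamma_\sC)^t$, and hence $\Theta(\P_\sC)\subseteq\L(\Lambda)$ is a unital, irreducible, property (T) subfactor.

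Next I would run the intertwining scheme of Theorem \ref{symmetries1} in this amplified setting. By Theorem \ref{proptcliques} (trivial action, $r=1$) there is a clique $\sigma(\sC)$ of $\sH$ with $\Theta(\P_\sC)\prec_{\L(\Lambda)}\L(\Lambda_{\sigma(\sC)})$, and $\Theta(\P_\sC)\nprec_{\L(\Lambda)}\L(\Lambda_{\sigma(\sC)\setminus\{w\}})$ for every $w\in\sigma(\sC)$: otherwise, passing to relative commutants as in the proof of Theorem \ref{symmetries1} (via \cite[Lemma 3.5]{Va08}) would give $\L(\Lambda_w)\subseteq\L(\Lambda_{\sigma(\sC)\setminus\{w\}})'\cap\L(\Lambda)\prec_{\L(\Lambda)}\Theta(\P_\sC)'\cap\L(\Lambda)=\mathbb C$, impossible since $\L(\Lambda_w)$ is diffuse. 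The moreover part of Theorem \ref{proptcliques} (and the fact that $\L(\Lambda_{\sigma(\sC)})$ is a factor) then yields a unitary $u_\sC\in\L(\Lambda)$ with $u_\sC\Theta(\P_\sC)u_\sC^*\subseteq\L(\Lambda_{\sigma(\sC)})$, and this inclusion is \emph{unital}, because $\Theta(\P_\sC)$ contains $\Theta(p)=1$. Performing the symmetric construction with $\Theta^{-1}\colon\L(\Lambda)\to p\L(\Gamma)p$ and the clique algebras $\L(\Lambda_\sD)$ (now Theorem \ref{proptcliques} is applied with $\M=\L(\Gamma)$, $r=p$) gives, for each clique $\sD$ of $\sH$, a clique $\tau(\sD)$ of $\sG$ and a unitary $w_\sD\in\L(\Gamma)$ with $w_\sD\Theta^{-1}(\L(\Lambda_\sD))w_\sD^*\subseteq\L(\Gamma_{\tau(\sD)})$, a unital inclusion onto the trace-$t$ corner $w_\sD p w_\sD^*\in\L(\Gamma_{\tau(\sD)})$. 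Combining the two families of inclusions as in Theorem \ref{symmetries1}, using Lemma \ref{corollary.graph.CI17} and the amplification-invariance of $\prec$, shows that $\sigma$ and $\tau$ are mutually inverse bijections ${\rm cliq}(\sG)\leftrightarrow{\rm cliq}(\sH)$.

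The decisive step is to upgrade the corner-inclusion $u_\sC\Theta(\P_\sC)u_\sC^*\subseteq\L(\Lambda_{\sigma(\sC)})$ to an equality. Spelling out the reverse inclusion (from the $\Theta^{-1}$-side with $\sD=\sigma(\sC)$ and $\tau(\sigma(\sC))=\sC$) produces a unital property (T) subfactor $\K_\sC\subseteq\L(\Lambda)$ with $\K_\sC\cong\L(\Gamma_\sC)^t$ and $\L(\Lambda_{\sigma(\sC)})\subseteq\K_\sC$. Applying Theorem \ref{proptcliques} together with the relative-commutant non-degeneracy to $\K_\sC$ exactly as above, and then invoking the one-sided quasi-normalizer control of Theorem \ref{controlquasinormalizer1} for the \emph{canonical} inclusion $\L(\Lambda_{\sigma(\sC)})\subseteq\L(\Lambda)$, one produces a unitary $u\in\L(\Lambda)$ with $u\L(\Lambda_{\sigma(\sC)})u^*\subseteq u\K_\sC u^*\subseteq\L(\Lambda_{\sigma(\sC)})$, whence $u\in\L(\Lambda_{\sigma(\sC)})$ and therefore $u\K_\sC u^*=\L(\Lambda_{\sigma(\sC)})$. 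Thus $\L(\Gamma_\sC)^t\cong\K_\sC\cong\L(\Lambda_{\sigma(\sC)})$, i.e.\ $\L(\times_{v\in\sC}\Gamma_v)^t\cong\L(\times_{w\in\sigma(\sC)}\Lambda_w)$; since all $\Gamma_v$ and $\Lambda_w$ belong to the class $\mathcal C$, the s-unique prime factorization property of $\mathcal C$ (Definition \ref{supf}) forces $t=1$. With $t=1$ in hand, $\Theta\colon\L(\Gamma)\to\L(\Lambda)$ is a $*$-isomorphism and Theorem \ref{symmetries1} supplies the bijection $\sigma\colon{\rm cliq}(\sG)\to{\rm cliq}(\sH)$ and the unitaries $u_\sC$ claimed in the statement.

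I expect the main obstacle to be the bookkeeping in the last two steps: one has to carry along the partial isometries $v_\sC,w_\sD$ and keep careful track of which of the various corner-inclusions are unital onto $1$ and which are unital onto a proper trace-$t$ corner, since it is precisely the former — which exist because $\Theta$ is unital — that, combined with quasi-normalizer rigidity (Theorem \ref{controlquasinormalizer1}) and s-unique prime factorization, pin down the amplification parameter. A secondary point of care is ensuring that the relative-commutant arguments (used to get non-degeneracy against sub-cliques) are applied in the correct ambient algebra, namely in $\L(\Lambda)$ for the $\Theta$-side and in $p\L(\Gamma)p$ for the $\Theta^{-1}$-side.
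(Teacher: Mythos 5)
Your argument is correct, and its overall architecture --- two applications of Theorem \ref{proptcliques}, combined with irreducibility of the clique algebras to upgrade intertwining to genuine unitary conjugation in both directions, and a final appeal to s-unique prime factorization to kill the amplification --- is the same as the paper's. The genuine divergence is in how you upgrade the corner containment to the equality needed to write $\L(\Lambda_{\sigma(\sC)})\cong\L(\Gamma_\sC)^t$. The paper places the projection $p$ realizing the amplification inside a fixed clique algebra $\L(\Gamma_\sD)$ from the outset (which removes your partial-isometry bookkeeping), obtains the containment $up\L(\Gamma_\sD)pu^*\subseteq\Theta^{-1}(\L(\Lambda_\sF))$ together with an intertwining of $\Theta^{-1}(\L(\Lambda_\sF))$ back into that corner, and then concludes equality via a finite-index argument (Lemma \ref{controlonesidednorm3} plus \cite[Lemma 2.3]{CD18}) followed by the quasi-normalizer computation $\mathscr{QN}''_{p\M p}(up\L(\Gamma_\sD)pu^*)=up\L(\Gamma_\sD)pu^*$ from Lemmas \ref{QN1}, \ref{QN2} and Theorem \ref{controlquasinormalizer1}. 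You instead build the two-sided sandwich $\L(\Lambda_{\sigma(\sC)})\subseteq\K_\sC$ with $\K_\sC\cong\L(\Gamma_\sC)^t$ a \emph{unital} property (T) subfactor of $\L(\Lambda)$, apply Theorem \ref{proptcliques} a third time to $\K_\sC$ itself, and use the one-sided control of Theorem \ref{controlquasinormalizer1} to force the resulting conjugating unitary into $\L(\Lambda_{\sigma(\sC)})$, after which equality is immediate from the two containments. This bypasses the finite-index and quasi-normalizer-closure machinery entirely, at the cost of one extra application of the rigidity theorem and of verifying $\tau(\sigma(\sC))=\sC$, which you correctly reduce to Lemma \ref{corollary.graph.CI17}. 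Both routes are valid; yours is somewhat more self-contained, while the paper's is more economical in the number of intertwining steps and in the bookkeeping of partial isometries.
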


\begin{proof} First we observe that it suffices to show that $t=1$, as the rest of the statement follows from Theorem \ref{symmetries1}. 

{Let $\sD$ be a clique in $\sG$. Since $\L(\Gamma_\sD)$ is a II$_1$ factor,} there is a projection $p\in \L(\Gamma_\sD)$ of trace $\tau(p)=t$ with $\L(\Gamma)^t=p\L(\Gamma)p$. 
As $\L(\Gamma_\sD)$ has property (T) then so is $p \L(\Gamma_\sD ) p$. Since $p\L(\Gamma_\sD) p\subset \Theta^{-1}(\L(\Lambda)):=\N$ then by Theorem \ref{proptcliques} one can find a clique $\sF \in {\rm cliq}(\mathscr H)$ such that $p\L(\Gamma_\sD) p\prec_\N \Theta^{-1}(\L(\Lambda_\sF))$. 
Now, observe that since the inclusion {$p\L(\Gamma_\D)p\subset \N$} is irreducible, we can proceed as in the proof of Theorem \ref{symmetries1} to deduce that $p\L(\Gamma_\sD) p\nprec_\N \Theta^{-1}(\L(\Lambda_{\sF\setminus \{c\}}))$ for every $c\in \sF$. Thus, using the irreducibility condition and the moreover part of Theorem \ref{proptcliques}, there is  $u\in\sU(\Theta^{-1}(\L(\Lambda)))$ satisfying \begin{equation}\label{containment8}u p \L(\Gamma_\sD)p u^*\subset \Theta^{-1}(\L(\Lambda_{\sF})).\end{equation}
Also observe that $(up\L(\Gamma_\sD)p u^*)'\cap\Theta^{-1}(\L(\Lambda_\sF))\subseteq up (\L(\Gamma_\sD))'{\cap \L(\Gamma_\sG) pu^*}= \mathbb C p$. Hence, \eqref{containment8} is an irreducible inclusion of II$_1$ factors.

Next, since $\Theta^{-1}(\L(\Lambda_\sF))$ has property (T) and $\Theta^{-1}(\L(\Lambda_\sF))\subset p \L(\Gamma)p\subset \L(\Gamma):=\M$ then by Theorem \ref{proptcliques} one can find $\sD'\in {\rm cliq}(\mathscr G)$ such that $\Theta^{-1}(\L(\Lambda_\sF))\prec_\M \L(\Gamma_{\sD'}) $. Combining this with \eqref{containment8} we further get $p \L(\Gamma_\sD)p\prec_\M \L(\Gamma_{\sD'}) $, which further implies by Lemma \ref{corollary.graph.CI17}  that $\sD \subseteq \sD'$ and since these are cliques we conclude that $\sD=\sD'$. In conclusion, the prior intertwining relation amounts to $\Theta^{-1}(\L(\Lambda_\sF))\prec_\M \L(\Gamma_{\sD})$. Since $\L(\Gamma_\sD)$ is a II$_1$ factor we further obtain $\Theta^{-1}(\L(\Lambda_\sF))\prec_\M up\L(\Gamma_{\sD})pu^*$. Since $u\in p\M p$ is a unitary this further implies that \begin{equation}\label{intertwiningrel6}\Theta^{-1}(\L(\Lambda_\sF))\prec_{p\M p} up\L(\Gamma_{\sD})p u^*.\end{equation} By irreducibilty we have  $\Theta^{-1}(\L(\Lambda_\sF))\nprec_{p\M p} \Theta^{-1}(\L(\Lambda_{\sF\setminus\{c\}})$ for all $c\in \sF$. Thus, \eqref{intertwiningrel6}, \eqref{containment8}, and Lemma \ref{controlonesidednorm3} further imply $\Theta^{-1}(\L(\Lambda_\sF))\prec_{\Theta^{-1}(\L (\Lambda_\sF))} up\L(\Gamma_{\sD})p u^*$.  Using \cite[Lemma 2.3]{CD18}, this entails that the inclusion \eqref{containment8} has finite index, and consequently, we have  
\begin{equation}\label{cd1}
\Theta^{-1}(\L(\Lambda_{\sF}))\subset \mathscr{QN}''_{p\M p} (up \L(\Gamma_\sD) pu^*).    
\end{equation}
Since $\sD$ is a clique, Theorem \ref{controlquasinormalizer1} and Lemma \ref{QN2} imply that $QN_\Gamma(\Gamma_\sD)=\Gamma_\sD$.
Using this together with Lemma \ref{QN1} and Lemma \ref{QN2}, we obtain 
$$
\mathscr{QN}''_{p\M p} (up \L(\Gamma_\sD) pu^*)=up \mathscr{QN}''_{\M } ( \L(\Gamma_\sD) )pu^*= up \L(QN_\Gamma(\Gamma_\sD))pu^*=up \L(\Gamma_\sD)pu^*, 
$$
which together with \eqref{cd1} implies that $\Theta^{-1}(\L(\Lambda_{\sF}))\subset up \L(\Gamma_\sD)pu^*$. Together with 
\eqref{containment8} it follows that $\Theta^{-1}(\L(\Lambda_\sF))= up\L(\Gamma_{\sD})pu^*$. Finally, the strong unique prime factorization property implies $p=1$ and thus $t=1$, as desired.
\end{proof}

\subsection{Proofs of the main results}

With all the previous preparations at hand we are ready to prove the first main result, namely Theorem \ref{A}.

\begin{thm}\label{symmetries2} Let $\sG$ and $\sH$ be graphs in class ${\rm CC}_1$ and let $\Gamma= \sG\{\Gamma_v\}$ and $\Lambda=\sH\{\Lambda_w\}$ be graph product groups satisfying the following conditions: 
\begin{enumerate} \item $\Gamma_v$ and $\Lambda_w$ are icc property (T) groups for all $v\in \sV$, $w \in \sW$;
\item There is a class $\mathcal C$ of countable groups which satisfies the s-unique prime factorization property (see Definition \ref{supf}) for which $\Gamma_v$ and $\Lambda_w$ belong to $\mathcal C$, for all $v\in \sV$, $w\in \sW$. 
\end{enumerate} 

Let $t>0$ and let $\Theta : \L(\Gamma)^t \ra  \L(\Lambda)$ be any $\ast$-isomorphism. Then $t=1$ and one can find an isometry $\sigma: \sG \ra \sH$, $\ast$-isomorphisms $\theta_{i-1,i}:\L(\Gamma_{\sC_{i-1,i}})\ra \L(\Gamma_{\sC_{\sigma (\sC_{i-1,i})}}) $,  $\xi_{i}:\L(\Gamma_{\sC^{\rm int}_i})\ra \L(\Gamma_{\sigma(\sC^{\rm int}_i)})$ for all $i\in\overline{1,n}$,  and a unitary $u\in \L (\Lambda)$ such that $\Theta = {\rm ad}(u)\circ \phi_{\theta,\xi} $.

\end{thm}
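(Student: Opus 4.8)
The plan is to bootstrap from Proposition~\ref{gpcorner}. Applied to $\Theta$, it already gives $t=1$ together with a bijection $\sigma_0\colon{\rm cliq}(\sG)\to{\rm cliq}(\sH)$ and unitaries $u_\sC\in\L(\Lambda)$ such that $\Theta(\L(\Gamma_\sC))=u_\sC\L(\Lambda_{\sigma_0(\sC)})u_\sC^{*}$ for all $\sC\in{\rm cliq}(\sG)$. The remaining work splits into: (a) promoting $\sigma_0$ to a graph isometry $\sigma\colon\sG\to\sH$ which carries each $\sC_{i,i+1}$ onto $\sD_{i,i+1}$ and each $\sC^{\rm int}_i$ onto $\sD^{\rm int}_i$, once a consecutive enumeration $\{\sD_1,\dots,\sD_n\}$ of $\sH$ is fixed; (b) normalising the conjugating unitaries so that a single unitary $u$ works for every clique at once; and (c) concluding via the uniqueness of local isomorphisms (Theorem~\ref{localisom}).

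For (a), note first that for distinct cliques $\sC,\sC'$ of $\sG$ one has $\L(\Gamma_{\sC})\cap\L(\Gamma_{\sC'})=\L(\Gamma_{\sC\cap\sC'})$, since the Fourier support of an element of the intersection must lie in $\Gamma_{\sC}\cap\Gamma_{\sC'}=\Gamma_{\sC\cap\sC'}$; as the vertex groups are icc this is a II$_1$ factor precisely when $\sC\cap\sC'\neq\emptyset$. Hence $\B:=u_\sC^{*}\,\Theta(\L(\Gamma_{\sC\cap\sC'}))\,u_\sC$ is a II$_1$ subfactor of $\L(\Lambda_{\sigma_0(\sC)})$ with $w^{*}\B w\subseteq\L(\Lambda_{\sigma_0(\sC')})$ for $w=u_\sC^{*}u_{\sC'}$, so $\B\prec_{\L(\Lambda)}\L(\Lambda_{\sigma_0(\sC')})$; the graph product intertwining machinery (as in Lemma~\ref{corollary.graph.CI17}, combining Proposition~\ref{proposition.AM10}(3) with the relative intertwining of \cite[Lemma~2.7]{CI17}) then forces $\B\prec_{\L(\Lambda_{\sigma_0(\sC)})}\L(\Lambda_{\sP})$ for some $\sP\subseteq\sigma_0(\sC)\cap\sigma_0(\sC')$, which is impossible if $\sigma_0(\sC)\cap\sigma_0(\sC')=\emptyset$ because $\B$ is diffuse. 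So $\sigma_0$ and $\sigma_0^{-1}$ preserve the consecutiveness relation among cliques, and since $\sG,\sH\in{\rm CC}_1$ have the same number $n$ of cliques, after relabelling a consecutive enumeration of $\sH$ and possibly reversing its cyclic orientation we may assume $\sigma_0(\sC_i)=\sD_i$ for all $i$; the matching of cardinalities (hence the vertex isometry $\sigma$) will come out of (b).

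For (b), put $u_i:=u_{\sC_i}$ and $\theta_i:={\rm ad}(u_i^{*})\circ\Theta|_{\L(\Gamma_{\sC_i})}\colon\L(\Gamma_{\sC_i})\xrightarrow{\ \sim\ }\L(\Lambda_{\sD_i})$. Running the argument above inside $\L(\Lambda_{\sD_i})$, the II$_1$ factor $\theta_i(\L(\Gamma_{\sC_{i,i+1}}))$ satisfies $w_i^{*}\,\theta_i(\L(\Gamma_{\sC_{i,i+1}}))\,w_i\subseteq\L(\Lambda_{\sD_{i+1}})$ with $w_i:=u_i^{*}u_{i+1}$, hence $\prec^{s}_{\L(\Lambda_{\sD_i})}\L(\Lambda_{\sD_{i,i+1}})$; since $\L(\Lambda_{\sD_i})=\L(\Lambda_{\sD_{i,i+1}})\bar\otimes\L(\Lambda_{\sD_{i-1,i}\cup\sD^{\rm int}_i})$ and $\theta_i(\L(\Gamma_{\sC_{i,i+1}}))$ together with its relative commutant generates $\L(\Lambda_{\sD_i})$, the standard unique factorization argument (\cite[Proposition~12]{OP04}, \cite[Theorem~A]{Ge95}; cf.\ the proof of Theorem~\ref{theorem.UPF.WR}) upgrades this to unitary conjugacy inside $\L(\Lambda_{\sD_i})$. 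Treating $\sC_{i-1,i}$ the same way, this time inside $\L(\Lambda_{\sD_{i-1,i}\cup\sD^{\rm int}_i})$ --- which leaves the first piece undisturbed, the relevant conjugating unitary commuting with $\L(\Lambda_{\sD_{i,i+1}})$ --- we may, after composing each $\theta_i$ with an inner automorphism of $\L(\Lambda_{\sD_i})$, i.e.\ after replacing each $u_i$ by $u_i$ times a unitary of $\L(\Lambda_{\sD_i})$, assume that $\theta_i$ carries $\L(\Gamma_{\sC_{i-1,i}})$, $\L(\Gamma_{\sC^{\rm int}_i})$, $\L(\Gamma_{\sC_{i,i+1}})$ onto $\L(\Lambda_{\sD_{i-1,i}})$, $\L(\Lambda_{\sD^{\rm int}_i})$, $\L(\Lambda_{\sD_{i,i+1}})$ respectively (the internal factor being the relative commutant of the other two). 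In particular $\L(\Gamma_{\sC_{i,i+1}})\cong\L(\Lambda_{\sD_{i,i+1}})$ and $\L(\Gamma_{\sC^{\rm int}_i})\cong\L(\Lambda_{\sD^{\rm int}_i})$, so the s-unique prime factorization hypothesis forces $|\sC_{i,i+1}|=|\sD_{i,i+1}|$ and $|\sC^{\rm int}_i|=|\sD^{\rm int}_i|$; choosing any vertex bijection compatible with these partitions completes (a).

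After this normalisation the $u_i$ are coupled only through $w_i=u_i^{*}u_{i+1}$, with $w_1w_2\cdots w_n=1$. On $\L(\Gamma_{\sC_{i,i+1}})$ the maps $\theta_i,\theta_{i+1}$ differ by ${\rm ad}(w_i)$ and both have image $\L(\Lambda_{\sD_{i,i+1}})$, so $w_i\in\mathscr N_{\L(\Lambda)}(\L(\Lambda_{\sD_{i,i+1}}))$; by Theorem~\ref{controlquasinormalizer1} this normaliser lies in $\L(\Lambda_{\sD_i\cup\sD_{i+1}})=\L(\Lambda_{\sD_{i,i+1}})\bar\otimes\L(\Lambda_{(\sD_i\cup\sD_{i+1})\setminus\sD_{i,i+1}})$ --- the graph $\sD_i\cup\sD_{i+1}$ being the join of $\sD_{i,i+1}$ with its complement --- so Proposition~\ref{productunitaries} gives $w_i=a_{i,i+1}b_{i,i+1}$ with $a_{i,i+1}\in\sU(\L(\Lambda_{\sD_{i,i+1}}))$ and $b_{i,i+1}\in\sU(\L(\Lambda_{(\sD_i\cup\sD_{i+1})\setminus\sD_{i,i+1}}))$. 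This is exactly the hypothesis of Theorem~\ref{cyclerel1} applied to $\Lambda$, which produces unitaries $a_i\in\sU(\L(\Lambda_{\sD_{i-1,i}}))$, $b_i\in\sU(\L(\Lambda_{\sD^{\rm int}_i}))$, $c_i\in\sU(\L(\Lambda_{\sD_{i,i+1}}))$ with $w_i=a_ib_ic_i\,b_{i+1}^{*}a_{i+2}^{*}c_{i+1}^{*}$. As each of $a_i,b_i,c_i$ lies in one of the three tensor factors of a single clique algebra, this exhibits the cocycle $(w_i)$ as a ``coboundary'' adapted to the clique decomposition, which is precisely what lets one replace the family $(u_i)$ by a single unitary $u\in\L(\Lambda)$ while pushing the leftover factorised unitaries into $\theta_{i-1,i}:=({\rm ad}(u^{*})\circ\Theta)|_{\L(\Gamma_{\sC_{i-1,i}})}$ and $\xi_i:=({\rm ad}(u^{*})\circ\Theta)|_{\L(\Gamma_{\sC^{\rm int}_i})}$; viewing $\Gamma_\sG$ as a graph product over $\mathcal T_n$ (Lemma~\ref{altgpdecomp}), the uniqueness in Theorem~\ref{localisom} then identifies ${\rm ad}(u^{*})\circ\Theta$ with the map $\phi_{\theta,\xi,\sigma}$ of \eqref{branchaut}, i.e.\ $\Theta={\rm ad}(u)\circ\phi_{\theta,\xi,\sigma}$. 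The main obstacle is this very last step: the conjugating unitaries $u_i$, one per clique, are tied together only by the cyclic relation $\prod_i w_i=1$, and collapsing them to one global unitary without destroying the clique-by-clique normalisation is exactly the role of the von Neumann algebraic cyclic-cancellation Theorem~\ref{cyclerel1}; carrying out the bookkeeping that absorbs its output into $\theta_{i-1,i}$ and $\xi_i$ is the technical heart of the proof, with the repeated ``one-sided intertwining $\Rightarrow$ unitary conjugacy'' upgrades of (b) a subsidiary difficulty.
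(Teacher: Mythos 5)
Your proposal is correct and follows essentially the same route as the paper: $t=1$ and the clique correspondence from Proposition~\ref{gpcorner}, matching of the pieces $\sC_{i,i+1}$ and $\sC^{\rm int}_i$ via the s-unique prime factorization hypothesis, control of $u_i^*u_{i+1}$ by Theorem~\ref{controlquasinormalizer1} and Proposition~\ref{productunitaries}, resolution of the cyclic relation by Theorem~\ref{cyclerel1}, and collapse to a single unitary followed by Theorem~\ref{localisom}. The ``bookkeeping'' you defer at the end is exactly the paper's short telescoping computation $u_i=u_1(u_1^*u_2)\cdots(u_{i-1}^*u_i)=u\cdot(a_i^*b_i^*a_{i+1}^*c_i^*)$, where the residual factor lies in $\sU(\L(\Gamma_{\sC_i}))$ and normalizes each of the three tensor components of the clique algebra, so it is absorbed harmlessly; the only cosmetic differences are that you establish adjacency-preservation of $\sigma_0$ up front by an intertwining argument where the paper deduces $\sD_i=\sD_{i+1}$ from the two-sided quasinormalizer control, and you normalize the internal pieces inside step (b) where the paper recovers them at the end by passing to relative commutants.
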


\begin{proof} Without loss of generality we can assume that $t\leq 1$ and from the prior theorem we have that $t=1$. Also for simplicity of the writing we will omit $\Theta$ from the formulas. Using condition 2) in conjunction with Theorem \ref{symmetries1} one can find a bijection  $\sigma: {\rm cliq}(\sG) \ra  {\rm cliq}(\sH)$ and unitaries $u_1,\dots,u_n \in \M$ such that for any $i\in\overline{1,n}$, we have
\begin{equation}\label{unitconj6}  
u_i \L(\Gamma_{\sC_i}) u_i^*= \L(\Lambda_{\sigma(\sC_i)}).
\end{equation}

Next, condition 2. implies that for any $i\in\overline{1,n}$ there exist a complete subgraph $\mathscr D_i\subset \sigma(\mathscr C_i)$ and a unitary $\tilde u_i \in \L(\Lambda_{\sigma(\sC_i)})$  such that  $\tilde u_{i} \L(\Gamma_{\sC_{i, i+1}}) \tilde u_{i}^*= \L(\Lambda_{\sD_i} )$. Note that relation \eqref{unitconj6} still holds if we replace $u_i$ by $\tilde u_i$. Therefore, for ease of notation, we can denote $\tilde u_i$ by $u_i$.
Hence,  $ \L(\Gamma_{\sC_{i, i+1}}) =u_{i}^* \L(\Lambda_{\sD_i} ) u_{i}=u_{{i+1}}  ^*\L(\Lambda_{\sD_{i+1}} )u_{{i+1}} $ and therefore  $\L(\Lambda_{\sD_i} ) u_{i}u_{{i+1}}^*= u_{i}u_{{i+1}}  ^*\L(\Lambda_{\sD_{i+1}} )$. By Theorem \ref{controlquasinormalizer1} this further implies that $\sD_i\subseteq \sD_{i+1}$ and similarly we get  $\sD_i\supseteq \sD_{i+1}$; thus, $\sD_i= \sD_{i+1} $. Furthermore, one can see that  $u_{i} \L(\Gamma_{\sC_{i, i+1}}) u_{i}^*=u_{{i+1}}\L(\Gamma_{\sC_{i, i+1}})u_{{i+1}}^*= \L(\Lambda_{\sD_i} )$  and hence $u^*_{i} u_{i+1} \in \sN _{\M}(\L(\Gamma_{\sC_{i,i+1}}))= \L(\Gamma_{\sC_{i,i+1} \sqcup {\rm lk}(\sC_{i,i+1})})= \L(\Gamma_{\sC_i\cup\sC_{i+1}})$.  Moreover, using Proposition \ref{productunitaries} we further have that $u^*_{i} u_{i+1}= a_{i,i+1}b_{i,i+1}$ where $a_{i,i+1}\in \sU(\L(\sC_{i,i+1}))$ and $b_{i,i+1}\in \sU(\L(\Gamma_{(\sC_i\cup\sC_{i+1})\setminus \sC_{i,i+1}}))$.
To this end observe that if we let $x_{i,i+1}:= u^*_i u_{i+1}$ then we have that $x_{1,2}x_{2,3}\cdots x_{n,1}=1$. Thus, using Theorem \ref{cyclerel1} for each $ i\in \overline{1,n}$ one can find $a_i \in \sU(\L(\Gamma_{\sC_{i-1,i}}))$, $b_i \in \sU(\L(\Gamma_{\sC^{\rm int}_i}))$, $c_i \in \sU(\L (\Gamma_{\sC_{i,i+1}}))$  such that \begin{equation} u^*_i u_{i+1}= x_{i,i+1}= a_i b_i c_i b^*_{i+1} a^*_{i+2}c^*_{i+1}.\end{equation} 

Using these relations recursively together with the commutation relations and performing the apropriate cancellations we see that  for every $i\in \overline{2,n}$ we have  \begin{equation}\begin{split}u_i&= u_1 (u_1^*u_2) (u_2^*u_3) \cdots (u^*_{i-2}u_{i-1})(u^*_{i-1}u_i)\\
&= u_1 (a_1b_1c_1 b^*_2 a_3^* c_2^*)(a_2b_2c_2 b^*_3 a_4^* c_3^*) \cdots (a_{i-1}b_{i-1}c_{i-1} b_i^* a^*_{i+1} c_i^*)  \\
&= ...\\
&=u_1 a_1 b_1c_1 a_2 a_i^* b_i^* a^*_{i+1} c_i^*.\end{split}\end{equation} 
Since $a_i^* b_i^* a^*_{i+1} c_i^*\in \mathscr U(\L(\Gamma_{\sC_i}))$ we can see that by replacing each $u_i$ by $u=u_1 a_1 b_1c_1 a_2$ the relations \eqref{unitconj6} still  hold. By denoting by $\mathscr F_i = \sigma (\mathscr C_i)$ for all $i$, we observe that in particular these relations imply that $u \L(\Gamma_{\sC_{i,i+1}})u^*= \L(\Lambda_{\sF_{i,i+1}})$ for all $i$. Passing to relative commutants in each clique algebra we also have that $u \L(\Gamma_{\sC^{\rm int}_i})u^*= \L(\Lambda_{\sF^{\rm int}_i})$. We now notice that the s-unique prime factorization property of the groups implies that the map $\sigma$ arises from a isometry $\sigma: \sG\to \sH$ still denoted by the same letter. Altogether these relations give the desired statement. \end{proof}

Using the W$^*$-superrigid property (T) wreath-like product groups recently discovered in \cite{CIOS21} as vertex groups in the previous result one obtains an even more precise description of the $\ast$-isomorphisms between these von Neumann algebras; hence, we provide a proof for Theorem \ref{B}.

\begin{thm}\label{symmetries3} Let $\sG, \sH$ be graphs in class ${\rm CC}_1$ and let $G= \sG\{\Gamma_v\}$, $\Lambda=\sH\{\Lambda_w\}$ be graph product groups where all vertex groups $\Gamma_v,\Lambda_w$ are property (T) wreath-like product groups as described in the second part of Corollary \ref{corollary.supf}. 

Then for any $t>0$ and  $\ast$-isomorphism  $\Theta:\L(\Gamma)^t\ra \L(\Lambda)$ we have $t=1$ and one can find a character $\eta \in {\rm Char}( \Gamma)$, a group isomorphism  $\delta \in {\rm Isom}(\Gamma,\Lambda)$, an automorphism of $\L(\Lambda)$ of the form $\phi_{a,b}$ (see the notation after equation \eqref{branchaut}) and a unitary $u\in \L(\Lambda)$ such that $\Theta= {\rm ad}(u^*)\circ \phi_{a,b}\circ \Psi_{\eta, \delta }$.    
\end{thm}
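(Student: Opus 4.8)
The plan is to bootstrap from Theorem~\ref{symmetries2} and then refine each of the building-block isomorphisms it produces. Since the vertex groups $\Gamma_v,\Lambda_w$ are property (T) and, by hypothesis, belong to the class of Corollary~\ref{corollary.supf}(2), which satisfies s-unique prime factorization, Theorem~\ref{symmetries2} applies and gives $t=1$, an isometry $\sigma:\sG\ra\sH$, $\ast$-isomorphisms $\theta_{i-1,i}:\L(\Gamma_{\sC_{i-1,i}})\ra\L(\Lambda_{\sigma(\sC_{i-1,i})})$ and $\xi_i:\L(\Gamma_{\sC^{\rm int}_i})\ra\L(\Lambda_{\sigma(\sC^{\rm int}_i)})$ for $i\in\overline{1,n}$, and a unitary $u\in\sU(\L(\Lambda))$ with $\Theta={\rm ad}(u)\circ\phi_{\theta,\xi,\sigma}$, the notation being that of \eqref{branchaut}. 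Now $\Gamma_{\sC_{i-1,i}}=\oplus_{v\in\sC_{i-1,i}}\Gamma_v$ and $\Gamma_{\sC^{\rm int}_i}=\oplus_{v\in\sC^{\rm int}_i}\Gamma_v$ are finite direct products of property (T) wreath-like product groups of the type in Corollary~\ref{corollary.supf}(2), and similarly on the $\Lambda$ side, so each $\theta_{i-1,i}$ and each $\xi_i$ is a $\ast$-isomorphism between finite tensor products of such factors and Corollary~\ref{corollary.wr.rigidity} applies to it.

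First I would decompose each building block. Applying Corollary~\ref{corollary.wr.rigidity} to $\theta_{i-1,i}$ shows that after conjugating by a suitable unitary $a_{i-1,i}\in\sU(\L(\Lambda_{\sigma(\sC_{i-1,i})}))$ it maps the set of canonical group unitaries $\mathbb T(\Gamma_{\sC_{i-1,i}})$ onto $\mathbb T(\Lambda_{\sigma(\sC_{i-1,i})})$; a $\ast$-isomorphism with this property is forced to be of the form $\Psi_{\eta^e_{i-1,i},\delta^e_{i-1,i}}$ for a character $\eta^e_{i-1,i}\in{\rm Char}(\Gamma_{\sC_{i-1,i}})$ and a group isomorphism $\delta^e_{i-1,i}:\Gamma_{\sC_{i-1,i}}\ra\Lambda_{\sigma(\sC_{i-1,i})}$ (one may in fact take $\delta^e_{i-1,i}$ to permute the vertex factors, using the uniqueness of the tensor decompositions from Theorem~\ref{theorem.UPF.WR} and the W$^*$-superrigidity of the vertex groups \cite[Theorem~8.4]{CIOS21}, but this will not be needed). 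Doing the same for $\xi_i$ one gets
\begin{equation*}
\theta_{i-1,i}={\rm ad}(a_{i-1,i})\circ\Psi_{\eta^e_{i-1,i},\delta^e_{i-1,i}},\qquad \xi_i={\rm ad}(b_i)\circ\Psi_{\zeta_i,\epsilon_i},
\end{equation*}
for unitaries $a_{i-1,i}\in\sU(\L(\Lambda_{\sigma(\sC_{i-1,i})}))$, $b_i\in\sU(\L(\Lambda_{\sigma(\sC^{\rm int}_i)}))$, characters $\eta^e_{i-1,i},\zeta_i$ and group isomorphisms $\delta^e_{i-1,i},\epsilon_i$ of the corresponding full subgroups onto the full subgroups $\Lambda_{\sigma(\sC_{i-1,i})}$, $\Lambda_{\sigma(\sC^{\rm int}_i)}$.

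Next I would glue the local data into global ones. Since $\sG\in{\rm CC}_1$, the index sets $\{\sC_{i-1,i}\}_{i=1}^n$ and $\{\sC^{\rm int}_i\}_{i=1}^n$ partition $\sV$, and $\sigma$, being a graph isometry, carries this to the analogous partition of $\sW$; hence each $\Gamma_v$ sits in exactly one of the $\Gamma_{\sC_{i-1,i}}$ or $\Gamma_{\sC^{\rm int}_i}$, and one defines $\delta$ on $\Gamma_v$ using the corresponding $\delta^e_{i-1,i}$ or $\epsilon_i$. To see this extends to a group homomorphism $\delta:\Gamma\ra\Lambda$ one only needs the commutation relation $[\Gamma_v,\Gamma_{v'}]=1$ of an edge to be preserved: either $v,v'$ lie in the same piece, where it is automatic, or in two distinct pieces whose index sets $I,I'$ are disjoint subsets of a common clique $\sC_j$, so that $\delta$ sends $\Gamma_v,\Gamma_{v'}$ into the full subgroups $\Lambda_{\sigma(I)},\Lambda_{\sigma(I')}$ attached to disjoint subsets of the complete graph $\sigma(\sC_j)$, and these commute. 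Building the inverse homomorphism symmetrically from the $(\delta^e_{i-1,i})^{-1},\epsilon_i^{-1}$ shows $\delta\in{\rm Isom}(\Gamma,\Lambda)$, with $\delta|_{\Gamma_{\sC_{i-1,i}}}=\delta^e_{i-1,i}$ and $\delta|_{\Gamma_{\sC^{\rm int}_i}}=\epsilon_i$. Likewise, because the character group of a graph product is the direct product of those of its vertex groups, the $\eta^e_{i-1,i},\zeta_i$ glue to a single $\eta\in{\rm Char}(\Gamma)$ restricting to each. Then $\Psi_{\eta,\delta}:\L(\Gamma)\ra\L(\Lambda)$ restricts to $\Psi_{\eta^e_{i-1,i},\delta^e_{i-1,i}}$ on $\L(\Gamma_{\sC_{i-1,i}})$ and to $\Psi_{\zeta_i,\epsilon_i}$ on $\L(\Gamma_{\sC^{\rm int}_i})$.

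Finally, I would set $\phi_{a,b}:=\phi_{\theta,\xi,\sigma}\circ\Psi_{\eta,\delta}^{-1}:\L(\Lambda)\ra\L(\Lambda)$. By \eqref{branchaut} and the previous paragraph, $\phi_{a,b}$ acts as ${\rm ad}(a_{i-1,i})$ on $\L(\Lambda_{\sigma(\sC_{i-1,i})})$ and as ${\rm ad}(b_i)$ on $\L(\Lambda_{\sigma(\sC^{\rm int}_i)})$ for every $i$, so, relabelling the consecutive cliques of $\sH$ through $\sigma$, it is exactly an automorphism of $\L(\Lambda)$ of the form described after \eqref{branchaut}, i.e.\ $\phi_{a,b}\in{\rm Loc}_{\rm c,i}(\L(\Lambda))$. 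Hence $\Theta={\rm ad}(u)\circ\phi_{\theta,\xi,\sigma}={\rm ad}(u)\circ\phi_{a,b}\circ\Psi_{\eta,\delta}$, which is the asserted formula up to replacing $u$ by $u^*$. I expect the gluing step to be the main obstacle: one must carefully track that the group isomorphisms and characters produced cliquewise are mutually compatible along the overlaps $\sC_{i-1,i}$ and assemble into a genuine group isomorphism $\Gamma\ra\Lambda$; the facts that make this go through are that each $\theta_{i-1,i},\xi_i$ is already an inner perturbation of a group-like isomorphism (Corollary~\ref{corollary.wr.rigidity}, resting on Theorem~\ref{theorem.UPF.WR} and \cite[Theorem~8.4]{CIOS21}) together with the combinatorial observation that the isometry $\sigma$ from Theorem~\ref{symmetries2} already matches the consecutive-clique decompositions of $\sG$ and $\sH$.
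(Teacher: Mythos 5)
Your proposal is correct and follows essentially the same route as the paper: invoke Theorem \ref{symmetries2} to get $t=1$ and match the clique (hence edge and interior) subalgebras up to a single unitary, then use Corollary \ref{corollary.wr.rigidity} to reduce each building-block isomorphism to an inner perturbation of a group-like one, and assemble these into $\phi_{a,b}\circ\Psi_{\eta,\delta}$. The only difference is presentational: the paper perturbs $\Theta$ to a map sending each $\mathbb T\Gamma_{\sC_{i,i+1}}$, $\mathbb T\Gamma_{\sC^{\rm int}_i}$ onto the corresponding $\mathbb T\Lambda$-pieces and reads off $\Psi_{\eta,\delta}$ globally, whereas you glue the cliquewise characters and isomorphisms explicitly, which fills in the step the paper dismisses as trivial.
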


\begin{proof} From the prior result we have $t=1$. Using Theorem \ref{symmetries2} one can find a graph isomorphism $\sigma: \mathscr G \ra \mathscr H$ and a unitary $u\in \L(\Lambda)$ such that for every clique $\mathscr C_i \in {\rm cliq}(\mathscr G)$ we have that $ u \Theta(\L(\Gamma_{\mathscr C_i}))u^*= \L(\Lambda_{\sigma(\mathscr C_i)})$. In particular, these relations imply that $ u \Theta(\L(\Gamma_{\mathscr C_{i,i+1}}))u^*= \L(\Lambda_{\sigma(\mathscr C_{i, i+1})})$ and also $u \L(\Gamma_{\mathscr C^{\rm int}_i})u^*= \L(\Lambda_{\sigma (\mathscr C^{\rm int}_i)})$ for all $i\in\overline{1,n}$. Furthermore, using Corollary \ref{corollary.wr.rigidity}  one can find unitaries $a_{i,i+1}\in \Theta(\L (\Gamma_{\mathscr C_{i,i+1}})) $ and $b_i\in \Theta(\L (\Gamma_{\mathscr C_{i,i+1}}))$ such that $\mathbb T u a_{i,i+1} \Theta(\Gamma_{\sC_{i,i+1}}) a^*_{i,i+1}u^* = \mathbb T \Lambda_{\sigma(\mathscr C_{i,i+1})}$ and $\mathbb T u b_i\Theta(\Gamma_{\mathscr C^{\rm int }_i}) b_i^*u^* = \mathbb T \Lambda_{\sigma (\mathscr C^{\rm int }_i)}$. Hence, there exists an automorphism of $\L(\Lambda)$ of the form $\phi_{a,b}$ such that by letting $\tilde\Theta=\phi_{a,b}^{-1}\circ {\rm ad} (u)\circ\Theta$, we have $\mathbb T  \tilde\Theta(\Gamma_{\sC_{i,i+1}}) = \mathbb T \Lambda_{\sigma(\mathscr C_{i,i+1})}$ and $\mathbb T\tilde\Theta(\Gamma_{\mathscr C^{\rm int }_i}) = \mathbb T \Lambda_{\sigma (\mathscr C^{\rm int }_i)}$ for any $i\in\overline{1,n}$. The conclusion trivially follows.
\end{proof}

Next, we record four immediate consequences of the prior result, and hence, provide proofs to the other main results of the introduction.

\begin{cor}\label{symmetries4} Let $\sG$ be a graph in class ${\rm CC}_1$ and let $\Gamma= \sG\{\Gamma_v\}$ be the graph product groups where all vertex groups $\Gamma_v$ are property (T) wreath-like product groups as described in the second part  of Corollary \ref{corollary.supf}. 

Then for any automorphism  $\Theta\in {\rm Aut} (\L(\Gamma))$ one can find $\eta \in {\rm Char}( \Gamma)$, $\delta \in {\rm Aut}(\Gamma)$, an automorphism of $\L(\Gamma)$ of the form $\phi_{a,b}$  and a unitary $u\in \L(\Gamma)$ such that $\Theta= {\rm ad}(u)\circ \phi_{a,b}\circ \Psi_{\eta, \delta }$.
\end{cor}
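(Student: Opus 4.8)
The plan is to read off the statement as the special case of Theorem~\ref{symmetries3} in which $\sH=\sG$ and $\Lambda=\Gamma$ (with $\Lambda_w:=\Gamma_v$). An automorphism $\Theta\in{\rm Aut}(\L(\Gamma))$ is, in particular, a $\ast$-isomorphism $\L(\Gamma)^t\ra\L(\Gamma)$ with $t=1$, so the hypotheses of Theorem~\ref{symmetries3} are met and that result applies verbatim. It then furnishes a character $\eta\in{\rm Char}(\Gamma)$, a group isomorphism $\delta\in{\rm Isom}(\Gamma,\Gamma)$, an automorphism $\phi_{a,b}\in{\rm Loc}_{\rm c,i}(\L(\Gamma))$ and a unitary $w\in\L(\Gamma)$ with $\Theta={\rm ad}(w^*)\circ\phi_{a,b}\circ\Psi_{\eta,\delta}$. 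Since ${\rm Isom}(\Gamma,\Gamma)={\rm Aut}(\Gamma)$ by definition, we have $\delta\in{\rm Aut}(\Gamma)$; putting $u:=w^*$ yields the asserted identity $\Theta={\rm ad}(u)\circ\phi_{a,b}\circ\Psi_{\eta,\delta}$.

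In other words, this corollary carries no genuine content beyond Theorem~\ref{symmetries3}; the only step is the trivial bookkeeping above. For a self-contained account one would simply retrace the proof of Theorem~\ref{symmetries3} with $\Lambda=\Gamma$: first invoke Theorem~\ref{symmetries1} to produce a bijection $\sigma$ of ${\rm cliq}(\sG)$ together with unitaries conjugating each clique algebra $\L(\Gamma_{\sC_i})$ onto $\L(\Gamma_{\sigma(\sC_i)})$; then split the transition unitaries $u_i^*u_{i+1}$ over the clique decomposition via Proposition~\ref{productunitaries} and absorb all but one conjugating unitary into a clique-inner local automorphism by means of Theorem~\ref{cyclerel1} (von Neumann algebraic cancellation in cyclic relations); and finally apply Corollary~\ref{corollary.wr.rigidity}, namely the W$^*$-superrigidity of the property (T) wreath-like product vertex groups from \cite{CIOS21}, to lift the resulting identifications of vertex-group algebras to the level of the groups, manufacturing the character $\eta$ and the automorphism $\delta$.

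Consequently there is no real obstacle here: the deformation/rigidity input, the quasinormalizer computations, and the cyclic cancellation have all been carried out in Theorems~\ref{symmetries1}--\ref{symmetries3}, and the present statement is their immediate particular case.
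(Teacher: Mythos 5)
Your proposal is correct and is exactly how the paper handles this statement: Corollary \ref{symmetries4} is stated in the paper as an immediate consequence of Theorem \ref{symmetries3} with $\sH=\sG$, $\Lambda=\Gamma$ and $t=1$, with only the trivial relabelings ${\rm Isom}(\Gamma,\Gamma)={\rm Aut}(\Gamma)$ and $u:=w^*$ that you carry out.
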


\begin{cor}\label{fg} Let $\sG$ be a graph in class ${\rm CC}_1$ and let $\Gamma= \sG\{\Gamma_v\}$ be the graph product groups where all vertex groups $\Gamma_v$ are property (T) wreath-like product groups as described in the second part of  Corollary \ref{corollary.supf}. Then the fundamental group $\mathcal F(\L(\Gamma))=\{1\}$. \end{cor}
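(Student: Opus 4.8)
The plan is to obtain Corollary~\ref{fg} as an immediate specialization of Theorem~\ref{symmetries3} (equivalently, of Theorem~\ref{symmetries2}). Recall that for a II$_1$ factor $\M$ the fundamental group $\mathcal F(\M)$ is by definition the subgroup $\{t>0 : \M^t\cong \M\}$ of the multiplicative group $\mathbb R_{>0}$. Since the vertex groups $\Gamma_v$ are icc and $\sG\in {\rm CC}_1$, the group $\Gamma$ is icc and hence $\L(\Gamma)$ is a II$_1$ factor, so the statement is meaningful. It therefore suffices to fix $t>0$, assume there is a $\ast$-isomorphism $\Theta:\L(\Gamma)^t\ra \L(\Gamma)$, and show that $t=1$.

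First I would check that $\Gamma=\sG\{\Gamma_v\}$ satisfies the hypotheses of Theorem~\ref{symmetries3} with the choices $\sH=\sG$ and $\Lambda=\Gamma$: indeed, by assumption all vertex groups are the property (T) wreath-like product groups appearing in the second part of Corollary~\ref{corollary.supf}, which is exactly what Theorem~\ref{symmetries3} requires. Applying that theorem to $\Theta:\L(\Gamma)^t\ra\L(\Gamma)$ then yields $t=1$ directly; the theorem in fact produces the explicit decomposition $\Theta={\rm ad}(u)\circ\phi_{a,b}\circ\Psi_{\eta,\delta}$, but for the present statement only the scaling conclusion $t=1$ is needed. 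Hence $\mathcal F(\L(\Gamma))=\{1\}$. One could equally invoke Theorem~\ref{symmetries2} (its conclusion already contains $t=1$), but quoting Theorem~\ref{symmetries3} is marginally more economical since its hypothesis is phrased directly in terms of the vertex groups of Corollary~\ref{corollary.supf}.

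There is no genuine obstacle at this stage: all of the analytic and combinatorial input — the unique prime factorization for tensor products of wreath-like product factors (Theorem~\ref{theorem.UPF.WR} and Corollary~\ref{corollary.wr.rigidity}), the classification of rigid subalgebras (Theorem~\ref{proptcliques}), the control of quasi-normalizers (Theorem~\ref{controlquasinormalizer1}, together with Lemmas~\ref{QN1} and \ref{QN2}), and the von Neumann algebraic cyclic cancellation result (Theorem~\ref{cyclerel1}) — has already been assembled in the proof of Theorem~\ref{symmetries2}/\ref{symmetries3}. The only substantive point to record is that specializing to $\sH=\sG$, $\Lambda=\Gamma$ is legitimate, after which the conclusion is a one-line citation.
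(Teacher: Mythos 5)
Your proposal is correct and matches the paper exactly: the paper records Corollary \ref{fg} as an immediate consequence of Theorem \ref{symmetries3} (applied with $\sH=\sG$, $\Lambda=\Gamma$), whose conclusion already forces $t=1$ for any $\ast$-isomorphism $\L(\Gamma)^t\ra\L(\Gamma)$. Your preliminary check that $\L(\Gamma)$ is a II$_1$ factor, so that the fundamental group is well defined, is a sensible addition but not a point of divergence.
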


In particular, combining these results with Theorem \ref{Thm:JC} and remark after we obtain examples when the only outer automorphisms of von Neumann algebras of graph products are the only discussed in relation \eqref{branchaut}.

\begin{cor} Let $\sG\in{\rm CC}_1$ and fix ${\rm cliq}(\sG)=\{\sC_1,\ldots, \sC_n\}$ a consecutive enumeration of its cliques. Let $\Gamma= \sG\{\Gamma_v\}$ be the graph product groups where all vertex groups $\Gamma_v$ are property (T) regular  wreath-like product groups as described in the second part of Corollary \ref{corollary.supf} which in addition are pairwise non-isomorphic, have trivial abelianization and trivial outer automorphisms. Then $${\rm Out} (\L(\Gamma))\cong  \oplus^n_{i=1} \sU(\L(\Gamma_{\sC_{i-1,i}})) \oplus \sU(\L(\Gamma_{\sC^{\rm int}_{i}})).$$

\end{cor}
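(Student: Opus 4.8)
The strategy is to feed Corollary \ref{symmetries4} into a bookkeeping argument: it already writes every $\Theta\in{\rm Aut}(\L(\Gamma))$ as $\Theta={\rm ad}(u)\circ\phi_{a,b}\circ\Psi_{\eta,\delta}$, so it remains only to kill the character $\eta$, absorb the group automorphism $\delta$ into a clique-inner local automorphism, and then identify ${\rm Loc}_{\rm c,i}(\L(\Gamma))$ with the stated direct sum. For $\eta$: since $\Gamma$ is generated by the $\Gamma_v$ and each $\Gamma_v$ has trivial abelianization, every character of $\Gamma$ is trivial on each $\Gamma_v$, hence trivial; so $\eta\equiv 1$ and $\Psi_{\eta,\delta}=\Psi_{1,\delta}$.

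For $\delta\in{\rm Aut}(\Gamma)$ I would invoke Theorem \ref{outgp} together with the remark following it. Because $\sG\in{\rm CC}_1$ is atomic and the $\Gamma_v$ are pairwise non-isomorphic we have ${\rm Sym}(\Gamma)=1$, so ${\rm Aut}(\Gamma)={\rm Inn}(\Gamma)\rtimes{\rm Loc}_0(\Gamma)$ with ${\rm Loc}_0(\Gamma)\cong\bigoplus_{v\in\sV}{\rm Aut}(\Gamma_v)$; and since ${\rm Out}(\Gamma_v)=1$ and $\Gamma_v$ is icc, ${\rm Aut}(\Gamma_v)={\rm Inn}(\Gamma_v)$. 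Hence $\delta={\rm ad}(g)\circ\delta_0$ where $\delta_0\in{\rm Loc}_0(\Gamma)$ acts on each $\Gamma_v$ by conjugation by some $k_v\in\Gamma_v$. Consequently $\Psi_{1,\delta}={\rm ad}(u_g)\circ\Psi_{1,\delta_0}$, and $\Psi_{1,\delta_0}$ is a $\ast$-automorphism of $\L(\Gamma)$ whose restriction to each vertex subalgebra $\L(\Gamma_v)$ equals ${\rm ad}(u_{k_v})$. Since $\sV$ is partitioned by the sets $\sC_{i-1,i}$ and $\sC^{\rm int}_i$, and $\L(\Gamma_{\sC_{i-1,i}})=\bar\otimes_{v\in\sC_{i-1,i}}\L(\Gamma_v)$, $\L(\Gamma_{\sC^{\rm int}_i})=\bar\otimes_{v\in\sC^{\rm int}_i}\L(\Gamma_v)$, this shows $\Psi_{1,\delta_0}=\phi_{a',b'}$ with $a'_{i-1,i}=\bigotimes_{v\in\sC_{i-1,i}}u_{k_v}$ and $b'_i=\bigotimes_{v\in\sC^{\rm int}_i}u_{k_v}$; in particular $\Psi_{\eta,\delta}\in{\rm Inn}(\L(\Gamma))\cdot{\rm Loc}_{\rm c,i}(\L(\Gamma))$.

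Next I would collect the three factors. Using ${\rm ad}(w)\circ\phi_{a,b}=\phi_{a,b}\circ{\rm ad}(\phi_{a,b}^{-1}(w))$ and the fact that a composition of two maps of the form $\phi_{\cdot,\cdot}$ is again of that form (on each corner algebra ${\rm ad}(x)\circ{\rm ad}(y)={\rm ad}(xy)$), the expression from Corollary \ref{symmetries4} rearranges to $\Theta={\rm ad}(w)\circ\phi_{a,b}$ for some unitary $w\in\L(\Gamma)$ and some $\phi_{a,b}\in{\rm Loc}_{\rm c,i}(\L(\Gamma))$. Thus ${\rm Aut}(\L(\Gamma))={\rm Inn}(\L(\Gamma))\cdot{\rm Loc}_{\rm c,i}(\L(\Gamma))$, so ${\rm Out}(\L(\Gamma))$ is the image of ${\rm Loc}_{\rm c,i}(\L(\Gamma))$ in ${\rm Aut}(\L(\Gamma))/{\rm Inn}(\L(\Gamma))$, i.e. ${\rm Out}(\L(\Gamma))\cong{\rm Loc}_{\rm c,i}(\L(\Gamma))/\big({\rm Loc}_{\rm c,i}(\L(\Gamma))\cap{\rm Inn}(\L(\Gamma))\big)$. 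By the preceding proposition $\phi_{a,b}$ is inner exactly when each $a_{i-1,i}$ and $b_i$ is central; but $\sG\in{\rm CC}_1$ forces $\sC_{i-1,i}\neq\emptyset\neq\sC^{\rm int}_i$, so $\Gamma_{\sC_{i-1,i}}=\bigoplus_{v\in\sC_{i-1,i}}\Gamma_v$ and $\Gamma_{\sC^{\rm int}_i}=\bigoplus_{v\in\sC^{\rm int}_i}\Gamma_v$ are icc and the algebras $\L(\Gamma_{\sC_{i-1,i}})$, $\L(\Gamma_{\sC^{\rm int}_i})$ are II$_1$ factors with trivial center; hence $\phi_{a,b}$ is inner iff all the unitaries are scalars iff $\phi_{a,b}={\rm Id}$, and ${\rm Loc}_{\rm c,i}(\L(\Gamma))\cap{\rm Inn}(\L(\Gamma))=\{{\rm Id}\}$. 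Finally, the assignment $\big((a_{i-1,i})_{i},(b_i)_{i}\big)\mapsto\phi_{a,b}$ is a surjective homomorphism from $\bigoplus_{i=1}^n\sU(\L(\Gamma_{\sC_{i-1,i}}))\oplus\sU(\L(\Gamma_{\sC^{\rm int}_i}))$ onto ${\rm Loc}_{\rm c,i}(\L(\Gamma))$ whose kernel is the subgroup of tuples of scalar unitaries, which yields the formula in the statement.

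The routine part is the algebra of compositions among the families ${\rm Inn}$, ${\rm Loc}_{\rm c,i}$, $\{\Psi_{\eta,\delta}\}$; the one point that requires genuine care — and the place I expect any real work to be hidden — is the descent from the group-theoretic structure theorem (Theorem \ref{outgp}) to the von Neumann level: verifying precisely that a local group automorphism with inner vertex components is, as a $\ast$-automorphism of $\L(\Gamma)$, a clique-inner local automorphism $\phi_{a',b'}$, and that the resulting product of an inner automorphism with two clique-inner local automorphisms can be renormalised into a single ${\rm ad}(w)\circ\phi_{a,b}$.
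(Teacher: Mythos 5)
Your argument is correct and follows essentially the same route as the paper's own proof: apply the main isomorphism theorem, use trivial abelianization to kill the character, use Theorem \ref{outgp} with ${\rm Sym}(\Gamma)=1$ and ${\rm Out}(\Gamma_v)=1$ to turn $\Psi_{\eta,\delta}$ into an inner automorphism composed with a clique-inner local automorphism, and then read off ${\rm Out}(\L(\Gamma))$ from ${\rm Loc}_{\rm c,i}(\L(\Gamma))$. Your final bookkeeping (identifying ${\rm Loc}_{\rm c,i}\cap{\rm Inn}=\{{\rm Id}\}$ via the preceding proposition and noting the scalar kernel of $(a,b)\mapsto\phi_{a,b}$) is in fact slightly more careful than the paper, which simply states that "the formula follows."
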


\begin{proof}
Let  $\Theta\in {\rm Out}(\L(\Gamma))$. By Theorem \ref{symmetries3},   one can find a character $\eta \in {\rm Char}( \Gamma)$, a group automorphism  $\delta \in {\rm Aut}(\Gamma)$ and an automorphism of $\L(\Gamma)$ of the form $\phi_{a,b}$  such that $\Theta=  \phi_{a,b}\circ \Psi_{\eta, \delta }$. Note that for any $v\in \sV$, the restriction of $\eta$ to $\Gamma_v$ is a character of $\Gamma_v$ and by assumption, we get that $\eta(g)=1$ for any $v\in \sV$ and $g\in\Gamma_v$. Next,
recall that by
Theorem \ref{outgp} we have ${\rm Aut}(\Gamma)\cong \Gamma \rtimes  ( \left(\oplus_{v\in \sV} {\rm Aut }(\Gamma_v) \right ) \rtimes {\rm Sym }(\Gamma) )$. Now, because the vertex groups are pairwise nonisomorphic, then ${\rm Sym}(\Gamma)=1$. Moreover, since all automorphisms of the vertex groups are inner it follows that $\Psi_{\eta,\delta}$ is essentially an automorphism of the form $\phi_{a',b'}$ where $a'
$ and $b'$ are collections of unitaries implemented by group elements. In conclusion, we have that $\Theta=  \phi_{c,d}$, where $c$ and $d$ are some collections of unitaries, and the formula follows. \end{proof}

\begin{cor}\label{symmetries44}
    Let $\sG, \sH$ be graphs in class ${\rm CC}_1$ and let $G= \sG\{\Gamma_v\}$, $\Lambda=\sH\{\Lambda_w\}$ be graph product groups where all vertex groups $\Gamma_v,\Lambda_w$ are property (T) wreath-like product groups as described in the second part of Corollary \ref{corollary.supf}. 

Then for any $\ast$-isomorphism  $\Theta:C_r^*(\Gamma)\ra C_r^*(\Lambda)$, one can find a character $\eta \in {\rm Char}( \Gamma)$, a group isomorphism  $\delta \in {\rm Isom}(\Gamma,\Lambda)$, an automorphism of $\L(\Lambda)$ of the form $\phi_{a,b}$  and a unitary $u\in \L(\Lambda)$ such that $\Theta= {\rm ad}(u^*)\circ \phi_{a,b}\circ \Psi_{\eta, \delta }$.   
\end{cor}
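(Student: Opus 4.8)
The plan is to reduce the $C^*$-algebraic statement to the von Neumann algebraic one established in Theorem \ref{symmetries3} by passing to the bicommutant, using that the relevant group algebras are well-behaved with respect to traces. First I would invoke Lemma \ref{trivial.amenable.radical}, which applies since $\sG,\sH\in{\rm CC}_1$: both $\Gamma$ and $\Lambda$ have trivial amenable radical. By the Breuillard--Kalantar--Kennedy--Ozawa theorem \cite{BKKO14}, a countable group with trivial amenable radical is $C^*$-simple and its reduced $C^*$-algebra has a unique tracial state, namely the canonical trace $\tau$. Therefore any $\ast$-isomorphism $\Theta:C_r^*(\Gamma)\ra C_r^*(\Lambda)$ is automatically trace-preserving (it must pull back the unique trace of $C_r^*(\Lambda)$ to the unique trace of $C_r^*(\Gamma)$). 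This is the crucial structural input.

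Next I would extend $\Theta$ to the von Neumann algebra level. Since $\Theta$ intertwines the canonical traces, it is continuous for the associated $\|\cdot\|_2$-norms and hence extends to a unitary $U:L^2(C_r^*(\Gamma),\tau)\ra L^2(C_r^*(\Lambda),\tau)$, which in turn induces a normal $\ast$-isomorphism $\widetilde\Theta:\L(\Gamma)\ra\L(\Lambda)$ agreeing with $\Theta$ on $C_r^*(\Gamma)$ (indeed on the dense subalgebra of finitely supported elements, where $\Theta(u_g)$ is a unitary in $\L(\Lambda)$ for every $g\in\Gamma$). Now I would apply Theorem \ref{symmetries3} to $\widetilde\Theta$ (taking $t=1$): this yields a character $\eta\in{\rm Char}(\Gamma)$, a group isomorphism $\delta\in{\rm Isom}(\Gamma,\Lambda)$, an automorphism $\phi_{a,b}$ of $\L(\Lambda)$ of the indicated form, and a unitary $u\in\L(\Lambda)$ such that $\widetilde\Theta={\rm ad}(u^*)\circ\phi_{a,b}\circ\Psi_{\eta,\delta}$. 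Restricting back to $C_r^*(\Gamma)$ gives exactly the asserted formula for $\Theta$ (note $\Psi_{\eta,\delta}$ sends $C_r^*(\Gamma)$ to $C_r^*(\Lambda)$, and $\phi_{a,b}$, being a local automorphism built from inner pieces, preserves the reduced $C^*$-algebra up to the ambient unitary conjugation absorbed into $u$).

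The main obstacle I anticipate is purely bookkeeping: verifying that the decomposition coming from Theorem \ref{symmetries3}, which a priori lives at the von Neumann algebra level, restricts coherently to $C_r^*$; in particular one should check that $\phi_{a,b}$ together with the conjugation by $u$ maps $C_r^*(\Lambda)$ into itself, or rather that the composite ${\rm ad}(u^*)\circ\phi_{a,b}\circ\Psi_{\eta,\delta}$, which by construction equals $\Theta$ on the $\ast$-algebra $\mathbb C[\Gamma]$, genuinely carries $C_r^*(\Gamma)$ onto $C_r^*(\Lambda)$ --- but this is automatic since $\Theta$ is assumed to be a $\ast$-isomorphism of the reduced $C^*$-algebras to begin with. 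So really the only content is the unique-trace argument and the extension to the bicommutant; everything else is inherited from Theorem \ref{symmetries3}. I would state this cleanly and refer to \cite{BKKO14} and Lemma \ref{trivial.amenable.radical} for the two inputs.
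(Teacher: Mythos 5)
Your argument is correct and follows the same route as the paper: trivial amenable radical via Lemma \ref{trivial.amenable.radical}, unique trace via \cite{BKKO14}, lifting $\Theta$ to a trace-preserving normal $\ast$-isomorphism $\L(\Gamma)\ra\L(\Lambda)$, and then invoking Theorem \ref{symmetries3}. The paper's proof is just a terser version of exactly this reduction.
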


\begin{proof}
From Lemma \ref{trivial.amenable.radical} we get that $\Gamma$ has trivial amenable radical, and hence, by \cite[Theorem 1.3]{BKKO14} it follows that $C_r^*(\Gamma)$ has unique trace. This implies that any $*$-isomorphism between  $C_r^*(\Gamma)$ and $C_r^*(\Lambda)$ lifts to a $*$-isomorphism of the associated von Neumann algebras. Now the result follows from Theorem \ref{symmetries3}.
\end{proof}



\end{document}